\definecolor{black}{rgb}{0.0, 0.0, 0.0}
\definecolor{red}{rgb}{1.0, 0.5, 0.5}
\newcommand{\margnote}[1]{
\ifthenelse{\boolean{shownotes}}%
{\marginpar{\raggedright\tiny\texttt{#1}}}%
{}%
}
\newcommand{\hole}[1]{
\ifthenelse{\boolean{shownotes}}%
{\begin{center} \fbox{ \rule {.25cm}{0cm} \rule[-.1cm]{0cm}{.4cm}
\parbox{.85\textwidth}{\begin{center} \texttt{#1}\end{center}} \rule
{.25cm}{0cm}}\end{center}} {} }
\numberwithin{equation}{section}
\title[Analysis of Vlasov--Poisson--Navier--Stokes system]{On the dynamics of charged particles in an incompressible flow: from kinetic-fluid to fluid-fluid models}
\author[Choi]{Young-Pil Choi}
\address[Young-Pil Choi]{\newline Department of Mathematics\newline
Yonsei University, 50 Yonsei-Ro, Seodaemun-Gu, Seoul 03722, Republic of Korea}
\email{ypchoi@yonsei.ac.kr}
\author[Jung]{Jinwook Jung}
\address[Jinwook Jung]{\newline Research Institute of Basic Sciences \newline Seoul National University, Seoul  08826, Republic of Korea}
\email{warp100@snu.ac.kr}
\numberwithin{equation}{section}
\newtheorem{theorem}{Theorem}[section]
\newtheorem{lemma}{Lemma}[section]
\newtheorem{corollary}{Corollary}[section]
\newtheorem{proposition}{Proposition}[section]
\newtheorem{remark}{Remark}[section]
\newtheorem{definition}{Definition}[section]
\newcommand{\R}{\mathbb R}
\newcommand{\om}{\Omega}
\newcommand{\bbn}{\mathbb N}
\newcommand{\T}{\mathbb T}
\newcommand{\mc}{\mathcal C}
\newcommand{\bq}{\begin{equation}}
\newcommand{\eq}{\end{equation}}
\newcommand{\e}{\varepsilon}
\newcommand{\lt}{\left}
\newcommand{\rt}{\right}
\newcommand{\pa}{\partial}
\newcommand{\mh}{\mathcal{H}}
\newcommand{\me}{\mathcal{E}}
\newcommand{\ml}{\mathcal{L}}
\begin{document}
%%%%%%%%%%%%%%%%
\allowdisplaybreaks

\date{\today}

\subjclass[]{}
\keywords{Hydrodynamic limit, Euler--Poisson system, kinetic-fluid models, incompressible Navier--Stokes system, global existence of solutions.}

\begin{abstract} In this paper, we are interested in the dynamics of charged particles interacting with the incompressible viscous flow. More precisely, we consider the Vlasov--Poisson or Vlasov--Poisson--Fokker--Planck equation coupled with the incompressible Navier--Stokes system through the drag force. For the proposed kinetic-fluid model, we study the asymptotic regime corresponding to strong local alignment and diffusion forces. Under suitable assumptions on the initial data, we rigorously derive a coupled isothermal/pressureless Euler--Poisson system and incompressible Navier--Stokes system(in short, EPNS system). For this hydrodynamic limit, we employ the modulated kinetic energy estimate together with the relative entropy method and the bounded Lipschitz distance. We also construct a global-in-time strong solvability for the isothermal/pressureless EPNS system. In particular, this global-in-time solvability gives the estimates of hydrodynamic limit hold for all time. 
\end{abstract}

\maketitle \centerline{\date}

%\tableofcontents

%%%%%%%%%%%%%%%%%%%%%%%%%%%%%%%%%%%%%%%%%%%%%%%%%%%%%%%%%%%%%%%%%%%%%%%%%%%%%%%%%5
%
%
%                        Section: Introduction 
%
%
%%%%%%%%%%%%%%%%%%%%%%%%%%%%%%%%%%%%%%%%%%%%%%%%%%%%%%%%%%%%%%%%%%%%%%%%%%%%%%%%%
\section{Introduction}\label{sec:1}
In this work, we are interested in the dynamics of charged particles immersed in an incompressible viscous fluid. The motion of numerous small solid charged particles (resp. affected by a white noise random force) can be described by the Vlasov--Poisson system (resp. Vlasov--Poisson--Fokker--Planck) and the incompressible viscous fluid can be modeled by the Navier--Stokes system \cite{AIK14,AKS10}, see also \cite{BT06,BT13,HDRD13,KC93} for the mathematical modeling on a charged particle in fluids. Specifically, let $f = f(x,\xi,t)$ be the number density of charged particles at position $x \in \T^d$ with velocity $\xi \in \R^d$ at time $t \in \R_+$ and $v = v(x,t)$ be the bulk velocity of the incompressible viscous fluid, respectively. Then, the dynamics of a pair $(f, v)$ is governed by the following Vlasov--Poisson--Navier--Stokes (in short, VPNS) system\footnote{To be more precise, if there is no diffusion in the kinetic equation in \eqref{main_eq}, i.e., $\sigma=0$, then the system \eqref{main_eq} would be called the Vlasov--Poisson--Navier--Stokes system. On the other hand, if $\sigma > 0$, then the system \eqref{main_eq} can be called the Vlasov--Poisson--Fokker--Planck--Navier--Stokes system. However, for simplicity, in the present work, we call \eqref{main_eq} Vlasov--Poisson--Navier--Stokes system.}:
\begin{align}\label{main_eq}
\begin{aligned}
&\partial_t f + \xi \cdot \nabla_x f + \nabla_\xi \cdot ( (F_d- \nabla_x U) f)  = \tau^{-1}\nabla_\xi \cdot (\sigma\nabla_\xi f - (u-\xi)f), \quad (x,\xi) \in \T^d \times \R^d, \ t > 0,\\
&-\Delta_x U = (\rho -c),\\
&\partial_t v + (v \cdot \nabla_x) v + \nabla_x p - \mu\Delta_x v =- \int_{\R^d} m_p F_d f\,d\xi,\\
&\nabla_x \cdot v = 0
\end{aligned}
\end{align}
subject to initial data:
\[
(f(x,\xi,0), v(x,0)) = (f_0(x,\xi), v_0(x)), \quad (x,\xi) \in \T^d \times \R^d,
\]
where $\rho = \rho(x,t)$ and $u = u(x,t)$ denote the local averaged particle density and velocity, respectively:
\[
\rho(x,t) := \int_{\R^d} f(x,\xi,t)\,d\xi \quad \mbox{and} \quad u(x,t) := \frac1{\rho(x,t)}\int_{\R^d} \xi f(x,\xi,t) \,d\xi.
\]
Here $\sigma \geq 0$, $\mu > 0$, and $\tau > 0$ represent the diffusion and viscosity coefficients,  relaxation time, respectively.  $c > 0$ denotes the background state which is chosen as
\[
c := \int_{\T^d} \rho\,dx.
\]
Throughout this paper, without loss of generality, we may assume that $c = 1$ due to the conservation of mass.  

The coupled kinetic-fluid models describe the motion of dispersed particles in an underlying gas.  It has received considerable attention due to its wide range of applications in the modeling of chemical engineering, atmospheric pollution, aerosols, and sprays \cite{BBJM05,BGLM15,OR81,Wi58}. We refer to \cite{Des10, OR81} for more physical backgrounds and modeling issues for the interactions between particles and fluids. Our main system \eqref{main_eq} consists of the Vlasov--Poisson/Vlasov--Poisson--Fokker--Planck system and the incompressible Navier--Stokes system, and these two systems are coupled through the drag force $F_d$ which is typically given by $m_p F_d = c_d \mu(v - \xi)$, which is also often called the friction or Brinkman force \cite{B49}. Here $m_p, c_d > 0$ denote the mass of one single particle and the coefficient of drag force, respectively. In \cite{DGR08}, this force is rigorously derived from rigid spherical particles immersed in the fluid by means of homogenization arguments. The term with $u - \xi$ on the right-hand side of \eqref{main_eq} describes the velocity-relaxation towards the local particle velocity $u$ which can also be derived from nonlinear velocity alignment forces \cite{KMT14}. If there is no coupling between particles and fluid, i.e., without drag forces, the kinetic part in \eqref{main_eq} becomes the nonlinear Vlasov--Poisson--Fokker--Planck system, see \cite{Vill02} for general discussion on the Vlasov--Fokker--Planck-type equations. For the safe of simplicity, we assume that the constants $m_p$, $\mu$, and $c_d$ equal $1$ in the sequel.

In the current work, we study an asymptotic analysis for the VPNS system \eqref{main_eq} corresponding to both the local alignment and the Brownian effects that are very strong, i.e. $\tau=\e \ll 1$. More precisely, for each $\e > 0$ we consider 
\begin{align}\label{A-2}
\begin{aligned}
&\partial_t f^\e + \xi \cdot \nabla_x f^\e + \nabla_\xi \cdot ( ((v^\e-\xi)-  \nabla_x U^\e) f^\e)  = \frac{1}{\e}\nabla_\xi \cdot (\sigma \nabla_\xi f^\e -(u^\e-\xi)f^\e)\\
&-\Delta_x  U^\e = \rho^\e - 1,\\
&\partial_t v^\e + (v^\e \cdot \nabla_x) v^\e + \nabla_x p^\e -\Delta _x v^\e =- \int_{\R^d} (v^\e-\xi)f^\e\,d\xi,\\
&\nabla_x \cdot v^\e = 0.
\end{aligned}
\end{align}
Formally, if we send the singular parameter $\e \to 0$, then the right hand side of the kinetic equation in \eqref{A-2} becomes zero, and subsequently this yields
\[
f^\e(x,\xi,t) \simeq 
\left\{\begin{array}{lcl} \rho(x,t) \otimes \delta_{u(x,t)}(\xi) & \mbox{ if } & \sigma = 0,\\[2mm]
\frac{\rho(x,t)}{(2\pi\sigma)^{d/2}}e^{-\frac{|u-\xi|^2}{2\sigma}} & \mbox{ if } & \sigma >0.
\end{array}\right.
\]
This formal observation together with estimating velocity moments of $f^\e$ gives that the system \eqref{A-2} can be well approximated by the isothermal/pressureless Euler--Poisson system coupled with the incompressible Navier--Stokes system (in short, EPNS system):
\begin{align}
\begin{aligned}\label{A-3}
&\partial_t \rho + \nabla_x \cdot (\rho u) = 0, \quad x \in \T^d, \ t > 0,\\
&\partial_t (\rho u) + \nabla_x \cdot (\rho u \otimes u) + \sigma \nabla_x \rho= -\rho(u-v)  - \rho\nabla_x U,\\
&-\Delta_x U = \rho - 1,\\
&\partial_t v + (v \cdot\nabla_x )v + \nabla_x p - \Delta_x v = \rho(u-v),\\
&\nabla_x \cdot v =0.
\end{aligned}
\end{align} 

Our first main contribution is to make the above formal observation rigorous. To be more specific, we establish quantitative error estimates between solutions $(f^\e, v^\e)$ to \eqref{A-2} and $(\rho,u,v)$ to \eqref{A-3}. In particular, this shows the unique strong solution to the system \eqref{A-3} can be well approximated by weak solutions to the system \eqref{main_eq}. There is little literature on the rigorous asymptotic analysis for kinetic-fluid models. In \cite{MV08}, the Vlasov--Fokker--Planck equation coupled with the compressible Navier--Stokes system is considered, and the asymptotic analysis under the regime of strong diffusion and drag forces is investigated. In this limit, a two--phase macroscopic model is rigorously derived. Later, in \cite{CCK16}, the kinetic-fluid model consisting of the nonlinear Vlasov--Fokker--Planck equation and the incompressible Navier--Stokes system, i.e., our main system \eqref{main_eq} with $U \equiv 0$, is taken into account, and the isothermal Euler coupled with incompressible Navier--Stokes system is derived under the same asymptotic regime as ours. We also refer to \cite{CJpre, CJpre2, GJV04,GJV04_2} for other types of hydrodynamic limits of kinetic-fluid models. In those works, the presence of diffusion plays an important role in analyzing the modulated total energy or relative entropy based on the weak-strong uniqueness principle. In fact, it provides the strict convexity of the modulated total energy with respect to $\rho$, and it is crucially used to handle the strong coupling between kinetic and fluid equations in the framework of relative entropy method, see Remark \ref{rmk_comm}. It is well known that $L^1$ norm can be controlled by the relative entropy, for instance, Csisz\'ar--Kullback--Pinsker inequality. In the present work, we propose a unified approach for the hydrodynamic limit of the VPNS system \eqref{A-2}; our analyses do hold regardless of the presence of the diffusion force. We find that the modulated interaction energy arising from Coulomb interaction and the smoothing effect from viscosity in the fluid part of \eqref{A-2} enable us to control the strong coupling between kinetic and fluid equations. This remarkable observation enables us to have the hydrodynamic limit even in the absence of the diffusion in the kinetic equation in \eqref{A-2}.

Before stating the theorem precisely, let us introduce the notions of weak and strong solutions to the equations \eqref{main_eq} and \eqref{A-3}. For the weak solutions to \eqref{main_eq}, we consider two function spaces:
\[
\mathsf{H} := \{v \in L^2(\T^d) \ | \ \nabla_x  \cdot v =0\} \quad \mbox{and} \quad \mathsf{V} := \{v \in H^1(\T^d) \ | \ \nabla_x  \cdot v =0\}.
\]
We also write $\mathsf{V} '$ as the dual space of $\mathsf{V}$.
\begin{definition}\label{D2.1}
For $T \in (0,+\infty)$, we say a pair $(f,v)$ is a weak entropy solution to the system \eqref{main_eq} on the time interval $[0,T]$ if it satisfies the following:
\begin{enumerate}
\item[(i)]
$f \in L^\infty(0,T; (L_+^1 \cap L^\infty)(\T^d \times \R^d))$\footnote{$L^1_+(\T^d \times \R^d)$ denotes the set of non-negative $L^1(\T^d \times \R^d)$ functions.}, \quad $|\xi|^2 f \in L^\infty(0,T;L^1(\T^d\times\R^d))$,
\item[(ii)]
$v \in L^\infty(0,T;\mathsf{H}) \cap L^2(0,T;\mathsf{V})$, \quad $\partial_t v \in \mathcal{C}([0,T];\mathsf{V}')$,
\item[(iii)]
for every $\Phi\in \mathcal{C}_c^\infty(\T^d\times\R^d\times[0,T])$ with $\Phi(\cdot, \cdot, T)=0$,
\begin{align*}
&\int_0^T \iint_{\T^d \times \R^d} f \partial_t \Phi \,dxd\xi dt  + \iint_{\T^d \times \R^d} f_0 \Phi(\cdot, \cdot, 0)\,dxd\xi\\
&+ \int_0^T \iint_{\T^d \times \R^d} (\xi f) \cdot \nabla_x  \Phi + ((v-\xi)f + \tau^{-1}(u-\xi)f - (\nabla_x  U) f - \sigma \tau^{-1}\nabla_\xi f) \cdot \nabla_\xi \Phi\,dxd\xi dt=0,
\end{align*}
\item[(iv)]
for every $\phi \in H^1(\T^d)$ and a.e. $t>0$,
\[
\int_{\T^d} \nabla_x  U \cdot \nabla_x  \phi\,dx = \int_{\T^d} (\rho-1)\phi \,dx,
\]
\item[(v)]
for every $\Psi \in \mathcal{C}_c^\infty(\T^d\times[0,T])$ with $\nabla_x  \cdot \Psi=0$ and a.e. $t>0$,
\begin{align*}
\int_{\T^d}& v(x,t) \cdot \Psi(x,t)\,dx -\int_{\T^d} v_0(x)\cdot\Psi(x,0)\,dx -\int_0^t \int_{\T^d} \left( v \cdot \partial_t \Psi + (v\cdot \nabla_x )\Psi \cdot v -\nabla_x  v : \nabla_x\Psi \right) dxds \\
&= \int_0^t\int_{\T^d} \rho(u-v)\Psi\,dxds.
\end{align*}
\item[(vi)] $(f,v)$ satisfies the following entropy inequality: 
\begin{align}\label{entropy}
\begin{aligned}
&\iint_{\T^d \times \R^d} \left(\frac{|\xi|^2}{2} + \sigma \log f \right) f \,dxd\xi + \frac{1}{2}\int_{\T^d} |\nabla U|^2 \,dx + \frac{1}{2}\int_{\T^d} |v|^2\,dx\\
&\quad + \int_0^t\iint_{\T^d \times \R^d}\left(\frac{1}{\tau f} \left|\sigma \nabla_\xi f - (u-\xi)f\right|^2  + |v-\xi|^2 f\right) dxd\xi ds + \int_0^t\int_{\T^d}|\nabla v|^2  \,dxds\\
&\qquad  \le \iint_{\T^d \times \R^d} \left(\frac{|\xi|^2}{2} + \sigma \log f_0 \right) f_0 \,dxd\xi + \frac{1}{2}\int_{\T^d} |\nabla U_0|^2 \,dx + \frac{1}{2}\int_{\T^d} |v_0|^2\,dx+\sigma dt \|f_0\|_{L^1}.\\
\end{aligned}
\end{align} 
\end{enumerate}
\end{definition}
Since we deal with two cases; isothermal/pressureless cases, we introduce two different notions of strong solutions to the system \eqref{A-3}.

\begin{definition}[Isothermal EPNS system]\label{def_strong1} Let $d\geq 2$ and $s>d/2+1$. For a given $T \in (0,+\infty]$, we call $(\rho, u, v)$ a strong solution of \eqref{A-3} with $\sigma > 0$ on the time interval $[0,T)$ if $(\rho, u, v)$ satisfies the following conditions:
\begin{itemize}
\item[(i)] $(\rho, u) \in \mc([0,T); H^s(\T^d)) \times \mc([0,T); H^s(\T^d))$,
\item[(ii)] $v \in \mc([0,T); H^s(\T^d)) \cap L^2(0,T; H^{s+1}(\T^d))$,
\item[(iii)] $(\rho, u,v)$ satisfies the system \eqref{A-3} with $\sigma > 0$ in the sense of distributions.
\end{itemize}
\end{definition}

\begin{definition}[Pressureless EPNS system]\label{def_strong2} Let $d\geq 2$ and $s>d/2+1$. For a given $T \in (0,+\infty]$, we call $(\rho, u, v)$ a strong solution of \eqref{A-3} with $\sigma = 0$ on the time interval $[0,T)$ if $(\rho, u, v)$ satisfies the following conditions:
\begin{itemize}
\item[(i)] $(\rho, u) \in \mc([0,T); H^s(\T^d)) \times \mc([0,T); H^{s+1}(\T^d))$,
\item[(ii)] $v \in \mc([0,T); H^{s+1}(\T^d)) \cap L^2(0,T; H^{s+2}(\T^d))$,
\item[(iii)] $(\rho, u,v)$ satisfies the system \eqref{A-3} with $\sigma = 0$ in the sense of distributions.
\end{itemize}
\end{definition}

\begin{remark}As stated in Definition \ref{def_strong2}, compared to the case with pressure the different regularities for $\rho$ and $u$ are taken due to the absence of pressure. 
\end{remark}

We also introduce our main assumptions for the quantified hydrodynamic limit from the VPNS system \eqref{A-2} to the EPNS system \eqref{A-3} below.
\begin{itemize}
\item[{\bf (H1)}] The initial total energies of the systems \eqref{A-2} and \eqref{A-3} satisfy
\[
\begin{split}
\iint_{\T^d \times \R^d}& \left(\frac{|\xi|^2}{2} + \sigma \log f_0^\e\right)f_0^\e \,dxd\xi + \frac{1}{2}\int_{\T^d} |v_0^\e|^2\,dx \\
&- \int_{\T^d} \left(\frac{|u_0|^2}{2} + \sigma \log\rho_0\right)\rho_0\,dx - \frac{1}{2}\int_{\T^d} |v_0|^2\,dx = \mathcal{O}(\sqrt\e).
\end{split}
\]
\item[{\bf(H2)}] The initial modulated energies of the systems \eqref{A-2} and \eqref{A-3} satisfy
\[
\begin{split}
\int_{\T^d} &\rho_0^\e |u_0^\e - u_0|^2\,dx + \int_{\T^d} |v_0^\e - v_0|^2\,dx +\sigma \int_{\T^d} \int_{\rho_0}^{\rho_0^\e} \frac{\rho_0^\e-z}{z}\,dzdx + \int_{\T^d} |\nabla_x (U_0^\e-U_0)|^2\,dx = \mathcal{O}(\sqrt\e).
\end{split}
\]
\end{itemize}

\begin{theorem}\label{T2.2}
For $T>0$ and $d\ge 2$,  let $(f^\e, v^\e)$ be weak entropy solutions to the system \eqref{A-2} in the sense of Definition \ref{D2.1} on the time interval $[0,T]$ corresponding to initial data $(f_0^\e, v_0^\e)$. Let $(\rho, u, v)$ be the unique strong solution to the system \eqref{A-3} in the sense of Definition \ref{def_strong1} or \ref{def_strong2} on the time interval $[0,T]$ corresponding to initial data $(\rho_0, u_0, v_0)$. Assume that the initial data $(f^\e, v^\e)$ are well-prepared in the sense that {\bf (H1)}-{\bf (H2)} hold.
\begin{itemize}
\item[(i)] (Isothermal pressure case) If $\sigma>0$, we have the following convergences:
\begin{align*}
\begin{aligned}
(\rho^\e, \rho^\e u^\e) &\to (\rho,\rho u), \quad \rho^\e u^\e \otimes u^\e \to \rho u\otimes u \quad \mbox{a.e.} \quad \mbox{and} \quad L^\infty(0,T;L^1(\T^d)),\cr
\int_{\R^d} f^\e \xi\otimes \xi\,d\xi &\to \rho u\otimes u + \rho\, \mathbb{I}_d \quad \mbox{a.e.} \quad \mbox{and} \quad L^p(0,T;L^1(\T^d)) \quad \mbox{for} \quad 1 \leq p \leq 2, \quad \mbox{and}\cr
v^\e &\to v \quad \mbox{a.e.} \quad \mbox{and} \quad L^\infty(0,T;L^2(\T^d))
\end{aligned}
\end{align*}
as $\e \to 0$, where $\mathbb{I}_d$ is the $d\times d$ identity matrix. Furthermore if the relative entropy between $f_0^\e$ and $M_{\rho_0, u_0}$ is well-prepared in the following sense:
\[
\iint_{\T^d \times \R^d} \int_{M_{\rho_0,u_0}}^{f_0^\e} \frac{f_0^\e-z}{z}\,dzdxd\xi \to 0
\]
as $\e \to 0$, we have
\[
f^\e \to M_{\rho,u} := \frac{\rho}{(2\pi\sigma)^{d/2}}e^{-\frac{|u-\xi|^2}{2\sigma}} \quad \mbox{a.e.} \quad \mbox{and} \quad L^\infty(0,T;L^1(\T^d\times\R^d))
\]
as $\e \to 0$.
\item[(ii)] (Pressureless case) If $\sigma=0$, we have
\begin{align*}
(\rho^\e, \rho^\e u^\e) &\rightharpoonup  (\rho,\rho u),   \quad \rho^\e u^\e \otimes u^\e \rightharpoonup \rho u \otimes u, \quad \mbox{weakly in } L^\infty(0,T;\mathcal{M}(\T^d)), \cr
\int_{\R^d} f^\e \xi \otimes \xi\,d\xi &\rightharpoonup \rho u \otimes u \quad \mbox{weakly in } L^1(0,T;\mathcal{M}(\T^d)),\cr
f^\e &\rightharpoonup \rho \otimes \delta_{u}(\xi)  \quad \mbox{weakly in } L^p(0,T;\mathcal{M}(\T^d\times\R^d)) \quad\mbox{for} \quad 1\le p \le 2,\quad\mbox{and}\cr
v^\e &\to v \quad \mbox{a.e.} \quad \mbox{and} \quad L^\infty(0,T;L^2(\T^d))
\end{align*}
as $\e \to 0$, where $\mathcal{M}(\om)$ denotes the space of (signed) Radon measures on $\om$. 
\end{itemize}
\end{theorem}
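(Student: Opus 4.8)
The plan is to carry out a quantitative modulated energy / relative entropy estimate, combining (i) the modulated kinetic energy of the particles against the limiting macroscopic velocity $u$, (ii) the modulated interaction energy $\tfrac12\int|\nabla_x(U^\e-U)|^2$ coming from the Coulomb coupling, (iii) the relative (Boltzmann) entropy of $f^\e$ against the local Maxwellian $M_{\rho^\e,u}$ (when $\sigma>0$) or the modulated internal energy $\sigma\int\int_\rho^{\rho^\e}\frac{\rho^\e-z}{z}$, and (iv) the modulated fluid energy $\tfrac12\int|v^\e-v|^2$. The object to control is
\[
\mathcal{E}^\e(t) := \iint_{\T^d\times\R^d}\tfrac12|\xi-u|^2 f^\e\,dxd\xi + \mathcal{H}(f^\e|M_{\rho^\e,u}) + \tfrac12\int_{\T^d}|\nabla_x(U^\e-U)|^2\,dx + \tfrac12\int_{\T^d}|v^\e-v|^2\,dx,
\]
with the internal-energy version replacing $\mathcal{H}$ when $\sigma=0$. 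First I would write down the entropy inequality \eqref{entropy} for $(f^\e,v^\e)$ and the energy identity for the strong solution $(\rho,u,v)$ of \eqref{A-3}, then subtract, using $u$, $v$, $U$ as test functions (via the weak formulations (iii), (v) of Definition \ref{D2.1} and the Poisson equation (iv)). The well-preparedness hypotheses {\bf (H1)}--{\bf (H2)} guarantee $\mathcal{E}^\e(0)=\mathcal{O}(\sqrt\e)$.

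The core computation is a Gr\"onwall inequality of the form $\mathcal{E}^\e(t) \le \mathcal{E}^\e(0) + C\sqrt\e + C\int_0^t \mathcal{E}^\e(s)\,ds + (\text{good dissipation terms with favorable sign})$. The singular relaxation term $\tfrac1\e\nabla_\xi\cdot(\sigma\nabla_\xi f^\e-(u^\e-\xi)f^\e)$ is the source of the $\mathcal{O}(\sqrt\e)$ gain: by the entropy dissipation one controls $\tfrac1\e\iint\frac{1}{f^\e}|\sigma\nabla_\xi f^\e-(u^\e-\xi)f^\e|^2$, and Cauchy--Schwarz converts the error between $u^\e$ and $u$ (and between the kinetic stress tensor $\int f^\e\xi\otimes\xi\,d\xi$ and $\rho^\e u\otimes u+\sigma\rho^\e\mathbb{I}_d$) into $\sqrt\e$ times a bounded quantity, as in \cite{MV08,CCK16}. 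The drag-coupling terms $-\int(v^\e-\xi)f^\e\,d\xi$ in the fluid equation and $(v^\e-\xi)f^\e$ in the Vlasov equation combine, after modulation, into $-\int\rho^\e|u-v|^2$-type dissipation plus remainders absorbable into $\mathcal{E}^\e$; the Coulomb terms $\rho^\e\nabla_x U^\e$ versus $\rho\nabla_x U$ modulate into $\tfrac{d}{dt}\tfrac12\int|\nabla_x(U^\e-U)|^2$ up to terms estimated by $\|\nabla_x(U^\e-U)\|_{L^2}^2$ together with $\|\nabla_x u\|_{L^\infty}$, using the continuity equations $\partial_t(\rho^\e-\rho)+\nabla_x\cdot(\text{fluxes})=0$ and the embedding $H^s\hookrightarrow W^{1,\infty}$ from $s>d/2+1$. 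Throughout, all the $H^s$-norms of $(\rho,u,v)$ that appear as Gr\"onwall coefficients are finite and bounded on $[0,T]$ because $(\rho,u,v)$ is the given strong solution (Definitions \ref{def_strong1}, \ref{def_strong2}).

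The main obstacle, and the novelty emphasized in the introduction, is the \emph{strong coupling between the kinetic and fluid equations} when $\sigma=0$: without the strict convexity in $\rho$ provided by the internal energy, the usual relative-entropy closure fails, and one must instead exploit the modulated interaction energy $\tfrac12\int|\nabla_x(U^\e-U)|^2$ and the parabolic smoothing $\int_0^t\|\nabla_x(v^\e-v)\|_{L^2}^2$ to absorb the problematic cross terms (cf.\ Remark \ref{rmk_comm}). Concretely, the term coupling $\rho^\e(u^\e-v^\e)$ to the modulated fluid momentum must be split so that the part not controlled by $\int\rho^\e|\xi-u|^2f^\e$ is paid for by $\|\nabla_x(v^\e-v)\|_{L^2}^2$ after an integration by parts and a Poisson/Helmholtz estimate; this is where the argument genuinely departs from \cite{CCK16}. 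Once $\mathcal{E}^\e(t)=\mathcal{O}(\sqrt\e)$ on $[0,T]$ is established, the stated convergences follow in a routine way: $\rho^\e u^\e\otimes u^\e\to\rho u\otimes u$ in $L^\infty(0,T;L^1)$ and $\int f^\e\xi\otimes\xi\,d\xi\to\rho u\otimes u+\sigma\rho\,\mathbb{I}_d$ follow from $\int\rho^\e|u^\e-u|^2\le\mathcal{E}^\e$, uniform energy bounds, and Cauchy--Schwarz; $v^\e\to v$ in $L^\infty(0,T;L^2)$ is immediate from $\tfrac12\int|v^\e-v|^2\le\mathcal{E}^\e$; the Csisz\'ar--Kullback--Pinsker inequality upgrades the relative entropy bound to $f^\e\to M_{\rho,u}$ in $L^\infty(0,T;L^1(\T^d\times\R^d))$ in the case $\sigma>0$ under the extra well-preparedness of $\mathcal{H}(f_0^\e|M_{\rho_0,u_0})$; and in the pressureless case the weak-$*$ convergences in $\mathcal{M}$ follow from the same modulated bounds together with tightness from the uniform second-moment control $|\xi|^2 f^\e\in L^\infty(0,T;L^1)$.
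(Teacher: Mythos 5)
Your proposal follows essentially the same route as the paper: a Gr\"onwall estimate for the modulated kinetic/fluid/internal energy augmented by the modulated Coulomb energy $\tfrac12\int|\nabla_x(U^\e-U)|^2$, with the singular relaxation term converted to $\mathcal{O}(\sqrt\e)$ via Cauchy--Schwarz against the entropy dissipation, the pressureless drag cross term $\int(\rho^\e-\rho)(v^\e-v)(u-v)$ absorbed by the Coulomb energy and the viscous dissipation $\int_0^t\|\nabla_x(v^\e-v)\|_{L^2}^2$ exactly as in the paper's estimate of $\mathcal{K}_5$, and the kinetic relative entropy plus Csisz\'ar--Kullback--Pinsker giving $f^\e\to M_{\rho,u}$. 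The only step you gloss over is the weak convergence $\rho^\e\rightharpoonup\rho$ in $\mathcal{M}(\T^d)$ when $\sigma=0$, which the paper obtains from the bounded Lipschitz distance estimate (Lemma \ref{lem_dbl}), or equivalently on $\T^d$ from the $H^{-1}$ bound supplied by the modulated Coulomb energy; this is recoverable from the bounds you establish.
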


\begin{remark} The assumptions {\bf (H1)}-{\bf (H2)} for the well-prepared initial data can be replaced by
\begin{itemize}
\item[{\bf (H1)$'$}]
\[
\iint_{\T^d \times \R^d} \left(\frac{|\xi|^2}{2} + \sigma \log f_0^\e\right)f_0^\e \,dxd\xi - \int_{\T^d} \left(\frac{|u_0|^2}{2} + \sigma \log\rho_0\right)\rho_0\,dx = \mathcal{O}(\sqrt\e).
\]
\item[{\bf(H2)$'$}]
\[
\|\rho^\e_0 - \rho_0\|_{L^2(\T^d)} = \mathcal{O}(\sqrt\e), \quad \|u_0^\e - u_0\|_{L^\infty(\T^d)} = \mathcal{O}(\sqrt\e), \quad \mbox{and} \quad \|v_0^\e - v_0\|_{L^\infty(\T^d)} = \mathcal{O}(\sqrt\e).
\]
\end{itemize}
Indeed, we notice that
\[
\|\nabla_x (U^\e - U)\|_{L^2(\T^d)} \leq C \|\rho^\e - \rho\|_{H^{-1}(\T^d)}  \le C \|\rho^\e - \rho\|_{L^2(\T^d)},
\]
where $C>0$ is independent of $\e > 0$. For the other replacements, we can use a similar argument as in \cite[Remark 1.5]{CJpre}.
\end{remark}

\begin{remark} In order to make all the computations rigorous, we may need slightly less regularity than that in Definitions \ref{def_strong1} and \ref{def_strong2}, see \cite{Cpre,LT17} for instance. More precisely, we only need to have the solutions $(\rho, u,v)$ for the system \eqref{A-3} satisfying 
\begin{enumerate}
\item[(i)] $(\rho, u,v) \in \mc([0,T];L^1(\T^d)) \times \mathcal{C}([0,T];W^{1,\infty}(\T^d)) \times \mathcal{C}([0,T];W^{1,\infty}(\T^d))$,
\item[(ii)] $(\rho, u,v)$ satisfies the following free energy estimate in the sense of distributions:
\begin{align*}
&\frac{d}{dt}\lt(\frac12\int_{\T^d} \rho|u|^2\,dx  + \sigma \int_{\T^d} \rho \log \rho\,dx + \frac12\int_{\T^d} U\rho\,dx +\frac{1}{2} \int_{\T^d} |v|^2\,dx\rt) \cr
&\quad + \int_{\T^d} |\nabla_x v|^2\,dx +\int_{\T^d} \rho|u-v|^2\,dx=0,
\end{align*}
\item[(iii)] $(\rho, u,v)$ satisfies the system \eqref{A-3} in the sense of distributions.
\end{enumerate}
\end{remark}

\begin{remark}\label{rmk_conv}It follows from the convergence $\|\nabla_x (U^\e - U)\|_{L^\infty(0,T;L^2(\T^d))}$ in Theorem \ref{T2.2} that
\[
\rho^\e \to \rho \quad \mbox{in}\quad  L^\infty(0,T;H^{-1}(\T^d))
\]
as $\e \to 0$.  Indeed, it can be easily checked that
\[
\|\rho^\e - \rho\|_{H^{-1}(\T^d)} \leq \|\nabla_x (U^\e - U)\|_{L^2(\T^d)}.
\]
Notice that $\rho^\e$ and $\rho$ do not belong to $H^{-1}(\T^d)$ but $\rho^\e-1$ and $\rho-1$ do.
\end{remark}

\begin{remark} Theorem \ref{T2.2} can be extended to the whole space case if we add additional assumption on the bounded Lipschitz distance between the initial densities $\rho_0^\e$ and $\rho_0$. See Remarks \ref{rmk_rel} and \ref{rmk_whole} for detailed discussions on that.
\end{remark}

As stated in Theorem \ref{T2.2}, in order to make the hydrodynamic limit from \eqref{A-2} to \eqref{A-3} fully rigorous, it requires to obtain the existence of weak solutions to the system \eqref{A-2} and the existence of a unique strong solution to the limiting system \eqref{A-3}. The existence theories for weak solutions of Vlasov--Navier--Stokes system and Vlasov--Fokker--Planck--Navier--Stokes system are well developed in \cite{BDGM09,Yu13} and  \cite{CCK16,CKL11,CJpre2,MV07}, respectively. More recently, in \cite{CY20, YY18}, the global-in-time existence of weak solutions for Vlasov--Navier--Stokes-type system is also established when the collisional interactions for the particles are also taken into account. The Coulomb interactions between particles are dealt with in \cite{AIK14,AKS10}, and the global-in-time weak solutions for Vlasov--Poisson(--Fokker--Planck)--Navier--Stokes system are constructed. Compared to those works, we include the local alignment force $u - \xi$ in the kinetic equation, and this introduces new difficulties. In particular, due to the lack of regularity of the local particle velocity $u = \int_{\R^d} \xi f\,d\xi / \int_{\R^d} f\,d\xi$, the method of characteristics can not be applied directly, nor is the weak compactness of the product $u f$ obvious. This requires additional technical arguments, for instance, regularizations, weak/strong compactness arguments, velocity averaging lemma, and some uniform entropy inequalities. We  would like to emphasize that we also show that our weak solutions satisfy the entropy inequality \eqref{entropy} in Definition \ref{D2.1} (vi), which is crucially used in the proof of Theorem \ref{T2.2}. In fact, showing the entropy inequality demands a proper way of regularizing the system \eqref{main_eq}. One can simply mollify the local particle density $u$ and the Coulomb interactions by using the standard mollifier, then this would give the existence of weak solutions. However, this strategy grinds out some technical difficulties in establishing the entropy inequality \eqref{entropy} in Definition \ref{D2.1} (vi). Since we cannot specify appropriate references on it, we provide details on the global-in-time existence theory for the system \eqref{main_eq} in Appendix \ref{sec:weak}. 

\vspace{0.4cm}

The global-in-time existence of weak solutions to the system \eqref{main_eq} implies that the quantified hydrodynamic limit estimates in Theorem \ref{T2.2} hold as long as there exists a strong solution to the system \eqref{A-3}. Thus, our second main result is concerned with the global-in-time existence and uniqueness of strong solutions to the EPNS system \eqref{A-3},  investigated in Section \ref{sec:strong_ext}. As mentioned before, the asymptotic analysis stated in the previous section is based on the weak-strong uniqueness principle, thus we need a strong regularity of solutions to the limiting system \eqref{A-3}. 

The theorems below provide the global-in-time solvability of strong solutions to the system \eqref{A-3} under suitable smoothness and smallness assumptions on the initial data. These together with Theorem \ref{T2.2} assert that the hydrodynamic limit from \eqref{A-2} to \eqref{A-3} holds for all time $t \geq 0$.  

\begin{theorem}\label{T2.3}
Let $d\geq2$ and $s>d/2+1$. Suppose that the initial data $(\rho_0, u_0, v_0)$ satisfy
\begin{itemize}
\item[(i)] $\inf_{x\in\T^d}\rho_0(x)>0$ and 
\item[(ii)] $(\rho_0, u_0, v_0) \in H^s(\T^d) \times H^s(\T^d) \times H^s(\T^d)$.
\end{itemize}
If 
\[
\|\log \rho_0\|_{H^{s}(\T^d)}^2 + \|u_0\|_{H^{s}(\T^d)}^2 + \|v_0\|_{H^{s}(\T^d)}^2 <\tilde\e_0^2,
\]
for some $\tilde\e_0>0$ sufficiently small, then the system \eqref{A-3} with $\sigma > 0$ admits a global-in-time unique strong solution in the sense of Definition \ref{def_strong1} with $T = +\infty$ satisfying 
\[
\sup_{t \ge 0}\lt(\|\log \rho(\cdot, t)\|_{H^{s}(\T^d)}^2 + \|u(\cdot, t)\|_{H^{s}(\T^d)}^2 + \|v(\cdot, t)\|_{H^{s}(\T^d)}^2 \rt)<\infty. 
\]
\end{theorem}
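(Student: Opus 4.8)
The plan is to prove global existence for the isothermal EPNS system by a standard continuation argument: establish local well-posedness in $H^s$, then derive an a priori differential inequality for a suitably chosen energy functional that, under the smallness hypothesis on the initial data, prevents blow-up and in fact yields a uniform-in-time bound. Because $\rho$ must stay positive and bounded away from zero, the natural variable is $q := \log\rho$ rather than $\rho$ itself; rewriting the continuity and momentum equations in terms of $(q,u,v)$ turns the pressure term $\sigma\nabla_x\rho/\rho$ into $\sigma\nabla_x q$, which is symmetric and well-behaved for energy estimates. The local existence in $\mc([0,T_*);H^s)$ for $q$ and $u$, and $\mc([0,T_*);H^s)\cap L^2(0,T_*;H^{s+1})$ for $v$, follows from the standard theory for symmetric hyperbolic systems coupled to a parabolic equation (the Navier--Stokes part), treating the drag and Coulomb terms as lower-order perturbations; the initial positivity $\inf\rho_0>0$ guarantees $q_0\in H^s$.

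The heart of the argument is the a priori estimate. First I would run the $L^2$-level free energy identity: multiplying the momentum equation by $u$, the fluid equation by $v$, using $-\Delta_x U = \rho-1$ and the continuity equation, one gets a closed dissipative balance in which $\int|\nabla_x v|^2$ and $\int\rho|u-v|^2$ appear with favorable signs (this is precisely the identity displayed in the third remark after Theorem~\ref{T2.2}). Next, for $1\le|\alpha|\le s$, apply $\partial_x^\alpha$ to the $(q,u,v)$-system and test against $(\sigma\partial_x^\alpha q,\partial_x^\alpha u,\partial_x^\alpha v)$ with weights making the symmetric structure manifest. The transport terms $u\cdot\nabla_x$ generate commutators controlled by the Moser/Kato--Ponce estimates $\|[\partial_x^\alpha,u\cdot\nabla_x]w\|_{L^2}\lesssim \|\nabla_x u\|_{L^\infty}\|w\|_{H^s}+\|u\|_{H^s}\|\nabla_x w\|_{L^\infty}$; the viscous term on $v$ yields the coercive $\mu\|\nabla_x\partial_x^\alpha v\|_{L^2}^2$; the Coulomb term $\rho\nabla_x U$ contributes, after $\partial_x^\alpha$, terms bounded using $\|\nabla_x U\|_{H^{s}}\lesssim \|\rho-1\|_{H^{s-1}}\lesssim\|q\|_{H^s}$ (elliptic regularity, valid since $\rho-1$ has zero mean) together with the algebra property of $H^s$ for $s>d/2+1$; and the drag term $-\rho(u-v)$ splits into a damping part $-\|\sqrt\rho\,\partial_x^\alpha(u-v)\|_{L^2}^2$ plus commutators with $\rho=e^q$, handled by composition estimates $\|e^q-1\|_{H^s}\le \Phi(\|q\|_{H^s})$. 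Summing over $\alpha$ and defining
\[
\mathcal{E}(t):=\|q(t)\|_{H^s}^2+\|u(t)\|_{H^s}^2+\|v(t)\|_{H^s}^2,\qquad
\mathcal{D}(t):=\|\nabla_x v(t)\|_{H^s}^2+\|(u-v)(t)\|_{H^s}^2,
\]
one arrives at a differential inequality of the form $\frac{d}{dt}\mathcal{E}(t)+c_0\,\mathcal{D}(t)\le C\,\mathcal{E}(t)^{1/2}\mathcal{D}(t)+C\,\mathcal{E}(t)^{1/2}\big(\|\nabla_x u\|_{L^\infty}+\|\nabla_x q\|_{L^\infty}\big)\,\mathcal{E}(t)$. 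The missing dissipation for $q$ and $u$ separately is recovered by a standard trick: since $u=v+(u-v)$, control of $\nabla_x v$ and of $u-v$ in $H^s$ gives control of $\nabla_x u$; and for $q$, one adds a small multiple of the cross term $\delta\int \nabla_x q\cdot u\,dx$ (a Bresch--Desjardins-type or hypocoercivity correction), whose time derivative produces a coercive $-c_1\sigma\|\nabla_x q\|_{H^{s-1}}^2$ at the cost of terms absorbable into $\mathcal{D}$ and into the viscous dissipation of $v$. This upgrades the inequality to $\frac{d}{dt}\tilde{\mathcal{E}}(t)+c\,\tilde{\mathcal{D}}(t)\le C\,\tilde{\mathcal{E}}(t)^{1/2}\,\tilde{\mathcal{D}}(t)$, where $\tilde{\mathcal{E}}\sim\mathcal{E}$ and $\tilde{\mathcal{D}}\gtrsim \mathcal{E}+\mathcal{D}$.

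The conclusion is then routine: if $\tilde{\mathcal{E}}(0)<\tilde\e_0^2$ with $\tilde\e_0$ small enough that $C\tilde\e_0<c/2$, a continuity/bootstrap argument shows $\tilde{\mathcal{E}}(t)$ stays below $\tilde\e_0^2$ for all $t$ in the existence interval, whence $\frac{d}{dt}\tilde{\mathcal{E}}+\frac{c}{2}\tilde{\mathcal{D}}\le 0$, giving $\sup_{t\ge0}\tilde{\mathcal{E}}(t)\le\tilde{\mathcal{E}}(0)<\infty$ and $\int_0^\infty\tilde{\mathcal{D}}\,dt<\infty$; in particular $\|v\|_{L^2(0,\infty;H^{s+1})}<\infty$, matching Definition~\ref{def_strong1}(ii). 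Uniform boundedness of $\|q\|_{H^s}$ keeps $\rho=e^q$ bounded above and below, so the solution never degenerates, and the local existence time can be reiterated indefinitely, yielding $T=+\infty$. Uniqueness follows from a Gronwall estimate on the difference of two solutions at the $L^2$ (or $H^1$) level, using the established $H^s$ bounds as Lipschitz constants. I expect the main obstacle to be the closure of the dissipative estimate for the density: the raw energy method gives no decay of $\|q\|_{H^s}$, and without the pressure ($\sigma>0$) there would be none at all, so correctly designing the cross-term correction and verifying that the resulting cross-contributions are genuinely controlled by the available dissipation (viscous in $v$, drag in $u-v$, pressure in $\nabla_x q$)—with all constants independent of time—is the delicate point; the Coulomb term is comparatively benign because elliptic regularity trades one derivative for free and $\int_{\T^d}(\rho-1)\,dx=0$.
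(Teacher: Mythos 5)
Your proposal follows essentially the same route as the paper: reformulate with $g=\log\rho$ so the pressure becomes $\sigma\nabla_x g$, prove local existence for the resulting symmetrizable hyperbolic--parabolic system, run $H^s$ energy estimates with Moser-type commutator bounds, use the drag term to transfer the viscous dissipation of $v$ into damping for $u$, recover dissipation for the density through the cross term $\int \nabla^k(\nabla g)\cdot\nabla^k u\,dx$ (the paper's Lemma \ref{LB.2}, exactly your hypocoercivity correction), and close with a smallness bootstrap and continuation argument.

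One caveat: your final inequality asserts $\tilde{\mathcal{D}}\gtrsim\mathcal{E}+\mathcal{D}$, i.e.\ full coercivity including the undifferentiated $\|g\|_{L^2}^2$. The estimates actually available do not produce dissipation of $\|g\|_{L^2}$ (the cross term only yields $\|\nabla^k(\nabla g)\|_{L^2}^2$, and the Poisson term enters as a source of size $C\|g\|_{H^k}^2$ rather than as a restoring force), so the single closed inequality $\frac{d}{dt}\tilde{\mathcal{E}}+c\tilde{\mathcal{D}}\le C\tilde{\mathcal{E}}^{1/2}\tilde{\mathcal{D}}$ is not obtainable as stated. The paper circumvents this by inducting on the derivative order $k$ (Corollary \ref{CB.1}): the lower-order terms $C\|g\|_{H^k}^2$, $C\|\nabla^k u\|_{L^2}^2$, etc.\ appearing at level $k+1$ are bounded by the already-established level-$k$ estimate $\mathfrak{X}(T;k)\le C\mathfrak{X}_0(k)$, anchored at $k=0$ by the conserved free energy of Proposition \ref{P5.1}, and Gr\"onwall then yields a uniform-in-time bound (not decay) at each order. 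With that adjustment your argument matches the paper's.
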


\begin{theorem}\label{T2.3_2}
Let $d\geq2$ and $s>d/2+1$. Suppose that the initial data $(\rho_0, u_0, v_0)$ satisfy
\begin{itemize}
\item[(i)] $\inf_{x\in\T^d}\rho_0>0$ and 
\item[(ii)] $(\rho_0, u_0, v_0) \in H^s(\T^d) \times H^{s+1}(\T^d) \times H^{s+1}(\T^d)$.
\end{itemize}
If 
\[
 \|\rho_0 - 1\|_{H^s(\T^d)}^2 + \|u_0\|_{H^{s+1}(\T^d)}^2 + \|v_0\|_{H^{s+1}(\T^d)}^2 <\tilde\e_0^2,
\]
for some $\tilde\e_0>0$ sufficiently small, then the system \eqref{A-3} with $\sigma = 0$ admits a global-in-time unique strong solution in the sense of Definition \ref{def_strong2} with $T = +\infty$ satisfying 
\[
\sup_{t \ge 0}\lt(\|(\rho-1)(\cdot, t)\|_{H^s(\T^d)}^2 + \|u(\cdot, t)\|_{H^{s+1}(\T^d)}^2 + \|v(\cdot, t)\|_{H^{s+1}(\T^d)}^2\rt)<\infty. 
\]
\end{theorem}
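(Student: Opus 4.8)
The plan is to prove global existence and uniqueness via a standard \emph{a priori estimate plus continuation} argument at the level of the perturbation variables. Set $\varrho := \rho - 1$, which satisfies $\int_{\T^d}\varrho\,dx = 0$ (mass conservation), so that $\varrho$, $u$, $v$ are all mean-zero after projecting; note $-\Delta_x U = \varrho$ gives $\|\nabla_x U\|_{H^{s+1}} \lesssim \|\varrho\|_{H^s}$ by elliptic regularity on $\T^d$. Define the energy functional
\[
E(t) := \|\varrho(t)\|_{H^s}^2 + \|u(t)\|_{H^{s+1}}^2 + \|v(t)\|_{H^{s+1}}^2,
\qquad
D(t) := \|\nabla_x v(t)\|_{H^{s+1}}^2 + \|\sqrt{\rho}\,(u-v)(t)\|_{H^{s+1}}^2 .
\]
Local well-posedness in these spaces follows from a routine iteration/fixed-point scheme (hyperbolic transport for $\varrho$ and $u$, parabolic smoothing for $v$, with the drag coupling treated perturbatively), using that $s + 1 > d/2 + 2$ makes $H^{s+1}$ a Banach algebra and $H^s$ a module over it; the condition $\inf_x \rho_0 > 0$ is propagated on a short time interval so $\rho$ stays bounded below, keeping $u$ at one derivative higher than $\varrho$ consistent. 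The crux is then the global \emph{a priori} bound.

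The key step is a differential inequality of the form $\frac{d}{dt} E(t) + c\, D(t) \le C\, \sqrt{E(t)}\, D(t) + C\, E(t)^{3/2}$ for constants $c, C > 0$ depending only on $d, s$. I would obtain it by applying $\partial_x^\alpha$ for $|\alpha| \le s+1$ to the $u$- and $v$-equations and $|\alpha|\le s$ to the $\varrho$-equation, pairing in $L^2$, and summing. The transport terms $(u\cdot\nabla_x)\varrho$, $(u\cdot\nabla_x)u$, $(v\cdot\nabla_x)v$ are handled by the standard commutator (Kato--Ponce) estimates, which produce $\lesssim \|u\|_{H^{s+1}}\,\|\varrho\|_{H^s}^2$, $\lesssim \|u\|_{H^{s+1}}^3$, $\lesssim \|v\|_{H^{s+1}}^3$ — all cubic, i.e. $\lesssim E^{3/2}$. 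The pressure-free structure is what lets the density sit at order $H^s$: there is no $\nabla_x\rho$ forcing the velocity equation, so the continuity equation $\partial_t\varrho + u\cdot\nabla_x\varrho + (1+\varrho)\nabla_x\cdot u = 0$ closes in $H^s$ using only that $\nabla_x\cdot u \in H^s$. The Poisson term $-\rho\nabla_x U = -(1+\varrho)\nabla_x U$ contributes $-\int \partial^\alpha u\cdot\partial^\alpha\nabla_x U$; here one integrates by parts and uses $\partial_t\varrho = -\nabla_x\cdot((1+\varrho)u)$ together with $-\Delta U = \varrho$ to convert $\int u\cdot\nabla_x U$ into $\frac12\frac{d}{dt}\int|\nabla_x U|^2$ plus lower-order cubic terms, so that $\|\nabla_x U\|_{H^s}^2$ (equivalently $\lesssim \|\varrho\|_{H^{s-1}}^2 \le E$) is absorbed into a modified energy. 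Finally, the drag coupling $\pm\rho(u-v)$, paired against $u$ and $v$ respectively, yields exactly the damping term $-\|\sqrt{\rho}(u-v)\|_{H^{s+1}}^2 \le 0$ modulo commutators (from moving $\partial^\alpha$ past the coefficient $\rho = 1+\varrho$) that are again cubic; and the viscous term $-\Delta_x v$ gives $-\|\nabla_x v\|_{H^{s+1}}^2$.

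Once the inequality $\frac{d}{dt}E + cD \le C\sqrt{E}\,(D + E)$ is in hand, a continuity argument closes everything: choose $\tilde\e_0$ so small that $C\sqrt{E(0)} < c$ and $E(0) < \tilde\e_0^2$; then as long as $E(t) < (c/C)^2$ one has $\frac{d}{dt}E \le 0$ after absorbing $CD\sqrt E$ into $cD$ and bounding $C E^{3/2} \le C\tilde\e_0\, E$ (redefining $\tilde\e_0$ smaller if needed, or more carefully using Poincaré/elliptic gain to dominate $E$ by $D$ plus the already-controlled low modes), so $E(t) \le E(0) < \tilde\e_0^2$ for all $t$, and the local solution extends globally with the asserted uniform bound. \textbf{The main obstacle} I anticipate is the bookkeeping of the Poisson coupling: unlike a pure damping term, $-\rho\nabla_x U$ has no sign, so one must build the correct modified energy $E + \frac12\|\nabla_x U\|_{H^{s-1}}^2$ (or use the antisymmetry between the $\rho u$-momentum equation and the $U$-equation) to see the cancellation, while simultaneously checking that \emph{all} remainder terms are genuinely cubic — in particular that no term of the form ``$E$ times a constant'' survives without a $D$ or an extra $\sqrt E$ to absorb it, since otherwise only exponential-in-time (not uniform) bounds follow. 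The uniqueness statement follows from a parallel energy estimate on the difference of two solutions in the lower norm $L^2\times H^1 \times H^1$, which is routine.
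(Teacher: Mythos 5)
Your overall architecture (local existence by iteration, then a uniform \emph{a priori} bound plus a continuation argument) matches the paper's, and your accounting of the transport/commutator terms as cubic is fine. But there is a genuine gap at the heart of the global estimate: your dissipation functional $D(t) = \|\nabla_x v\|_{H^{s+1}}^2 + \|\sqrt{\rho}(u-v)\|_{H^{s+1}}^2$ contains \emph{no control of the density whatsoever}, and in the pressureless case there is no $\sigma\nabla_x\rho$ term to generate one. Consequently the inequality $\frac{d}{dt}E + cD \le C\sqrt{E}\,D + CE^{3/2}$ cannot close: after absorbing $C\sqrt{E}\,D$ into $cD$ you are left with $\frac{d}{dt}E \le CE^{3/2}$, which gives at best a bound on a finite time interval of length $\sim E(0)^{-1/2}$, not a uniform-in-time bound. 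Your parenthetical escape hatch --- ``using Poincar\'e/elliptic gain to dominate $E$ by $D$'' --- is exactly the step that fails, because $\|\varrho\|_{H^s}^2 \not\lesssim D$. Moreover, by folding the Poisson coupling into a conserved modified energy $E + \tfrac12\|\nabla_x U\|_{H^{s-1}}^2$ you neutralize the one term that could have helped you.

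The paper's resolution is to \emph{extract} dissipation for the density from the Poisson term rather than cancel it. One augments the $k$-th order energy by the cross term $-\int_{\T^d}\nabla^k h\,\nabla^k(\nabla_x\cdot u)\,dx$ with $h=\rho-1$ (Lemma \ref{L5.4}); differentiating it in time and substituting the $u$-equation, the identity $\nabla_x\cdot(\nabla_x K\star h) = \Delta_x K\star h = -h$ produces the negative term $-\|\nabla^k h\|_{L^2}^2$, i.e.\ genuine density dissipation, at the cost of a harmless multiple of $\|\nabla^k(\nabla_x u)\|_{L^2}^2$, which is in turn dominated by the damping that the drag force $-(u-v)$ supplies in Lemma \ref{L5.3}. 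The full dissipation $\|\nabla^k h\|_{L^2}^2 + \|\nabla^k(\nabla_x u)\|_{L^2}^2 + \|\nabla^k v\|_{L^2}^2 + \|\nabla^k(\nabla_x v)\|_{L^2}^2$ then controls all top-order quantities, and the remaining lower-order terms (e.g.\ $\|\nabla^{k-1}h\|_{L^2}^2$, $\|\nabla^k u\|_{L^2}^2$ appearing on the right of Lemmas \ref{L5.4} and \ref{L5.5}) are handled by an induction on $k$ using the already-established bounds $\mathfrak{Y}(T;m)\le C\mathfrak{Y}_0(m)$ for $m\le k$, rather than by a single closed inequality for the total energy. Without this hypocoercivity-type cross term (or an equivalent mechanism producing $-\|h\|_{H^s}^2$ on the left), your argument does not yield the claimed uniform-in-time bound, and hence does not give global existence.
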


Note that the limiting system \eqref{A-3} consists of the isothermal/pressureless Euler--Poisson system and the incompressible Navier--Stokes system. It is well-known that the Euler--Poisson equations develop a formation of singularities in finite time no matter how smooth the initial data are \cite{CCZ16, CW96, En98, ELT01, LL13, LT02, LT03, Pe90}. For that reason, it is not obvious to expect the global-in-time smooth regularity of solutions to the isothermal/pressureless EPNS system \eqref{A-3}. On the other hand, it is worth noticing that the incompressible Navier--Stokes system has a dissipative structure; it includes a smoothing effect of the viscous term. Thus it is required to use that to prevent the formation of finite-time singularities. 

Without the Poisson interaction, i.e. the isothermal/pressureless Euler system coupled with Navier--Stokes system, the global-in-time well-posedness theory is now well-established. In the presence of pressure, the drag force is properly used to transfer the smoothing effect in the viscous fluid to the isothermal Euler part and we can have a dissipative term for $\rho$ from the pressure. Thus the global-in-time unique strong solution can be constructed (see \cite{Choi15,Choi16}). In the pressureless case, it is easily noticeable that there is no direct dissipative effect for the fluid density $\rho$, thus a large-time behavior estimate of solutions to pressureless Euler--Navier--Stokes-type system is used in \cite{CK16,CJpre3,HKK14} to control the fluid density of the pressureless Euler part, and the global-in-time unique strong solution is obtained. More precisely, in those works, the large-time behavior estimate gives some time-integrability of $\|u\|_{H^{s+2}}$ for some $s>0$. This subsequently yields the uniform-in-time bound on $\rho$ in $H^s$-space. Here the regularity of $u$ in $L^2$ should be higher than that of $\rho$ by two. 

For the isothermal EPNS system, i.e. \eqref{A-3} with $\sigma > 0$, we use a similar argument as in \cite{Choi15,Choi16}. We take into account the drag force in the Euler--Poisson system in \eqref{A-3} as the relative damping to have a dissipation of the fluid velocity $u$. By this approach, we can handle the higher-order estimate of $u$ in the Euler--Poisson part. We then use the viscous term in the incompressible Navier--Stokes system in \eqref{A-3} through the drag force to control the growth of the incompressible fluid velocity $v$ in the Euler--Poisson system. Finally we can get a dissipative term for $\rho$ from the pressure, and thus the solutions $(\rho,u,v)$ belong to the same Sobolev spaces stated in Definition \ref{def_strong1}. On the other hand, for the pressureless EPNS system, as mentioned above, it is not clear how to have the dissipative term for the fluid density $\rho$ to construct the global-in-time solutions. Moreover, according our estimates, $u$ can only have higher $L^2$-regularity than $\rho$ by one, not two. This prevents us from using the large-time behavior of solutions. Instead, we analyze the Poisson term appropriately to have some dissipative effect on the density. Thus as stated in Definition \ref{def_strong2}, we were able to have solutions $\rho$ and $u$ in $H^s$ and $H^{s+1}$ spaces, respectively; $u$ only needs to have one more derivative in $L^2$ than $\rho$ has. For both cases, we use some estimates of crossing terms with a different order of derivatives of solutions and the mathematical induction for the homogeneous Sobolev spaces. 

Here we introduce several notations used throughout the current work. For functions, $f(x,v)$ and $g(x)$, $\|f\|_{L^p}$ and $\|g\|_{L^p}$ represent the usual $L^p(\T^d \times \R^d)$- and $L^p(\T^d)$-norms, respectively. We denote by $C$ a generic positive constant which may differ from line to line. $C = C(\alpha, \beta, \cdots)$ stands for a positive constant depending on $\alpha, \beta, \cdots$. For simplicity, we often omit $x$-dependence of differential operators, i.e. $\nabla f:= \nabla_x f$ and $\Delta f := \Delta_x f$. $\nabla^k$ represents any partial derivative $\pa^\alpha$ with multi-index $\alpha$, $|\alpha| = k$. $C = C(\alpha, \beta, \cdots)$ stands for a positive constant depending on $\alpha, \beta, \cdots$. Finally, we write $f\lesssim g$ if there exists a constant $C>0$ satisfying $f \le Cg$. 

The rest of this paper is organized as follows. In Section \ref{sec:h_limit}, we provide the details of proof of Theorem \ref{T2.2}. We first get the uniform-in-$\e$ upper bound for weak solutions and modified entropy inequality. Then, we investigate the modulated energy functional  and obtain its dissipation estimate, which yields the desired convergences.  Section \ref{sec:strong_ext} is devoted to present the details of proofs of Theorems \ref{T2.3} and \ref{T2.3_2}. Under the smallness assumption on the initial data, we can obtain the uniform-in-time upper bounds for strong solutions, leading to the global existence results.  Appendix \ref{sec:weak} addresses the existence of weak entropy solutions to system \eqref{main_eq} in the sense of Definition \ref{D2.1}. In Appendix \ref{app_conv}, we provide the proof of Proposition \ref{prop_h1}.  Finally, in Appendix \ref{app_local}, the local well-posedness theory for the EPNS system \eqref{A-3} is established. 
%%%%%%%%%%%%%%%%%%%%%%%%%%%%%%%%%%%%%%%%%%%
%
%
%
% \section{Hydrodynamic limit: from VPNS to EPNS}\label{sec:4}
%
%
%
%%%%%%%%%%%%%%%%%%%%%%%%%%%%%%%%%%%%%%%%%%%%%%%%%%

\section{Hydrodynamic limit: from VPNS to EPNS}\label{sec:h_limit}
 In this section, we present the rigorous derivation of the EPNS system \eqref{A-3} from the VPNS \eqref{main_eq} with $\sigma \geq 0$. We first notice that the potential $U$ can be represented by using the interaction potential $K$ which satisfies the following conditions (see \cite{BR93, Tit58}):
\begin{enumerate}
\item[(i)] The potential $K$ is an even function explicitly written as
\[
K(x) = \left\{\begin{array}{lcl}\displaystyle  -c_0 \log |x| + G_0(x) & \mbox{ if } & d=2, \\
c_1|x|^{2-d} + G_1(x) & \mbox{ if } & d\ge 3,\end{array}\right.
\]
where $c_0>0$ and $c_1>0$ are normalization constants and $G_0$ and $G_1$ are smooth functions over $\T^2$ and $\T^d$ ($d\ge 3$), respectively. 
\vspace{0.2cm}

\item[(ii)] For any $h \in L^2(\T^d)$ with $\int_{\T^d} h \,dx =0$, $U := K\star h \in H^1(\T^d)$ is the unique function that satisfies the following condition:
\[
\int_{\T^d} U\,dx = 0 \quad \mbox{and} \quad \int_{\T^d} \nabla U \cdot \nabla \psi\,dx = \int_{\T^d} h\,\psi\,dx \quad \forall\, \psi \in H^1(\T^d),
\]
i.e., $U$ is the unique weak solution to $-\Delta U = h$.
\end{enumerate}
 Thus, we rewrite the system \eqref{main_eq} as
\begin{align*}%\label{C-0}
%\begin{aligned}
&\partial_t f + \xi \cdot \nabla f + \nabla_\xi \cdot ( ((v-\xi)-\nabla K\star(\rho-1)) f)  = \nabla_\xi \cdot (\sigma\nabla_\xi f - (u-\xi)f),\\
&\partial_t v + (v \cdot \nabla) v + \nabla p -\Delta v =- \int_{\R^d} (v-\xi)f\,d\xi,\\
&\nabla \cdot v = 0.
%\end{aligned}
\end{align*}

  As mentioned before, our proof is based on the modulated kinetic energy estimate and the relative entropy method. For this, we first provide the entropy inequality estimate. Set 
\[
\begin{split}
&\mathcal{F}(f,v):= \iint_{\T^d \times \R^d} \left(\frac{|\xi|^2}{2} + \sigma\log f\right) f\,dxd\xi + \frac{1}{2}\int_{\T^d} |\nabla K\star (\rho-1)|^2 \,dx + \int_{\T^d} \frac{1}{2}|v|^2\,dx,\\
&\mathcal{D}_1(f) := \iint_{\T^d \times \R^d}\frac{1}{f}|\sigma\nabla_\xi f - (u-\xi)f|^2\,dxd\xi, \quad \mbox{and}\\
&\mathcal{D}_2(f,v):= \iint_{\T^d \times \R^d} |v-\xi|^2 f\,dxd\xi + \int_{\T^d} |\nabla v|^2\,dx.
\end{split}
\]
Recall from Definition \ref{D2.1} (vi) that $\mathcal{F}(f^\e, v^\e)$ satisfies
\bq\label{D-1}
\mathcal{F}(f^\e, v^\e) + \frac{1}{\e}\int_0^t \mathcal{D}_1(f^\e) \, ds +\int_0^t \mathcal{D}_2(f^\e,v^\e) ds \le \mathcal{F}(f_0^\e, v_0^\e) + \sigma d t \|f_0^\e\|_{L^1}.
\eq
Without loss of generality, we assume $\|f_0^\e \|_{L^1} = 1$. In the lemma below, we show the uniform bound for the system \eqref{A-2}.

\begin{lemma}\label{L4.1}
For $T>0$, let $(f^\e, v^\e)$ be a weak entropy solution to the system \eqref{A-2} on the time interval $[0,T)$ corresponding to initial data $(f_0^\e, v_0^\e)$. Then we have
\begin{align*}
&\iint_{\T^d \times \R^d} \left( \frac{|x|^2}{4} +\frac{|\xi|^2}{4}+ \sigma |\log f^\e|\right)f^\e\,dxd\xi + \frac12\int_{\T^d} |\nabla K \star(\rho^\e-1)|^2\,dx+ \int_{\T^d} \frac{1}{2} |v^\e|^2 \,dx \\
&\quad +  \frac{1}{\e}\int_0^T \mathcal{D}_1(f^\e) \, ds +\int_0^T \mathcal{D}_2(f^\e,v^\e)\, ds \le C(\mathcal{F}(f_0^\e, v_0^\e) +1)e^{CT},
\end{align*}
where $C>0$ is a constant independent of $\e$.
\end{lemma}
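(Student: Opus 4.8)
The plan is to derive the whole estimate from the entropy inequality \eqref{D-1} together with two standard ingredients: a coercivity (lower) bound for the functional $\mathcal{F}$ in terms of the kinetic energy $\int|\xi|^2 f^\e$, and the classical $t|\log t|$--moment estimate that controls the negative part of the entropy by velocity (and position) moments.

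\textbf{Step 1 (coercivity of $\mathcal{F}$).} Since $\int_{\T^d\times\R^d} f^\e\,dxd\xi=\|f_0^\e\|_{L^1}=1$ for a.e. $t$, I would write, in the case $\sigma>0$,
\[
\int\Bigl(\tfrac{|\xi|^2}{2}+\sigma\log f\Bigr)f \;=\; \int\tfrac{|\xi|^2}{4}f \;+\; \sigma\int f\log\frac{f}{\mathcal{G}}\;+\;\sigma\log c_\sigma,
\]
where $\mathcal{G}(\xi)=c_\sigma e^{-|\xi|^2/(4\sigma)}$ is the Gaussian normalized so that $\int_{\T^d\times\R^d}\mathcal{G}=1$. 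The relative-entropy term $\sigma\int f\log(f/\mathcal{G})$ is nonnegative (Jensen's inequality, using $\int f=\int\mathcal{G}=1$), hence
\[
\mathcal{F}(f,v)\;\ge\;\int\tfrac{|\xi|^2}{4}f\;+\;\tfrac12\|\nabla K\star(\rho-1)\|_{L^2}^2\;+\;\tfrac12\|v\|_{L^2}^2\;-\;C_\sigma
\]
for a fixed constant $C_\sigma$ depending only on $\sigma,d$ (for $\sigma=0$ this is immediate and sharper, since $\mathcal{F}$ already contains $\int\frac{|\xi|^2}{2}f$ and the $\sigma$--terms are absent). Plugging this into \eqref{D-1} immediately bounds the velocity moment $\int\frac{|\xi|^2}{4}f^\e$, the interaction energy $\frac12\|\nabla K\star(\rho^\e-1)\|_{L^2}^2$, the fluid energy $\frac12\|v^\e\|_{L^2}^2$, and both dissipation integrals $\frac1\e\int_0^T\mathcal{D}_1(f^\e)\,ds$ and $\int_0^T\mathcal{D}_2(f^\e,v^\e)\,ds$ by $\mathcal{F}(f_0^\e,v_0^\e)+\sigma dT+C_\sigma$, uniformly in $\e$.

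\textbf{Step 2 (the entropy term $\sigma\int|\log f^\e|f^\e$).} Split $\log f^\e=(\log f^\e)_+-(\log f^\e)_-$. For the positive part, $\sigma\int(\log f^\e)_+f^\e\le \sigma\int f^\e\log f^\e+\sigma\int(\log f^\e)_-f^\e$, and $\sigma\int f^\e\log f^\e\le\mathcal{F}(f^\e,v^\e)$ since the remaining terms in $\mathcal{F}$ are nonnegative, so this is controlled by Step 1 and \eqref{D-1}. For the negative part I would use the elementary bound, valid for $t\in(0,1)$,
\[
-\,t\log t\;\le\;C\bigl(|x|^2+|\xi|^2\bigr)\,t\;+\;C\,e^{-(|x|^2+|\xi|^2)/2},
\]
obtained by distinguishing $t<e^{-(|x|^2+|\xi|^2)}$ from $t\ge e^{-(|x|^2+|\xi|^2)}$; integrating gives $\sigma\int(\log f^\e)_-f^\e\le C\sigma\bigl(1+\int(|x|^2+|\xi|^2)f^\e\bigr)$. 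On $\T^d$ one has $\int|x|^2 f^\e\le C$ trivially (choosing a fixed bounded representative of $|x|^2$), while $\int|\xi|^2 f^\e$ was bounded in Step 1; hence $\sigma\int|\log f^\e|f^\e\le C(\mathcal{F}(f_0^\e,v_0^\e)+1)(1+T)$.

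\textbf{Step 3 (spatial moment and conclusion).} On $\T^d$ the term $\int\frac{|x|^2}{4}f^\e\le C$ is again immediate; in the whole-space variant alluded to in the remarks one instead runs Grönwall on $\frac{d}{dt}\int|x|^2 f^\e=2\int x\cdot\xi f^\e\le\int|x|^2 f^\e+\int|\xi|^2 f^\e$, which is the only place the factor $e^{CT}$ is genuinely needed. Collecting Steps 1--3 and absorbing the fixed constants ($C_\sigma$, $\sigma d$, etc.) into $C$, every term on the left-hand side is bounded by $C(\mathcal{F}(f_0^\e,v_0^\e)+1)(1+T)\le C(\mathcal{F}(f_0^\e,v_0^\e)+1)e^{CT}$ with $C$ independent of $\e$.

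\textbf{Main obstacle.} The delicate point is the apparent circularity in Step 2: bounding $\sigma\int|\log f^\e|f^\e$ needs an a priori bound on $\int|\xi|^2 f^\e$, yet $\mathcal{F}$ only directly controls the combination $\int(\frac{|\xi|^2}{2}+\sigma\log f^\e)f^\e$ whose $\sigma\log f^\e$ part is sign-indefinite. Splitting off $\frac{|\xi|^2}{4}f^\e$ and recognizing the remainder as a nonnegative relative entropy against a Gaussian in $\xi$ (Step 1) is precisely what breaks the loop; one must also treat $\sigma=0$ separately, where all $\sigma$--terms simply vanish and the argument is easier.
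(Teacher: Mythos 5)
Your proof is correct and follows essentially the same route as the paper's: the paper's (one-line) argument likewise controls the negative part of the entropy via $\iint f^\e \log_- f^\e\,dxd\xi \le \frac{1}{4\sigma}\iint f^\e(1+|\xi|^2)\,dxd\xi + C$ and then combines this with the entropy inequality \eqref{D-1}, which is exactly what your Steps 1--2 accomplish (your Gaussian relative-entropy formulation and the pointwise $-t\log t$ bound are two equivalent phrasings of that same estimate). Your observation that Gr\"onwall is only genuinely needed for the $|x|^2$-moment in the whole-space setting, while on $\T^d$ the linear-in-$T$ growth is simply dominated by $e^{CT}$, is accurate and consistent with the statement.
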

\begin{proof}
First, we note that
\[
\iint_{\T^d \times \R^d} f^\e \log_{-}f^\e \,dx d\xi  \le \frac{1}{4\sigma}\iint_{\T^d \times \R^d} f^\e \left( 1 + |\xi|^2 \right) dx d\xi  + C,
\]
where $C=C(\sigma)$ is independent of $\e$. Then, we combine the above estimate with \eqref{D-1} and Gr\"onwall's lemma to get the desired estimate.
\end{proof}

Next, we investigate a modified entropy inequality.
\begin{lemma}\label{L4.2}
For $T>0$,  let $(f^\e, v^\e)$ be a weak entropy solution to \eqref{A-2} on the time interval $[0,T]$ corresponding to initial data $(f_0^\e, v_0^\e)$. Then we have
\[\begin{aligned}
\mathcal{F}(f^\e, v^\e) + \int_0^t \int_{\T^d} |\nabla v^\e|^2 \,dxds + \int_0^t \int_{\T^d} \rho^\e |u^\e - v^\e|^2 \, dxds + \frac{1}{2\e} \int_0^t \mathcal{D}_1(f^\e)\, ds\le \mathcal{F}(f_0^\e, v_0^\e) + C\e,
\end{aligned}\]
where $C >0$ is a constant independent of $\e$.
\end{lemma}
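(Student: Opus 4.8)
The plan is to start from the basic entropy inequality \eqref{D-1} and \emph{absorb} the term $\frac{1}{\e}\int_0^t \mathcal D_1(f^\e)\,ds$ against the kinetic friction term $\int_0^t\iint |v^\e-\xi|^2 f^\e$ in $\mathcal D_2$, producing the relaxation term $\int_0^t\int_{\T^d}\rho^\e|u^\e-v^\e|^2\,dx\,ds$ plus a controllable remainder of size $\mathcal O(\e)$. Concretely, I would first rewrite the dissipation $\mathcal D_1(f^\e) = \iint \frac1{f^\e}\big|\sigma\nabla_\xi f^\e - (u^\e-\xi)f^\e\big|^2\,dx\,d\xi$. The key algebraic observation is that the momentum balance of the kinetic equation in \eqref{A-2} picks up exactly $-\frac1\e\int_{\R^d}(u^\e-\xi)f^\e\,d\xi$ from the singular term, and upon integration in $\xi$ one has $\int_{\R^d}\big(\sigma\nabla_\xi f^\e - (u^\e-\xi)f^\e\big)\,d\xi = 0$ (the $\sigma$ part integrates to zero, and $\int_{\R^d}(u^\e-\xi)f^\e\,d\xi = \rho^\e u^\e - \rho^\e u^\e = 0$ by the very definition of $u^\e$). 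This orthogonality is what lets us complete the square: writing the friction term in $\mathcal D_2$ as $\iint|v^\e-\xi|^2 f^\e = \iint|(u^\e-\xi)-(u^\e-v^\e)|^2 f^\e$ and expanding, the cross term reproduces $\int_{\R^d}(u^\e-\xi)f^\e\,d\xi\cdot(u^\e-v^\e) = 0$ after $\xi$-integration, leaving $\iint|u^\e-\xi|^2 f^\e + \int_{\T^d}\rho^\e|u^\e-v^\e|^2\,dx$.

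The remaining issue is to control $\iint|u^\e-\xi|^2 f^\e$ — the "temperature" part — by a fraction of $\frac1\e \mathcal D_1$. I would use that, by Cauchy--Schwarz and the elementary inequality $\frac1{f^\e}|\sigma\nabla_\xi f^\e - (u^\e-\xi)f^\e|^2 \geq$ (a multiple of) the pointwise square of $(u^\e-\xi)f^\e$ modulo the gradient term, more precisely: expand $\mathcal D_1(f^\e) = \iint\big(\sigma^2\frac{|\nabla_\xi f^\e|^2}{f^\e} - 2\sigma\nabla_\xi f^\e\cdot(u^\e-\xi) + |u^\e-\xi|^2 f^\e\big)$; the middle term integrates by parts to $-2\sigma\iint \nabla_\xi\cdot(u^\e-\xi)\,f^\e = 2\sigma d\iint f^\e = 2\sigma d$ (using $\|f_0^\e\|_{L^1}=1$ and conservation of mass), so $\mathcal D_1(f^\e) \geq \iint|u^\e-\xi|^2 f^\e - 2\sigma d$. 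Hence $\iint|u^\e-\xi|^2 f^\e \leq \mathcal D_1(f^\e) + 2\sigma d$, and therefore $\int_0^t \iint|u^\e-\xi|^2 f^\e\,ds \leq \e\cdot\frac1\e\int_0^t\mathcal D_1(f^\e)\,ds + 2\sigma d\, t$; wait — I actually want this on the left, so instead I move $\int_0^t\iint|u^\e-\xi|^2 f^\e\,ds$ to the right of the entropy inequality and bound it by $\frac12\cdot\frac1\e\int_0^t\mathcal D_1(f^\e)\,ds + C\e$ (splitting $\iint|u^\e-\xi|^2 f^\e \le \frac1{2\e}\mathcal D_1 \cdot \e + \ldots$ requires a touch more care — one uses $\iint|u^\e-\xi|^2 f^\e \leq \mathcal D_1 + 2\sigma d$ so $\int_0^t\iint|u^\e-\xi|^2 \leq \int_0^t \mathcal D_1 + 2\sigma d t$, and since $\int_0^t\mathcal D_1 = \e\cdot\frac1\e\int_0^t\mathcal D_1$ is $\mathcal O(\e)$ times a bounded quantity by \eqref{D-1}/Lemma \ref{L4.1}, this whole piece is $\mathcal O(\e)$). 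Combining: the terms $\int_0^t\int_{\T^d}\rho^\e|u^\e-v^\e|^2$, $\int_0^t\int_{\T^d}|\nabla v^\e|^2$, and $\frac1{2\e}\int_0^t\mathcal D_1(f^\e)$ all land on the left with the right sign, $\mathcal F(f^\e,v^\e)$ stays on the left, and the right-hand side is $\mathcal F(f_0^\e,v_0^\e) + \sigma dt\|f_0^\e\|_{L^1} - \sigma dt\|f^\e\|_{L^1} + \mathcal O(\e) = \mathcal F(f_0^\e,v_0^\e) + C\e$ once we notice the $\sigma dt$ contributions cancel by mass conservation $\|f^\e\|_{L^1}=\|f_0^\e\|_{L^1}=1$.

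The main obstacle I anticipate is twofold and both parts are mild: first, the manipulations above are formal identities that must be justified for \emph{weak} entropy solutions — the integration by parts in $\xi$ producing $2\sigma d$, and the vanishing of $\int_{\R^d}(\sigma\nabla_\xi f^\e-(u^\e-\xi)f^\e)\,d\xi$, both need the $|\xi|^2 f^\e$ and $\sigma\log f^\e$ integrability from Definition \ref{D2.1}(i) together with an approximation argument (this is presumably why the entropy inequality, rather than an equality, is assumed); second, one must be slightly careful that $\frac1\e\int_0^t\mathcal D_1(f^\e)\,ds$ is genuinely finite and bounded uniformly — but this is exactly the content of \eqref{D-1} combined with the lower bounds on $\mathcal F$ established via Lemma \ref{L4.1}. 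Given these, the proof is a one-paragraph computation: expand the square, use the definition of $u^\e$ to kill the cross term, integrate by parts to extract the $\sigma d$ constants, and absorb $\frac12\cdot\frac1\e\int_0^t\mathcal D_1$ to keep a clean $\frac1{2\e}\int_0^t\mathcal D_1$ on the left while the leftover $\mathcal O(\e)$ pieces collect into $C\e$.
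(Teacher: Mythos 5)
Your overall architecture is the same as the paper's: decompose $\iint_{\T^d\times\R^d}|v^\e-\xi|^2f^\e\,dxd\xi=\int_{\T^d}\rho^\e|u^\e-v^\e|^2\,dx+\iint_{\T^d\times\R^d}|u^\e-\xi|^2f^\e\,dxd\xi$ (the paper writes this in a symmetrized two-point form, but it is the same identity), and then use the leftover ``temperature'' term together with half of $\frac1\e\int_0^t\mathcal{D}_1$ to cancel the $+\sigma d t\|f_0^\e\|_{L^1}$ on the right of \eqref{D-1}. However, your key step runs in the wrong direction. The term $\int_0^t\iint|u^\e-\xi|^2f^\e\,dxd\xi\,ds$ sits on the left-hand side with a plus sign, so to cancel $+\sigma dt$ on the right you need the \emph{lower} bound
\[
\iint_{\T^d\times\R^d}|u^\e-\xi|^2f^\e\,dxd\xi\ \ge\ \sigma d-\frac{1}{2\e}\mathcal{D}_1(f^\e)-C\e,
\]
whereas you only establish the \emph{upper} bound $\iint|u^\e-\xi|^2f^\e\le\mathcal{D}_1+2\sigma d$, which is useless for a term you are keeping (or subtracting after moving it to the right). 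Your subsequent claim that ``$\int_0^t\iint|u^\e-\xi|^2f^\e\le\int_0^t\mathcal{D}_1+2\sigma dt$ \dots\ is $\mathcal O(\e)$'' is also false ($2\sigma dt=\mathcal O(1)$), and the term ``$-\sigma dt\|f^\e\|_{L^1}$'' you invoke at the end to cancel $+\sigma dt\|f_0^\e\|_{L^1}$ is never produced by any of your manipulations. There is additionally a sign error in the integration by parts: $-2\sigma\iint\nabla_\xi f^\e\cdot(u^\e-\xi)=+2\sigma\iint f^\e\,\nabla_\xi\cdot(u^\e-\xi)=-2\sigma d$, not $+2\sigma d$; and note that even the corrected identity $\mathcal{D}_1=\sigma^2\iint|\nabla_\xi f^\e|^2/f^\e-2\sigma d+\iint|u^\e-\xi|^2f^\e$ cannot yield the needed lower bound, since solving for $\iint|u^\e-\xi|^2f^\e$ leaves $-\sigma^2\iint|\nabla_\xi f^\e|^2/f^\e$, which has no a priori upper bound.

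The correct route --- which is exactly what the paper's estimates of $\mathcal{I}_2$ and $\mathcal{I}_3$ amount to --- is not to expand the square but to use $\sigma d=\iint(u^\e-\xi)\cdot\sigma\nabla_\xi f^\e\,dxd\xi$, so that
\[
\iint_{\T^d\times\R^d}|u^\e-\xi|^2f^\e\,dxd\xi-\sigma d=-\iint_{\T^d\times\R^d}(u^\e-\xi)\sqrt{f^\e}\cdot\frac{\sigma\nabla_\xi f^\e-(u^\e-\xi)f^\e}{\sqrt{f^\e}}\,dxd\xi,
\]
and then apply Cauchy--Schwarz and Young's inequality with weight $\e$, together with the uniform second-moment bound $\iint|u^\e-\xi|^2f^\e\le\iint|\xi|^2f^\e\le C$ from Lemma \ref{L4.1}, to obtain $\bigl|\iint|u^\e-\xi|^2f^\e-\sigma d\bigr|\le\frac{1}{2\e}\mathcal{D}_1(f^\e)+C\e$. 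Inserting this lower bound into \eqref{D-1} and absorbing $\frac{1}{2\e}\int_0^t\mathcal{D}_1$ into the full dissipation gives the lemma. With this replacement your argument closes; as written, it does not.
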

\begin{proof}
From a direct computation, we get
\begin{align*}\begin{aligned}
&\frac{1}{2}\iiiint_{\T^{2d} \times \R^{2d}} f^\varepsilon(x,\xi)f^\varepsilon(y,\xi_*)|\xi-\xi_*|^2 \,dxdyd\xi d\xi_*  + \int_{\T^d} \rho^\varepsilon|u^\varepsilon -v^\varepsilon|^2 \,dx\\
&\quad =\frac{1}{2} \iint_{\T^d \times \T^d} \rho^\varepsilon(x)\rho^\varepsilon (y)|u^\varepsilon(x) - u^\varepsilon(y)|^2 \,dxdy  +  \iint_{\T^d \times \R^d} |v^\varepsilon - \xi|^2 f^\varepsilon \,dxd\xi.
\end{aligned}\end{align*}
Then, we estimate the first term on the right hand side of the above equality in the same way as \cite[Lemma B.3]{KMT15}:
\begin{align}\label{est_en1}
\begin{aligned}
\frac{1}{2}& \iint_{\T^d \times \T^d} \rho^\varepsilon(x)\rho^\varepsilon (y)|u^\varepsilon(x) - u^\varepsilon(y)|^2 \,dxdy\\
& = \iint_{\T^d \times \T^d} \rho^\e (x) \rho^\e (y) (u^\e(x) - u^\e(y))\cdot u^\e(x) \,dxdy\\
&=\iiiint_{\T^{2d} \times \R^{2d}} f^\e(x,\xi)f^\e(y,\xi_*)(\xi-\xi_*)\cdot u^\e(x)\,dxdyd\xi d\xi_* \\
&= \iiiint_{\T^{2d} \times \R^{2d}}f^\e(y,\xi_*)f^\e(x,\xi)(\xi-\xi_*)\cdot  \xi \,dxdyd\xi d\xi_* \\
&\quad + \iiiint_{\T^{2d} \times \R^{2d}} f^\e(y,\xi_*)(\xi-\xi_*) \cdot (f^\e(x,\xi)(u^\e(x)-\xi)-\sigma \nabla_\xi f(x,\xi)) \,dxdyd\xi d\xi_* \\
& \quad +\iiiint_{\T^{2d} \times \R^{2d}}f^\e(y,\xi_*)(\xi-\xi_*)\cdot \sigma \nabla_\xi f(x,\xi)\,dxdyd\xi d\xi_* \\
& =: \mathcal{I}_1 + \mathcal{I}_2 + \mathcal{I}_3.
\end{aligned}
\end{align}
For $\mathcal{I}_1$, we use the change of variables $(x,\xi) \leftrightarrow (y,\xi_*)$ to get
\[
\mathcal{I}_1 = \frac{1}{2}\iiiint_{\T^{2d} \times \R^{2d}}f^\e(y,\xi_*)f^\e(x,\xi)|\xi-\xi_*|^2 \,dxdyd\xi d\xi_* . 
\]
For the estimate of $\mathcal{I}_2$, similarly to \cite{KMT15}, we set 
\[
V^\e(x,\xi) := \frac{1}{\sqrt{f^\e(x,\xi)}}(f^\e(x,\xi)(u^\e(x)-\xi)-\sigma \nabla_\xi f(x,\xi)).
\]
Then we obtain
\begin{align*}
\mathcal{I}_2 &=\int_{\T^d \times \T^d \times \R^d}\sqrt{f^\e(x,\xi)}\rho^\e(y)(\xi - u^\e(y))\cdot V^\e(x,\xi)\,dxdy d\xi \\
&=  \left(\int_{\T^d} \rho^\e(y)\,dy\right)\iint_{\T^d \times \R^d} \xi\sqrt{f^\e(x,\xi)} \cdot V^\e(x,\xi)\,dxd\xi\\
&\quad - \left(\int_{\T^d} (\rho^\e u^\e)(y)\,dy\right)  \iint_{\T^d \times \R^d} \sqrt{f^\e(x,\xi)} \cdot V^\e(x,\xi)\,dxd\xi\\
&\le 2\|f_0^\e\|_{L^1}\left(\iint_{\T^d \times \R^d} |\xi|^2 f^\e\,dxd\xi\right)^{1/2}\left(\iint_{\T^d \times \R^d} (V^\e(x,\xi))^2\,dxd\xi\right)^{1/2}\\
&\le \frac{1}{2\e} D_1(f^\e) + C\e,
\end{align*}
where $C>0$ is independent of $\e$, and we used Cauchy--Schwarz inequality and
\[
\rho^\e |u^\e|^2 \le \int_{\R^d}|\xi|^2 f^\e\,d\xi.
\]
For $\mathcal{I}_3$, we directly estimate
\[
\mathcal{I}_3= -\sigma d \int_{\T^{2d}\times\R^{2d}}f^\e(y,\xi_*)f^\e(x,\xi)\,dxdyd\xi d\xi_* = -d\sigma.
\]
We combine the estimates for $\mathcal{I}_i, i=1,2,3$, and put it into \eqref{est_en1} to complete the proof.
\end{proof}

\subsection{Modulated energy estimates}\label{sec:rel}
In this subsection, we provide several estimates regarding the modulated energy associated to the systems \eqref{A-2} and \eqref{A-3}. For this purpose, we define
\[
U := \left(\begin{array}{c}\rho \\ m \\ v\end{array}\right) \quad \mbox{and} \quad A(U) := \left(\begin{array}{ccc}m & 0 & 0 \\ \frac{m\otimes m}{\rho} & \sigma \rho \mathbb{I}_d & 0 \\ 0 & 0 & v\otimes v \end{array}\right),
\]
where $m = \rho u$. Then, the system \eqref{A-3} can be recast in the form of hyperbolic conservation law:
\[
\partial_t U + \nabla \cdot A(U) = F(U),
\]
where $F = F(U)$ is given by
\[
F(U) := \left(\begin{array}{c}0 \\ -\rho(u-v) -\rho \nabla K \star (\rho-1) \\ -\nabla p +\Delta v + \rho(u-v)\end{array}\right).
\]
Then, the associated total energy can be written as
\[
E(U) := \frac{|m|^2}{2\rho} + \frac{|v|^2}{2} +\sigma \rho \log \rho.
\]
With these newly defined macroscopic variables, we define a modulated energy functional as follows:
\[
\me(\bar{U}| U) = E({\bar U}) - E(U) - DE(U)(\bar{U} -U), \quad {\bar U} := \left(\begin{array}{c}{\bar \rho} \\ {\bar m} \\ {\bar v}\end{array}\right),
\]
which can be rewritten as
\[
\me(\bar{U}| U) = \frac{{\bar\rho}}{2}|{\bar u} - u|^2 + \frac{1}{2}|{\bar v} - v|^2 + \sigma \mh(\bar\rho | \rho),
\]
where the relative entropy $\mh(x|y)$ is given by
\[
\mh(x|y) := x\log x - y\log y - (1+\log y)(x-y) \ge \frac{1}{2}\min\left\{\frac{1}{x}, \frac{1}{y} \right\}|x-y|^2.
\]
Note that if $\sigma = 0$, then the modulate energy $\me$ does not include the relative entropy $\mh$, and thus it is not convex with respect to the fluid density $\rho$.  As mentioned in Introduction, this makes a huge problem with the estimate of a term that appears due to the drag force. Regarding this issue, we do not employ the relative entropy $\mh$ to handle that term but use the Coulomb interactions to overcome this difficulty. See Remark \ref{rmk_comm} for more details.

Now, we provide the estimate for the modulated energy functional.

\begin{lemma}\label{L4.3}
The modulated energy functional $\me$ satisfies the following differential equality:
\begin{align*}
\frac{d}{dt}&\int_{\T^d} \me({\bar U}|U)\,dx + \int_{\T^d} |\nabla ({\bar v}-v)|^2\,dx + \int_{\T^d} {\bar \rho}|({\bar u} - {\bar v}) - (u-v)|^2\,dx\\
&= \int_{\T^d} \partial_t E({\bar U})\,dx +\int_{\T^d} {\bar \rho}|{\bar u}-u|^2\,dx + \int_{\T^d} |\nabla {\bar v}|^2\,dx +\int_{\T^d} {\bar \rho}{\bar u}\cdot \nabla K \star ({\bar \rho}-1)\,dx\\
&\quad -\int_{\T^d} \nabla DE(U):A({\bar U}|U)\,dx-\int_{\T^d} DE(U)(\pa_t {\bar U} + \nabla \cdot A({\bar U}) -F({\bar U}))\,dx\\
&\quad + \int_{\T^d} ({\bar \rho}-\rho)({\bar v}-v)(u-v)\,dx -\int_{\T^d} {\bar \rho}({\bar u}-u)\cdot \nabla K \star ({\bar \rho}-\rho)\,dx,
\end{align*}
where $A({\bar U}|U) := A({\bar U})-A(U) -DA(U)({\bar U} - U)$ is the modulated flux functional.
\end{lemma}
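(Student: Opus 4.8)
The plan is to derive the stated identity by a careful bookkeeping of the time derivative of $\int_{\T^d}\me(\bar U|U)\,dx$, using the conservation-law reformulation $\partial_t U + \nabla\cdot A(U) = F(U)$ together with the defining relation $\me(\bar U|U) = E(\bar U) - E(U) - DE(U)(\bar U - U)$. First I would differentiate in time and split the result into three groups: the pure $E(\bar U)$ term, which contributes $\int_{\T^d}\partial_t E(\bar U)\,dx$ and will be kept as is on the right-hand side; the terms $-\partial_t E(U) - \partial_t\big(DE(U)(\bar U-U)\big)$, which I expand using the chain rule and then substitute $\partial_t U = F(U) - \nabla\cdot A(U)$ wherever $\partial_t U$ or $\partial_t(\text{components of }U)$ appears; and a cross term $-DE(U)\,\partial_t\bar U$. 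The key algebraic fact I would invoke (standard in the relative-entropy / weak-strong framework, cf.\ the structure already set up before the lemma) is that $DE$ is the entropy variable whose flux is compatible with $A$, so that $\nabla\cdot\big(DE(U)\,[\text{flux}]\big)$ terms reorganize into the modulated flux $A(\bar U|U) := A(\bar U) - A(U) - DA(U)(\bar U-U)$, producing the term $-\int_{\T^d}\nabla DE(U):A(\bar U|U)\,dx$, while the remaining contributions assemble into $-\int_{\T^d} DE(U)\big(\partial_t\bar U + \nabla\cdot A(\bar U) - F(\bar U)\big)\,dx$ plus genuinely lower-order source/coupling terms.

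The next step is to account for the source term $F$. Since $F(U)$ contains the viscous term $\Delta v$, the drag $\pm\rho(u-v)$, the pressure gradient $-\nabla p$, and the Poisson force $-\rho\nabla K\star(\rho-1)$, I would integrate the viscous part by parts to generate the dissipation $-\int_{\T^d}|\nabla(\bar v - v)|^2\,dx$ (moved to the left-hand side), together with the leftover $+\int_{\T^d}|\nabla\bar v|^2\,dx$ on the right. The drag contributions, when modulated, produce the damping term $-\int_{\T^d}\bar\rho|(\bar u-\bar v)-(u-v)|^2\,dx$ on the left, along with the residual $\int_{\T^d}\bar\rho|\bar u - u|^2\,dx$ and the mixed term $\int_{\T^d}(\bar\rho-\rho)(\bar v - v)(u-v)\,dx$ on the right; one must be slightly careful about which combination of $u,v,\bar u,\bar v$ each piece lands in, and completing the square is what produces the clean damping form. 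The pressure gradient $-\nabla p$ integrates against the divergence-free $\bar v - v$ to vanish (using $\nabla\cdot v = \nabla\cdot\bar v = 0$), so it contributes nothing. The Poisson term is the delicate one: modulating $-\rho\nabla K\star(\rho-1)$ against $DE$ and against $F(\bar U)$ and collecting everything produces exactly $\int_{\T^d}\bar\rho\bar u\cdot\nabla K\star(\bar\rho-1)\,dx$ and $-\int_{\T^d}\bar\rho(\bar u - u)\cdot\nabla K\star(\bar\rho-\rho)\,dx$ on the right-hand side.

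After all substitutions, I would collect terms: the two dissipation terms move left; everything else is organized into the eight terms on the right-hand side as displayed. I expect the main obstacle to be the careful tracking of the Poisson/interaction contributions — the Coulomb potential $U = K\star(\rho-1)$ enters $F$ nonlinearly through $\rho\nabla U$, and it also contributes to $E$ only indirectly (the interaction energy $\tfrac12\int|\nabla K\star(\rho-1)|^2\,dx$ is \emph{not} part of the pointwise density $E(U)$, so it must be handled separately from the modulated-energy machinery). Reconciling the two ways the interaction force appears (once from the "true" energy balance and once from the modulated conservation-law formalism, where it sits inside $F$) is where the symmetric bilinear structure of $\int\nabla K\star g_1\cdot\nabla K\star g_2\,dx$ must be used, and getting the signs and the $\bar\rho - 1$ versus $\bar\rho - \rho$ arguments exactly right is the part that requires genuine care rather than routine manipulation. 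Everything else — the chain rule expansions, the divergence-free cancellation of the pressure, and the integration by parts of the viscosity — is bookkeeping.
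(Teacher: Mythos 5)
Your proposal is correct and follows essentially the same route as the paper's proof: the paper differentiates $\me(\bar U|U)=E(\bar U)-E(U)-DE(U)(\bar U-U)$, splits the result into four groups $\mathcal{J}_1,\dots,\mathcal{J}_4$, obtains the modulated-flux term $-\int\nabla DE(U):A(\bar U|U)\,dx$ from the convective part exactly as you describe (citing the identity from \cite[Appendix A]{CCK16}), and then computes $D^2E(U)F(U)(\bar U-U)+DE(U)F(\bar U)$ explicitly to produce the drag damping, the viscous dissipation, the vanishing pressure terms, and the two Coulomb terms with arguments $\bar\rho-1$ and $\bar\rho-\rho$. Your identification of the Poisson contribution as the delicate bookkeeping step, and of the fact that the interaction energy is not contained in the pointwise density $E(U)$, matches the paper's treatment.
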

\begin{proof}
A straightforward computation yields
\begin{align*}
\frac{d}{dt}\int_{\T^d} \me({\bar U}|U)\,dx &= \int_{\T^d} \partial_t E({\bar U})\,dx -\int_{\T^d} DE(U)(\pa_t {\bar U} + \nabla \cdot A({\bar U}) -F({\bar U}))\,dx\\
&\quad + \int_{\T^d} D^2E(U)\nabla \cdot A(U)({\bar U}- U) + DE(U) \nabla \cdot A({\bar U})\,dx\\
&\quad -\int_{\T^d} D^2E(U)F(U)({\bar U}-U) + DE(U)F({\bar U})\,dx\\
&=: \sum_{i=1}^4 \mathcal{J}_i.
\end{align*}
For $\mathcal{J}_3$, we use the same estimate in \cite[Appendix A]{CCK16} to get 
\[
\mathcal{J}_3 = -\int_{\T^d} \nabla DE(U):A({\bar U}|U)\,dx.
\]
For $\mathcal{J}_4$, we note that
\[
DE(U) = \left(\begin{array}{c}-m^2/(2\rho^2) + \sigma(\log\rho+1)\\ m/\rho \\ v\end{array}\right) \quad \mbox{and} \quad D^2E(U) = \left(\begin{array}{ccc} * & -m/\rho^2 & 0 \\ * & 1/\rho & 0 \\ 0 & 0 & 1\end{array}\right).
\]
Then we find
\begin{align*}
D^2E(U)F(U)({\bar U}- U) &=({\bar \rho} - \rho)u \cdot \nabla K \star (\rho-1) - u \cdot (v-u)({\bar \rho} - \rho)\\
&\quad - ({\bar m}-m)\cdot\nabla K \star (\rho-1) + ({\bar m} - m)\cdot (v-u)\\
&\quad -\nabla p \cdot ({\bar v} - v) + \Delta v\cdot({\bar v} - v) + \rho(u-v)\cdot({\bar v}-v)
\end{align*} 
and
\[
DE(U)F({\bar U}) = -{\bar \rho}u \cdot \nabla K \star ({\bar\rho}-1) + {\bar \rho}u \cdot ({\bar v} - {\bar u}) -\nabla {\bar p} \cdot v + \Delta {\bar v} \cdot v + {\bar \rho} v \cdot ({\bar u} - {\bar v}). 
\]
Thus we obtain
\begin{align*}
&\int_{\T^d} D^2E(U)F(U)({\bar U}- U) + DE(U)F({\bar U})\,dx \\
&\quad = \int_{\T^d} {\bar \rho}(u-{\bar u}) \cdot \nabla K \star({\bar \rho} - \rho) -{\bar \rho}{\bar u}\cdot \nabla K \star ({\bar \rho}-1)\,dx+ \int_{\T^d} {\bar \rho} (u^2 - uv +2{\bar u}v - 2{\bar u}u + u{\bar v} - {\bar v}v)\,dx\\
&\qquad +\int_{\T^d} \rho(u{\bar v} - {\bar v}v -uv + v^2)\,dx + \int_{\T^d} \Delta v \cdot ({\bar v}-v) +\Delta {\bar v}\cdot v\,dx.
\end{align*}
This together with \cite[Appendix A]{CCK16} yields
\begin{align*}
\mathcal{J}_4 &= -\int_{\T^d} {\bar \rho}({\bar u}-u) \cdot \nabla K \star({\bar \rho} - \rho) dx +\int_{\T^d} {\bar \rho}{\bar u}\cdot \nabla K \star ({\bar \rho}-1)\,dx\\
&\quad + \int_{\T^d} {\bar \rho} |{\bar u}-u|^2\,dx + \int_{\T^d} |\nabla {\bar v}|^2\,dx-\int_{\T^d} {\bar \rho}|({\bar u}-{\bar v}) -(u-v)|^2\,dx\\
&\quad -\int_{\T^d} |\nabla({\bar v}-v)|^2\,dx +\int_{\T^d} ({\bar \rho}-\rho)({\bar v}-v)(u-v)\,dx.
\end{align*}
This completes the proof.
\end{proof}

In the proposition below, we present the estimate of the modulated energy associated to the systems \eqref{A-2} and \eqref{A-3}.
\begin{proposition}\label{P2.2}
For $T>0$ and $d\ge 2$, let $(f^\e, v^\e)$ be weak entropy solutions to the system \eqref{A-2} on the interval $[0,T]$ corresponding to initial data $(f_0^\e, v_0^\e)$. Let $(\rho, u, v)$ be the unique strong solution to the system \eqref{A-3} on the interval $[0,T]$ corresponding to initial data $(\rho_0, u_0, v_0)$. Assume that the initial data $(f^\e, v^\e)$ satisfy the assumptions {\bf (H1)}-{\bf (H2)} in Theorem \ref{T2.2}. Then we have
\begin{align*}
&\int_{\T^d} \rho^\e|u^\e-u|^2dx  + \int_{\T^d} |v^\e-v|^2\,dx + \sigma \int_{\T^d} \int_\rho^{\rho^\e} \frac{\rho^\e-z}{z}\,dzdx + \int_{\T^d} |\nabla K\star (\rho^\e-\rho)|^2\,dx \\
&\quad +\int_0^t \int_{\T^d} |\nabla (v^\e-v)|^2\,dxds + \int_0^t \int_{\T^d}  \rho^\e |(u^\e - v^\e) - (u-v)|^2\,dxds\\
&\qquad \le C\sqrt{\e}
\end{align*}
for almost every $t \in [0,T]$, where $C$ is independent of $\e$.
\end{proposition}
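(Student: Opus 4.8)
The plan is to run a weak--strong (relative-entropy/modulated-energy) argument in which the approximating state is the vector of macroscopic moments $U^\e := (\rho^\e, m^\e, v^\e)$, $m^\e := \rho^\e u^\e$, and the basic functional is
\[
\Lambda^\e(t) := \int_{\T^d}\me(U^\e\,|\,U)\,dx + \frac12\int_{\T^d}|\nabla K\star(\rho^\e-\rho)|^2\,dx + \sigma\iint_{\T^d\times\R^d} f^\e \log\frac{f^\e}{M_{\rho^\e,u^\e}}\,dxd\xi ,
\]
with $M_{\rho^\e,u^\e}$ the local Maxwellian (as in Theorem~\ref{T2.2}) and the last term understood to be absent when $\sigma=0$. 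By the coercivity of $\me$ and $\mh$ and the nonnegativity of the last term, $\Lambda^\e$ together with the two dissipation integrals in the statement dominates a constant times the left-hand side of the Proposition, so it suffices to prove a Grönwall inequality for $\Lambda^\e$ with forcing of size $\sqrt\e$.

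\emph{Step 1: the moment system and the kinetic defect.} Testing (after a routine cutoff in $\xi$) the weak formulation of the kinetic equation in \eqref{A-2} against $1$ and against $\xi$ shows that $U^\e$ solves the hyperbolic system $\pa_t U + \nabla\cdot A(U) = F(U)$ of Subsection~\ref{sec:rel} \emph{up to} the single defect $-\nabla\cdot R^\e$ in the momentum block, where $R^\e := \int_{\R^d}(\xi-u^\e)\otimes(\xi-u^\e)\,f^\e\,d\xi - \sigma\rho^\e \mathbb I_d$; the $1/\e$ relaxation terms drop because their zeroth and first $\xi$-moments vanish, and the Navier--Stokes block is exact. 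Writing $R^\e$ through the dissipation variable $V^\e = (f^\e)^{-1/2}(\sigma\nabla_\xi f^\e-(u^\e-\xi)f^\e)$ of Lemma~\ref{L4.2} and integrating by parts in $\xi$ gives $R^\e = -\int_{\R^d}(\xi-u^\e)\sqrt{f^\e}\otimes V^\e\,d\xi$, whence, by Cauchy--Schwarz and Lemmas~\ref{L4.1}--\ref{L4.2} (which also give $\int_0^t\md_1(f^\e)\,ds \ls \e$),
\[
\int_0^t\!\!\int_{\T^d}|R^\e|\,dxds \ \ls\ \Big(\iint_{\T^d\times\R^d}|\xi-u^\e|^2 f^\e\Big)^{1/2}\Big(\int_0^t\md_1(f^\e)\,ds\Big)^{1/2}\sqrt t \ \ls\ \sqrt\e .
\]
I would also record the identity $\int_{\T^d}E(U^\e)\,dx = \mf(f^\e,v^\e) - \tfrac12\int_{\T^d}|\nabla K\star(\rho^\e-1)|^2\,dx - \sigma\iint f^\e\log\tfrac{f^\e}{M_{\rho^\e,u^\e}}\,dxd\xi + \mathrm{const}$, obtained by splitting $\tfrac12|\xi|^2$ into macroscopic and internal energy.

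\emph{Step 2: differentiating $\Lambda^\e$.} For $\tfrac{d}{dt}\int\me(U^\e|U)$ I apply Lemma~\ref{L4.3} with $\bar U = U^\e$: its defect term $-\int DE(U)(\pa_t U^\e + \nabla\cdot A(U^\e) - F(U^\e))\,dx$ is then exactly $-\int_{\T^d}\nabla u : R^\e\,dx$, and its term $\int\pa_t E(U^\e)\,dx$ is rewritten by the identity of Step~1 so that $\tfrac{d}{dt}\mf(f^\e,v^\e)$ enters — which Lemma~\ref{L4.2} controls by minus the entropy dissipation up to an $\ls\e$ defect. Three cancellations then happen by design: $-\sigma\tfrac{d}{dt}\iint f^\e\log\tfrac{f^\e}{M_{\rho^\e,u^\e}}$ against the last term of $\Lambda^\e$; $-\tfrac12\tfrac{d}{dt}\int|\nabla K\star(\rho^\e-1)|^2$ against the Lemma~\ref{L4.3} term $\int\rho^\e u^\e\cdot\nabla K\star(\rho^\e-1)\,dx$; and, using $\pa_t\rho^\e = -\nabla\cdot m^\e$, $\pa_t\rho = -\nabla\cdot(\rho u)$ and $-\Delta(K\star h)=h$, the Coulomb-modulation rate $\tfrac{d}{dt}\tfrac12\int|\nabla K\star(\rho^\e-\rho)|^2 = \int\nabla K\star(\rho^\e-\rho)\cdot[\rho^\e(u^\e-u)+(\rho^\e-\rho)u]\,dx$ has its first half cancel the Lemma~\ref{L4.3} term $-\int\rho^\e(u^\e-u)\cdot\nabla K\star(\rho^\e-\rho)\,dx$; finally $\int|\nabla v^\e|^2$ from Lemma~\ref{L4.3} is killed by the viscous dissipation in Lemma~\ref{L4.2}. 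Integrating in $t$, what remains is
\[
\Lambda^\e(t) + \int_0^t\!\!\int_{\T^d}|\nabla(v^\e-v)|^2 + \int_0^t\!\!\int_{\T^d}\rho^\e|(u^\e-v^\e)-(u-v)|^2 \ \le\ \Lambda^\e(0) + C\e + \int_0^t\big(\mathrm I+\mathrm{II}+\mathrm{III}+\mathrm{IV}+\mathrm V\big)\,ds,
\]
with $\mathrm I = -\int\nabla u : R^\e$, $\mathrm{II} = -\int\nabla DE(U):A(U^\e|U)$, $\mathrm{III} = \int(\rho^\e-\rho)(v^\e-v)\cdot(u-v)$, $\mathrm{IV} = \int\nabla K\star(\rho^\e-\rho)\cdot(\rho^\e-\rho)u$, and $\mathrm V = \int\rho^\e|u^\e-u|^2$.

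\emph{Step 3: closing the estimate; the hard point.} Now $\mathrm V \le 2\Lambda^\e$ is Grönwall-type; $\int_0^t|\mathrm I| \le \|\nabla u\|_{L^\infty_{t,x}}\int_0^t\!\int|R^\e| \ls \sqrt\e$ by Step~1; $|\mathrm{II}| \ls \Lambda^\e$ because $\rho\mapsto\sigma\rho\mathbb I_d$ is affine, so the modulated flux $A(U^\e|U)$ reduces to the quadratic remainders $\rho^\e(u^\e-u)\otimes(u^\e-u)$ and $(v^\e-v)\otimes(v^\e-v)$ (the computation of \cite[Appendix A]{CCK16}); and, writing $\phi := K\star(\rho^\e-\rho)$ so that $\rho^\e-\rho = -\Delta\phi$, integrating $\mathrm{IV}$ by parts twice gives $\mathrm{IV} = \int\nabla u:\nabla\phi\otimes\nabla\phi - \tfrac12\int(\nabla\cdot u)|\nabla\phi|^2$, hence $|\mathrm{IV}| \ls \|\nabla u\|_{L^\infty}\|\nabla\phi\|_{L^2}^2 \ls \Lambda^\e$. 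The delicate term is $\mathrm{III}$, since no $L^2$-control of $\rho^\e-\rho$ is available from $\Lambda^\e$ (the relative entropy $\mh(\rho^\e|\rho)$ even disappears entirely for $\sigma=0$). Here I again substitute $\rho^\e-\rho = -\Delta\phi$ and integrate by parts, moving one derivative onto $(v^\e-v)\cdot(u-v)$:
\[
\mathrm{III} = \int_{\T^d}\nabla\phi\cdot\nabla\big[(v^\e-v)\cdot(u-v)\big]\,dx \ \le\ \tfrac12\int_{\T^d}|\nabla(v^\e-v)|^2 + C\big(\|v^\e-v\|_{L^2}^2 + \|\nabla\phi\|_{L^2}^2\big),
\]
$C$ depending on $\|(u,v)\|_{W^{1,\infty}}$; the first piece is absorbed into the viscous modulated dissipation on the left, the rest being $\ls \Lambda^\e$. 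Collecting everything gives $\Lambda^\e(t) + \tfrac12\int_0^t\|\nabla(v^\e-v)\|_{L^2}^2 + \int_0^t\!\int\rho^\e|(u^\e-v^\e)-(u-v)|^2 \le \Lambda^\e(0) + C\sqrt\e + C\int_0^t\Lambda^\e\,ds$, and since $\Lambda^\e(0) = \mathcal O(\sqrt\e)$ by {\bf (H1)}--{\bf (H2)} — using $\|\nabla K\star(\rho_0^\e-\rho_0)\|_{L^2} = \|\nabla(U_0^\e-U_0)\|_{L^2}$ and, for the $\sigma\iint f_0^\e\log(f_0^\e/M_{\rho_0^\e,u_0^\e})$ piece, {\bf (H1)} together with Step~1's identity — Grönwall's lemma yields $\Lambda^\e(t) + (\text{dissipation}) \ls \sqrt\e$, which is the claim. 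All formal steps (testing the weak formulations against the smooth strong solution, and the relative-entropy computation) are made rigorous by the standard regularization procedure, as in \cite{CCK16}. I expect $\mathrm{III}$ to be the only real obstacle: it is overcome precisely by the interplay of the modulated Coulomb energy (an $H^{-1}$-type control of $\rho^\e-\rho$) and the Navier--Stokes viscous dissipation (the $L^2_tH^1_x$-bound on $v^\e-v$) absorbing the derivative produced by the integration by parts — the mechanism highlighted in the introduction and Remark~\ref{rmk_comm}.
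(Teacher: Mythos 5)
Your proposal is correct and follows essentially the same route as the paper: the modulated energy identity of Lemma \ref{L4.3} with $\bar U = U^\e$, the modified entropy inequality of Lemma \ref{L4.2} to absorb $\int_0^t\int|\nabla v^\e|^2$ and control the kinetic defect $R^\e$ by $\sqrt\e$, the cancellation of the Coulomb cross term against $\tfrac{d}{dt}\tfrac12\int|\nabla K\star(\rho^\e-\rho)|^2$, and — the key point — the integration by parts of the drag term $\mathrm{III}=\mathcal K_5$ through $\rho^\e-\rho=-\Delta K\star(\rho^\e-\rho)$ with absorption into the viscous modulated dissipation. The only (harmless) deviation is that you augment the functional with $\sigma\iint f^\e\log(f^\e/M_{\rho^\e,u^\e})$ to make the energy bookkeeping an identity, where the paper simply drops this nonnegative quantity at time $t$ and controls the corresponding initial quantity via {\bf (H1)}--{\bf (H2)}.
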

\begin{proof} First, we set
\[
U:= \left(\begin{array}{c} \rho \\ \rho u \\ v \end{array}\right) \quad \mbox{and} \quad U^\e := \left(\begin{array}{c} \rho^\e \\ \rho^\e u^\e \\ v^\e \end{array}\right), 
\]
where
\[
\rho^\e :=\int_{\R^d} f^\e \,d\xi \quad \mbox{and} \quad \rho^\e u^\e := \int_{\R^d}\xi f^\e \,d\xi.
\]
We replace ${\bar U}$ with $U^\e$ in Lemma \ref{L4.3} to get
\begin{align*}
&\int_{\T^d} \me(U^\e |U)\,dx + \int_0^t \int_{\T^d} |\nabla (v^\e-v)|^2\,dxds + \int_0^t \int_{\T^d}  \rho^\e |(u^\e - v^\e) - (u-v)|^2\,dxds\\
&= \int_{\T^d} \me(U_0^\e| U_0)\,dx + \int_0^t \int_{\T^d} \left(\partial_s E( U^\e )+\rho^\e |u^\e-u|^2\,+  |\nabla v^\e |^2 + \rho^\e  u^\e \cdot \nabla K \star (\rho^\e-1)\right) \,dxds\\
&\quad -\int_0^t \int_{\T^d} \nabla DE(U):A(U^\e|U)\,dxds -\int_0^t\int_{\T^d} DE(U)(\pa_t U^\e+ \nabla \cdot A( U^\e) -F(U^\e))\,dxds\\
&\quad + \int_0^t \int_{\T^d} (\rho^\e-\rho)(v^\e-v)(u-v)\,dxds -\int_0^t \int_{\T^d} \rho^\e(u^\e-u)\cdot \nabla K \star (\rho^\e-\rho)\,dxds\\
&=: \sum_{i=1}^6 \mathcal{K}_i.
\end{align*}
From now on, we separately estimate $\mathcal{K}_i, i=2,\dots,6$ as follows:\\

\noindent $\diamond$ (Estimates for $\mathcal{K}_2$): Note that
\[
\frac{1}{2}\frac{d}{dt}\int_{\T^d} |\nabla K\star(\rho^\e-1)|^2 \,dx = \int_{\T^d} \rho^\e u^\e \cdot \nabla K \star (\rho^\e-1) \,dx.
\]
This observation entails
\begin{align*}
\mathcal{K}_2 &= \mathcal{F}(f^\e, v^\e) + \int_0^t \int_{\T^d} \rho^\e |u^\e-u|^2\,dxds + \int_0^t \int_{\T^d}  |\nabla v^\e|^2\,dxds \\
&\quad - \int_{\T^d} \left(E(U_0^\e) + \frac{1}{2}|\nabla K\star (\rho_0^\e-1)|^2 \right)\,dx + \int_{\T^d}\rho^\e \frac{|u^\e|^2}{2} \,dx - \int_{\T^d} \frac{|\xi|^2}{2} f^\e\,dxd\xi\\
&\le C\e + \mathcal{F}(f_0^\e, v_0^\e) -\int_{\T^d} \left(E(U_0^\e) + \frac{1}{2}|\nabla K\star (\rho_0^\e-1)|^2 \right)\,dx\\
&\le \mathcal{O}(\sqrt{\e}),
\end{align*}
where we used the fact $\rho^\e |u^\e|^2 \le \int_{\R^d} |\xi|^2 f^\e \,d\xi$, {\bf (H1)}, and Lemma \ref{L4.2}.\\

\noindent $\diamond$ (Estimates for $\mathcal{K}_3$):  By the definition of $A(U^\e| U)$, we get
\[
A(U^\e|U) = \left(\begin{array}{ccc}0 & 0 & 0 \\ \rho^\e (u^\e - u)\otimes (u^\e - u) & 0 & 0 \\ 0 & 0 & (v^\e-v)\otimes (v^\e-v) \end{array}\right).
\]
This together with 
\[
DE(U) = \left(\begin{array}{c}*\\ u \\ v\end{array}\right)
\]
yields
\[
\mathcal{K}_3 \le \|\nabla (u,v)\|_{L^\infty} \int_0^t \int_{\T^d} \rho^\e |u^\e-u|^2 + |v^\e - v|^2\,dx \le C\int_0^t \int_{\T^d} \me(U^\e|U)\,dxds,
\]
where $C>0$ is a positive constant independent of $\e$.\\

\noindent $\diamond$ (Estimates for $\mathcal{K}_4$): We integrate the kinetic equation in \eqref{A-2} with respect to $\xi$ to find
\begin{align*}
&\partial_t \rho^\e + \nabla \cdot (\rho^\e u^\e) = 0,\\
&\partial_t (\rho^\e u^\e) + \nabla \cdot (\rho^\e u^\e\otimes u^\e) +\sigma\nabla\rho^\e +  \rho^\e( (u^\e - v^\e) +\nabla K \star(\rho^\e-1)) \\
&\hspace{4.5cm}= \nabla \cdot \left(\int_{\R^d} (u^\e \otimes u^\e -\xi \otimes \xi +\sigma \mathbb{I}_d)f^\e\,d\xi\right)\\
&\partial_t v^\e + (v^\e \cdot \nabla) v^\e + \nabla p^\e -\Delta v^\e = \rho^\e(u^\e-v^\e),\\
&\nabla \cdot v^\e =0
\end{align*}
in the sense of distributions. Thus we use Cauchy--Schwarz inequality to get
\begin{align*}
\mathcal{K}_4 &=-\int_0^t\int_{\T^d} DE(U)(\pa_t U^\e+ \nabla \cdot A( U^\e) -F(U^\e))\,dxds\\
&= \int_0^t \int_{\T^d} \nabla u : \left(\int_{\R^d} (u^\e \otimes u^\e - \xi\otimes \xi + \sigma\mathbb{I}_d)f^\e\,d\xi\right)\,dxds.
\end{align*}
Then, by \cite[Lemma 4.4]{KMT15}, we estimate
\begin{align*}
&\int_{\R^d} (u^\e \otimes u^\e - \xi\otimes\xi +\sigma \mathbb{I}_d)f^\e \,d\xi\\
&\quad=\int_{\R^d}(u^\e \otimes (u^\e-\xi) + (u^\e- \xi)\otimes\xi +\sigma \mathbb{I}_d)f^\e \,d\xi\\
&\quad= \int_{\R^d}\bigg[ (u^\e \sqrt{f^\e} \otimes ((u^\e-\xi)\sqrt{f^\e} -2\sigma \nabla_\xi \sqrt{f^\e}) + \sigma u^\e \otimes \nabla_\xi f^\e \\
&\hspace{5cm} + ((u^\e-\xi)\sqrt{f^\e} -2\sigma \nabla_\xi \sqrt{f^\e})\otimes\xi\sqrt{f^\e} + \sigma\nabla_\xi f^\e \otimes \xi + \sigma f^\e\mathbb{I}_d\bigg] \,d\xi\\
&\quad= \int_{\R^d}\bigg[ (u^\e \sqrt{f^\e} \otimes ((u^\e-\xi)\sqrt{f^\e} -2\sigma \nabla_\xi \sqrt{f^\e}) + ((u^\e-\xi)\sqrt{f^\e} -2\sigma \nabla_\xi \sqrt{f^\e})\otimes\xi\sqrt{f^\e} \bigg] \,d\xi\\
&\quad \le 2\left(\int_{\R^d}|\xi|^2 f^\e\,d\xi\right)^{1/2} \left(\int_{\R^d} \frac{1}{f^\e}|\sigma \nabla_\xi f^\e - (u^\e-\xi)f^\e|^2\,d\xi\right),
\end{align*}
and this yields
\[
\mathcal{K}_4 \le C \left(\int_0^t \iint_{\T^d \times\R^d}|\xi|^2f^\e \,dxd\xi ds\right)^{1/2}\left(\int_0^t \mathcal{D}_1(f^\e)\,ds\right)^{1/2} \le C\sqrt{\e}.
\]
Here $C=C(T, \|\nabla u\|_{L^\infty}) > 0$ is a constant independent of $\e$ and we used Lemma \ref{L4.1}.\\

\noindent $\diamond$ (Estimates for $\mathcal{K}_5$): Since $-\Delta K\star(\rho^\e-\rho) = \rho^\e - \rho$ in the weak sense, by the integration by parts, we obtain
\begin{align*}
\mathcal{K}_5 &= \int_0^t \int_{\T^d} -\Delta K \star(\rho^\e - \rho) (v^\e-v)(u-v)\,dxds\\
&= \int_0^t \int_{\T^d} \nabla K\star(\rho^\e -\rho) \cdot \nabla ( (v^\e-v) (u-v))\,dxds\\
&\le C\int_0^t \int_{\T^d} |\nabla K \star (\rho^\e - \rho)|( |v^\e-v| + |\nabla (v^\e - v)|)\,dxds\\
&\le C\int_0^t \int_{\T^d} \me(U^\e | U)\,dxds + C\int_0^t \int_{\T^d} |\nabla K\star (\rho^\e-\rho)|^2\,dxds + \frac{1}{4} \int_0^t \int_{\T^d} |\nabla (v^\e-v)|^2\,dxds.
\end{align*}

\noindent $\diamond$ (Estimates for $\mathcal{K}_6$): A straightforward computation gives
\begin{align*}
\frac12\frac{d}{dt}\int_{\T^d} |\nabla K \star (\rho^\e - \rho)|^2\,dx &= \int_{\T^d} (\nabla K \star (\rho^\e - \rho))\cdot (\nabla K \star (\pa_t \rho^\e - \pa_t \rho))\,dx\cr
&=-\int_{\T^d} (\Delta K \star (\rho^\e - \rho)) \lt(K\star (\pa_t \rho^\e - \pa_t \rho)\rt)dx\cr
&= \int_{\T^d}   (\rho^\e - \rho) \lt(K\star (\pa_t \rho^\e - \pa_t \rho)\rt)dx.
\end{align*}
We then use the symmetry of $K$ to get
\begin{align*}
&\int_{\T^d}  (\rho^\e - \rho) \lt(K\star (\pa_t \rho^\e - \pa_t \rho)\rt)dx\cr
&\quad =-\iint_{\T^d \times \T^d} (\rho^\e - \rho)(x) K(x-y) \lt( \nabla_y \cdot (\rho^\e u^\e)(y) - \nabla_y \cdot (\rho u)(y) \rt) dxdy\cr
&\quad = \iint_{\T^d \times \T^d} (\rho^\e - \rho)(x) \nabla_y \lt( K(x-y)\rt) \cdot \lt( (\rho^\e u^\e)(y) - (\rho u)(y) \rt) dxdy\cr
&\quad =-\iint_{\T^d \times \T^d} (\rho^\e - \rho)(x) \nabla K(x-y) \cdot \lt( (\rho^\e u^\e)(y) - (\rho u)(y) \rt) dxdy\cr
&\quad =\iint_{\T^d \times \T^d} (\rho^\e - \rho)(y) \nabla K(x-y) \cdot \lt( (\rho^\e u^\e)(x) - (\rho u)(x) \rt) dxdy\cr
&\quad =\int_{\T^d} \nabla K\star(\rho^\e - \rho)  \cdot \lt(\rho^\e u^\e- \rho u \rt) dx.
\end{align*}
Then, we obtain
\begin{align*}
\mathcal{K}_6 &+ \int_0^t \int_{\T^d} \nabla K\star(\rho^\e - \rho)  \cdot \lt(\rho^\e u^\e - \rho u \rt) dx ds\\
&= \int_0^t \int_{\T^d} \nabla K\star(\rho^\e - \rho)\cdot u (\rho^\e - \rho) \,dxds\\
&=- \int_0^t \int_{\T^d} \nabla K \star(\rho^\e - \rho)  \cdot u\  \Delta K\star(\rho^\e - \rho) \,dxds\\
&= -\frac{1}{2}\int_0^t \int_{\T^d} |\nabla K\star(\rho^\e - \rho)|^2 \nabla \cdot u \,dxds + \int_0^t \int_{\T^d} \nabla K\star(\rho^\e - \rho)\otimes \nabla K \star(\rho^\e -\rho) : \nabla u\,dxds\\
&\le C\int_0^t\int_{\T^d} |\nabla K\star(\rho^\e - \rho)|^2\,dxds,
\end{align*}
Now, we combine all the previous estimates to yield
\begin{align*}
&\int_{\T^d} \me(U^\e |U)\,dx +\frac12 \int_{\T^d} |\nabla K\star(\rho^\e - \rho)|^2\,dx \\
&\quad +\int_0^t \int_{\T^d} |\nabla (v^\e-v)|^2\,dxds + \int_0^t \int_{\T^d}  \rho^\e |(u^\e - v^\e) - (u-v)|^2\,dxds\\
&\qquad \le C\sqrt{\e} + \int_{\T^d} \me(U_0^\e |U_0)\,dx + \frac12 \int_{\T^d} |\nabla K\star(\rho_0^\e - \rho_0)|^2\,dx + C\int_0^t \int_{\T^d} \me(U^\e|U)\,dxds \\
&\qquad \quad + C\int_0^t\int_{\T^d} |\nabla K\star (\rho^\e - \rho)|^2\,dxds,
\end{align*}
and thus, we use {\bf (H2)} and Gr\"onwall's lemma to get
\begin{align*}
&\int_{\T^d} \me(U^\e |U)\,dx + \int_{\T^d} |\nabla K\star(\rho^\e - \rho)|^2\,dx \\
&\quad +\int_0^t \int_{\T^d} |\nabla (v^\e-v)|^2\,dxds + \int_0^t \int_{\T^d}  \rho^\e |(u^\e - v^\e) - (u-v)|^2\,dxds\\
&\qquad \le C\sqrt{\e},
\end{align*}
where $C>0$ is independent of $\e$.
\end{proof}

\begin{remark}\label{rmk_comm} In the proof of Proposition \ref{P2.2}, the most delicate term is $\mathcal{K}_5$ which appears due to the coupling, drag force, between particles and fluid. In the case with diffusion, i.e., the modulated energy is convex with respect to $\rho$, by using the same argument as in the proof of \cite[Proposition 5.2]{CCK16}, we can also estimate $\mathcal{K}_5$ as
\[
\mathcal{K}_5 \leq C\int_0^t\int_{\T^d} \me(U^\e |U)\,dxds + \frac12 \int_0^t \int_{\T^d}  \rho^\e |(u^\e - v^\e) - (u-v)|^2\,dxds,
\]
where $C>0$ is independent of $\e$. Here the relative entropy is used to control the difference between $\rho^\e$ and $\rho$. This estimate also provides the same result as in Proposition \ref{P2.2}. However, this strategy only works in the presence of diffusion. As mentioned in Introduction, we find that the Coulomb interaction can be used to control $\rho^\e - \rho$, and this works regardless of the presence of diffusion.
\end{remark}

\begin{remark}\label{rmk_rel} Proposition \ref{P2.2} does not require the boundedness and periodicity of the domain. The estimates in Proposition \ref{P2.2} hold in the whole space as long as there exist the weak solutions $(f^\e, v^\e)$ to the system \eqref{A-2} and the strong solutions $(\rho,u,v)$ to the system \eqref{A-3} in the whole space with desired regularities, at least locally in time.
\end{remark}

\subsection{Proof of Theorem \ref{T2.2}} 
In this subsection, we provide the details of the proof of Theorem \ref{T2.2} showing the EPNS system  \eqref{A-3} can be well approximated by the VPNS system \eqref{A-2} for $\e>0$ small enough.

\subsubsection{Isothermal pressure case}
Let us first show the convergences in the isothermal pressure case. For this, we only show the following convergence:
\[
f^\e \to M_{\rho,u} := \frac{\rho}{(2\pi\sigma)^{d/2}}e^{-\frac{|u-\xi|^2}{2\sigma}} \quad \mbox{a.e.} \quad \mbox{and} \quad L^\infty(0,T;L^1(\T^d\times\R^d)),
\]
since the other convergences can be directly obtained by using the modulated energy estimated in Proposition \ref{P2.2}, see \cite[Corollary 2.1]{CCJpre} or \cite[Corollary 1.1]{CJpre}. For simplicity, we set $\sigma=1$ and consider 
\begin{align*}
\mh(f^\e | M_{\rho,u}) &= f^\e \log f^\e - M_{\rho,u} \log M_{\rho,u} + (f^\e - M_{\rho,u}) (1 + \log M_{\rho,u}) \cr
&= f^\e \lt(\log f^\e - \log M_{\rho,u}\rt) + (f^\e - M_{\rho,u}).
\end{align*}
Since 
\[
\log M_{\rho,u} = \log \rho  - \frac{|u-\xi|^2}{2} - \frac d2 \log (2\pi),
\]
we find
\[
\iint_{\T^d \times \R^d} \mh(f^\e | M_{\rho,u})\,dxd\xi = \iint_{\T^d \times \R^d} f^\e \log f^\e\,dxd\xi - \int_{\T^d} \rho^\e \log \rho\,dx + \frac12\iint_{\T^d \times \R^d} |u-\xi|^2f^\e \,dxd\xi + \frac d2 \log(2\pi).
\]
Note that 
\[
\|f^\e - M_{\rho,u}\|_{L^1}^2 \leq 4\iint_{\T^d \times \R^d} \mh(f^\e | M_{\rho,u})\,dxd\xi.
\]
In the proposition below, we estimate the above integral. Although its proof is similar to \cite{CCJpre} or \cite{CJpre}, for the completeness of our work we provide the details in Appendix \ref{app_conv}.
\begin{proposition}\label{prop_h1} Let $(f^\e,v^\e)$ be a global weak entropy solution to the system \eqref{A-2} and $(\rho,u,v)$ be a strong solution to the system \eqref{A-3} on the time interval $[0,T]$. Then we have
\begin{align*}
& \iint_{\T^d \times \R^d} \mh(f^\e | M_{\rho,u})\,dxd\xi +\frac12\int_{\T^d} |\nabla K\star (\rho^\e - \rho)|^2\,dx + \frac1{2\e}\int_0^t\iint_{\T^d \times \R^d} \frac{1}{f^\e}| \nabla_\xi f^\e - (u^\e - \xi )f^\e|^2\,dxd\xi  ds\cr
&\quad \leq  \iint_{\T^d \times \R^d} \mh(f_0^\e | M_{\rho_0,u_0})\,dxd\xi +\frac12\int_{\T^d} |\nabla  K \star(\rho_0^\e - \rho_0)|^2\,dx\cr
&\qquad + C\int_0^t\lt(\min\lt\{1,\int_{\T^d} \rho^\e |u - u^\e|^2\,dx\rt\}\rt)^{1/2} ds +  \frac{1}{\e^{1/4}}\int_0^t\int_{\T^d} |u^\e - u|^2 \rho^\e\,dxds \cr
&\qquad + C \int_0^t\int_{\T^d} |\nabla K \star (\rho^\e - \rho)|^2\,dxds+ C\e\int_0^t\iint_{\T^d \times \R^d} |\xi|^2 f^\e\,dxd\xi ds\cr
&\qquad + C\e \int_0^t\iint_{\T^d \times \R^d} |v^\e - \xi|^2 f^\e\,dxd\xi ds  + C\e^{1/4}\int_0^t\int_{\T^d} |v^\e - u^\e|^2 \rho^\e\,dxds,
\end{align*}
where $C>0$ is independent of $\e>0$.
\end{proposition}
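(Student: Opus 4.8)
The plan is to derive an evolution inequality for the relative entropy $\iint \mh(f^\e \mid M_{\rho,u})\,dxd\xi$ along the flow, mimicking the computation in Lemma~\ref{L4.2} and Proposition~\ref{P2.2} but now modulating around the local Maxwellian $M_{\rho,u}$ rather than around the macroscopic variables. First I would use the kinetic entropy inequality \eqref{entropy} (with $\sigma=1$): the term $\iint f^\e\log f^\e$ together with the dissipation $\frac1\e\mathcal{D}_1(f^\e)$ is already controlled, so the task reduces to tracking the extra contributions $-\int \rho^\e\log\rho\,dx$ and $\frac12\iint |u-\xi|^2 f^\e\,dxd\xi$ coming from $\log M_{\rho,u}$. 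Differentiating these in time produces, via the weak formulation of the kinetic equation in \eqref{A-2}, a collection of terms: transport terms involving $\nabla_x u$ and $\nabla_x(\log\rho)$ paired with velocity moments of $f^\e$, the drag/relaxation terms, the Coulomb force term $\rho^\e u^\e\cdot\nabla K\star(\rho^\e-1)$, and the fluid-coupling term $\rho^\e(v^\e-\xi)$. The Coulomb piece is handled exactly as the $\mathcal{K}_5$--$\mathcal{K}_6$ analysis in Proposition~\ref{P2.2}: one introduces $\frac12\int|\nabla K\star(\rho^\e-\rho)|^2\,dx$ as an auxiliary quantity, integrates by parts using $-\Delta K\star(\rho^\e-\rho)=\rho^\e-\rho$, and absorbs everything into $C\int_0^t\int|\nabla K\star(\rho^\e-\rho)|^2\,dxds$ plus modulated-energy terms already estimated in Proposition~\ref{P2.2}.

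Next I would use the algebraic identity, valid because $M_{\rho,u}$ is a Maxwellian, that rewrites the cross terms. The key manipulation is the one already appearing in the paper (the $V^\e$ trick of Lemma~\ref{L4.2} and the moment identity used in estimating $\mathcal{K}_4$): one writes $(u^\e-\xi)f^\e - \nabla_\xi f^\e = \sqrt{f^\e}\,V^\e$ with $V^\e := f^{\e,-1/2}(\nabla_\xi f^\e-(u^\e-\xi)f^\e)$, whose square integrates to $\mathcal{D}_1(f^\e)$, and splits every troublesome moment into a ``good'' factor times $V^\e$. Pairing such factors against $\xi$, $u$, $u^\e$, or $v^\e$ and applying Cauchy--Schwarz produces terms of the form $\frac1{2\e}\mathcal{D}_1(f^\e)$ (to be absorbed on the left) times $\e$ times a moment bounded by Lemma~\ref{L4.1}; this is the origin of the $C\e\int_0^t\iint|\xi|^2f^\e$ and $C\e\int_0^t\iint|v^\e-\xi|^2f^\e$ terms in the claimed inequality. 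The transport term $\nabla_x u:\int \xi\otimes\xi f^\e\,d\xi$ reproduces the $\mathcal{K}_4$-type estimate, contributing an $\mathcal{O}(\sqrt\e)$ via $\mathcal{D}_1$; and the relaxation term towards $u^\e$, combined with the modulation around $u$, yields the difference $|u^\e-u|^2\rho^\e$, but paired against a factor that is only controlled after spending a power of $\e$, which is why one ends up with $\e^{-1/4}\int_0^t\int|u^\e-u|^2\rho^\e$ rather than a clean bound (the asymmetric split $\e^{1/4}\cdot\e^{-1/4}$ in Young's inequality). The remaining $\min\{1,\int\rho^\e|u-u^\e|^2\}^{1/2}$ term arises from a term that is linear in $u-u^\e$ and must be controlled by the square root of the modulated kinetic energy, truncated at $1$ because $f^\e$ has only bounded total mass and energy.

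The technical heart — and the step I expect to be the main obstacle — is the bookkeeping of the $\nabla_x\log\rho$ term: $\int \nabla_x\log\rho\cdot\left(\int \xi f^\e\,d\xi - \rho^\e u^\e\right)$-type expressions must be shown to telescope against the pressure/entropy structure of the limit system \eqref{A-3}, using that $(\rho,u,v)$ is a genuine strong solution with $\nabla\log\rho, \nabla u\in L^\infty$. Concretely one needs the identity $\partial_t(\rho\log\rho) + \nabla\cdot(\rho u\log\rho) + \rho\,\nabla\cdot u\cdot(\text{something}) = \dots$ together with the momentum equation for $\rho u$ in \eqref{A-3}, to see that the $\log\rho$-contributions from the kinetic side and the $M_{\rho,u}$-side cancel up to terms controlled by $\me(U^\e\mid U)$ and $|\nabla K\star(\rho^\e-\rho)|^2$. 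Once these cancellations are in place, collecting terms and invoking Lemma~\ref{L4.1} (for the moment bounds), Lemma~\ref{L4.2} (to absorb $\tfrac1{2\e}\mathcal{D}_1$), and Proposition~\ref{P2.2} (for the modulated-energy and Coulomb terms already known to be $\mathcal{O}(\sqrt\e)$) yields the stated inequality. Finally I would note that the right-hand side is $o(1)$ as $\e\to0$: by Proposition~\ref{P2.2} each of $\int_0^t\int\rho^\e|u-u^\e|^2$, $\int_0^t\int|\nabla K\star(\rho^\e-\rho)|^2$, and $\int_0^t\int\rho^\e|v^\e-u^\e|^2$ is $\mathcal{O}(\sqrt\e)$ (the last using $\rho^\e|v^\e-u^\e|^2\le 2\rho^\e|v^\e-v|^2+2\rho^\e|v-u^\e|^2$ and the coupling dissipation), while the $\e^{-1/4}\cdot\mathcal{O}(\sqrt\e)=\mathcal{O}(\e^{1/4})$ term is the dominant error, giving $\|f^\e-M_{\rho,u}\|_{L^\infty(0,T;L^1)}^2\lesssim \e^{1/4}+\mh(f_0^\e\mid M_{\rho_0,u_0})$-type initial data, which vanishes under the well-preparedness hypothesis of Theorem~\ref{T2.2}(i).
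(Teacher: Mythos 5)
Your proposal is correct and follows essentially the same route as the paper's proof in Appendix \ref{app_conv}: the decomposition of $\mh(f^\e|M_{\rho,u})$ into $\iint f^\e\log f^\e - \int\rho^\e\log\rho + \tfrac12\iint|u-\xi|^2f^\e$, the $V^\e$/Cauchy--Schwarz splitting of the dissipation against the moments (yielding the $C\e\iint|\xi|^2f^\e$ and $C\e\iint|v^\e-\xi|^2f^\e$ terms), the $\mathcal{K}_6$-style Gr\"onwall treatment of the Coulomb term via $\tfrac12\int|\nabla K\star(\rho^\e-\rho)|^2$, the asymmetric $\e^{1/4}\cdot\e^{-1/4}$ Young split producing the $\e^{-1/4}\int\rho^\e|u^\e-u|^2$ term, and the cancellation of the $\nabla\log\rho$ contributions between the $\ml_1$-type term and $-\frac{d}{dt}\int\rho^\e\log\rho$ are all exactly the steps the paper carries out. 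The only cosmetic difference is that you fold Proposition \ref{P2.2} into the argument to conclude smallness, whereas the paper keeps those terms explicit in the statement and invokes Proposition \ref{P2.2} only afterwards.
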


As a direct consequence of Proposition \ref{prop_h1}, we obtain from the modulated energy estimates in Theorem \ref{T2.2} and the entropy estimate in Lemma \ref{L4.1} that
\[
 \iint_{\T^d \times \R^d} \mh(f^\e | M_{\rho,u})\,dxd\xi \leq  \iint_{\T^d \times \R^d} \mh(f_0^\e | M_{\rho_0,u_0})\,dxd\xi + C\e^{1/4}.
\]

\subsubsection{Pressureless case}
For the pressureless case, the convergence of $\rho^\e$ towards $\rho$ is not clear from Proposition \ref{P2.2}. In order to get the desired convergence of $\rho^\e$ towards $\rho$, we can use the previous result \cite[Lemma 4.1]{CCJpre}, see also \cite[Lemma 2.2]{CC20},  \cite[Proposition 3.1]{Cpre}, and \cite[Lemma 5.2]{FK19}, which asserts that the bounded Lipschitz distance $d_{BL}$ between local densities can be bounded from above by the modulated kinetic energy. Let $\mu, \nu \in \mathcal{M}(\T^d)$ be two Radon measures, then the bounded Lipschitz distance, which is denoted by $d_{BL}: \mathcal{M}(\T^d) \times \mathcal{M}(\T^d) \to \R_+$, between $\mu$ and $\nu$ is defined by
\[
d_{BL}(\mu,\nu) := \sup_{\phi \in \Omega} \lt|\int_{\T^d} \phi(x)( \mu(dx) - \nu(dx))\rt|,
\]
where the admissible set $\Omega$ of test functions are given by
\[
\Omega:= \lt\{\phi: \T^d \to \R: \|\phi\|_{L^\infty} \leq 1, \ Lip(\phi) := \sup_{x \neq y} \frac{|\phi(x) - \phi(y)|}{|x-y|} \leq 1 \rt\}.
\]
Then by \cite[Lemma 4.1]{CCJpre} we have the following estimate which gives the convergence $\rho^\e \rightharpoonup \rho$  weakly in $L^\infty(0,T;\mathcal{M}(\T^d))$.
\begin{lemma}\label{lem_dbl} Let $(f^\e,v^\e)$ be a global weak entropy solution to the system \eqref{A-2} and $(\rho,u,v)$ be a strong solution to the system \eqref{A-3} on the time interval $[0,T]$. Then we have
\[
d_{BL}(\rho(t), \rho^\e(t)) \leq Cd_{BL}(\rho_0, \rho_0^\e) + C\lt(\int_0^t  \int_{\T^d} \rho^\e|u^\e- u|^2\,dx ds\rt)^{1/2}
\]
for $0 \leq t \leq T$, where $C > 0$ is independent of $\e>0$.
\end{lemma}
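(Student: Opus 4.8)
The plan is to track the time evolution of the bounded Lipschitz distance $d_{BL}(\rho(t),\rho^\e(t))$ by testing the two continuity equations against an admissible $\phi\in\Omega$. Since $\rho$ solves $\partial_t\rho+\nabla\cdot(\rho u)=0$ and $\rho^\e$ solves $\partial_t\rho^\e+\nabla\cdot(\rho^\e u^\e)=0$ in the sense of distributions (the latter obtained by integrating the kinetic equation in $\eqref{A-2}$ against $d\xi$, exactly as in the $\mathcal K_4$ estimate of Proposition \ref{P2.2}), I would write, for any $\phi\in\mc C^\infty(\T^d)$ with $\|\phi\|_{L^\infty}\le1$ and $\mathrm{Lip}(\phi)\le1$,
\[
\frac{d}{dt}\int_{\T^d}\phi\,(\rho^\e-\rho)\,dx=\int_{\T^d}\nabla\phi\cdot(\rho^\e u^\e-\rho u)\,dx.
\]
The right-hand side I would split as $\int_{\T^d}\nabla\phi\cdot\rho^\e(u^\e-u)\,dx+\int_{\T^d}\nabla\phi\cdot(\rho^\e-\rho)u\,dx$. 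The first piece is controlled by Cauchy--Schwarz using $|\nabla\phi|\le1$: it is bounded by $\big(\int_{\T^d}\rho^\e\,dx\big)^{1/2}\big(\int_{\T^d}\rho^\e|u^\e-u|^2\,dx\big)^{1/2}$, and $\int_{\T^d}\rho^\e\,dx=1$ by conservation of mass. The second piece is the one that feeds back the distance itself: since $u$ is Lipschitz (indeed $u\in\mc C([0,T];W^{1,\infty})$ by the strong-solution regularity), the map $x\mapsto \nabla\phi(x)\cdot u(x)$ is, up to the fixed constant $C=C(\|u\|_{W^{1,\infty}})$, an admissible test function after rescaling, so $\big|\int_{\T^d}\nabla\phi\cdot(\rho^\e-\rho)u\,dx\big|\le C\,d_{BL}(\rho^\e(t),\rho(t))$.

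Taking the supremum over $\phi\in\Omega$ and using that the bound on the first piece is independent of $\phi$, I obtain a differential inequality of Gr\"onwall type:
\[
\frac{d}{dt}\,d_{BL}(\rho(t),\rho^\e(t))\le C\,d_{BL}(\rho(t),\rho^\e(t))+C\Big(\int_{\T^d}\rho^\e|u^\e-u|^2\,dx\Big)^{1/2}.
\]
Integrating in time from $0$ to $t$ and applying Gr\"onwall's lemma gives
\[
d_{BL}(\rho(t),\rho^\e(t))\le Ce^{Ct}\Big(d_{BL}(\rho_0,\rho_0^\e)+\int_0^t\Big(\int_{\T^d}\rho^\e|u^\e-u|^2\,dx\Big)^{1/2}ds\Big),
\]
and then Cauchy--Schwarz in time (on $[0,T]$, absorbing $\sqrt T$ and $e^{CT}$ into $C$) converts $\int_0^t(\cdots)^{1/2}ds$ into $\big(\int_0^t\int_{\T^d}\rho^\e|u^\e-u|^2\,dx\,ds\big)^{1/2}$, which is precisely the claimed bound.

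There are two technical points worth flagging rather than a genuine obstacle. First, the sup over $\Omega$ and $\frac{d}{dt}$ must be interchanged carefully: the clean way is to fix $\phi$, integrate the identity in time to get a bound on $\int_{\T^d}\phi(\rho^\e-\rho)(t)\,dx$ involving $\int_0^t d_{BL}(\rho^\e,\rho)\,ds$ (which is already a supremum and hence $\phi$-independent), then take the sup over $\phi$ and only afterwards invoke Gr\"onwall in integral form. Second, one needs enough regularity/integrability to justify testing the distributional continuity equations against the Lipschitz (not smooth) function $\nabla\phi\cdot u$; this is handled by a standard mollification of $\phi$ and passing to the limit, using $\rho^\e\in L^\infty(0,T;L^1)$ from Lemma \ref{L4.1} and $u\in W^{1,\infty}$. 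The main substantive input is simply the uniform mass bound $\int_{\T^d}\rho^\e\,dx=1$ together with the Lipschitz regularity of $u$ furnished by Definitions \ref{def_strong1}--\ref{def_strong2}; everything else is Gr\"onwall and Cauchy--Schwarz. This is exactly the argument of \cite[Lemma 4.1]{CCJpre}, so I would largely cite it and only indicate the adaptation to the present coupled system.
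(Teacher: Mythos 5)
Your overall strategy (Eulerian test of the two continuity equations, splitting the flux as $\rho^\e(u^\e-u)+(\rho^\e-\rho)u$, Cauchy--Schwarz plus Gr\"onwall) breaks down at the feedback step, and this is a genuine gap, not a technicality. You claim that
\[
\Bigl|\int_{\T^d}\nabla\phi\cdot(\rho^\e-\rho)\,u\,dx\Bigr|\le C\,d_{BL}(\rho^\e(t),\rho(t))
\]
because $\nabla\phi\cdot u$ is ``an admissible test function after rescaling''. It is not: admissibility in the definition of $d_{BL}$ requires both an $L^\infty$ bound \emph{and} a Lipschitz bound. For $\phi\in\Omega$ the gradient $\nabla\phi$ is merely bounded by $1$ a.e.; its Lipschitz seminorm is $\|\nabla^2\phi\|_{L^\infty}$, which is completely uncontrolled over $\Omega$ (think of $\phi$ a mollified distance function: mollifying at scale $\delta$ costs $\|\nabla^2\phi\|_{L^\infty}\sim\delta^{-1}$). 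Hence $\nabla\phi\cdot u$ is bounded but in general not Lipschitz, and $d_{BL}$ gives no control on its integral against $\rho^\e-\rho$; what you would actually need there is the total variation distance. This is the standard pitfall in proving stability of continuity equations in $W_1$ or $d_{BL}$, and it is precisely why the naive Gr\"onwall-in-$d_{BL}$ computation does not close.

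The argument of \cite[Lemma 4.1]{CCJpre} (following \cite[Lemma 5.2]{FK19}), which is what the paper invokes, avoids the term altogether by working Lagrangianly in the test function: since $u\in\mc([0,T];W^{1,\infty})$ it generates a flow $X(t;s,x)$, and for fixed $t$ and $\phi\in\Omega$ one sets $\phi_s:=\phi\circ X(t;s,\cdot)$, which solves the backward transport equation $\partial_s\phi_s+u\cdot\nabla\phi_s=0$. Then $\frac{d}{ds}\int\phi_s\rho\,dx=0$ while $\frac{d}{ds}\int\phi_s\rho^\e\,dx=\int\nabla\phi_s\cdot(u^\e-u)\rho^\e\,dx$, so
\[
\int_{\T^d}\phi\,(\rho^\e(t)-\rho(t))\,dx=\int_{\T^d}\phi_0\,(\rho^\e_0-\rho_0)\,dx+\int_0^t\int_{\T^d}\nabla\phi_s\cdot(u^\e-u)\rho^\e\,dxds,
\]
and the only price paid is that $\mathrm{Lip}(\phi_s)\le e^{\|\nabla u\|_{L^\infty}(t-s)}$, which produces exactly the constant $C$ multiplying $d_{BL}(\rho_0,\rho_0^\e)$ in the statement (a constant you would not see if your version of the argument were correct). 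From there your Cauchy--Schwarz steps (using $\int\rho^\e\,dx=1$ and H\"older in time) go through verbatim. So the correct proof replaces your Gr\"onwall feedback with the transported test function; the rest of your write-up is fine.
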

This together with Proposition \ref{P2.2} asserts
\[
d_{BL}(\rho(t), \rho^\e(t)) \leq Cd_{BL}(\rho_0, \rho_0^\e) + C\e^{1/4}.
\]
For the other convergence estimates, we refer to \cite[Corollary 2.3]{CCJpre}. This completes the proof of Theorem \ref{T2.2}. 

\begin{remark}\label{rmk_whole} Lemma \ref{lem_dbl} holds in the whole space. This together with Remark \ref{rmk_rel} implies that the estimates of hydrodynamic limit in Theorem \ref{T2.2} also hold when we add the assumption that $d_{BL}(\rho_0, \rho_0^\e) = \mathcal{O}(\e^{1/4})$ to Theorem \ref{T2.2}.
On the other hand, in the spatial periodic domain $\T^d$, we can bound the bounded Lipschitz between $\rho^\e$ and $\rho$ by $H^{-1}(\T^d)$-norm between them. Indeed, we find 
\[
\int_{\T^d} (\rho^\e-\rho)\phi \,dx \le \|\rho^\e - \rho\|_{H^{-1}}\|\phi\|_{H^1} \le \|\rho^\e - \rho\|_{H^{-1}}\|\phi\|_{W^{1,\infty}} \leq \|\rho^\e - \rho\|_{H^{-1}}
\]
for $\phi \in W^{1,\infty}(\T^d)$ with $\|\phi\|_{W^{1,\infty}} \le 1$. This implies that we do not need to add additional assumption on the initial densities $\rho^\e_0$ and $\rho_0$ to get the convergence estimates in Theorem \ref{T2.2}. 
\end{remark}
%%%%%%%%%%%%%%%%%%%%%%%%%%%%%%%%%%%%%%%
%
%
% \section{Global strong solvability for the isothermal/pressureless EPNS system}\label{sec:5}
%
%
%%%%%%%%%%%%%%%%%%%%%%%%%%%%%%%%%%%%%%%
\section{Global-in-time strong solvability for the isothermal/pressureless EPNS system}\label{sec:strong_ext}
 
In this section, we study the global-in-time existence and uniqueness of strong solutions to the system \eqref{A-3}. 

\subsection{Local solvability}
We discuss the local-in-time strong solvability for the isothermal and pressureless cases separately.

\subsubsection{Isothermal EPNS system}
Consider the following isothermal Euler--Poisson system coupled with Navier--Stokes system:
\begin{align}
\begin{aligned}\label{E-1}
&\pa_t \rho + \nabla \cdot (\rho u) = 0, \quad (x,t) \in \T^d \times \R_+,\cr
&\pa_t (\rho u) + \nabla \cdot (\rho u \otimes u) + \nabla \rho =  -\rho(u-v + \nabla K \star (\rho-1)) ,\cr
&\pa_t v + (v \cdot \nabla) v + \nabla p -\Delta v = \rho(u-v),\cr
&\nabla \cdot v =0.
\end{aligned}
\end{align}
Here we set $\sigma=1$ without loss of generality. Then, we reformulate the system \eqref{E-1} by letting $g := \log \rho$ as follows:
\begin{align}\label{E-2}
\begin{aligned}
&\pa_t g + \nabla g \cdot  u + \nabla \cdot u = 0, \quad (x,t) \in \T^d \times \R_+,\cr
&\pa_t u +  (u \cdot \nabla ) u + \nabla g = - (u-v+ \nabla K\star (e^g-1)),\\
&\pa_t v + (v \cdot \nabla) v + \nabla p -\Delta v = e^g(u-v),\cr
&\nabla \cdot v =0
\end{aligned}
\end{align}
subject to initial data:
\bq\label{ini_F-1}
(g(x,0), u(x,0),v(x,0)) =: (g_0(x), u_0(x), v_0(x)), \quad x \in \T^d.
\eq

Now, we state the result on the local well-posedness of the system \eqref{E-2}.

\begin{theorem}\label{L5.1} Let $d\geq 2$ and $s > d/2+1$. Suppose that the initial data  satisfies
\[
(g_0, u_0,v_0) \in H^s(\T^d) \times H^s(\T^d) \times H^s(\T^d) \quad \mbox{and}\quad e^{g_0} > 0.
\] 
Then for any positive constants $\epsilon_0 < M_0$, there exists a positive constant $T^*$ such that if $$\|g_0\|_{H^s}^2 + \|u_0\|_{H^s}^2 +\|v_0\|_{H^s}^2 < \epsilon_0,$$ then the system \eqref{E-2}-\eqref{ini_F-1} admits a unique solution 
\[
(g,u,v) \in \mc([0,T^*]; H^s(\T^d)) \times \mc([0,T^*]; H^s(\T^d)) \times \mc([0,T^*];H^s(\T^d))
\] 
satisfying
\[
\sup_{0 \leq t \leq T^*} \lt(\|g(\cdot,t)\|_{H^s}^2 + \|u(\cdot,t)\|_{H^s}^2 +\|v(\cdot,t)\|_{H^s}^2 \rt) \leq M_0.
\]
\end{theorem}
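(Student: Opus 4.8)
The plan is to prove Theorem \ref{L5.1} by a standard iteration scheme combined with uniform energy estimates for the symmetrized system \eqref{E-2}. First I would set up a linearized iteration: given the $n$-th iterate $(g^n, u^n, v^n)$, define $(g^{n+1}, u^{n+1}, v^{n+1})$ as the solution of the linear system obtained by freezing the transport and coupling coefficients at the previous step, namely
\begin{align*}
&\pa_t g^{n+1} + u^n \cdot \nabla g^{n+1} + \nabla \cdot u^{n+1} = 0,\cr
&\pa_t u^{n+1} + (u^n \cdot \nabla) u^{n+1} + \nabla g^{n+1} = -(u^{n+1} - v^n + \nabla K \star (e^{g^n}-1)),\cr
&\pa_t v^{n+1} + (v^n \cdot \nabla) v^{n+1} + \nabla p^{n+1} - \Delta v^{n+1} = e^{g^n}(u^n - v^{n+1}),\cr
&\nabla \cdot v^{n+1} = 0,
\end{align*}
with the common initial data \eqref{ini_F-1}. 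The existence and $H^s$-regularity of each linear step is classical (linear symmetric hyperbolic system for $(g^{n+1},u^{n+1})$ coupled to a linear Stokes-type equation for $v^{n+1}$, using that $H^s$ is an algebra and $e^{g^n}, \nabla K\star(e^{g^n}-1) \in H^s$).

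The heart of the argument is the uniform-in-$n$ a priori bound. Applying $\nabla^k$ for $0\le k\le s$, pairing with $\nabla^k g^{n+1}$, $\nabla^k u^{n+1}$, $\nabla^k v^{n+1}$ and summing, the skew-symmetric structure of the pair $(\nabla\cdot u, \nabla g)$ makes the top-order coupling terms cancel; the transport terms $u^n\cdot\nabla$ are handled by commutator (Moser/Kato--Ponce) estimates producing $\|\nabla u^n\|_{L^\infty}$ times lower-order norms; the viscous term $-\Delta v^{n+1}$ gives a good $\|\nabla^{k+1} v^{n+1}\|_{L^2}^2$ term; the damping term $-u^{n+1}$ gives a good $-\|\nabla^k u^{n+1}\|_{L^2}^2$ term (which we do not even need for local theory but is harmless); and the remaining source terms $\nabla K\star(e^{g^n}-1)$, $e^{g^n}(u^n-v^{n+1})$ are bounded using the algebra property and the boundedness of $x\mapsto e^x$ on the range of $g^n$. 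This yields a differential inequality of the form $\frac{d}{dt}\mathcal{Y}^{n+1} \le C\,P(\mathcal{Y}^n)\,\mathcal{Y}^{n+1} + C\,P(\mathcal{Y}^n)$ where $\mathcal{Y}^m := \|g^m\|_{H^s}^2 + \|u^m\|_{H^s}^2 + \|v^m\|_{H^s}^2$ and $P$ is a fixed polynomial; a continuity/bootstrap argument then shows that if $\mathcal{Y}^n(t)\le M_0$ on $[0,T^*]$ for $T^*$ small depending only on $M_0$ (and $\mathcal{Y}^n(0)<\epsilon_0$), then also $\mathcal{Y}^{n+1}(t)\le M_0$ on $[0,T^*]$. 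One also needs a lower bound $e^{g^{n+1}} \ge \tfrac12 e^{\inf g_0} > 0$ on $[0,T^*]$, which follows from the transport equation for $g^{n+1}$ and the bound $\int_0^{T^*}\|\nabla\cdot u^{n+1}\|_{L^\infty}\,ds$ being small; this is where the smallness $\epsilon_0 < M_0$ and short time $T^*$ are used to keep $g^n$ in a fixed compact interval so that $e^{g^n}$ and its derivatives stay controlled.

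Next I would prove contraction in the \emph{low} norm: setting $\delta g^{n+1} := g^{n+1}-g^n$, etc., the differences satisfy a linear system of the same type with source terms that are products of differences $(\delta g^n, \delta u^n, \delta v^n)$ with bounded ($H^s$, hence $W^{1,\infty}$ since $s>d/2+1$) coefficients. Running the same energy estimate but only at the $L^2$ level (to avoid losing a derivative from the quasilinear transport coefficient $\delta u^{n}\cdot\nabla g^{n+1}$, which one estimates by $\|\delta u^n\|_{L^2}\|\nabla g^{n+1}\|_{L^\infty}$), one obtains $\frac{d}{dt}\|(\delta g^{n+1},\delta u^{n+1},\delta v^{n+1})\|_{L^2}^2 \le C_{M_0}\big(\|(\delta g^{n+1},\ldots)\|_{L^2}^2 + \|(\delta g^n,\delta u^n,\delta v^n)\|_{L^2}^2\big)$, so that on a possibly smaller time interval the map is a contraction in $\mc([0,T^*];L^2)$. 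The limit $(g,u,v)$ then lies in $L^\infty(0,T^*;H^s)$ by weak-$*$ lower semicontinuity, and standard arguments (Aubin--Lions for strong convergence of $(g^n,u^n)$, then passing to the limit in the equations, plus the Bona--Smith or interpolation technique for time-continuity) upgrade this to $\mc([0,T^*];H^s)$. Uniqueness follows from the same $L^2$ difference estimate applied to two solutions.

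The main obstacle I expect is the interplay between the quasilinear transport structure and the need for a uniform $H^s$ bound together with the positivity of $\rho = e^g$: the commutator estimates for $u\cdot\nabla$ at order $s$ require $\|\nabla u\|_{L^\infty}\lesssim \|u\|_{H^s}$ (valid since $s>d/2+1$), but keeping the constants in the bootstrap \emph{independent} of $n$ — and ensuring the lower bound on $e^{g^n}$ does not degenerate along the iteration — forces the careful choice of $T^*$ as a function of $M_0$ only. The other delicate point is the standard loss-of-derivative in contraction, resolved by contracting in $L^2$ rather than $H^s$; this is routine but must be stated carefully because the transport coefficient difference $\delta u^n$ is only controlled in $L^2$ while $\nabla g^{n+1}$ must be taken in $L^\infty$. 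Since these are by now classical techniques for symmetric hyperbolic--parabolic coupled systems, I would present the energy estimate in detail and refer to the literature (e.g.\ \cite{Choi15,Choi16} and the references on local theory cited in the paper) for the routine convergence and time-continuity arguments; the full details are relegated to Appendix \ref{app_local}.
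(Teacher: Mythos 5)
Your proposal is correct and follows essentially the same route as the paper's Appendix~\ref{app_local}: the identical linearized iteration \eqref{AppA-5}, uniform-in-$n$ $H^s$ energy bounds via Moser/commutator estimates on a short time interval $T^*$ depending only on the size parameters, an $L^2$-level Cauchy/contraction estimate to avoid the loss of derivative, and interpolation plus standard arguments to recover $\mc([0,T^*];H^s)$ regularity and uniqueness. The only superfluous point is your concern about a lower bound on $e^{g^n}$: since the unknown is $g=\log\rho$ and the estimates only ever use $e^{\|g\|_{L^\infty}}$ as an upper bound, positivity of $\rho$ is automatic and no such lower bound is needed.
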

\begin{proof}
Since the proof is rather lengthy and technical, we leave it in Appendix \ref{app_local}.
\end{proof}

\subsubsection{Pressureless EPNS system}\label{sec:npE}

For the pressureless case $\sigma=0$, we set $h := \rho - 1$ and reformulate the system \eqref{E-1} as 
\begin{align}\label{E-4}
\begin{aligned}
&\partial_t h + \nabla\cdot ((1+ h) u) = 0, \quad (x,t) \in \T^d \times \R_+,\\
&\partial_t u + (u \cdot \nabla) u = - (u-v+ \nabla K \star h),\\
&\pa_t v + (v \cdot \nabla) v +\nabla p -\Delta v = (1+h)(u-v),\\
&\nabla \cdot v = 0.
\end{aligned}
\end{align}
For the local-in-time strong solvability of the above system, we use a similar strategy to that for Theorem \ref{L5.1}, see Appendix \ref{app_local}; we construct a sequence of approximate solutions to the reformulated system:
\begin{align*}
&\partial_t h^{n+1} + \nabla\cdot ((1+ h^{n+1}) u^n) = 0, \quad (x,t) \in \T^d \times \R_+,\\
&\partial_t u^{n+1} + (u^n \cdot \nabla) u^{n+1} = - (u^{n+1}-v^n+ \nabla K \star h^n),\\
&\pa_t v^{n+1} + (v^n \cdot \nabla) v^{n+1} +\nabla p -\Delta v^{n+1} = (1+h^n)(u^n-v^{n+1}),\\
&\nabla \cdot v^{n+1} = 0,
\end{align*}
In this case, we use the following estimate for the Coulomb interaction term:
\begin{align*}
\int_{\T^d} \nabla (\nabla K \star h^n) : \nabla u^{n+1} \, dx & = \sum_{i,j=1}^d  \int_{\T^d} \partial_{x_j} (\partial_{x_i}(K\star h^n)) \partial_{x_j} u_i^{n+1}\,dx\\
&= \sum_{i,j=1}^d \int_{\T^d} \partial_{x_j}\partial_{x_j} (K\star h^n)) \partial_{x_i} u_i^{n+1} \,dx\\
&=  \int_{\T^d} \Delta K \star h^n \nabla \cdot u^{n+1} \,dx\cr
& = -\int_{\T^d} h^n \nabla \cdot u^{n+1}\,dx.
\end{align*}
Thus, the above observation enables us to have $H^{s+1}$-estimates for $u$ and thus for $v$, i.e., for any $M>N$, if
\[
\|h_0\|_{H^s}^2 + \|u_0\|_{H^{s+1}}^2 + \|v_0\|_{H^{s+1}}^2 < N,
\]
then there exists $T^*>0$ such that
\[
\sup_{0 \le t \le T^*}\left( \|h^n(\cdot,t)\|_{H^s}^2 + \|u^n(\cdot,t)\|_{H^{s+1}} + \|v^n(\cdot,t)\|_{H^{s+1}}^2 \right) < M \quad \forall \, n \in \bbn.
\]
Thus, we can employ the similar argument to Theorem \ref{L5.1} to obtain the local-in-time well-posedness of strong solutions to the system \eqref{E-1} with $\sigma=0$.

\begin{theorem}\label{L5.2} 
Let $d\geq 2$ and $s > d/2+1$. Suppose that the initial data  satisfies
\[
(h_0, u_0,v_0) \in H^s(\T^d) \times H^{s+1}(\T^d) \times H^{s+1}(\T^d) \quad \mbox{and}\quad 1+h_0 > 0.
\] 
Then for any positive constants $\epsilon_0 < M_0$, there exists a positive constant $T^*$ such that if 
\[
\|h_0\|_{H^s}^2 + \|u_0\|_{H^{s+1}}^2 +\|v_0\|_{H^{s+1}}^2 < \epsilon_0,
\]
 then the system \eqref{E-4} admits a unique solution 
 \[
 (h,u,v) \in \mc([0,T^*]; H^s(\T^d)) \times \mc([0,T^*]; H^{s+1}(\T^d)) \times \mc([0,T^*];H^{s+1}(\T^d)) 
 \]
 satisfying
\[
\sup_{0 \leq t \leq T^*} \lt(\|h(\cdot,t)\|_{H^s}^2 + \|u(\cdot,t)\|_{H^{s+1}}^2 +\|v(\cdot,t)\|_{H^{s+1}}^2 \rt) \leq M_0.
\]
\end{theorem}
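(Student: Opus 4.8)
The plan is to establish Theorem~\ref{L5.2} by the standard iteration scheme: construct the sequence of linearized approximate solutions displayed just above the statement, derive uniform-in-$n$ a priori bounds on a common time interval $[0,T^*]$, show the sequence is Cauchy in a lower-order norm, and pass to the limit. Since the argument parallels the proof of Theorem~\ref{L5.1} deferred to Appendix~\ref{app_local}, I would only highlight the points where the pressureless case genuinely differs.

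First I would set up the energy estimates for a single iterate. For the density equation $\pa_t h^{n+1} + \nabla\cdot((1+h^{n+1})u^n)=0$, applying $\nabla^k$ for $|k|\le s$, multiplying by $\nabla^k h^{n+1}$ and integrating, the commutator $[\nabla^k, u^n\cdot\nabla]h^{n+1}$ is controlled by Moser-type estimates in terms of $\|u^n\|_{H^{s+1}}$ and $\|h^{n+1}\|_{H^s}$ (here one uses $s>d/2+1$ so $H^s$ is an algebra and $\nabla u^n\in H^s\hookrightarrow L^\infty$), while the term $(1+h^{n+1})\nabla\cdot u^n$ needs $\nabla\cdot u^n\in H^s$, i.e.\ $u^n\in H^{s+1}$ — this is exactly why $u$ is carried at one order higher than $h$. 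For the velocity equation $\pa_t u^{n+1}+(u^n\cdot\nabla)u^{n+1}=-(u^{n+1}-v^n+\nabla K\star h^n)$, I would run the $H^{s+1}$ estimate; the transport commutator is again handled by Moser, the damping term $-u^{n+1}$ produces a good sign, the source $v^n$ is bounded in $H^{s+1}$, and the Coulomb term contributes $-\int_{\T^d}\nabla^{k}(\nabla K\star h^n):\nabla^k u^{n+1}\,dx$; the crucial gain is the computation displayed before the theorem, which turns the top-order piece of this term into $-\int_{\T^d}\nabla^{s}h^n\,\nabla^s(\nabla\cdot u^{n+1})\,dx$, i.e.\ $\nabla K\star h^n$ only costs as many derivatives of $h^n$ as $u^{n+1}$ has, so $h^n\in H^s$ suffices to close the $H^{s+1}$ bound on $u^{n+1}$. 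Finally, for $\pa_t v^{n+1}+(v^n\cdot\nabla)v^{n+1}+\nabla p-\Delta v^{n+1}=(1+h^n)(u^n-v^{n+1})$, the viscous term gives the dissipative $+\|\nabla^{s+2}v^{n+1}\|_{L^2}^2$, the pressure drops out after projecting onto divergence-free fields, the transport commutator is Moser-controlled, and $(1+h^n)(u^n-v^{n+1})\in H^{s+1}$ since $H^s$ is an algebra and $h^n,u^n,v^{n+1}$ are at level $s,s+1,s+1$; absorbing $\|v^{n+1}\|_{H^{s+1}}^2$ and using Young's inequality against the dissipation closes the $H^{s+1}$ bound on $v^{n+1}$.

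Next I would combine these into a single differential inequality for $Y^{n+1}(t):=\|h^{n+1}\|_{H^s}^2+\|u^{n+1}\|_{H^{s+1}}^2+\|v^{n+1}\|_{H^{s+1}}^2$ of the form $\frac{d}{dt}Y^{n+1}\le C(1+Y^n)(1+Y^n+Y^{n+1})$ (schematically), and run the bootstrap: given $\epsilon_0<M_0$, one shows by induction that if $Y^n(0)<\epsilon_0$ for all $n$ then there is $T^*>0$, depending only on $M_0$ and the coefficients, such that $\sup_{[0,T^*]}Y^n\le M_0$ for all $n$, exactly as asserted in the displayed two inequalities before the statement. Positivity $1+h^{n+1}>0$ on $[0,T^*]$ follows by writing the density along the flow of $u^n$ (or by noting $\|h^{n+1}\|_{L^\infty}\le C\|h^{n+1}\|_{H^s}$ stays small on a possibly shorter interval, then shrinking $T^*$), and I would also record the uniform bound on $\pa_t$ of each iterate in $H^{s-1}$ from the equations, giving equicontinuity in time.

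Then I would prove convergence: writing the differences $\delta h^{n+1}=h^{n+1}-h^n$, etc., and performing the same energy estimates \emph{one order lower} — i.e.\ in $H^{s-1}\times H^s\times H^s$ — I get a contraction-type inequality $\frac{d}{dt}\|(\delta h^{n+1},\delta u^{n+1},\delta v^{n+1})\|_*^2\le C\|(\delta h^{n+1},\delta u^{n+1},\delta v^{n+1})\|_*^2 + C\|(\delta h^{n},\delta u^{n},\delta v^{n})\|_*^2$ on $[0,T^*]$, with $C$ uniform thanks to the a priori bounds; Gr\"onwall plus a standard summation argument (or shrinking $T^*$) shows the sequence is Cauchy in $\mc([0,T^*];H^{s-1}\times H^s\times H^s)$. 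Interpolating with the uniform high-norm bounds upgrades the convergence to $\mc([0,T^*];H^{s'}\times H^{s'+1}\times H^{s'+1})$ for every $s'<s$, which is enough to pass to the limit in every term of \eqref{E-4} in the sense of distributions; the limit inherits the bound $\sup_{[0,T^*]}Y\le M_0$ by weak lower semicontinuity, and time-continuity at the top level $\mc([0,T^*];H^s\times H^{s+1}\times H^{s+1})$ follows from the Bona--Smith / Aubin--Lions type argument (weak continuity plus continuity of the norm). Uniqueness is the same difference estimate applied to two solutions with identical data. I expect the main obstacle to be bookkeeping the mismatched regularity levels consistently — making sure every nonlinear and commutator term genuinely closes with $h$ at order $s$ and $u,v$ at order $s+1$, the Coulomb term being the delicate one where the identity displayed before the theorem is indispensable — rather than any single hard estimate; the viscous dissipation in the $v$-equation makes that component the easiest, and the absence of pressure in the Euler part is precisely what forces (and permits) the $H^{s+1}$ regularity of $u$.
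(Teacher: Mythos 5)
Your proposal is correct and follows essentially the same route as the paper: the paper also constructs the displayed iteration scheme, closes the uniform $H^s\times H^{s+1}\times H^{s+1}$ bounds using precisely the integration-by-parts identity $\int_{\T^d}\nabla(\nabla K\star h^n):\nabla u^{n+1}\,dx=-\int_{\T^d}h^n\,\nabla\cdot u^{n+1}\,dx$ to avoid losing a derivative on $h^n$, and then invokes the same machinery as in the proof of Theorem \ref{L5.1} (Appendix \ref{app_local}): uniform-in-$n$ bounds, a Cauchy/contraction estimate in a lower-order norm, interpolation with the high norm, and a standard functional-analytic argument for top-order time continuity and uniqueness. The only cosmetic difference is that the paper runs the contraction estimate in $L^2$ for all three components rather than in $H^{s-1}\times H^s\times H^s$, which changes nothing essential.
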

%%%%%%%%%%%%%%%%%%%%%%%%%%%%%%%%%%%%%%%%%%%%%%%%%%%
%
%
%
% \section{Proof of Theorem \ref{T2.3}: Isothermal case}\label{app.B}
%
%
%
%%%%%%%%%%%%%%%%%%%%%%%%%%%%%%%%%%%%%%%%%%%%%%%%%%%

\subsection{Proof of Theorem \ref{T2.3}}\label{app.B}
 
In this subsection, we take further steps to obtain the global-in-time existence of strong solutions to the system \eqref{E-2}. First, we define
\[
\mathfrak{X}(T;k) := \sup_{0\le t \le T} \left( \|g(\cdot, t)\|_{H^k}^2 + \|u(\cdot, t)\|_{H^k}^2 + \|v(\cdot,t)\|_{H^k}^2\right) \quad \mbox{and} \quad \mathfrak{X}_0(k) = \|g_0\|_{H^k}^2 + \|u_0\|_{H^k}^2 + \|v_0\|_{H^k}^2.
\]
We then temporarily move back to the original system \eqref{E-1} and provide the energy estimate.

\begin{proposition}\label{P5.1}
Let $T>0$, and suppose that $(\rho, u,v)$ is a strong solution to \eqref{E-1} on the time interval $[0,T]$ corresponding to the initial data $(\rho_0, u_0, v_0)$. Then, we have
\[
\begin{split}
\frac12\frac{d}{dt}&\left(\int_{\T^d} \rho |u|^2\,dx + \int_{\T^d}|\nabla K\star(\rho-1)|^2\,dx +\int_{\T^d}|v|^2\,dx + 2\sigma \int_{\T^d}\rho \log\rho\,dx\right) \\
&\quad + \int_{\T^d}|\nabla v|^2\,dx + \int_{\T^d} \rho|u-v|^2\,dx=0.
\end{split}
\]
\end{proposition}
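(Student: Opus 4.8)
The plan is to test each equation in \eqref{E-1} against its natural multiplier, sum the resulting identities, and check that the force and pressure contributions cancel in pairs. First I would multiply the momentum equation by $u$ and integrate over $\T^d$. Invoking the continuity equation $\pa_t\rho+\nabla\cdot(\rho u)=0$, the transport part collapses into a perfect time derivative,
\[
\int_{\T^d}\left(\pa_t(\rho u)+\nabla\cdot(\rho u\otimes u)\right)\cdot u\,dx=\frac{d}{dt}\int_{\T^d}\frac12\rho|u|^2\,dx,
\]
because each of the two terms produces a $\pm\frac12\int_{\T^d}\pa_t\rho\,|u|^2\,dx$ which cancel. This gives
\[
\frac{d}{dt}\int_{\T^d}\frac12\rho|u|^2\,dx+\sigma\int_{\T^d}u\cdot\nabla\rho\,dx=-\int_{\T^d}\rho(u-v)\cdot u\,dx-\int_{\T^d}\rho u\cdot\nabla K\star(\rho-1)\,dx.
\]

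Next I would differentiate the internal energy $\sigma\int_{\T^d}\rho\log\rho\,dx$ in time; inserting $\pa_t\rho=-\nabla\cdot(\rho u)$ and integrating by parts yields $\sigma\frac{d}{dt}\int_{\T^d}\rho\log\rho\,dx=\sigma\int_{\T^d}\rho u\cdot\nabla\log\rho\,dx=\sigma\int_{\T^d}u\cdot\nabla\rho\,dx$, which is the pressure term above with the opposite sign. For the Coulomb energy I would write $U:=K\star(\rho-1)$, the mean-zero solution of $-\Delta U=\rho-1$; then
\[
\frac{d}{dt}\int_{\T^d}\frac12|\nabla U|^2\,dx=-\int_{\T^d}U\,\Delta\pa_t U\,dx=\int_{\T^d}U\,\pa_t\rho\,dx=-\int_{\T^d}U\,\nabla\cdot(\rho u)\,dx=\int_{\T^d}\rho u\cdot\nabla K\star(\rho-1)\,dx,
\]
which cancels the remaining force term in the momentum identity. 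Finally I would test the incompressible Navier--Stokes equation against $v$: the convective term and the pressure gradient vanish since $\nabla\cdot v=0$, leaving
\[
\frac{d}{dt}\int_{\T^d}\frac12|v|^2\,dx+\int_{\T^d}|\nabla v|^2\,dx=\int_{\T^d}\rho(u-v)\cdot v\,dx.
\]

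Adding the four identities, the pairs $\pm\sigma\int_{\T^d}u\cdot\nabla\rho\,dx$ and $\pm\int_{\T^d}\rho u\cdot\nabla K\star(\rho-1)\,dx$ cancel, while the two drag contributions combine into $-\int_{\T^d}\rho(u-v)\cdot u\,dx+\int_{\T^d}\rho(u-v)\cdot v\,dx=-\int_{\T^d}\rho|u-v|^2\,dx$, which produces precisely the stated free energy balance. I do not expect a real obstacle here: the only delicate points are the two integration-by-parts cancellations — pressure against internal energy, and the Brinkman/Coulomb force against the Poisson energy — and verifying that all manipulations are legitimate. The latter is immediate since $(\rho,u,v)$ is assumed to be a strong solution on $[0,T]$, so $\rho$ remains positive and bounded (hence $\rho\log\rho$ and $\nabla K\star(\rho-1)$ belong to $H^s$) and every integral and time derivative above is well defined.
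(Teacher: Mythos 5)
Your proposal is correct and follows essentially the same route as the paper's proof: multiply the momentum equation by $u$ and use the continuity equation, differentiate the internal and Coulomb energies in time to cancel the pressure and electrostatic force terms, and test the Navier--Stokes equation against $v$ so that the two drag contributions combine into $-\int_{\T^d}\rho|u-v|^2\,dx$. All cancellations you identify are exactly the ones used in the paper.
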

\begin{proof}
We first easily find
\[
\frac12\frac{d}{dt}\int_{\T^d}\rho |u|^2\,dx = -\sigma\int_{\T^d} \nabla \rho \cdot u\,dx -\int_{\T^d}\rho u \cdot \nabla K \star (\rho-1)\,dx -\int_{\T^d}\rho(u-v)\cdot u\,dx.
\]
On the other hand, the first two terms on the right hand side of the above can be estimated as
\[
\frac12\frac{d}{dt}\int_{\T^d} |\nabla K \star (\rho-1)|^2\,dx = \int_{\T^d} \rho u \cdot \nabla K \star (\rho-1)\,dx
\]
and
\[
\frac{d}{dt} \int_{\T^d} \rho \log \rho\,dx = \int_{\T^d}u \cdot \nabla \rho\,dx.
\]
For the incompressible Navier--Stokes equations, we get
\[
\frac12\frac{d}{dt}\int_{\T^d} v^2\,dx = -\int_{\T^d} |\nabla v|^2\,dx - \int_{\T^d}\rho(v-u)\cdot v\,dx.
\]
Finally, we combine all the above estimates to have the desired result.
\end{proof}

Note that
\[
\int_{\T^d}\rho \log\rho \,dx = \int_{\T^d}( \rho\log\rho + 1 -\rho)\,dx = \int_{\T^d} ((g-1)e^g + 1)\,dx,
\]
and we use the second-order Taylor approximation for $f(x) = (x-1) e^x$ at $x=0$ to get
\[
(1-\|g\|_{L^\infty})e^{-\|g\|_{L^\infty}}\int_{\T^d} |g|^2\,dx \le \int_{\T^d}\rho \log\rho \,dx \le (1+\|g\|_{L^\infty})e^{\|g\|_{L^\infty}}\int_{\T^d} |g|^2\,dx
\]
If we choose a sufficiently small $\e_1>0$ satisfying
\[
\mathfrak{X}(T;s)\le \e_1^2 \ll 1 \quad \mbox{so that}\quad \sup_{0 \le t \le T} \|g(\cdot, t)\|_{L^\infty} \le \log 2,
\]
then by Proposition \ref{P5.1} with $\sigma=1$ we have
\[
\begin{split}
\mathfrak{X}(T;0) &\le C\left(\int_{\T^d} \rho|u|^2\,dx + \int_{\T^d} |\nabla K\star(\rho-1)|^2\,dx + \int_{\T^d} |v|^2\,dx +\int_{\T^d}\rho\log\rho\,dx\right)\\
&\le C\left(\int_{\T^d} \rho_0|u_0|^2\,dx + \int_{\T^d} |\nabla K\star(\rho_0-1)|^2\,dx + \int_{\T^d} |v_0|^2\,dx +\int_{\T^d}\rho_0\log\rho_0\,dx\right)\\
&\le C\mathfrak{X}_0(0),
\end{split}
\]
where $C$ is independent of $T$ and we used
\[
\|\nabla K\star(\rho-1)\|_{L^2} \leq \|\nabla K\|_{L^1}\|\rho - 1\|_{L^2} = \|\nabla K\|_{L^1}\|e^g-e^0\|_{L^2} \le \|\nabla K\|_{L^1}e^{\|g\|_{L^\infty}}\|g\|_{L^2}.
\]
In the sequel, we provide higher-order derivative estimates, and for this, the following Moser-type inequalities will be significantly used.
\begin{lemma}\label{lem_moser} For any pair of functions $f,g \in (H^k \cap L^\infty)(\T^d)$, we obtain
\[
\|\nabla^k (fg)\|_{L^2} \le C\lt(\|f\|_{L^\infty} \|\nabla^k g\|_{L^2} + \|\nabla^k f\|_{L^2}\|g\|_{L^\infty}\rt).
\]
Furthermore, if $\nabla f \in L^\infty(\T^d)$, we have
\[
\|\nabla^k(fg) - f\nabla^k g\|_{L^2} \le C\lt(\|\nabla f\|_{L^\infty}\|\nabla^{k-1} g\|_{L^2} + \|g\|_{L^\infty}\|\nabla^k f\|_{L^2}\rt).
\]
Here $C>0$ only depends on $k$ and $d$.
\end{lemma}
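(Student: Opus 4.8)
The plan is to reduce both bounds to the Leibniz rule combined with Gagliardo--Nirenberg interpolation inequalities on $\T^d$. First I would write, for a multi-index with $|\alpha|=k$,
\[
\pa^\alpha(fg) = \sum_{\beta \le \alpha} \binom{\alpha}{\beta} \pa^\beta f\, \pa^{\alpha-\beta}g,
\]
so that it suffices to estimate each term $\|\pa^\beta f\,\pa^{\alpha-\beta}g\|_{L^2}$ with $|\beta| = j$, $0 \le j \le k$. For such a term I would apply H\"older's inequality with exponents $p = 2k/j$ and $q = 2k/(k-j)$ (so that $1/p+1/q = 1/2$; the endpoints $j=0$ and $j=k$ use $L^2 \cdot L^\infty$ directly), obtaining $\|\pa^\beta f\,\pa^{\alpha-\beta}g\|_{L^2} \le \|\pa^\beta f\|_{L^p}\|\pa^{\alpha-\beta}g\|_{L^q}$.

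Next I would invoke the Gagliardo--Nirenberg inequality on the torus in the form
\[
\|\nabla^j f\|_{L^{2k/j}} \le C\,\|\nabla^k f\|_{L^2}^{j/k}\,\|f\|_{L^\infty}^{1-j/k}, \qquad 0 \le j \le k,
\]
and the analogous inequality for $g$ with $j$ replaced by $k-j$. Multiplying these and using Young's inequality $ab \le \frac{j}{k}a^{k/j} + \frac{k-j}{k}b^{k/(k-j)}$ to split the product of fractional powers gives
\[
\|\pa^\beta f\,\pa^{\alpha-\beta}g\|_{L^2} \le C\bigl(\|\nabla^k f\|_{L^2}\|g\|_{L^\infty} + \|f\|_{L^\infty}\|\nabla^k g\|_{L^2}\bigr),
\]
and summing over $\beta$ yields the first inequality.

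For the commutator estimate, I would note that $\nabla^k(fg) - f\nabla^k g$ consists precisely of the Leibniz terms with $1 \le j \le k$, so no term falls entirely on $g$. Here I would interpolate the derivatives of $f$ between $\nabla f \in L^\infty$ and $\nabla^k f \in L^2$: for $1 \le j \le k$,
\[
\|\nabla^j f\|_{L^{2(k-1)/(j-1)}} \le C\,\|\nabla^k f\|_{L^2}^{(j-1)/(k-1)}\,\|\nabla f\|_{L^\infty}^{1 - (j-1)/(k-1)},
\]
paired with $\|\nabla^{k-j}g\|_{L^{2(k-1)/(k-j)}} \le C\,\|\nabla^{k-1}g\|_{L^2}^{(k-j)/(k-1)}\,\|g\|_{L^\infty}^{1-(k-j)/(k-1)}$ (with the convention that the $k=1$ case is trivial and the endpoint exponents are interpreted as $L^\infty$). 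Since $\frac{j-1}{2(k-1)} + \frac{k-j}{2(k-1)} = \frac12$, H\"older and Young's inequality again combine these into the bound $C(\|\nabla f\|_{L^\infty}\|\nabla^{k-1}g\|_{L^2} + \|g\|_{L^\infty}\|\nabla^k f\|_{L^2})$, and summing over $j$ finishes the proof. The main technical point to be careful about is the bookkeeping of the interpolation exponents at the endpoints $j=1$ and $j=k$ (where one factor is simply taken in $L^\infty$), and the fact that the Gagliardo--Nirenberg inequalities hold on $\T^d$ without a lower-order term only because the Leibniz terms always carry at least one derivative; otherwise the standard Poincar\'e-type correction must be absorbed using $\|f\|_{L^\infty}$. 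I expect this endpoint bookkeeping, rather than any deep idea, to be the only real obstacle.
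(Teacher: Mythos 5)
Your proposal is correct: the Leibniz expansion, the H\"older pairing with exponents $2k/j$ and $2k/(k-j)$, the Gagliardo--Nirenberg interpolations $\|\nabla^j f\|_{L^{2k/j}} \le C\|\nabla^k f\|_{L^2}^{j/k}\|f\|_{L^\infty}^{1-j/k}$ (applied to $\nabla f$ and $\nabla^{k-1}g$ in the commutator case), and the final Young step are exactly the canonical proof of these Moser-type calculus inequalities, and you correctly flag the only genuine subtlety, namely that the multiplicative form of Gagliardo--Nirenberg survives on $\T^d$ because every Leibniz term carries at least one derivative (so the integration-by-parts proof of the interpolation inequality produces no boundary or lower-order terms). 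The paper states Lemma \ref{lem_moser} without proof as a classical fact, so there is no alternative argument to compare against; your write-up is the standard one.
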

We begin by estimating the higher-order derivates of $(g,u)$.
\begin{lemma}\label{LB.1}
Let $s>d/2+1$, $T>0$ be given and suppose that $\mathfrak{X}(T;s) \le \e_1^2 \ll 1$. Then we have
\begin{align*}
\frac{d}{dt}&\lt(\|\nabla^k (\nabla g)\|_{L^2}^2 + \|\nabla^k (\nabla u)\|_{L^2}^2\rt) + \frac32\|\nabla^k(\nabla u)\|_{L^2}^2\\
&\le C\e_1 \lt(\|\nabla^k (\nabla g)\|_{L^2}^2 + \|\nabla^k (\nabla u)\|_{L^2}^2\rt) + C\|g\|_{H^k}^2 + 4\|\nabla^k (\nabla v)\|_{L^2}^2
\end{align*}
for $0 \le k \le s-1$, where $C$ is a positive constant independent of $T$.
\end{lemma}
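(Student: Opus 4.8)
The plan is to perform a standard energy estimate on the reformulated system \eqref{E-2}, applying $\nabla^{k+1}$ to the equations for $g$ and $u$, multiplying by $\nabla^{k+1}g$ and $\nabla^{k+1}u$ respectively, and integrating over $\T^d$. The key structural feature to exploit is the relative-damping term $-(u-v)$ in the velocity equation, which upon differentiation produces $-\|\nabla^k(\nabla u)\|_{L^2}^2$ on the left-hand side; together with a coupling term $+\int \nabla^k(\nabla v)\cdot\nabla^k(\nabla u)\,dx$ estimated by Young's inequality this accounts for the $\tfrac32\|\nabla^k(\nabla u)\|_{L^2}^2$ gain and the $4\|\nabla^k(\nabla v)\|_{L^2}^2$ loss on the right. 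The pressure/density gradient $\nabla g$ in the $u$-equation and the $\nabla\cdot u$ term in the $g$-equation are antisymmetric after integration by parts: the cross terms $\int \nabla^{k+1}(\nabla g)\cdot\nabla^{k+1}u\,dx$ arising from $\nabla g$ in the $u$-equation and $\int \nabla^{k+1}(\nabla\cdot u)\,\nabla^{k+1}g\,dx$ from $\nabla\cdot u$ in the $g$-equation cancel (this is exactly the symmetrization the change of variables $g=\log\rho$ is designed to produce), so no derivative is lost from the isothermal pressure.

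Next I would treat the transport terms $\nabla g\cdot u$ and $(u\cdot\nabla)u$. Writing $\nabla^{k+1}(u\cdot\nabla u) = (u\cdot\nabla)\nabla^{k+1}u + [\nabla^{k+1},u\cdot\nabla]u$, the principal part integrates against $\nabla^{k+1}u$ to $\tfrac12\int (u\cdot\nabla)|\nabla^{k+1}u|^2\,dx = -\tfrac12\int(\nabla\cdot u)|\nabla^{k+1}u|^2\,dx$, which is bounded by $\|\nabla u\|_{L^\infty}\|\nabla^{k+1}u\|_{L^2}^2 \lesssim \e_1\|\nabla^k(\nabla u)\|_{L^2}^2$ using the smallness hypothesis $\mathfrak{X}(T;s)\le\e_1^2$ and Sobolev embedding $H^{s-1}\hookrightarrow L^\infty$ (valid since $s-1>d/2$). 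The commutator is controlled by the second Moser inequality in Lemma \ref{lem_moser}, giving $\|[\nabla^{k+1},u\cdot\nabla]u\|_{L^2}\lesssim \|\nabla u\|_{L^\infty}\|\nabla^{k+1}u\|_{L^2}\lesssim \e_1\|\nabla^k(\nabla u)\|_{L^2}$; the same treatment applies to $\nabla g\cdot u$, producing terms absorbed into $C\e_1(\|\nabla^k(\nabla g)\|_{L^2}^2+\|\nabla^k(\nabla u)\|_{L^2}^2)$. The remaining $+v$ in the damping term contributes $\int \nabla^{k+1}v\cdot\nabla^{k+1}u\,dx\le \|\nabla^k(\nabla v)\|_{L^2}^2+\tfrac14\|\nabla^k(\nabla u)\|_{L^2}^2$, consistent with the stated constants.

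The genuinely delicate term is the Coulomb contribution $-\int \nabla^{k+1}(\nabla K\star(e^g-1))\cdot\nabla^{k+1}u\,dx$. Here I would integrate by parts once to move a derivative off $u$ and onto the potential, then use $-\Delta(K\star h)=h$: schematically $\int \nabla^{k}(\nabla K\star(e^g-1)):\nabla^{k}(\nabla u)\,dx$ becomes, after the $\nabla\cdot u$ manipulation shown in the excerpt for the pressureless case, an expression of the form $-\int \nabla^k(e^g-1)\,\nabla^k(\nabla\cdot u)\,dx$ plus lower-order commutators — but in the isothermal case this term cannot be expected to cancel, so instead one estimates it directly. Since $K\star$ is a smoothing operator of order two, $\|\nabla^{k+1}(\nabla K\star(e^g-1))\|_{L^2}=\|\nabla^{k}(\nabla K\star\nabla(e^g-1))\|_{L^2}\lesssim \|\nabla^k(e^g-1)\|_{L^2}\lesssim \|g\|_{H^k}$, using $e^g-1 = g\cdot\phi(g)$ with $\phi$ smooth and the Moser inequality together with $\|g\|_{L^\infty}\le\log 2$; pairing this with $\|\nabla^{k+1}u\|_{L^2}$ and Young's inequality yields a contribution bounded by $C\|g\|_{H^k}^2 + C\e_1\|\nabla^k(\nabla u)\|_{L^2}^2$, which matches the $C\|g\|_{H^k}^2$ term in the statement. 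I expect this Coulomb estimate to be the main obstacle, precisely because it is the only place where one does not gain a power of $\e_1$ uniformly — the $\|g\|_{H^k}^2$ term it generates is not small and must be carried along, to be closed later (in the global argument) against a dissipative mechanism for $g$ coming from the pressure combined with the damping. Summing the $(g,u)$ estimates and collecting all bounds gives exactly the claimed differential inequality for $0\le k\le s-1$.
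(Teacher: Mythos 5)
Your proposal is correct and follows essentially the same route as the paper: differentiate $k+1$ times, exploit the cancellation of the symmetric cross terms $\int\nabla^{k+1}g\,\nabla^k(\nabla\cdot u)\,dx$, control the transport terms by Moser commutator estimates and $\|\nabla(g,u)\|_{L^\infty}\lesssim\e_1$, absorb part of the damping via Young's inequality against the $v$-coupling, and reduce the Coulomb term to $\|\nabla^k(e^g-1)\|_{L^2}\lesssim\|g\|_{H^k}$. The only cosmetic difference is that the paper converts the Coulomb term by integration by parts and $-\Delta K\star h=h$ into $\int\nabla^k(e^g-1)\,\nabla^k(\nabla\cdot u)\,dx$, whereas you invoke the $L^2$-boundedness of $\nabla^2K\star$ directly; both land on the same bound.
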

\begin{proof}
%Since the proof is rather technical, we postpone it to Appendix \ref{app_LB.1} for the smooth flow of reading.
We separately estimate the case $k=0$ and $k>0$ as follows:\\

\noindent $\bullet$ (Step A: First-order estimate) For $\nabla g$, we have
\begin{align*}
\frac12\frac{d}{dt}\|\nabla g\|_{L^2}^2 &= -\int_{\T^d} \nabla(\nabla g \cdot u)\cdot \nabla g\,dx -\int_{\T^d} \nabla g \cdot \nabla(\nabla \cdot u)\,dx\\
&= \frac12 \int_{\T^d} (\nabla \cdot u)|\nabla g|^2\,dx -\int_{\T^d} (\nabla g \cdot \nabla)u \cdot \nabla g\,dx -\int_{\T^d} \nabla g \cdot \nabla (\nabla \cdot u)\,dx\\
&\le \frac32\|\nabla u\|_{L^\infty}\|\nabla g\|_{L^2}^2 -\int_{\T^d} \nabla g \cdot \nabla (\nabla \cdot u)\,dx\\
&\le C\e_1\|\nabla g\|_{L^2}^2 -\int_{\T^d} \nabla g \cdot \nabla (\nabla \cdot u)\,dx
\end{align*}
Then, for $\nabla u$, we obtain
\begin{align*}
\frac12\frac{d}{dt}\|\nabla u\|_{L^2}^2 &= -\int_{\T^d} \nabla(u \cdot \nabla u) :\nabla u\,dx -\int_{\T^d} \nabla^2 g : \nabla u\,dx \\
&\quad -\int_{\T^d} \nabla (\nabla K\star(e^g -1)):\nabla u\,dx -\int_{\T^d} \nabla(u-v):\nabla u\,dx\\
&\le C\|\nabla u\|_{L^\infty}\|\nabla u\|_{L^2}^2 -\int_{\T^d} \nabla^2 g :\nabla u\,dx + \int_{\T^d} (e^g-1)(\nabla\cdot u)\,dx -\frac78 \|\nabla u\|_{L^2}^2 + 2\|\nabla v\|_{L^2}^2\\
&\le C\e_1 \|\nabla u\|_{L^2}^2 -\int_{\T^d} \nabla^2 g :\nabla u\,dx + C\|g\|_{L^2}^2 -\frac34\|\nabla u\|_{L^2} + 2\|\nabla v\|_{L^2}^2,
\end{align*}
where $C$ depends on $k$ and $d$ and we used the smallness of $\e_1$ to get
\[
\|e^g-1\|_{L^2} \le e^{\|g\|_{L^\infty}} \|g\|_{L^2} \le e^{C\e_1} \|g\|_{L^2} \le C\|g\|_{L^2}.
\]
So we combine two estimates to yield the desired result when $k=0$.\\

\noindent $\bullet$ (Step B: Higher-order estimate) For $1 \le k \le s-1$, we estimate $\nabla^k (\nabla g)$ as
\begin{align*}
&\frac12\frac{d}{dt}\|\nabla^k (\nabla g)\|_{L^2}^2 \cr
&\quad = -\int_{\T^d} \nabla^k (u \cdot \nabla (\nabla g)) \cdot \nabla^k (\nabla g)\,dx -\int_{\T^d} \Big[ \nabla^k (\nabla (\nabla g \cdot u)) - u \cdot \nabla (\nabla^k(\nabla g)) \Big] \nabla^k(\nabla g)\,dx\\
&\qquad -\int_{\T^d} \nabla^k (\nabla(\nabla \cdot u)) \cdot \nabla^k (\nabla g)\,dx\\
&\quad \le \frac{\|\nabla \cdot u\|_{L^\infty}}{2}\|\nabla^k (\nabla g)\|_{L^2}^2 + C\|\nabla^k (\nabla g)\|_{L^2}\Big(\|\nabla u\|_{L^\infty} \|\nabla^k (\nabla g)\|_{L^2} + \|\nabla g\|_{L^\infty}\|\nabla^k (\nabla u)\|_{L^2}\Big)\\
&\qquad -\int_{\T^d} \nabla^k (\nabla(\nabla \cdot u)) \cdot \nabla^k (\nabla g)\,dx\\
&\quad \le C\e_1\left(\|\nabla^k (\nabla g)\|_{L^2}^2 + \|\nabla^k (\nabla u)\|_{L^2}^2\right) -\int_{\T^d} \nabla^k (\nabla(\nabla \cdot u)) \cdot \nabla^k (\nabla g)\,dx,
\end{align*}
where $C$ only depends on $k$ and $d$. For $\nabla^k(\nabla u)$, we get
\begin{align*}
&\frac12\frac{d}{dt}\|\nabla^k (\nabla u)\|_{L^2}^2 \cr
&\quad = -\int_{\T^d}u \cdot \nabla (\nabla^k (\nabla u)) : \nabla^k (\nabla u)\,dx -\int_{\T^d} \Big[ \nabla^k (\nabla (u \cdot \nabla u)) -  u \cdot \nabla (\nabla^k (\nabla u))\Big] : \nabla^k (\nabla u)\,dx\\
&\qquad -\int_{\T^d} \nabla^k ((\nabla u)^2) :\nabla^k (\nabla u)\,dx -\int_{\T^d} \nabla^k (\nabla^2 g) :\nabla^k (\nabla u)\,dx\\
&\qquad -\int_{\T^d} \nabla^k (\nabla^2 K\star(e^g-1)) :\nabla^k (\nabla u)\,dx -\int_{\T^d} \nabla^k (\nabla(u-v)):\nabla^k (\nabla u)\,dx\\
&\quad \le C\|\nabla u\|_{L^\infty}\|\nabla^k (\nabla u)\|_{L^2}^2 -\int_{\T^d} \nabla^k (\nabla^2 g) :\nabla^k (\nabla u)\,dx\\
&\qquad +\int_{\T^d} \nabla^k(e^g-1) \nabla^k (\nabla \cdot u)\,dx -\frac78 \|\nabla^k (\nabla u)\|_{L^2}^2 +2\|\nabla^k (\nabla v)\|_{L^2}^2\\
&\quad \le C\e_1\|\nabla^k(\nabla u)\|_{L^2}^2  -\int_{\T^d} \nabla^k (\nabla^2 g) :\nabla^k (\nabla u)\,dx +2\|\nabla^k (e^g-1)\|_{L^2}^2 -\frac34\|\nabla^k (\nabla u)\|_{L^2}^2 +2\|\nabla^k (\nabla v)\|_{L^2}^2.
\end{align*}
Here, for the estimate of $\|\nabla^k (e^g)\|_{L^2}$, we follow the same argument in \cite{CCJpre} as follows: let $a_k :=\|\nabla^k (e^g-1)\|_{L^2}$. First, one obtains
\[
a_0 \le e^{\|g\|_{L^\infty}}\|g\|_{L^2}\le Ce^{C\e_1}\|g\|_{L^2}.
\]
Then, we use Lemma \ref{lem_moser} to have
\[
\begin{aligned}
a_k&\le \|\nabla^{k-1}(e^g \nabla g)\|_{L^2}\\
&\le C(e^{\|g\|_{L^\infty}}\|\nabla^k g\|_{L^2} + a_{k-1}\|\nabla g\|_{L^\infty})\\
&\le C(e^{C\e_1}\|\nabla^k g\|_{L^2} + \e_1 a_{k-1})\\
&\le C(e^{C\e_1}\|\nabla^k g\|_{L^2} + C\e_1(e^{C\e_1}\|\nabla^{k-1} g\|_{L^2} + \e_1 a_{k-2}))\\
&\le C e^{C\e_1}\sum_{\ell=0}^k \e_1^{\ell} \|\nabla^{k-\ell} g\|_{L^2},
\end{aligned}
\]
where $C$ only depends on $k$ and $d$. Since $\e_1$ is sufficiently small, we obtain
\[
a_k \le C\|g\|_{H^k}.
\]
Combining all of the above estimates, we complete the proof.
\end{proof}

It is worth noticing that Lemma \ref{LB.1} does not provide the dissipation rate for $\nabla^k (\nabla g)$. In order to have it, inspired by \cite{Choi16}, we estimate the mixing term in the lemma below.
\begin{lemma}\label{LB.2}
Let $d\geq2$, $s>d/2+1$, and $T>0$ be given. Suppose that $\mathfrak{X}(T;s) \le \e_1^2 \ll 1$. Then we have
\begin{align*}
\frac{d}{dt}&\int_{\T^d} \nabla^k (\nabla g) \cdot \nabla^k u\,dx + \frac12\|\nabla^k(\nabla g)\|_{L^2}^2\\
&\le C\e_1 \lt(\|\nabla^k(\nabla g)\|_{L^2}^2 + \|\nabla^k(\nabla u)\|_{L^2}^2\rt) + C \lt(\| g\|_{H^k}^2 + \|\nabla^k u\|_{L^2}^2\rt) +\|\nabla^k(\nabla u)\|_{L^2}^2
\end{align*}
for $0 \le k \le s-1$, where $C$ is a positive constant independent of $T$.
\end{lemma}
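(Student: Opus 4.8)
The plan is to differentiate the mixing functional $\int_{\T^d}\nabla^k(\nabla g)\cdot\nabla^k u\,dx$ in time, commute $\pa_t$ past $\nabla^k$, and substitute the two evolution equations of \eqref{E-2}. Using $\pa_t\nabla g=\nabla\pa_t g=-\nabla(u\cdot\nabla g)-\nabla(\nabla\cdot u)$ and $\pa_t u=-(u\cdot\nabla)u-\nabla g-(u-v)-\nabla K\star(e^g-1)$ one obtains
\[
\frac{d}{dt}\int_{\T^d}\nabla^k(\nabla g)\cdot\nabla^k u\,dx=-\|\nabla^k(\nabla g)\|_{L^2}^2-\int_{\T^d}\nabla^{k+1}(u\cdot\nabla g)\cdot\nabla^k u\,dx-\int_{\T^d}\nabla^{k+1}(\nabla\cdot u)\cdot\nabla^k u\,dx+R,
\]
where $R$ gathers the contributions of $(u\cdot\nabla)u$, the relaxation term $-\nabla^k u$, the coupling term $\nabla^k v$, and the Poisson term $\nabla^k(\nabla K\star(e^g-1))$. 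The decisive gain is the term $-\|\nabla^k(\nabla g)\|_{L^2}^2$, which comes from pairing $\nabla^k(\nabla g)$ with the $\nabla g$ in the momentum equation; this is exactly the dissipation for $\nabla g$ that Lemma \ref{LB.1} does not produce, and it is the whole purpose of this lemma.

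The two genuinely top-order terms must be reorganized by integration by parts. Since a fully contracted $\nabla^k u$ against a gradient collapses to $\nabla^k(\nabla\cdot u)$, one integration by parts gives $-\int_{\T^d}\nabla^{k+1}(\nabla\cdot u)\cdot\nabla^k u\,dx=\|\nabla^k(\nabla\cdot u)\|_{L^2}^2\le\|\nabla^k(\nabla u)\|_{L^2}^2$, which is the unavoidable price term $\|\nabla^k(\nabla u)\|_{L^2}^2$ on the right-hand side. The same manoeuvre turns $-\int_{\T^d}\nabla^{k+1}(u\cdot\nabla g)\cdot\nabla^k u\,dx$ into $\int_{\T^d}\nabla^k(u\cdot\nabla g)\cdot\nabla^k(\nabla\cdot u)\,dx$, in which $\nabla^k(u\cdot\nabla g)$ carries only $k+1$ derivatives; by Lemma \ref{lem_moser} together with the Sobolev bounds $\|u\|_{L^\infty},\|\nabla g\|_{L^\infty}\lesssim\|u\|_{H^s}+\|g\|_{H^s}\lesssim\e_1$ it is bounded by $C\e_1(\|\nabla^k(\nabla g)\|_{L^2}+\|\nabla^k u\|_{L^2})$, so Young's inequality sends the whole term into $C\e_1(\|\nabla^k(\nabla g)\|_{L^2}^2+\|\nabla^k(\nabla u)\|_{L^2}^2)+C\|\nabla^k u\|_{L^2}^2$. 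The $\nabla^k v$ contribution vanishes altogether: moving the free gradient onto $v$ produces $-\int_{\T^d}\nabla^k g\,\nabla^k(\nabla\cdot v)\,dx=0$ by $\nabla\cdot v=0$, which is why no $v$-norm appears in the statement.

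The remaining part $R$ is routine. The convection term $-\int_{\T^d}\nabla^k((u\cdot\nabla)u)\cdot\nabla^k(\nabla g)\,dx$ is quadratic in $u$, so by Lemma \ref{lem_moser} and $\|u\|_{L^\infty},\|\nabla u\|_{L^\infty}\lesssim\e_1$ it goes into $C\e_1(\|\nabla^k(\nabla g)\|_{L^2}^2+\|\nabla^k(\nabla u)\|_{L^2}^2+\|\nabla^k u\|_{L^2}^2)$. The relaxation term $-\int_{\T^d}\nabla^k(\nabla g)\cdot\nabla^k u\,dx$ is $\le\frac14\|\nabla^k(\nabla g)\|_{L^2}^2+C\|\nabla^k u\|_{L^2}^2$ by Young's inequality, the $\frac14\|\nabla^k(\nabla g)\|_{L^2}^2$ being swallowed by the good term. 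For the Poisson term $-\int_{\T^d}\nabla^k(\nabla g)\cdot\nabla^k(\nabla K\star(e^g-1))\,dx$, I would integrate by parts and use $\Delta K\star h=-h$ to reduce it to a multiple of $\int_{\T^d}\nabla^k g\cdot\nabla^k(e^g-1)\,dx$, which is bounded by $\|\nabla^k g\|_{L^2}\|\nabla^k(e^g-1)\|_{L^2}\le C\|g\|_{H^k}^2$ thanks to the Moser-type bound $\|\nabla^k(e^g-1)\|_{L^2}\le C\|g\|_{H^k}$ already established inside the proof of Lemma \ref{LB.1} (the case $k=0$ uses $\|e^g-1\|_{L^2}\le e^{\|g\|_{L^\infty}}\|g\|_{L^2}$). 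Collecting everything, choosing $\e_1$ small enough that the coefficient $-1+\frac14+C\e_1$ of $\|\nabla^k(\nabla g)\|_{L^2}^2$ is $\le-\frac12$, and moving $\frac12\|\nabla^k(\nabla g)\|_{L^2}^2$ to the left gives the claim; the case $k=0$ follows from the same computation and is slightly simpler. The only real difficulty is the bookkeeping of the top-order terms: making sure that everything carrying $k+2$ derivatives of $g$ or $k+1$ derivatives of $u$ is either annihilated by integration by parts and incompressibility, converted exactly into the allotted $\|\nabla^k(\nabla u)\|_{L^2}^2$, or made $\e_1$-small, and no commutator estimate beyond Lemma \ref{lem_moser} is needed since the offending transport terms are already in conservative form after one integration by parts.
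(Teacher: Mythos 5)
Your proposal is correct and follows essentially the same route as the paper: differentiating the mixed functional, extracting the dissipation $-\|\nabla^k(\nabla g)\|_{L^2}^2$ from the $\nabla g$ term in the momentum equation, converting the top-order transport and $\nabla(\nabla\cdot u)$ terms via integration by parts into $\nabla^k(\nabla\cdot u)$ contractions, annihilating the $v$-contribution by $\nabla\cdot v=0$, reducing the Poisson term with $-\Delta K\star h=h$, and closing with the Moser estimates of Lemma \ref{lem_moser} and the smallness $\mathfrak{X}(T;s)\le\e_1^2$. The only cosmetic difference is that the paper first rewrites the functional as $-\int_{\T^d}\nabla^k g\,\nabla^k(\nabla\cdot u)\,dx$ before differentiating, whereas you differentiate first and integrate by parts term by term; the resulting estimates are identical.
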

\begin{proof}
Direct computations and applying Lemma \ref{lem_moser} yield
\begin{align*}
\frac{d}{dt}&\int_{\T^d} \nabla^k (\nabla g) \cdot \nabla^k u\,dx\\
&=-\frac{d}{dt}\int_{\T^d} (\nabla^k g) \ \nabla^k(\nabla \cdot u)\,dx\\
&= \int_{\T^d} \nabla^k (\nabla g \cdot u + \nabla \cdot u)\nabla^k (\nabla \cdot u)\,dx +\int_{\T^d} \nabla^k \lt(\nabla \cdot (u\cdot \nabla u) +\Delta g - (e^g-1) + \nabla \cdot u\rt)(\nabla^k g)\,dx\\
&= \int_{\T^d} (\nabla^k (\nabla g)\cdot u) \nabla^k (\nabla \cdot u)\,dx +\int_{\T^d}\left[ \nabla^k (\nabla g \cdot u) - \nabla^k (\nabla g) \cdot u\rt] \nabla^k (\nabla \cdot u)\,dx \\
&\quad + \|\nabla^k(\nabla \cdot u)\|_{L^2}^2 -\int_{\T^d} (u \cdot \nabla^k (\nabla u))\cdot \nabla^k(\nabla g)dx-\int_{\T^d}\lt[ \nabla^k (u \cdot \nabla u) - u \cdot \nabla^k (\nabla u)\rt] \cdot \nabla^k (\nabla g)\,dx\\
&\quad  -\|\nabla^k (\nabla g)\|_{L^2}^2 +\int_{\T^d} \nabla^k (e^g-1)\nabla^k g\,dx -\int_{\T^d} \nabla^k  u \cdot \nabla^k (\nabla g)\,dx\\
&\le 2\|u\|_{L^\infty}\|\nabla^k(\nabla g)\|_{L^2}\|\nabla^k (\nabla u)\|_{L^2}+ C\|\nabla^k(\nabla u)\|_{L^2}\lt(\|\nabla u\|_{L^\infty}\|\nabla^k g\|_{L^2} + \|\nabla g\|_{L^\infty}\|\nabla^k u\|_{L^2}\rt)\\
&\quad+\|\nabla^k(\nabla \cdot u)\|_{L^2}^2 + C\|\nabla u\|_{L^\infty}\|\nabla^k(\nabla g)\|_{L^2}\|\nabla^k u\|_{L^2} -\frac12\|\nabla^k(\nabla g)\|_{L^2}^2 +\frac12\|\nabla^k u\|_{L^2}^2+ C\|g\|_{H^k}^2  \\
&\le C\e_1 \lt(\|\nabla^k(\nabla g)\|_{L^2}^2 + \|\nabla^k(\nabla u)\|_{L^2}^2\rt) + C \lt(\|g\|_{H^k}^2 + \|\nabla^k u\|_{L^2}^2\rt)+\|\nabla^k(\nabla u)\|_{L^2}^2 -\frac12\|\nabla^k(\nabla g)\|_{L^2}^2.
\end{align*}
This completes the proof.
\end{proof}

Now, we provide the higher-order estimates for the incompressible Navier--Stokes equations.
\begin{lemma}\label{LB.3}
Let $d\geq2$, $s>d/2+1$, and $T>0$ be given. Suppose that $\mathfrak{X}(T;s) \le \e_1^2 \ll 1$ so that
\[
\sup_{0\le t \le T} \|g(\cdot, t)\|_{L^\infty} \le \log 2.
\]
 Then we have
\begin{align*}
\frac{d}{dt}&\|\nabla^k v\|_{L^2}^2 +\frac12\|\nabla^k v\|_{L^2}^2+\|\nabla^k (\nabla v)\|_{L^2}^2\\
&\le C\e_1 \|\nabla^k v\|_{L^2}^2 +C\lt(\|\nabla^k u\|_{L^2}^2+(\|\nabla^{k-1} v\|_{L^2} + \|\nabla^{k-1} u\|_{L^2}^2+\|g\|_{H^{k-1}}^2)(1-\delta_{k,0})\rt)
\end{align*}
for $0 \le k \le s-1$, where $C$ is a positive constant independent of $T$. Here $\delta_{k,0}$ denotes the Kronecker delta, i.e., the terms with $(k-1)$-th order do not appear when $k=0$.
\end{lemma}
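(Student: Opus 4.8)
The plan is to run the standard $\nabla^k$-energy estimate on the incompressible Navier--Stokes block of \eqref{E-2},
\[
\partial_t v + (v\cdot\nabla)v + \nabla p - \Delta v = e^g(u-v),\qquad \nabla\cdot v = 0,
\]
the only system-specific difficulty being the coupling term $e^g(u-v)$. First I would apply $\nabla^k$ (for $0\le k\le s-1$, so that $\nabla^{k+1}v$ is controlled by the regularity at hand), take the $L^2(\T^d)$ inner product with $\nabla^k v$, and integrate over $\T^d$. The time derivative produces $\tfrac12\frac{d}{dt}\|\nabla^k v\|_{L^2}^2$; the viscous term produces the dissipation $\|\nabla^k(\nabla v)\|_{L^2}^2=\|\nabla^{k+1}v\|_{L^2}^2$; and the pressure term vanishes, since $\int_{\T^d}\nabla(\nabla^k p)\cdot\nabla^k v\,dx = -\int_{\T^d}\nabla^k p\,\nabla^k(\nabla\cdot v)\,dx = 0$ by incompressibility.

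For the convective term I would write $\nabla^k((v\cdot\nabla)v) = (v\cdot\nabla)\nabla^k v + [\nabla^k,\,v\cdot\nabla]v$. The first contribution gives $\tfrac12\int_{\T^d}(v\cdot\nabla)|\nabla^k v|^2\,dx = -\tfrac12\int_{\T^d}(\nabla\cdot v)|\nabla^k v|^2\,dx = 0$, and the commutator is handled by the Moser estimate of Lemma \ref{lem_moser}, which yields $\|[\nabla^k,\,v\cdot\nabla]v\|_{L^2}\le C\|\nabla v\|_{L^\infty}\|\nabla^k v\|_{L^2}\le C\e_1\|\nabla^k v\|_{L^2}$; here I use $\|\nabla v\|_{L^\infty}\le C\|v\|_{H^s}\le C\e_1$, valid because $s-1>d/2$ and $\mathfrak{X}(T;s)\le\e_1^2$. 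Thus the convective term contributes at most $C\e_1\|\nabla^k v\|_{L^2}^2$.

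The main point is the drag term $\int_{\T^d}\nabla^k(e^g(u-v))\cdot\nabla^k v\,dx$, which I would split as $\int_{\T^d}\nabla^k(u-v)\cdot\nabla^k v\,dx + \int_{\T^d}\nabla^k((e^g-1)(u-v))\cdot\nabla^k v\,dx$. The first piece equals $\int_{\T^d}\nabla^k u\cdot\nabla^k v\,dx - \|\nabla^k v\|_{L^2}^2$, and Young's inequality turns it into $-\tfrac12\|\nabla^k v\|_{L^2}^2 + C\|\nabla^k u\|_{L^2}^2$; moved to the left-hand side, the negative term supplies the claimed $+\tfrac12\|\nabla^k v\|_{L^2}^2$ with room to spare. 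For the second piece, when $k\ge1$ I would integrate by parts to move one derivative onto $v$,
\[
\int_{\T^d}\nabla^k\big((e^g-1)(u-v)\big)\cdot\nabla^k v\,dx = -\int_{\T^d}\nabla^{k-1}\big((e^g-1)(u-v)\big)\cdot\nabla^{k+1}v\,dx,
\]
and estimate, using Lemma \ref{lem_moser} together with $\|e^g-1\|_{L^\infty}\le C\e_1$, $\|u-v\|_{L^\infty}\le C\e_1$ and $\|\nabla^{k-1}(e^g-1)\|_{L^2}\le C\|g\|_{H^{k-1}}$ (this last bound exactly as in the proof of Lemma \ref{LB.1}),
\[
\big\|\nabla^{k-1}\big((e^g-1)(u-v)\big)\big\|_{L^2}\le C\e_1\big(\|\nabla^{k-1}u\|_{L^2}+\|\nabla^{k-1}v\|_{L^2}+\|g\|_{H^{k-1}}\big);
\]
Young's inequality then absorbs a small multiple of $\|\nabla^{k+1}v\|_{L^2}^2$ into the viscous dissipation and leaves $C\e_1^2(\|\nabla^{k-1}u\|_{L^2}^2+\|\nabla^{k-1}v\|_{L^2}^2+\|g\|_{H^{k-1}}^2)$, precisely the $(k-1)$-th order block in the statement. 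When $k=0$ no integration by parts is needed: $\int_{\T^d}(e^g-1)(u-v)\cdot v\,dx\le C\e_1(\|u\|_{L^2}^2+\|v\|_{L^2}^2)$, so no $(k-1)$-terms appear, consistent with the factor $(1-\delta_{k,0})$.

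Collecting these bounds, transferring the negative $\|\nabla^k v\|_{L^2}^2$ contribution and the absorbed fraction of $\|\nabla^{k+1}v\|_{L^2}^2$ to the left-hand side, and multiplying through by $2$, I obtain the stated inequality (weakening the coefficient of $\|\nabla^k v\|_{L^2}^2$ on the left down to $\tfrac12$). The only delicate step is the drag term: applying the Moser inequality directly to $\nabla^k(e^g(u-v))$ would cost a term $\|\nabla^k g\|_{L^2}$ that is not available on the right-hand side, and it is the integration by parts --- trading one derivative of $v$ against the viscous dissipation $\|\nabla^{k+1}v\|_{L^2}^2$ --- that lowers the cost to $\|g\|_{H^{k-1}}$ and explains the $(k-1)$-th order structure of the estimate.
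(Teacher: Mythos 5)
Your proof is correct and follows essentially the same route as the paper: the standard $\nabla^k$ energy estimate with Moser commutator bounds for the convection term, and the key integration by parts on the lower-order part of the drag term to trade one derivative of $v$ against the viscous dissipation, which is exactly what lowers the cost on $g$ to $\|g\|_{H^{k-1}}$. The only cosmetic difference is that you split the drag term as $(u-v)+(e^g-1)(u-v)$ to extract the exact damping $-\|\nabla^k v\|_{L^2}^2$, whereas the paper keeps $e^g\nabla^k(v-u)$ and uses $e^{-\|g\|_{L^\infty}}\ge \tfrac12$; both yield the same estimate.
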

\begin{proof}
It follows from \eqref{E-4} that 
\[\begin{aligned}
\frac12\frac{d}{dt}\|v\|_{L^2} + \|\nabla v\|_{L^2}^2&= -\int_{\T^d} (v \cdot \nabla v) \cdot v\,dx \int_{\T^d} e^g(v-u)\cdot v\,dx\\
&\le -\frac12\int_{\T^d}e^g |v|^2\,dx + \frac12\int_{\T^d}e^g |u|^2\,dx\\
&\le -\frac14\|v\|_{L^2}^2 + \|u\|_{L^2}^2.
\end{aligned}
\]
For $k \ge 1$, we estimate
\begin{align*}
\frac12&\frac{d}{dt}\|\nabla^k v\|_{L^2}^2 +\|\nabla^k (\nabla v)\|_{L^2}^2\\
&= -\int_{\T^d} \nabla^k (v \cdot \nabla v)\cdot \nabla^k v\,dx - \int_{\T^d}\nabla^k (e^g(v-u))\cdot\nabla^k v\,dx\\
&= -\int_{\T^d}\lt[ \nabla^k(v \cdot \nabla v) -v \cdot \nabla^k (\nabla v)\rt] \cdot \nabla^k v\,dx\\
&\quad -\int_{\T^d} e^g \nabla^k (v-u)\cdot \nabla^k v\,dx -\int_{\T^d} \lt[\nabla^k (e^g(v-u)) - e^g \nabla^k (v-u)\rt] \cdot \nabla^k v\,dx\\
&\le C\|\nabla v\|_{L^\infty}\|\nabla^k v\|_{L^2}^2 - e^{-\|g\|_{L^\infty}}\|\nabla^k v\|_{L^2}^2 + e^{\|g\|_{L^\infty}} \|\nabla^k u\|_{L^2}\|\nabla^k v\|_{L^2}\\
&\quad +\int_{\T^d} \left[ \nabla^{k-1} (e^g(v-u)) - e^g \nabla^{k-1}(v-u)\rt]\cdot\nabla^{k+1}v\,dx -\int_{\T^d} e^g \nabla g (\nabla^{k-1} (v-u))\cdot \nabla^{k+1}v\,dx\\
&\le C\e_1 \|\nabla^k v\|_{L^2}^2 -\frac14\|\nabla^k v\|_{L^2}^2 + 4\|\nabla^k u\|_{L^2}^2\\
&\quad + C\|\nabla^{k+1} v\|_{L^2}(e^{\|g\|_{L^\infty}} \|\nabla^{k-1}(v-u)\|_{L^2} + \|\nabla^{k-1}(e^g)\|_{L^2}\|v-u\|_{L^\infty})\\
&\quad +e^{\|g\|_{L^\infty}}\|\nabla g\|_{L^\infty}\|\nabla^{k-1}(v-u)\|_{L^2}\|\nabla^{k+1}v\|_{L^2}\\
&\le C\e_1 \|\nabla^k v\|_{L^2}^2 -\frac14\|\nabla^k v\|_{L^2}^2 + 4\|\nabla^k u\|_{L^2}^2 +\frac12\|\nabla^k(\nabla v)\|_{L^2}^2 \cr
&\quad +C\lt(\|\nabla^{k-1} v\|_{L^2} + \|\nabla^{k-1} u\|_{L^2}^2+\|g\|_{H^{k-1}}^2\rt),
\end{align*}
where $C$ is a positive constant independent of $T$. This concludes the desired result.
\end{proof}

Now, we can investigate the uniform energy estimates using Lemmas \ref{LB.1}--\ref{LB.3}.
\begin{corollary}\label{CB.1}
Let $s>d/2+1$, $T>0$ be given and suppose that $\mathfrak{X}(T;s)\le \e_1^2 \ll 1$ so that
\[
\sup_{0\le t \le T} \|g(\cdot, t)\|_{L^\infty} \le \log 2.
\]
 Then we have
\[
\mathfrak{X}(T;s) \le C_1 \mathfrak{X}_0(s), 
\]
where $C_1$ is independent of $T$.
\end{corollary}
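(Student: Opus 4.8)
The plan is to feed the differential inequalities of Proposition \ref{P5.1} and Lemmas \ref{LB.1}--\ref{LB.3} into a single Lyapunov functional at each derivative level and run an induction on the number of derivatives, the point being that under the smallness hypothesis the dissipation strictly dominates, so the resulting linear ODE has \emph{uniform-in-$T$} bounded solutions rather than exponentially growing ones.

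For the base case I would use Proposition \ref{P5.1} with $\sigma=1$ together with the two elementary bounds already recorded above, namely $\int_{\T^d}\rho\log\rho\,dx\ge(1-\|g\|_{L^\infty})e^{-\|g\|_{L^\infty}}\|g\|_{L^2}^2\gtrsim\|g\|_{L^2}^2$ (valid since $\sup_{[0,T]}\|g\|_{L^\infty}\le\log 2$) and $\|\nabla K\star(\rho-1)\|_{L^2}\lesssim\|g\|_{L^2}$, to get $\mathfrak{X}(T;0)\le C\mathfrak{X}_0(0)$ with $C$ independent of $T$. For the inductive step, fix $1\le k\le s-1$ and assume the uniform bound $\sup_{[0,T]}(\|\nabla^{j}(\nabla g)\|_{L^2}^2+\|\nabla^{j}(\nabla u)\|_{L^2}^2+\|\nabla^{j}v\|_{L^2}^2)\le C_{j}\mathfrak{X}_0(s)$ for all $j\le k-1$; combined with the base case, and using the elementary comparisons $\|\nabla^k g\|_{L^2}\lesssim\|\nabla^{k-1}(\nabla g)\|_{L^2}$, $\|\nabla^k u\|_{L^2}\lesssim\|\nabla^{k-1}(\nabla u)\|_{L^2}$, this already controls $\|g\|_{H^k}^2$, $\|\nabla^k u\|_{L^2}^2$, and all the $(1-\delta_{k,0})$ lower-order terms appearing in Lemma \ref{LB.3} by $C\mathfrak{X}_0(s)$.

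I then set, for a small $\delta\in(0,1)$ and a large $\beta>4$,
\[
\Phi_k(t):=\|\nabla^k(\nabla g)\|_{L^2}^2+\|\nabla^k(\nabla u)\|_{L^2}^2+\delta\int_{\T^d}\nabla^k(\nabla g)\cdot\nabla^k u\,dx+\beta\|\nabla^k v\|_{L^2}^2 ,
\]
and add Lemma \ref{LB.1}, $\delta$ times Lemma \ref{LB.2}, and $\beta$ times Lemma \ref{LB.3}. The choice $\beta>4$ makes the favourable $\beta\|\nabla^k(\nabla v)\|_{L^2}^2$ dissipation dominate the stray $4\|\nabla^k(\nabla v)\|_{L^2}^2$ coming out of Lemma \ref{LB.1}, while $\delta<1$ produces the dissipation for $\nabla^k(\nabla g)$ that Lemma \ref{LB.1} is missing (at the cost of consuming a fraction $\delta$ of the $\|\nabla^k(\nabla u)\|_{L^2}^2$ dissipation, still leaving $\tfrac32-\delta>0$) and keeps $\Phi_k$ equivalent, by Young's inequality, to $\|\nabla^k(\nabla g)\|_{L^2}^2+\|\nabla^k(\nabla u)\|_{L^2}^2+\|\nabla^k v\|_{L^2}^2$ up to a term $\tfrac{\delta}{2}\|\nabla^k u\|_{L^2}^2$ that is itself controlled by the induction hypothesis. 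Absorbing all the lower-order contributions into $C_k\mathfrak{X}_0(s)$ and, using the smallness of $\e_1$, absorbing the $C\e_1\Phi_k$ terms into half of the dissipation, one arrives at $\frac{d}{dt}\Phi_k+c\,\Phi_k\le C_k\mathfrak{X}_0(s)$ for some $c>0$ independent of $T$; integrating this linear inequality gives $\Phi_k(t)\le\Phi_k(0)+\tfrac{C_k}{c}\mathfrak{X}_0(s)\le C_k'\mathfrak{X}_0(s)$ uniformly in $t\in[0,T]$, and the coercivity of $\Phi_k$ closes the induction at level $k$. Summing the resulting bounds over $k=0,\dots,s-1$ and adding the base case yields $\mathfrak{X}(T;s)\le C_1\mathfrak{X}_0(s)$.

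The main obstacle is not any single estimate but the sign bookkeeping in assembling these three inequalities: Lemma \ref{LB.1} is degenerate in $\nabla g$ and moreover injects a large multiple of $\|\nabla^k(\nabla v)\|_{L^2}^2$, and Lemma \ref{LB.2} returns an unfavourable $+\|\nabla^k(\nabla u)\|_{L^2}^2$, so one must tune the two weights $\delta,\beta$ — together with the smallness of $\e_1$ — so that every dissipative quantity produced strictly dominates every copy of the same quantity consumed, while the weighted functional $\Phi_k$ remains comparable (modulo inductively controlled lower-order terms) to the natural $H^{k+1}\times H^{k+1}\times H^k$ energy. This balance is exactly what upgrades the a priori estimate from one carrying a factor $e^{CT}$ to one uniform in $T$, which is what the corollary requires.
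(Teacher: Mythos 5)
Your overall strategy is the same as the paper's: combine Lemma \ref{LB.1}, a multiple of Lemma \ref{LB.2}, and a multiple of Lemma \ref{LB.3} into a weighted Lyapunov functional at each derivative level, use the smallness of $\e_1$ to make the functional strictly decaying up to an inductively controlled source term, and run Gr\"onwall to get a $T$-independent bound. The paper uses the specific weights $1$, $\tfrac58$, $4$ (and keeps $\tfrac58\|\nabla^k u\|_{L^2}^2$ inside the functional to guarantee coercivity), whereas you use generic $\delta,\beta$ and delegate the coercivity issue to the induction hypothesis on $\|\nabla^k u\|_{L^2}$; this is a cosmetic difference and your sign bookkeeping (the $(\beta-4)\|\nabla^k(\nabla v)\|_{L^2}^2$ and $(\tfrac32-\delta)\|\nabla^k(\nabla u)\|_{L^2}^2$ margins, absorption of the $C\e_1$ terms) is sound.

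There is, however, one genuine gap: your argument never controls the top-order derivative $\|\nabla^s v\|_{L^2}$. Your functional $\Phi_k$ contains $\beta\|\nabla^k v\|_{L^2}^2$ and your induction runs over $k=0,\dots,s-1$, so summing the level-$k$ conclusions gives $\|\nabla^j g\|_{L^2}$ and $\|\nabla^j u\|_{L^2}$ for $j\le s$ but only $\|\nabla^j v\|_{L^2}$ for $j\le s-1$; the final sentence "summing over $k=0,\dots,s-1$ \dots yields $\mathfrak{X}(T;s)\le C_1\mathfrak{X}_0(s)$" therefore only proves the bound with $\|v\|_{H^{s-1}}$ in place of $\|v\|_{H^s}$. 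You cannot simply shift the $v$-block to $\beta\|\nabla^{k+1}v\|_{L^2}^2$ inside $\Phi_k$, because Lemma \ref{LB.3} at level $k+1$ produces a term $C\|\nabla^{k+1}u\|_{L^2}^2=C\|\nabla^k(\nabla u)\|_{L^2}^2$ with a non-small constant, which multiplied by $\beta>4$ cannot be absorbed by the $\tfrac32\|\nabla^k(\nabla u)\|_{L^2}^2$ dissipation. The correct repair — and what the paper does — is a two-step induction: first close the Gr\"onwall argument for $\nabla^k(\nabla g)$, $\nabla^k(\nabla u)$ (and $\nabla^k v$) as you did, obtaining in particular $\sup_t\|\nabla^{k+1}u\|_{L^2}^2\le C\mathfrak{X}_0(k+1)$; then apply Lemma \ref{LB.3} once more at level $k+1$, whose right-hand side now consists entirely of quantities already bounded by $C\mathfrak{X}_0(k+1)$, and run Gr\"onwall again to obtain $\sup_t\|\nabla^{k+1}v\|_{L^2}^2\le C\mathfrak{X}_0(k+1)$. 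With this extra step at each level the induction delivers the full $H^s$ bound on $v$ and the corollary follows.
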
 
\begin{proof}
The proof is based on the inductive argument. Since it suffices to show the induction step, we suppose that there exists a positive constant $C>0$ that is independent of $T$ such that
\[
\mathfrak{X}(T;m) \le C\mathfrak{X}_0(m) \quad \mbox{for} \quad 0 \le m \le k.
\]
First, Lemmas \ref{LB.1} and \ref{LB.2} deduce
\[
\begin{aligned}
\frac{d}{dt}&\left(\|\nabla^k(\nabla g)\|_{L^2}^2 + \|\nabla^k(\nabla u)\|_{L^2}^2 +\int_{\T^d}\nabla^k (\nabla g) \cdot \nabla^k  u\,dx\right)\\
&\le \left(-\frac12 + C\e_1\right) (\|\nabla^k (\nabla g)\|_{L^2}^2 + \|\nabla^k(\nabla u)\|_{L^2}^2) +C\mathfrak{X}(T;k) + 4\|\nabla^k (\nabla v)\|_{L^2}^2,
\end{aligned}
\]
where $C$ is independent of $T$. Moreover, the estimates in Lemma \ref{LB.1} imply
\[
\begin{aligned}
\frac12\frac{d}{dt}\|\nabla^k u\|_{L^2}^2 &\le C\e_1\|\nabla^k u\|_{L^2}^2  -\int_{\T^d} \nabla^k (\nabla g) \cdot \nabla^k u\,dx\\
&\quad +2\|\nabla K \star \nabla^k (e^g-1)\|_{L^2}^2 -\frac34\|\nabla^k u\|_{L^2}^2 +2\|\nabla^k  v\|_{L^2}^2\\
&\le C\e_1 \|\nabla^k u\|_{L^2}^2 -\frac{1}{12}\|\nabla^k u\|_{L^2}^2 + \frac38\|\nabla^k (\nabla g)\|_{L^2}^2 + C\mathfrak{X}(T;k).
\end{aligned}
\]
Here $C$ is independent of $T$. Thus, we obtain
\begin{align*}
&\frac{d}{dt}\left(\|\nabla^k(\nabla g)\|_{L^2}^2 + \|\nabla^k(\nabla u)\|_{L^2}^2 +\int_{\T^d}\nabla^k (\nabla g) \cdot \nabla^k  u\,dx + \frac58 \|\nabla^k u\|_{L^2}^2\right)\\
&\quad \le \left(-\frac{1}{32} + C\e_1\right) (\|\nabla^k (\nabla g)\|_{L^2}^2 + \|\nabla^k(\nabla u)\|_{L^2}^2 + \|\nabla^k u\|_{L^2}^2) +C\mathfrak{X}(T;k) + 4\|\nabla^k (\nabla v)\|_{L^2}^2.
\end{align*}
Next, we combine the previous relation with Lemma \ref{LB.3} to get
\[
\begin{aligned}
&\frac{d}{dt}\left(\|\nabla^k(\nabla g)\|_{L^2}^2 + \|\nabla^k(\nabla u)\|_{L^2}^2 +\int_{\T^d}\nabla^k (\nabla g) \cdot \nabla^k  u\,dx + \frac58 \|\nabla^k u\|_{L^2}^2 + 4\|\nabla^k v\|_{L^2}^2\right)\\
&\quad \le \left(-\frac{1}{32} + C\e_1\right) (\|\nabla^k(\nabla g)\|_{L^2}^2 + \|\nabla^k(\nabla u)\|_{L^2}^2 + \|\nabla^k u\|_{L^2}^2 + \|\nabla^k v\|_{L^2}^2) +C\mathfrak{X}(T;k)\\
&\quad \le \left(-\frac{1}{32} + C\e_1\right) (\|\nabla^k(\nabla g)\|_{L^2}^2 + \|\nabla^k(\nabla u)\|_{L^2}^2 + \|\nabla^k u\|_{L^2}^2 + \|\nabla^k v\|_{L^2}^2)+C\mathfrak{X}_0(k),
\end{aligned}
\]
where $C$ is independent of $T$. Since $\e_1$ is sufficiently small, we have $-1/32+C\e_1<0$. Since Young's inequality gives
\[
\begin{aligned}
&\frac14 \lt(\|\nabla^k(\nabla g)\|_{L^2}^2 + \|\nabla^k(\nabla u)\|_{L^2}^2 + \|\nabla^k u\|_{L^2}^2 + \|\nabla^k v\|_{L^2}^2\rt)\cr
&\quad \leq \|\nabla^k(\nabla g)\|_{L^2}^2 + \|\nabla^k(\nabla u)\|_{L^2}^2 +\int_{\T^d}\nabla^k (\nabla g) \cdot \nabla^k  u\,dx + \frac58 \|\nabla^k u\|_{L^2}^2 + 4\|\nabla^k v\|_{L^2}^2\\
&\quad \leq 4\lt( \|\nabla^k(\nabla g)\|_{L^2}^2 + \|\nabla^k(\nabla u)\|_{L^2}^2 + \|\nabla^k u\|_{L^2}^2 + \|\nabla^k v\|_{L^2}^2\rt),
\end{aligned}
\]
the previous relation combined with Gr\"onwall's lemma yields
\bq\label{AppB-1}
\sup_{0\le t \le T} (\|\nabla^k (\nabla g)\|_{L^2}^2 + \|\nabla^k(\nabla u)\|_{L^2}^2) \le C\mathfrak{X}_0(k+1),
\eq
where $C$ is independent of $T$. Finally, we obtain from Lemma \ref{LB.3} that
\begin{align*}
\frac{d}{dt}&\|\nabla^{k+1} v\|_{L^2}^2 + \|\nabla^{k+1}(\nabla v)\|_{L^2}^2\\
&\le \left(-\frac12 +C\e_1\right)\|\nabla^{k+1}v\|_{L^2}^2+C\lt(\|\nabla^{k+1} u\|_{L^2}^2+\|\nabla^k v\|_{L^2} + \|\nabla^k u\|_{L^2}^2+\|g\|_{H^k}^2\rt)\\
&\le  \left(-\frac12 +C\e_1\right)\|\nabla^{k+1}v\|_{L^2}^2 + C\mathfrak{X}_0(k+1).
\end{align*}
Here we used \eqref{AppB-1} and $C$ is independent of $T$. Thus, we again use Gr\"onwall's lemma to get
\bq\label{AppB-2}
\sup_{0\le t \le T}\|\nabla^{k+1} v\|_{L^2}^2 \le C\mathfrak{X}_0(k+1).
\eq
Hence, we gather the estimates \eqref{AppB-1} and \eqref{AppB-2} to complete the proof.
\end{proof}

Based on the {\it a priori} estimates in Corollary \ref{CB.1}, we can extend our local-in-time existence result in Theorem \ref{L5.2} to the global-in-time estimate.

\begin{proof}[Proof of Theorem \ref{T2.3}]
First, we choose $\e_1 \ll1$ sufficiently small so that the conditions in Corollary \ref{CB.1} hold. Set 
\[
\tilde\e_0^2 := \frac{\e_1^2}{2(1+C_1)},
\] 
where $C_1 > 0$ is appeared in Corollary \ref{CB.1}, and assume that the initial data $(g_0,u_0,v_0)$ satisfy
\[
\|g_0\|_{H^s}^2 + \|u_0\|_{H^s}^2 + \|v_0\|_{H^s}^2 < \tilde\e_0^2.
\]
Now, we define the lifespan of the solutions $(g,u,v)$ to the system \eqref{E-2}-\eqref{ini_F-1} as
\[
\tilde T := \sup\{ t \ge 0\ | \ \sup_{0 \le \tau \le t} (\|g(\cdot, \tau)\|_{H^s}^2 + \|u(\cdot, \tau)\|_{H^s}^2 + \|v(\cdot ,\tau)\|_{H^s}^2) < \e_1^2\}.
\]
Since $\tilde\e_0 < \e_1$, Theorem \ref{L5.2} implies $\tilde T >0$. Suppose $\tilde T< + \infty$. Then, by definition we have
\bq\label{contra}
\sup_{0 \le t \le \tilde T} \lt(\|g(\cdot, t)\|_{H^s}^2 + \|u(\cdot, t)\|_{H^s}^2 + \|v(\cdot ,t)\|_{H^s}^2\rt) = \e_1^2.
\eq
However, Corollary \ref{CB.1} implies
\[
\sup_{0 \le t \le \tilde T} (\|g(\cdot, t)\|_{H^s}^2 + \|u(\cdot, t)\|_{H^s}^2 + \|v(\cdot ,t)\|_{H^s}^2) \le C_1 \tilde{\e_0}^2 \le \frac{\e_1^2}{2} <\e_1^2,
\]
which contradicts our assumption \eqref{contra}. Therefore, the lifespan $\tilde{T} = \infty$, i.e., the strong solution exists globally in time.
\end{proof}

\subsection{Proof of Theorem \ref{T2.3_2}}
In this part, we investigate the global-in-time existence of strong solutions to the system  \eqref{E-4}. Similarly to the isothermal pressure case, we define
\[
\mathfrak{Y}(T;k) := \sup_{0\le t \le T} \left( \|h\|_{H^{k-1}}^2 + \|u\|_{H^k}^2 + \|v\|_{H^k}^2\right) \quad \mbox{and} \quad \mathfrak{Y}_0(k) :=  \|h_0\|_{H^{k-1}}^2 + \|u_0\|_{H^k}^2 + \|v_0\|_{H^k}^2.
\]
\begin{remark}\label{R5.1}
Note that
\[
\|h\|_{H^{-1}} \leq \|\nabla K \star( \rho-1)\|_{L^2} \leq C\|h\|_{H^{-1}}.
\]
Thus, once we choose a sufficiently small $\e_1>0$ such that
\[
\mathfrak{Y}(T;s+1) \le \e_1^2 \ll 1 \quad \mbox{so that} \quad \sup_{0 \le t \le T_1} \|h(\cdot, t)\|_{L^\infty} \le \frac{1}{2},
\]
then by Proposition \ref{P5.1} with $\sigma=0$ we have 
\[
\begin{split}
\mathfrak{Y}(T, 0) &\le 2 \left(\int_{\T^d} \rho |u|^2\,dx + \int_{\T^d}|\nabla K\star(\rho-1)|^2\,dx +\int_{\T^d}|v|^2\,dx \right) \\
&\le 2 \left(\int_{\T^d} \rho_0 |u_0|^2\,dx + \int_{\T^d}|\nabla K\star(\rho_0-1)|^2\,dx +\int_{\T^d}|v_0|^2\,dx \right)\cr
&\le C\mathfrak{Y}_0(0).
\end{split}
\]
\end{remark}
We next present some relevant estimates similar to the isothermal pressure case.
\begin{lemma}\label{L5.3}
Let $s>d/2+1$, $T>0$ be given and suppose that $\mathfrak{Y}(T;s+1)\le \e_1^2 \ll 1$. Then we have
\[
\begin{aligned}
\frac{d}{dt}&\left(\|\nabla^k h\|_{L^2}^2 + \|\nabla^k (\nabla u)\|_{L^2}^2\right) +\frac32\|\nabla^k (\nabla u)\|_{L^2}^2 \\
&\le C\e_1 \left(\|\nabla^k h\|_{L^2}^2 + \|\nabla^k (\nabla u)\|_{L^2}^2 + \|\nabla^k u\|_{L^2}^2 \right)+ 2\|\nabla^k(\nabla v)\|_{L^2}^2
\end{aligned}
\]
for $0 \le k \le s$, where $C=C(d,k)$ is independent of $T$. 
\end{lemma}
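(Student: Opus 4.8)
The plan is to run $L^2$ energy estimates at level $\nabla^k$ on the continuity equation and at level $\nabla^k$ on the \emph{gradient} of the velocity equation in \eqref{E-4}, add the two, and absorb every right-hand side term either into $C\e_1(\cdots)$---using $\mathfrak{Y}(T;s+1)\le\e_1^2$ together with the embedding $H^s\hookrightarrow W^{1,\infty}$, valid since $s>d/2+1$---or into $2\|\nabla^k(\nabla v)\|_{L^2}^2$ via Young's inequality. The structural ingredient that replaces the pressure (absent in the pressureless system) is the identity
\[
\int_{\T^d}\nabla(\nabla K\star\phi):\nabla\psi\,dx=-\int_{\T^d}\phi\,\nabla\cdot\psi\,dx,
\]
which holds because $-\Delta(K\star\phi)=\phi$ and is exactly the computation used in the construction of the approximate solutions before Theorem~\ref{L5.2}. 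Since $\nabla^k$ commutes with $\nabla$ and with $K\star(\cdot)$, applying it with $\phi=\nabla^k h$ and $\psi=\nabla^k u$ shows that the Coulomb term appearing in the $\nabla^k(\nabla u)$-estimate equals $\int_{\T^d}\nabla^k h\,\nabla^k(\nabla\cdot u)\,dx$, which will cancel precisely the term $-\int_{\T^d}\nabla^k(\nabla\cdot u)\,\nabla^k h\,dx$ produced by the continuity equation.

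Concretely, write the continuity equation as $\partial_t h+u\cdot\nabla h+(1+h)\nabla\cdot u=0$; applying $\nabla^k$, testing against $\nabla^k h$, and splitting $(1+h)\nabla\cdot u=\nabla\cdot u+h\,\nabla\cdot u$ gives
\[
\tfrac12\tfrac{d}{dt}\|\nabla^k h\|_{L^2}^2=-\int_{\T^d}\nabla^k(u\cdot\nabla h)\,\nabla^k h\,dx-\int_{\T^d}\nabla^k(\nabla\cdot u)\,\nabla^k h\,dx-\int_{\T^d}\nabla^k(h\,\nabla\cdot u)\,\nabla^k h\,dx.
\]
In the transport term one peels off $u\cdot\nabla(\nabla^k h)$, integrates by parts to bound $\tfrac12\int(\nabla\cdot u)|\nabla^k h|^2$ by $C\e_1\|\nabla^k h\|_{L^2}^2$, and controls the commutator by Lemma~\ref{lem_moser} through $\|\nabla u\|_{L^\infty}\|\nabla^k h\|_{L^2}+\|\nabla h\|_{L^\infty}\|\nabla^k u\|_{L^2}$, which yields $C\e_1(\|\nabla^k h\|_{L^2}^2+\|\nabla^k u\|_{L^2}^2)$; the third term is $\le C(\|h\|_{L^\infty}\|\nabla^k(\nabla u)\|_{L^2}+\|\nabla\cdot u\|_{L^\infty}\|\nabla^k h\|_{L^2})\|\nabla^k h\|_{L^2}$ by Lemma~\ref{lem_moser}, hence $\le C\e_1(\|\nabla^k(\nabla u)\|_{L^2}^2+\|\nabla^k h\|_{L^2}^2)$; the middle term is kept aside. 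For the velocity, differentiate the $u$-equation once in space, apply $\nabla^k$, and test against $\nabla^k(\nabla u)$: the damping term $-u$ furnishes the dissipative contribution $\|\nabla^k(\nabla u)\|_{L^2}^2$; the convective term is handled exactly as the transport terms above, producing $C\e_1\|\nabla^k(\nabla u)\|_{L^2}^2$; the Navier--Stokes coupling term $\int_{\T^d}\nabla^k(\nabla v):\nabla^k(\nabla u)\,dx$ is bounded by $\tfrac14\|\nabla^k(\nabla u)\|_{L^2}^2+\|\nabla^k(\nabla v)\|_{L^2}^2$; and the Coulomb term, by the identity above, equals $\int_{\T^d}\nabla^k h\,\nabla^k(\nabla\cdot u)\,dx$.

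Adding the two estimates, the kept continuity term and the Coulomb term cancel; collecting the remainder and multiplying by $2$ gives exactly the asserted inequality. The case $k=0$ is treated separately but is easier, since the commutators then vanish identically. The main point requiring attention---and the reason one assumes $\mathfrak{Y}(T;s+1)\le\e_1^2$ rather than merely $\mathfrak{Y}(T;s)\le\e_1^2$---is that $\|\nabla^k(\nabla u)\|_{L^2}=\|\nabla^{k+1}u\|_{L^2}$ and the norms $\|\nabla u\|_{L^\infty}$, $\|\nabla\cdot u\|_{L^\infty}$ enter the estimates, so one genuinely needs $u$ bounded in $H^{s+1}$. I expect the only genuinely delicate step to be the bookkeeping that makes the continuity and Poisson contributions cancel after the correct allocation of derivatives; the pressureless system produces no self-generated dissipation for $h$ at this stage, and that dissipation will be supplied in a separate lemma---mirroring Lemma~\ref{LB.2} in the isothermal case---again by a mixing-term argument exploiting the Poisson structure.
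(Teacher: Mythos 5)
Your proposal is correct and follows essentially the same route as the paper: $\nabla^k$-energy estimate on the continuity equation plus a $\nabla^k(\nabla u)$-estimate on the differentiated velocity equation, with the Poisson term converted via $-\Delta K\star h=h$ into $\int_{\T^d}\nabla^k h\,\nabla^k(\nabla\cdot u)\,dx$ so that it cancels the linear divergence term kept aside from the continuity equation, and all remaining terms absorbed by Moser/commutator estimates and Young's inequality exactly as you describe. The paper likewise treats $k=0$ separately and defers the dissipation for $h$ to the subsequent mixing-term lemma.
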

\begin{proof}
Since zeroth-order estimates are analogous, we only consider higher-order estimates. For $1\le k\le s$, it follows from \eqref{E-4} that
\begin{align*}
\frac12\frac{d}{dt}\|\nabla^k h\|_{L^2}^2 &= -\int_{\T^d} \nabla^k (\nabla h \cdot u) \nabla^k h\,dx -\int_{\T^d} \nabla^k ((1+h)\nabla \cdot u)\nabla^k h\,dx\\
&= \frac12\int_{\T^d}(\nabla \cdot u)|\nabla^k h|^2\,dx - \int_{\T^d}[\nabla^k (\nabla h \cdot u) - \nabla^k (\nabla h)\cdot u]\nabla^k h\,dx\\
&\quad - \int_{\T^d} h \nabla^k (\nabla \cdot u) \nabla^k h\,dx - \int_{\T^d} \nabla^k h \, \nabla^k (\nabla \cdot u)\,dx\\
&\quad - \int_{\T^d} [\nabla^k (h \nabla \cdot u) - h \nabla^k (\nabla \cdot u)]\nabla^k h\,dx\\
&\le \frac{\|\nabla \cdot u\|_{L^\infty}}{2}\|\nabla^k h\|_{L^2}^2 + C\|\nabla^k h\|_{L^2} (\|\nabla u\|_{L^\infty}\|\nabla^k h\|_{L^2} + \|\nabla h\|_{L^\infty} \|\nabla^k u\|_{L^2})\\
&\quad +\|h\|_{L^\infty}\|\nabla^k(\nabla \cdot u)\|_{L^2}\|\nabla^k h\|_{L^2} - \int_{\T^d} \nabla^k h\,\nabla^k (\nabla \cdot u)\,dx\\
&\quad + C\|\nabla^k h\|_{L^2} (\|\nabla h\|_{L^\infty}\|\nabla^{k-1}(\nabla \cdot u)\|_{L^2} + \|\nabla^k h\|_{L^2}\|\nabla \cdot u\|_{L^\infty})\\
&\le C\e_1 (\|\nabla^k h\|_{L^2}^2 + \|\nabla^k (\nabla u)\|_{L^2}^2) + C\e_1\|\nabla^k u\|_{L^2}^2 -\int_{\T^d} \nabla^k h\,\nabla^k (\nabla \cdot u)\,dx,
\end{align*}
where $C=C(d,k)$ is a positive constant independent of $\e_0$, $\e_1$ and $T_1$ and we used Lemma \ref{lem_moser}.

Next, we estimate $\nabla^k (\nabla u)$ as
\begin{align*}
\frac12\frac{d}{dt}\|\nabla^k (\nabla u)\|_{L^2}^2 &= -\int_{\T^d} \nabla^k (\nabla(u \cdot\nabla  u)):\nabla^k (\nabla u)\,dx - \int_{\T^d} \nabla^k (\nabla(\nabla K \star h)):\nabla^k (\nabla u)\,dx\\
&\quad - \int_{\T^d} \nabla^k (\nabla(u-v)):\nabla^k (\nabla u)\,dx\\
&= -\int_{\T^d} u \cdot \nabla(\nabla^k(\nabla u)) :\nabla^k (\nabla u)\,dx -\int_{\T^d} [\nabla^k ((u \cdot \nabla^2 u)) - u \cdot \nabla^k (\nabla^2 u)]:\nabla^k (\nabla u)\,dx\\
&\quad -\int_{\T^d} \nabla^k (\nabla u)^2:\nabla^k (\nabla u)\,dx + \int_{\T^d} \nabla^k h \nabla^k (\nabla \cdot u)\,dx \\
&\quad- \|\nabla^k (\nabla u)\|_{L^2}^2 + \int_{\T^d}\nabla^k (\nabla v):\nabla^k(\nabla u)\,dx\\
& \le C\|\nabla u\|_{L^\infty}\|\nabla^k (\nabla u)\|_{L^2}^2 + \int_{\T^d} \nabla^k h \nabla^k (\nabla \cdot u)\,dx  -\frac34\|\nabla^k(\nabla u)\|_{L^2}^2 + \|\nabla^k(\nabla v)\|_{L^2}^2\\
& \le C\e_1\|\nabla^k (\nabla u)\|_{L^2}^2 + \int_{\T^d} \nabla^k h \nabla^k (\nabla \cdot u)\,dx -\frac34\|\nabla^k(\nabla u)\|_{L^2}^2 + \|\nabla^k(\nabla v)\|_{L^2}^2.
\end{align*}
Here we used
\begin{align*}
-\int_{\T^d} \nabla^k (\nabla(\nabla K \star h)):\nabla^k (\nabla u)\,dx &= -\int_{\T^d} \nabla^k (\Delta K \star h) : \nabla^k (\nabla \cdot u)\,dx= \int_{\T^d} \nabla^k h \nabla^k (\nabla \cdot u)\,dx.
\end{align*}
Thus, we combine the previous two estimates to get the desired result.
\end{proof}
Similarly as before, in the lemma below, we present the estimate which gives the dissipation rate for $\nabla^k h$.
\begin{lemma}\label{L5.4}
Let $s>d/2+1$, $T>0$ be given and suppose that $\mathfrak{Y}(T;s+1)\le \e_1^2 \ll 1$. Then we have
\[
\begin{aligned}
-\frac{d}{dt}&\int_{\T^d}\nabla^k h \nabla^k(\nabla \cdot u)\,dx+\frac14 \|\nabla^k h\|_{L^2}^2\\
&\le C\e_1 \left(\|\nabla^k h\|_{L^2}^2 + \|\nabla^k (\nabla u)\|_{L^2}^2 + \|\nabla^{k-1} h\|_{L^2}^2 + \|\nabla^k u\|_{L^2}^2 \right)+ \frac43\|\nabla^k(\nabla u)\|_{L^2}^2
\end{aligned}
\]
for $0 \le k\le s$, where $C=C(d,k)$ is independent of $T$. 
\end{lemma}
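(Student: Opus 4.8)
\emph{Plan.} The object to control is the mixed term $-\frac{d}{dt}\int_{\T^d}\nabla^k h\,\nabla^k(\nabla\cdot u)\,dx$, the pressureless counterpart of Lemma~\ref{LB.2}; the reason for introducing it is that, unlike the pure energy estimate of Lemma~\ref{L5.3}, it manufactures a genuine dissipation $\|\nabla^k h\|_{L^2}^2$ for the density. First I would differentiate in time and substitute \eqref{E-4}: from the continuity equation $\pa_t h=-\nabla\cdot u-\nabla\cdot(hu)$, and, taking the divergence of the $u$-equation and using $\nabla\cdot v=0$ together with the Poisson identity $-\Delta(K\star h)=h$, $\pa_t(\nabla\cdot u)=-\nabla u:\nabla u^\top-u\cdot\nabla(\nabla\cdot u)-\nabla\cdot u+h$. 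The last term is exactly what, after pairing with $\nabla^k h$, produces $-\|\nabla^k h\|_{L^2}^2$. Expanding $\nabla^k$ and integrating by parts one arrives at
\[
-\frac{d}{dt}\int_{\T^d}\nabla^k h\,\nabla^k(\nabla\cdot u)\,dx=\|\nabla^k(\nabla\cdot u)\|_{L^2}^2-\|\nabla^k h\|_{L^2}^2+\int_{\T^d}\nabla^k h\,\nabla^k(\nabla\cdot u)\,dx+(S+T)+R,
\]
where $S:=\int_{\T^d}\big(u\cdot\nabla(\nabla^k h)\big)\,\nabla^k(\nabla\cdot u)\,dx$ arises from the transport term $u\cdot\nabla h$ in the continuity equation, $T:=\int_{\T^d}\nabla^k h\,\big(u\cdot\nabla(\nabla^k(\nabla\cdot u))\big)\,dx$ arises from the transport term $u\cdot\nabla(\nabla\cdot u)$ in the divergence equation, and $R$ collects commutators and lower-order products.

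The crux — and the step I expect to be the main obstacle — is that at the top order $k=s$, where the assumption $\mathfrak{Y}(T;s+1)\le\e_1^2$ only provides $h\in H^s$ and $u\in H^{s+1}$, the terms $S$ and $T$ are \emph{individually} out of reach: $S$ carries a $(k+1)$-st derivative of $h$ and $T$ a $(k+2)$-nd derivative of $u$. They must be combined before any estimate is attempted. Using the product rule $\big(u\cdot\nabla(\nabla^k h)\big)\nabla^k(\nabla\cdot u)+\nabla^k h\,\big(u\cdot\nabla(\nabla^k(\nabla\cdot u))\big)=u\cdot\nabla\big(\nabla^k h\,\nabla^k(\nabla\cdot u)\big)$ and integrating by parts on $\T^d$,
\[
S+T=-\int_{\T^d}(\nabla\cdot u)\,\nabla^k h\,\nabla^k(\nabla\cdot u)\,dx\le\|\nabla\cdot u\|_{L^\infty}\|\nabla^k h\|_{L^2}\|\nabla^k(\nabla\cdot u)\|_{L^2}\lesssim\e_1\big(\|\nabla^k h\|_{L^2}^2+\|\nabla^k(\nabla u)\|_{L^2}^2\big),
\]
since $\|\nabla\cdot u\|_{L^\infty}\lesssim\|u\|_{H^{s+1}}\lesssim\e_1$ and $\|\nabla^k(\nabla\cdot u)\|_{L^2}\le\|\nabla^k(\nabla u)\|_{L^2}$ (Plancherel). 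Thus the dangerous derivatives cancel identically, which is precisely why the Poisson structure makes the pressureless case go through (it supplies both this cancellation and, via $-\Delta(K\star h)=h$, the dissipation). At the level of regularity used here this computation is understood formally, i.e. along the smooth approximating sequence.

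Finally I would dispose of $R$. The two commutators $\nabla^k(u\cdot\nabla h)-u\cdot\nabla(\nabla^k h)$ and $\nabla^k(u\cdot\nabla(\nabla\cdot u))-u\cdot\nabla(\nabla^k(\nabla\cdot u))$ are handled by the second Moser inequality in Lemma~\ref{lem_moser}, and the products $\nabla^k(h\,\nabla\cdot u)$ and $\nabla^k(\nabla u:\nabla u^\top)$ by the first; together with the Sobolev bounds $\|\nabla h\|_{L^\infty},\|\nabla u\|_{L^\infty},\|\nabla^2 u\|_{L^\infty}\lesssim\e_1$ (valid since $s-1>d/2$) all of these are $C\e_1$-small and fit into $C\e_1(\|\nabla^k h\|_{L^2}^2+\|\nabla^k(\nabla u)\|_{L^2}^2+\|\nabla^{k-1}h\|_{L^2}^2+\|\nabla^k u\|_{L^2}^2)$, so the (slightly larger) right-hand side of the lemma certainly absorbs them; for $k=0$ the two commutators vanish and the computation is the same without them. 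The leftover cross term $\int_{\T^d}\nabla^k h\,\nabla^k(\nabla\cdot u)\,dx$ is split by Young's inequality as $\le\frac34\|\nabla^k h\|_{L^2}^2+\frac13\|\nabla^k(\nabla\cdot u)\|_{L^2}^2$: its $\frac34\|\nabla^k h\|_{L^2}^2$ is absorbed against the $-\|\nabla^k h\|_{L^2}^2$ coming from the Poisson term, leaving exactly $+\frac14\|\nabla^k h\|_{L^2}^2$ on the left, while its $\frac13\|\nabla^k(\nabla\cdot u)\|_{L^2}^2$ adds to the earlier $\|\nabla^k(\nabla\cdot u)\|_{L^2}^2\le\|\nabla^k(\nabla u)\|_{L^2}^2$ to produce the $\frac43\|\nabla^k(\nabla u)\|_{L^2}^2$ on the right. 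Collecting the bounds yields the stated inequality.
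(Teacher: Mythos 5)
Your proposal is correct and follows essentially the same route as the paper: substitute the equations, extract the dissipation $-\|\nabla^k h\|_{L^2}^2$ from the Poisson identity $-\Delta(K\star h)=h$, cancel the top-order transport contributions before estimating, bound the commutators and products via Lemma~\ref{lem_moser}, and split the leftover cross term by Young's inequality with exactly the weights $\tfrac34$ and $\tfrac13$. The only (immaterial) difference is bookkeeping: the paper integrates $\mathcal{I}_1$ by parts so that the two dangerous terms cancel identically, whereas you combine them into $\int u\cdot\nabla(\nabla^k h\,\nabla^k(\nabla\cdot u))\,dx$ and are left with a harmless $\nabla\cdot u$-weighted term of size $C\e_1$.
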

\begin{proof}
Direct computation yields
\begin{align*}
-\frac{d}{dt}\int_{\T^d}\nabla^k h \nabla^k (\nabla \cdot u)\,dx&= \int_{\T^d} \nabla^k (\nabla \cdot ((1+h)u)) \nabla^k (\nabla \cdot u)\,dx\\
&\quad + \int_{\T^d}\nabla^k (\nabla \cdot (u \cdot \nabla u)  -h +\nabla \cdot u)\nabla^k h\,dx\\
& =: \mathcal{I}_1 + \mathcal{I}_2.
\end{align*}
For $\mathcal{I}_1$, we use Lemma \ref{lem_moser} to get
\begin{align*}
\mathcal{I}_1 &= -\int_{\T^d}\nabla^k ((1+h)u)\cdot \nabla (\nabla^k (\nabla\cdot u))\,dx\\
&= -\int_{\T^d}(\nabla^k h) u \cdot \nabla (\nabla^k (\nabla \cdot u))\,dx + \int_{\T^d}\nabla \cdot [\nabla^k((1+h)u) - u \nabla^k h ]\nabla^k (\nabla \cdot u)\,dx\\
&=-\int_{\T^d}(\nabla^k h) u \cdot \nabla (\nabla^k (\nabla \cdot u))\,dx + \int_{\T^d}[\nabla^k(\nabla h \cdot u) - \nabla^k (\nabla h) \cdot u]\nabla^k (\nabla \cdot u)\,dx \\
&\quad + \int_{\T^d}[\nabla^k (h \nabla \cdot u) - \nabla^k h \nabla \cdot u]\nabla^k (\nabla \cdot u)\,dx + \int_{\T^d} |\nabla^k (\nabla \cdot u)|^2\,dx\\
& \le -\int_{\T^d}(\nabla^k h) u \cdot \nabla (\nabla^k (\nabla \cdot u))\,dx +C\|\nabla^k (\nabla \cdot u)\|_{L^2}(\|\nabla u\|_{L^\infty}\|\nabla^k h\|_{L^2} + \|\nabla h\|_{L^\infty}\|\nabla^k u\|_{L^2})\\
&\quad + C\|\nabla^k (\nabla \cdot u)\|_{L^2}(\|\nabla^2 u\|_{L^\infty}\|\nabla^{k-1} h\|_{L^2} + \|h\|_{L^\infty}\|\nabla^k(\nabla \cdot u)\|_{L^2}) + \|\nabla^k(\nabla\cdot u)\|_{L^2}^2\\
&\le  -\int_{\T^d}(\nabla^k h) u \cdot \nabla (\nabla^k (\nabla \cdot u))\,dx\\
&\quad +C\e_1 \left(\|\nabla^k h\|_{L^2}^2 + \|\nabla^k (\nabla u)\|_{L^2}^2 + \|\nabla^{k-1} h\|_{L^2}^2 + \|\nabla^k u\|_{L^2}^2 \right)+ \|\nabla^k(\nabla u)\|_{L^2}^2,
\end{align*}
where $C$ only depends on $d$ and $k$. Here we used
\[
\|\nabla u\|_{W^{1,\infty}} \leq C\|u\|_{H^{s+1}} \quad \mbox{and} \quad \|h\|_{W^{1,\infty}} \leq C\|h\|_{H^s}
\]
due to $s > d/2 + 1$.

Next, we estimate $\mathcal{I}_2$ as 
\begin{align*}
\mathcal{I}_2 &= \int_{\T^d} u\cdot\nabla (\nabla^k(\nabla \cdot u)) \nabla^k h\,dx + \int_{\T^d} \lt(\nabla^k (u \cdot \nabla (\nabla \cdot u)) - u \cdot \nabla^k (\nabla (\nabla \cdot u))\rt)\nabla^k h\,dx\\
&\quad +\int_{\T^d} \nabla^k   \lt(\nabla\cdot(u \cdot \nabla u)  - u \cdot \nabla(\nabla \cdot u)\rt)\nabla^k h \,dx -\|\nabla^k h\|_{L^2}^2 + \int_{\T^d}\nabla^k h \nabla^k (\nabla \cdot u)\,dx\\
&\le  \int_{\T^d} u\cdot\nabla (\nabla^k(\nabla \cdot u)) \nabla^k h\,dx + C\|\nabla^k h\|_{L^2}(\|\nabla u\|_{L^\infty}\|\nabla^k (\nabla u)\|_{L^2} + \|\nabla^2  u\|_{L^\infty} \|\nabla^k u\|_{L^2})\\
&\quad -\frac14\|\nabla^k h\|_{L^2}^2 + \frac13\|\nabla^k (\nabla u)\|_{L^2}^2,
\end{align*}
where we used Lemma \ref{lem_moser} and $C$ only depends on $d$ and $k$. Finally, we combine the estimates for $\mathcal{I}_1$ and $\mathcal{I}_2$ to get the desired result.
\end{proof}

\begin{remark}\label{R5.2}
Since 
\[
\left|\int_{\T^d} \nabla^k h \nabla^k(\nabla \cdot u)\,dx\right| \le \frac12\|\nabla^k h\|_{L^2}^2 +\frac12\|\nabla^k (\nabla u)\|_{L^2}^2,
\]
the following relation is direct:
\[
\begin{aligned}
\frac12\lt( \|\nabla^k h\|_{L^2}^2 +\|\nabla^k (\nabla u)\|_{L^2}^2\rt) &\le \|\nabla^k h\|_{L^2}^2 +\|\nabla^k (\nabla u)\|_{L^2}^2 -\int_{\T^d}\nabla^k h \nabla^k(\nabla \cdot u)\,dx \\
&\le \frac32\lt( \|\nabla^k h\|_{L^2}^2 +\|\nabla^k (\nabla u)\|_{L^2}^2\rt).
\end{aligned}
\]
\end{remark}
Now, we consider the estimates for the Navier-Stokes part in \eqref{E-4}.
\begin{lemma}\label{L5.5}
Let $s>d/2+1$, $T>0$ be given and suppose that $\mathfrak{Y}(T;s+1)\le \e_1^2 \ll 1$. Then we have
\[
\begin{aligned}
\frac{d}{dt}&\|\nabla^k v\|_{L^2}^2 + \frac32\|\nabla^k v\|_{L^2}^2 + \|\nabla^k (\nabla v)\|_{L^2}^2\\
&\le C\e_1 \lt(\|\nabla^k v\|_{L^2}^2 + \lt(\|\nabla^{k-1} v\|_{L^2}^2 + \|\nabla^{k-1} u\|_{L^2}^2 + \|\nabla^{k-1} h\|_{L^2}^2\rt)(1-\delta_{k,0})\rt) + 2\|\nabla^k u\|_{L^2}^2
\end{aligned}
\]
for $0 \le k\le s+1$, where $C=C(d,k)$ is independent of $T$. 
\end{lemma}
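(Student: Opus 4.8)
The plan is to establish the estimate by applying $\nabla^k$ to the $v$-equation in \eqref{E-4} and pairing the result with $\nabla^k v$ in $L^2(\T^d)$, following the scheme of Lemma \ref{LB.3} with the factor $e^g$ there replaced by $1+h$ here. Throughout, $\mathfrak{Y}(T;s+1)\le\e_1^2$ together with $s>d/2+1$ guarantees that the $L^\infty$ norms of $h$, $\nabla h$, $u$, $v$, $\nabla v$ are all $\le C\e_1$ and that $\|\nabla^{s+1}u\|_{L^2}$, $\|\nabla^{s+1}v\|_{L^2}$, $\|\nabla^s h\|_{L^2}$ are controlled; these smallness and regularity facts are what render every error term either small (carrying a factor $C\e_1$) or of strictly lower differential order.

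For $k=0$ one tests $\pa_t v+(v\cdot\nabla)v+\nabla p-\Delta v=(1+h)(u-v)$ against $v$: the convective and pressure terms drop since $\nabla\cdot v=0$, the viscous term yields $\|\nabla v\|_{L^2}^2$, and in the drag term we write $(1+h)(u-v)\cdot v=(u-v)\cdot v+h(u-v)\cdot v$, use $\int_{\T^d}(u-v)\cdot v\,dx=\int_{\T^d}u\cdot v\,dx-\|v\|_{L^2}^2$, and bound the $h$-part by $\|h\|_{L^\infty}(\|u\|_{L^2}+\|v\|_{L^2})\|v\|_{L^2}$; Young's inequality, multiplication by $2$, and smallness of $\e_1$ then give the claim for $k=0$, the constant $3/2$ stemming from the fact that $1+h$ is within $C\e_1$ of $1$. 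For $1\le k\le s+1$ one applies $\nabla^k$ and pairs with $\nabla^k v$, handling the terms in turn. The convective contribution splits as $\int_{\T^d}(v\cdot\nabla)(\nabla^k v)\cdot\nabla^k v\,dx+\int_{\T^d}[\nabla^k((v\cdot\nabla)v)-v\cdot\nabla^k(\nabla v)]\cdot\nabla^k v\,dx$: the first integral vanishes by incompressibility, and the commutator is $\le C\|\nabla v\|_{L^\infty}\|\nabla^k v\|_{L^2}^2\le C\e_1\|\nabla^k v\|_{L^2}^2$ by Lemma \ref{lem_moser}. The pressure term vanishes after one integration by parts using $\nabla\cdot v=0$, and the viscous term gives $\|\nabla^k(\nabla v)\|_{L^2}^2$. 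For the drag term we decompose $(1+h)(u-v)=(u-v)+h(u-v)$: the $(u-v)$ part contributes $\int_{\T^d}\nabla^k u\cdot\nabla^k v\,dx-\|\nabla^k v\|_{L^2}^2$, where Young's inequality splits the first integral into $2\|\nabla^k u\|_{L^2}^2$ plus a small multiple of $\|\nabla^k v\|_{L^2}^2$ while the second supplies the $\|\nabla^k v\|_{L^2}^2$ dissipation; the remaining term $\int_{\T^d}\nabla^k(h(u-v))\cdot\nabla^k v\,dx$ is integrated by parts once so as to move one derivative off $h(u-v)$ onto $v$, becoming (schematically) $-\int_{\T^d}\nabla^{k-1}(h(u-v))\cdot\nabla^{k+1}v\,dx$; Lemma \ref{lem_moser} gives $\|\nabla^{k-1}(h(u-v))\|_{L^2}\le C(\|h\|_{L^\infty}\|\nabla^{k-1}(u-v)\|_{L^2}+\|u-v\|_{L^\infty}\|\nabla^{k-1}h\|_{L^2})\le C\e_1(\|\nabla^{k-1}u\|_{L^2}+\|\nabla^{k-1}v\|_{L^2}+\|\nabla^{k-1}h\|_{L^2})$, and Young's inequality turns this into $C\e_1(\|\nabla^{k-1}v\|_{L^2}^2+\|\nabla^{k-1}u\|_{L^2}^2+\|\nabla^{k-1}h\|_{L^2}^2)$ plus an absorbable fraction of $\|\nabla^k(\nabla v)\|_{L^2}^2$. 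Collecting everything, multiplying by $2$, and absorbing all $C\e_1$ multiples of $\|\nabla^k(\nabla v)\|_{L^2}^2$ into the viscous term yields the stated inequality; for $k=0$ no lower-order term arises, which is the meaning of the factor $(1-\delta_{k,0})$.

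The only genuinely delicate point is the drag term $\int_{\T^d}\nabla^k(h(u-v))\cdot\nabla^k v\,dx$ at the top order $k=s+1$: applying the Moser estimate directly to $\nabla^{s+1}(h(u-v))$ would require $\nabla^{s+1}h$, which is not controlled by $\mathfrak{Y}(T;s+1)$ because that functional carries one fewer derivative on $h$ than on $u$ and $v$. The single integration by parts above resolves this: afterwards only $\|\nabla^s h\|_{L^2}\le\|h\|_{H^s}$ is needed, while the extra derivative lands on $v$, where $\|\nabla^{s+1}v\|_{L^2}$ is part of $\mathfrak{Y}(T;s+1)$ and the resulting $\|\nabla^k(\nabla v)\|_{L^2}$ is reabsorbed by the viscous dissipation; this is precisely where the asymmetric definition of $\mathfrak{Y}$ is used. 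Everything else is a routine high-order energy computation, and the numerical constants $3/2$, $1$, $2$ in the statement are exactly what this bookkeeping produces once $\e_1$ is taken small enough.
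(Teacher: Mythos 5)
Your proposal is correct and follows essentially the same route as the paper's proof: the same commutator treatment of the convection term, the same splitting of the drag term into $(u-v)+h(u-v)$, and the same single integration by parts on $\nabla^k(h(u-v))$ so that only $\|\nabla^{k-1}h\|_{L^2}$ is needed while the extra derivative on $v$ is absorbed by the viscous dissipation. Your remark about why this integration by parts is forced at $k=s+1$ (since $\mathfrak{Y}(T;s+1)$ controls only $\|h\|_{H^s}$) correctly identifies the one genuinely delicate point, which is exactly how the paper exploits the asymmetric regularity.
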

\begin{proof} For $k=0$, we estimate 
\[\begin{aligned}
\frac12\frac{d}{dt}\|v\|_{L^2} + \|\nabla v\|_{L^2}^2&= -\int_{\T^d} (v \cdot \nabla v) \cdot v\,dx -\int_{\T^d} (1+h)(v-u)\cdot v\,dx\\
&\le -\frac34 \|v\|_{L^2} + \|u\|_{L^2} + \|h\|_{L^\infty}\|u\|_{L^2}\|v\|_{L^2}\\
&\le C\e_1 \lt(  \|v\|_{L^2} + \|u\|_{L^2}\rt) -\frac34 \|v\|_{L^2} + \|u\|_{L^2},
\end{aligned}\]
where $C$ is independent of $T$. For $k \geq 1$, we deduce from \eqref{E-4} that
\begin{align*}
\frac12\frac{d}{dt}&\|\nabla^k v\|_{L^2}^2 + \|\nabla^k(\nabla v)\|_{L^2}^2\\
&=-\int_{\T^d}\nabla^k (v \cdot \nabla v) \cdot \nabla^k v\,dx -\int_{\T^d}\nabla^k((1+h)(v-u))\nabla^k v\,dx\\
&= -\int_{\T^d}[\nabla^k (v \cdot \nabla v) - v\cdot \nabla (\nabla^k v)]\cdot \nabla^k v\,dx\\
&\quad + \int_{\T^d}\nabla^{k-1}(h(v-u))\nabla^{k+1} v\,dx -\int_{\T^d}\nabla^k(v-u)\cdot \nabla^k v\,dx\\
&\le C\|\nabla v\|_{L^\infty}\|\nabla^k v\|_{L^2}^2 + \|\nabla^{k-1}(h(v-u))\|_{L^2}\|\nabla^k (\nabla v)\|_{L^2} -\frac34\|\nabla^k v\|_{L^2}^2 + \|\nabla^k u\|_{L^2}^2\\
&\le C\e_1 \|\nabla^k v\|_{L^2}^2 +C\|\nabla^k (\nabla v)\|_{L^2}\lt(\|h\|_{L^\infty}\|\nabla^{k-1}(v-u)\|_{L^2} + \|\nabla^{k-1} h \|_{L^2}\|u-v\|_{L^\infty}\rt)\\
&\quad -\frac34\|\nabla^k v\|_{L^2}^2  + \|\nabla^k u\|_{L^2}^2\\
&\le C\e_1 \lt(\|\nabla^k v\|_{L^2}^2 + \|\nabla^{k-1} v\|_{L^2}^2 + \|\nabla^{k-1} u\|_{L^2}^2 + \|\nabla^{k-1} h\|_{L^2}^2\rt)\\
&\quad + \frac12\|\nabla^k(\nabla v)\|_{L^2}^2 -\frac34\|\nabla^k v\|_{L^2}^2  + \|\nabla^k u\|_{L^2}^2.
\end{align*}
Here we used Lemma \ref{lem_moser} and $C$ only depends on $d$ and $k$. This implies our desired estimate.
\end{proof}
Now, we are ready to present the uniform energy estimates based on Lemmas \ref{L5.3}-\ref{L5.5}.
\begin{corollary}\label{C5.1}
Let $s>d/2+1$, $T>0$ be given and suppose that $\mathfrak{Y}(T;s+1)\le \e_1^2 \ll 1$. Then we have
\[
\mathfrak{Y}(T;s+1) \le C_2 \mathfrak{Y}_0(s+1), 
\]
where $C_2$ is independent of $T$.
\end{corollary}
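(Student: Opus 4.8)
The plan is to follow the scheme of Corollary \ref{CB.1} and argue by induction on the differentiation order $k$, building at each order a coercive Lyapunov functional whose dissipation dominates every ``bad'' top-order term. The base case $k=0$ is exactly Remark \ref{R5.1}: Proposition \ref{P5.1} with $\sigma=0$ gives a closed free-energy identity, and combined with the equivalence $\|h\|_{H^{-1}}\simeq\|\nabla K\star(\rho-1)\|_{L^2}$ and the two-sided bound on $\rho=1+h$ coming from $\sup_{[0,T]}\|h\|_{L^\infty}\le\tfrac12$, it yields $\mathfrak{Y}(T;0)\le C\mathfrak{Y}_0(0)$ with $C$ independent of $T$. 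For the induction step I would assume $\mathfrak{Y}(T;m)\le C\mathfrak{Y}_0(m)$ for all $0\le m\le k$ (with $0\le k\le s$) and aim to bound the three quantities that distinguish $\mathfrak{Y}(T;k+1)$ from $\mathfrak{Y}(T;k)$, namely $\|\nabla^k h\|_{L^2}$, $\|\nabla^{k+1}u\|_{L^2}$ and $\|\nabla^{k+1}v\|_{L^2}$; every norm of order $\le k$ occurring below is automatically $\le C\mathfrak{Y}_0(k)\le C\mathfrak{Y}_0(k+1)$ by the inductive hypothesis.

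First I would add Lemma \ref{L5.3} (order $k$) to a small multiple $\delta$ of Lemma \ref{L5.4} (order $k$). The cross term $\int_{\T^d}\nabla^k h\,\nabla^k(\nabla\cdot u)\,dx$ in Lemma \ref{L5.4} then supplies the dissipation $\tfrac{\delta}{4}\|\nabla^k h\|_{L^2}^2$ that is missing for $h$, while by Remark \ref{R5.2} the functional
\[
\|\nabla^k h\|_{L^2}^2+\|\nabla^{k+1}u\|_{L^2}^2-\delta\!\int_{\T^d}\!\nabla^k h\,\nabla^k(\nabla\cdot u)\,dx
\]
stays coercive for $\delta$ small and retains a dissipation $\gtrsim\|\nabla^{k+1}u\|_{L^2}^2+\delta\|\nabla^k h\|_{L^2}^2$. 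On the right-hand side I am left with the genuinely top-order term $2\|\nabla^{k+1}v\|_{L^2}^2$, the $C\e_1$-small top-order terms $\|\nabla^k h\|_{L^2}^2+\|\nabla^{k+1}u\|_{L^2}^2$ — which the dissipation absorbs since $\e_1\ll1$ — and order-$\le k$ terms controlled by $C\mathfrak{Y}_0(k+1)$. To remove the remaining $\|\nabla^{k+1}v\|_{L^2}^2$ I would add a sufficiently large multiple $\Lambda$ of Lemma \ref{L5.5} \emph{at order $k$}: its dissipation $\|\nabla^k(\nabla v)\|_{L^2}^2=\|\nabla^{k+1}v\|_{L^2}^2$ dominates it once $\Lambda>2$, and — crucially — the right-hand side of Lemma \ref{L5.5} at order $k$ only feeds back $\|\nabla^k u\|_{L^2}^2$ and lower-order norms, all of order $\le k$. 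The outcome is a coercive functional $\Phi_k\simeq\|\nabla^k h\|_{L^2}^2+\|\nabla^{k+1}u\|_{L^2}^2+\|\nabla^k v\|_{L^2}^2$ obeying $\tfrac{d}{dt}\Phi_k+c\Phi_k\le C\mathfrak{Y}_0(k+1)$ for some $c>0$, so that Gr\"onwall's lemma together with $\Phi_k(0)\le C\mathfrak{Y}_0(k+1)$ yields $\sup_{[0,T]}\big(\|\nabla^k h\|_{L^2}^2+\|\nabla^{k+1}u\|_{L^2}^2\big)\le C\mathfrak{Y}_0(k+1)$.

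Having $\|\nabla^{k+1}u\|_{L^2}$ bounded uniformly in $T$, I would close by invoking Lemma \ref{L5.5} at order $k+1$. Its right-hand side now consists of $2\|\nabla^{k+1}u\|_{L^2}^2$ — already $\le C\mathfrak{Y}_0(k+1)$ by the previous step — the $C\e_1$-small term $\|\nabla^{k+1}v\|_{L^2}^2$, absorbed by the dissipation $\tfrac32\|\nabla^{k+1}v\|_{L^2}^2$ on the left, and lower-order norms bounded by $C\mathfrak{Y}_0(k)$; one more application of Gr\"onwall gives $\sup_{[0,T]}\|\nabla^{k+1}v\|_{L^2}^2\le C\mathfrak{Y}_0(k+1)$. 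Combining with the inductive hypothesis for the lower orders yields $\mathfrak{Y}(T;k+1)\le C\mathfrak{Y}_0(k+1)$. Iterating from $k=0$ up to $k=s$ (the range in which Lemmas \ref{L5.3}--\ref{L5.5} apply), and noting that the fixed choices of $\delta$, $\Lambda$ and the Moser constants of Lemma \ref{lem_moser} depend only on $s$ and $d$, one obtains $\mathfrak{Y}(T;s+1)\le C_2\,\mathfrak{Y}_0(s+1)$ with $C_2=C_2(s,d)$ independent of $T$.

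The main obstacle I anticipate is the top-order coupling of $u$ and $v$ through the drag force $(1+h)(u-v)$: pairing the $(h,u)$-block directly with the $v$-estimate one order higher fails, since the $(k{+}1)$-th order $v$-equation returns $2\|\nabla^{k+1}u\|_{L^2}^2$ with an $O(1)$ coefficient that the (comparatively modest) $u$-dissipation cannot absorb. The way around it — the same device as in the isothermal case — is to kill the $v$-source inside the $(h,u)$-block using the $v$-estimate \emph{at order $k$} (which only reintroduces order-$k$ quantities) and only \emph{afterwards} bootstrap $v$ up by one order; moreover, the Poisson identity $\Delta K\star h=-h$ exploited in Lemma \ref{L5.3} is precisely what prevents the Coulomb term from producing an un-dissipated top-order contribution in $h$. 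Apart from this structural point, the remaining work is the bookkeeping of the many crossing terms carrying different numbers of derivatives, each estimated by Lemma \ref{lem_moser} exactly as in the proofs of Lemmas \ref{L5.3}--\ref{L5.5}.
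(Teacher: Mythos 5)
Your proposal is correct and follows essentially the same route as the paper: base case from Remark \ref{R5.1}, induction on $k$ combining Lemma \ref{L5.3} with the cross-term estimate of Lemma \ref{L5.4} (coercivity via Remark \ref{R5.2}), absorbing the $\|\nabla^{k+1}v\|_{L^2}^2$ source with Lemma \ref{L5.5} \emph{at order} $k$, applying Gr\"onwall, and only then bootstrapping $v$ to order $k+1$ with a second application of Lemma \ref{L5.5} and Gr\"onwall. The only (immaterial) differences are your use of a small parameter $\delta$ on the cross term and a large $\Lambda$ on the $v$-block, where the paper fixes these coefficients to $-1$ and $2$ respectively.
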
 
\begin{proof}
The proof is similar to that of Corollary \ref{CB.1}.
 By Remark \ref{R5.1}, it suffices to show the induction step. Suppose that there exists a positive constant $C>0$ that is independent of $T$ such that
\[
\mathfrak{Y}(T;m) \le C\mathfrak{Y}_0(m) \quad \mbox{for} \quad 0 \le m \le k.
\]
First, we combine Lemmas \ref{L5.3} and \ref{L5.4} to obtain
\[
\begin{aligned}
\frac{d}{dt}&\left(\|\nabla^k h\|_{L^2}^2 + \|\nabla^k(\nabla u)\|_{L^2}^2 -\int_{\T^d}\nabla^k h \nabla^k (\nabla \cdot u)\,dx\right)\\
&\le \left(-\frac16 + C\e_1\right) (\|\nabla^k h\|_{L^2}^2 + \|\nabla^k(\nabla u)\|_{L^2}^2) +C\mathfrak{Y}(T;k) + 2\|\nabla^k (\nabla v)\|_{L^2}^2,
\end{aligned}
\]
where $C$ is independent of $T$. Next, we combine the previous relation with Lemma \ref{L5.5} to get
\[
\begin{aligned}
\frac{d}{dt}&\left(\|\nabla^k h\|_{L^2}^2 + \|\nabla^k(\nabla u)\|_{L^2}^2 -\int_{\T^d}\nabla^k h \nabla^k (\nabla \cdot u)\,dx + 2\|\nabla^k v\|_{L^2}^2\right)\\
&\le \left(-\frac16 + C\e_1\right) (\|\nabla^k h\|_{L^2}^2 + \|\nabla^k(\nabla u)\|_{L^2}^2 + \|\nabla^k v\|_{L^2}^2) +C\mathfrak{Y}(T;k)\\
&\le \left(-\frac16 + C\e_1\right) (\|\nabla^k h\|_{L^2}^2 + \|\nabla^k(\nabla u)\|_{L^2}^2 + \|\nabla^k v\|_{L^2}^2) +C\mathfrak{Y}_0(k).
\end{aligned}
\]
Here we used the induction hypothesis and $C$ is independent of $T$. Since $\e_1$ is sufficiently small, one knows $-1/6+C\e_1<0$. Thus, we use the estimates in Remark \ref{R5.2} and Gr\"onwall's lemma to have
\bq\label{E-7}
\sup_{0\le t \le T_1} (\|\nabla^k h\|_{L^2}^2 + \|\nabla^k(\nabla u)\|_{L^2}^2) \le C\mathfrak{Y}_0(k+1),
\eq
where $C$ is independent of $T$. Next, Lemma \ref{L5.5} implies
\begin{align*}
\frac{d}{dt}&\|\nabla^{k+1} v\|_{L^2}^2 + \|\nabla^{k+1}(\nabla v)\|_{L^2}^2\\
&\le \left(-\frac32 +C\e_1\right)\|\nabla^{k+1}v\|_{L^2}^2 + C(\|\nabla^k v\|_{L^2}^2 +\|\nabla^k u\|_{L^2}^2 + \|\nabla^k h\|_{L^2}^2 + \|\nabla^{k+1} u\|_{L^2}^2)\\
&\le  \left(-\frac32 +C\e_1\right)\|\nabla^{k+1}v\|_{L^2}^2 + C\mathfrak{Y}_0(k+1),
\end{align*}
where we used the induction hypothesis and \eqref{E-7}. Here $C$ is independent of $T$. Thus, we again use Gr\"onwall's lemma to have
\bq\label{E-8}
\sup_{0\le t \le T_1}\|\nabla^{k+1} v\|_{L^2}^2 \le C\mathfrak{Y}_0(k+1).
\eq
Thus we combine \eqref{E-7} and \eqref{E-8} to prove the induction step and hence, we conclude the proof.
\end{proof}

\begin{proof}[Proof of Theorem \ref{T2.3_2}] By using Corollary \ref{C5.1} and almost the same argument as in the proof of Theorem \ref{T2.3}, we can extend the local-in-time strong solutions to the global-in-time ones.
\end{proof}

%%%%%%%%%%%%%%%%%%%%%%%%%%%%%%%%%%%%%%%%%%%%%%%%%%%
%
%
%
%\section{Proof of Lemma \ref{L5.1}}\label{app.A}
%
%
%
%%%%%%%%%%%%%%%%%%%%%%%%%%%%%%%%%%%%%%%%%%%%%%%%%%%

\section*{Acknowledgments}
The work of Y.-P. Choi is supported by NRF grant (No. 2017R1C1B2012918), POSCO Science Fellowship of POSCO TJ Park Foundation, and Yonsei University Research Fund of 2019-22-021. The work of J. Jung is supported by NRF grant (No. 2019R1A6A1A10073437).

\appendix

%%%%%%%%%%%%%%%%%%%%%%%%%%%%%%%%%%%%%%%%%%%%%%%%%%%
%
%
%
%\section{Local solvability for the EPNS system}\label{app_conv}
%
%
%
%%%%%%%%%%%%%%%%%%%%%%%%%%%%%%%%%%%%%%%%%%%%%%%%%%%

%\section{Proofs for Lemmas \ref{L3.5} and \ref{L3.7}}\label{app.A}
%In this appendix, we present the proofs for Lemmas \ref{L3.5} and \ref{L3.7}.

%\subsection{Proof of Lemma \ref{L3.5}}

%\subsection{Proof of Lemma \ref{L3.7}}

\section{Global-in-time existence of weak entropy solutions to VPNS system}\label{sec:weak}
 
In this appendix, we discuss the global-in-time existence of weak solutions to the system \eqref{main_eq}. To be more precise, our goal is to prove:

\begin{theorem}\label{T2.1}
For $d=2,3$, suppose that the initial data $(f_0, v_0)$ satisfies the following conditions:
\[
f_0 \in (L_+^1\cap L^\infty)(\T^d\times\R^d) \quad \mbox{and} \quad  v_0 \in \mathsf{H}.
\]
Furthermore, we assume that the initial free energy is bounded:
\[
\iint_{\T^d \times \R^d} \left(\frac{|\xi|^2}{2} + \sigma |\log f_0| \right) f_0 \,dxd\xi + \frac{1}{2}\int_{\T^d} |\nabla U_0|^2 \,dx + \frac{1}{2}\int_{\T^d} |v_0|^2\,dx < \infty.
\]
Then for every $T>0$, there exists at least one weak solution $(f,v)$ to the system \eqref{main_eq} in the sense of Definition \ref{D2.1} on the time interval $[0,T]$. 
%satisfying 
%\begin{align}\label{entropy}
%\begin{aligned}
%&\iint_{\T^d \times \R^d} \left(\frac{|\xi|^2}{2} + \sigma \log f \right) f \,dxd\xi + \frac{1}{2}\int_{\T^d} |\nabla U|^2 \,dx + \frac{1}{2}\int_{\T^d} |v|^2\,dx\\
%&\quad + \int_0^t\iint_{\T^d \times \R^d}\left(\frac{1}{\tau f} \left|\sigma \nabla_\xi f - (u-\xi)f\right|^2  + |v-\xi|^2 f\right) dxd\xi ds + \int_0^t\int_{\T^d}|\nabla v|^2  \,dxds\\
%&\qquad  \le \iint_{\T^d \times \R^d} \left(\frac{|\xi|^2}{2} + \sigma \log f_0 \right) f_0 \,dxd\xi + \frac{1}{2}\int_{\T^d} |\nabla U_0|^2 \,dx + \frac{1}{2}\int_{\T^d} |v_0|^2\,dx+\sigma dt \|f_0\|_{L^1}.\\
%\end{aligned}
%\end{align} 
\end{theorem}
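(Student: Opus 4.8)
The plan is a regularization argument: we construct smooth solutions of a carefully regularized version of \eqref{main_eq}, derive bounds uniform in the regularization parameter, and pass to the limit while preserving enough structure to recover the entropy inequality \eqref{entropy}. Fix a small $\kappa>0$ and consider a regularized system in which the alignment velocity is replaced by the $\xi$-independent field $u_\kappa:=\big(\int_{\R^d}\xi\,(j_\kappa\star_x f)\,d\xi\big)/\big(j_\kappa\star_x\rho+\kappa\big)$, where $j_\kappa$ is a spatial mollifier, so that the product $u_\kappa f$ is well defined and weakly stable while every $\xi$-integration by parts behind $\mathcal{D}_1$ goes through verbatim; the Poisson potential is regularized as $U_\kappa:=K\star(j_\kappa\star_x\rho-1)$; and a small parabolic term $\kappa\,\Delta_{x,\xi}f$ together with a cutoff of large velocities is added to the kinetic equation to gain smoothing, strict positivity and $L^1_{x,\xi}$-integrability. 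For fixed $\kappa$ a solution is produced by a fixed-point scheme: given $v$, solve the now uniformly parabolic kinetic equation by linear theory, feed the drag force $-\int_{\R^d}(v-\xi)f\,d\xi$ into the Stokes--Navier--Stokes problem, and close the loop with the Leray projection and standard parabolic and Aubin--Lions estimates. Since the resulting solution is smooth, one gets exact energy and entropy \emph{equalities} --- the $\kappa$-analogues of \eqref{D-1} and of Lemmas \ref{L4.1}--\ref{L4.2} --- carrying only additional nonnegative $\kappa$-terms of the form $\kappa\iint\big(|\nabla_x\sqrt{f}|^2+|\nabla_\xi\sqrt{f}|^2\big)$, plus harmless sources $\mathcal{O}(\kappa)$.

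Next I would read off the bounds uniform in $\kappa$. The regularized free-energy equality gives $|\xi|^2f_\kappa\in L^\infty(0,T;L^1)$, $\nabla_x U_\kappa\in L^\infty(0,T;L^2)$, $v_\kappa\in L^\infty(0,T;\mathsf H)\cap L^2(0,T;\mathsf V)$, and $\int_0^T\big(\tfrac1\tau\mathcal{D}_1(f_\kappa)+\iint|v_\kappa-\xi|^2 f_\kappa+\int|\nabla_x v_\kappa|^2\big)\,dt\lesssim 1$; the entropy equality gives $f_\kappa\log f_\kappa\in L^\infty(0,T;L^1)$, its negative part being absorbed by the second moment exactly as in the proof of Lemma \ref{L4.1}; a maximum-principle argument for the transport-dominated kinetic equation propagates $\|f_\kappa(t)\|_{L^\infty}\le C(T)\|f_0\|_{L^\infty}$; mass is conserved. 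From the $L^\infty$-bound and the second moment one controls $\rho_\kappa$ and $\rho_\kappa u_\kappa$ in $L^\infty(0,T;L^{(d+2)/d})$ and, crucially, obtains equi-integrability of $\int_{\R^d}(1+|\xi|)f_\kappa\,d\xi$; and the fluid equation gives enough control on $\pa_t v_\kappa$ in a negative-order space (its drag term being bounded in $L^1_{t,x}$ by the energy) to later invoke Aubin--Lions. Here $d\in\{2,3\}$ enters so that these exponents and the Sobolev embeddings needed for the Navier--Stokes and Poisson couplings are admissible.

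For compactness and the limit, Aubin--Lions yields $v_\kappa\to v$ strongly in $L^2(0,T;\mathsf H)$ and a.e., while $f_\kappa\rightharpoonup f$ weakly-$\ast$ in $L^\infty(0,T;L^1\cap L^\infty)$; the decisive point is that the \emph{velocity averaging lemma}, applied to the kinetic equation whose right-hand side is controlled in a negative-order space by the $\mathcal{D}_1$-bound and the $\kappa$-diffusion, gives strong $L^p_{t,x}$-convergence of the moments $\rho_\kappa\to\rho$ and $\rho_\kappa u_\kappa\to\rho u$, together with $\int_{\R^d}\xi\otimes\xi\,f_\kappa\,d\xi\to\int_{\R^d}\xi\otimes\xi\,f\,d\xi$ by equi-integrability. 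This identifies every nonlinearity: $v_\kappa f_\kappa\to vf$ (strong $\times$ weak), $\nabla_x U_\kappa\to\nabla_x U$ in $\mathcal{C}([0,T];L^2)$ by elliptic regularity and $\rho_\kappa\to\rho$, $\int_{\R^d}(v_\kappa-\xi)f_\kappa\,d\xi\to\int_{\R^d}(v-\xi)f\,d\xi$, and the alignment term (that is, $(u_\kappa-\xi)f_\kappa$ tested against $\nabla_\xi\Phi$) passes because $\rho_\kappa u_\kappa\to\rho u$ strongly. Passing to the limit in the weak formulation of each of the four equations gives Definition \ref{D2.1}(i)--(v), with the non-negativity of $f$ and the initial data inherited.

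It remains to pass to the limit in the entropy estimate. The regularized equalities differ from \eqref{entropy} only by the extra nonnegative $\kappa$-terms, which are discarded, and by $\mathcal{O}(\kappa)$ sources, which vanish; and the kinetic energy $\tfrac12\iint|\xi|^2 f$, the entropy $\sigma\iint f\log f$ (convexity of $s\mapsto s\log s$ together with the $L^1\cap L^\infty$-bound ruling out loss of mass at infinity), the electric energy $\tfrac12\|\nabla_x U\|_{L^2}^2$, the fluid energy, and the dissipations $\mathcal{D}_1$, $\iint|v-\xi|^2 f$ and $\int|\nabla_x v|^2$ are all convex, hence weakly lower semicontinuous, so $\liminf_{\kappa\to0}$ reproduces exactly \eqref{entropy} (with Fatou in time). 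The main obstacle is precisely making the regularization compatible with this last step: a crude mollification of the local velocity breaks the completed square $\tfrac1f|\sigma\nabla_\xi f-(u-\xi)f|^2$ defining $\mathcal{D}_1$ and leaves a remainder that need not vanish as $\kappa\to0$, so $u_\kappa$ must be regularized in a way that keeps it independent of $\xi$ and makes the $\kappa$-level entropy production converge to the genuine $\mathcal{D}_1$; and, since $u_\kappa$ enjoys no compactness of its own, one is forced throughout to argue via the moment $\rho_\kappa u_\kappa$ and velocity averaging rather than via $u_\kappa$ directly.
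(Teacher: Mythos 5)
Your overall architecture --- regularize, solve the regularized system by a fixed-point scheme, derive uniform energy/entropy bounds, obtain compactness from velocity averaging and Aubin--Lions, and recover \eqref{entropy} by convexity and lower semicontinuity --- is exactly the paper's (Appendix \ref{sec:weak}), and most of your intermediate claims ($L^\infty$ and moment propagation, control of $\rho_\kappa$ and $\rho_\kappa u_\kappa$ in $L^{(d+2)/d}$ and $L^{(d+2)/(d+1)}$, the negative-order bound on $\partial_t v_\kappa$) correspond to Lemmas \ref{L3.1} and \ref{lem_mom} and to Step A of the limit passage.

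There is, however, a genuine gap at the single most delicate point, which you name in your last paragraph but do not resolve. Replacing $u$ by a regularized, $\xi$-independent field $u_\kappa$ in the local alignment term makes the entropy balance (the analogue of Proposition \ref{P3.1}) produce, besides the completed square, the cross term $\int_0^t\iint_{\T^d\times\R^d}(u_\kappa-\xi)\cdot u_\kappa f\,dxd\xi\,ds=\int_0^t\int_{\T^d}u_\kappa\cdot(\rho u_\kappa-\rho u)\,dx\,ds$, and this must be controlled \emph{uniformly in $\kappa$ before} any limit is taken. With your choice $u_\kappa=j_\kappa\star(\rho u)/(j_\kappa\star\rho+\kappa)$ the quantity $\int_{\T^d}\rho|u_\kappa|^2\,dx$ is not dominated by the kinetic energy $\iint|\xi|^2f\,dxd\xi$: Cauchy--Schwarz only gives $|u_\kappa|^2\le j_\kappa\star(\rho|u|^2)/(j_\kappa\star\rho+\kappa)$, and the weight $\rho/(j_\kappa\star\rho+\kappa)$ is unbounded, so the cross term has neither a sign nor an available bound (a velocity truncation at level $\lambda$ makes it finite but $\lambda$-dependent, which ruins the subsequent removal of the truncation). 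The paper's resolution is precisely to \emph{not} mollify in $x$ and to take $u_\e=\rho u/(\rho+\e)$, for which the cross term factors as $-\e\int_{\T^d}|\chi_\lambda(u_\e)|^2\,dx\le 0$ (see the proof of Corollary \ref{C3.2}) and can simply be discarded; this is the ``proper way of regularizing'' alluded to in the introduction. You would need to replace your $u_\kappa$ by this (or another) regularization producing a signed or summable error before the rest of your argument, which is otherwise sound, closes. A minor compensating observation: since you set $U_\kappa=K\star(j_\kappa\star_x\rho-1)$ instead of regularizing the kernel, your potential energy $\tfrac12\int|\nabla U_\kappa|^2$ is genuinely nonnegative and you avoid the paper's Lemma \ref{L3.5}, though you still need the analogue of Lemma \ref{L3.7} to pass to the limit in this term on both sides of the entropy inequality.
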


%We first notice that the potential $U$ can be represented by using the interaction potential $K$ which satisfies the following conditions (see \cite{BR93, Tit58}):
%\begin{enumerate}
%\item[(i)] The potential $K$ is an even function explicitly written as
%\[
%K(x) = \left\{\begin{array}{lcl}\displaystyle  -c_0 \log |x| + G_0(x) & \mbox{ if } & d=2, \\
%c_1|x|^{-1} + G_1(x) & \mbox{ if } & d= 3,\end{array}\right.
%\]
%where $c_0>0$ and $c_1>0$ are normalization constants and $G_0$ and $G_1$ are smooth functions over $\T^2$ and $\T^3$, respectively. 
%\vspace{0.2cm}
%
%\item[(ii)] For any $h \in L^2(\T^d)$ with $\int_{\T^d} h \,dx =0$, $U := K\star h \in H^1(\T^d)$ is the unique function that satisfies the following condition:
%\[
%\int_{\T^d} U\,dx = 0 \quad \mbox{and} \quad \int_{\T^d} \nabla U \cdot \nabla \psi\,dx = \int_{\T^d} h\,\psi\,dx \quad \forall\, \psi \in H^1(\T^d),
%\]
%i.e., $U$ is the unique weak solution to $-\Delta U = h$.
%\end{enumerate}
%Thus, we rewrite the system \eqref{main_eq} as
%\begin{align}\label{C-0}
%\begin{aligned}
%&\partial_t f + \xi \cdot \nabla f + \nabla_\xi \cdot ( ((v-\xi)-\nabla K\star(\rho-1)) f)  = \nabla_\xi \cdot (\sigma\nabla_\xi f - (u-\xi)f),\\
%&\partial_t v + (v \cdot \nabla) v + \nabla p -\Delta v =- \int_{\R^d} (v-\xi)f\,d\xi,\\
%&\nabla \cdot v = 0.
%\end{aligned}
%\end{align}
%Here, without loss of generality, we set $\tau=1$. This reformulation is useful when we discuss the regularization of the interaction potential $U$, see \eqref{reg_K} below.
Here, without loss of generality, we set $\tau=1$.\\

In order to show the global-in-time existence of weak solutions to the system \eqref{main_eq}, we first regularize the system by employing a cut-off function for the fluid and local particle velocities in the kinetic equation and the drag force in the fluid equation. We also mollify the convection velocity of the fluid equation and remove the singularity in the local particle velocity $u$. Next, we show the existence of weak solutions for the regularized system by using a fixed point argument. We then use some weak and strong compactness arguments together with the velocity averaging lemma to pass to the limit when the regularization parameters tend to zero or infinity. Finally, we prove the limiting functions obtained from the previous step satisfy our VPNS system in the sense of Definition \ref{D2.1}. 

\subsection{Regularized system}We regularize the system \eqref{main_eq} and investigate the existence of solutions and associated entropy inequalities. For the regularization parameters $\lambda$ and $\e$, we consider
\begin{align}
\begin{aligned}\label{C-1}
&\partial_t f^{\lambda,\e} + \xi \cdot \nabla f^{\lambda,\e} + \nabla_\xi \cdot (((\chi_\lambda(v^{\lambda,\e}) - \xi) - \nabla K^\e \star (\rho^{\lambda,\e}-1))f^{\lambda,\e}) \\
&\hspace{5cm}= \nabla_\xi \cdot (\sigma \nabla_\xi f^{\lambda,\e} - (\chi_\lambda(u_{\e}^{\lambda,\e})-\xi)f^{\lambda,\e}),\\
&\partial_t v^{\lambda,\e} + ((\theta_\e \star v^{\lambda,\e})\cdot \nabla) v^{\lambda,\e} + \nabla p^{\lambda,\e} -\Delta v^{\lambda,\e} = \rho^\eta (u^{\lambda,\e} -v^{\lambda,\e})\mathds{1}_{\{|v^{\lambda,\e}| \le \lambda\}},\\
&\nabla \cdot v^{\lambda,\e} =0
\end{aligned}
\end{align}
subject to initial data:
\[
f^{\lambda,\e}(x,\xi,0)=f^\lambda(x,\xi,0) := f_0(x,\xi)\mathds{1}_{\{|\xi|\le \lambda\}}(\xi) \quad \mbox{and} \quad v^{\lambda,\e}(x,0) = v^\e(x,0) := (\theta_\e \star v_0)(x),
\]
where $\chi_\lambda(v)$ is the truncation function given by
\[
\chi_\lambda(v) = v \mathds{1}_{\{ |v|\le \lambda\}}
\]
and $u_\e^{\lambda,\e}$ is defined by
\[
u_\e^{\lambda,\e} := \frac{\rho^{\lambda,\e} u^{\lambda,\e}}{\rho^{\lambda,\e} + \e}.
\]
Here the fluid velocity field $v$ is also regularized by using $\theta_\e = (1/\e^d)\theta(x/\e)$, where $\theta$ is a standard mollifier satisfying
\[
\theta \in \mathcal{C}_0^\infty(\T^d), \quad \theta \ge 0, \quad \int_{\T^d} \theta(x)\,dx = 1, \quad \mbox{and} \quad \mbox{supp } \theta \subset B_1(0).
\]
The regularized interaction potential $K^\e$ is given as
\bq\label{reg_K}
K^\e(x) = \left\{\begin{array}{lcl}\displaystyle  -\frac{c_0}{2} \log (\e + |x|^2) + G_0(x) & \mbox{ if } & d=2, \\
c_1(\e + |x|^2)^{-1/2} + G_1(x) & \mbox{ if } & d=3.\end{array}\right.
\eq
We use $\eta = (\lambda,\e)$ for notational simplicity whenever there is no confusion.
Then, we partially linearize the system \eqref{C-1} as follows:
\begin{align}
\begin{aligned}\label{C-2}
&\partial_t f^\eta + \xi \cdot \nabla f^\eta + \nabla_\xi \cdot (((\chi_\lambda(\tilde{v}) - \xi) - \nabla K^\e \star (\rho^\eta-1))f^\eta) = \nabla_\xi \cdot (\sigma \nabla_\xi f^\eta - (\chi_\lambda(\tilde{u})-\xi)f^\eta),\\
&\partial_t v^\eta + ((\theta_\e \star v^\eta)\cdot \nabla) v^\eta + \nabla p^\eta -\Delta v^\eta = \rho^\eta (u^\eta -\tilde{v})\mathds{1}_{\{|\tilde{v}|\le \lambda\}},\\
&\nabla \cdot v^\eta =0,
\end{aligned}
\end{align}
where $(\tilde{u}, \tilde{v})$ belong to $\mathcal{S} := L^2(\T^d \times (0,T))\times L^2(\T^d \times (0,T))$. As mentioned before, our strategy is to apply the fixed point theory argument to the system \eqref{C-2} to obtain the existence of weak solutions to system \eqref{C-1} and associated entropy inequality. For this, we need to estimate $L^p$-norm and velocity-moments of solutions $f^\eta$ to the kinetic equation in \eqref{C-2}. On the other hand, since $\nabla K^\e$ is bounded and Lipschitz continuous for fixed $\e>0$, the global-in-time existence of weak solutions to the kinetic equation in \eqref{C-2} can be found in \cite{De86, KMT13}. Furthermore, we get
\begin{align*}
\frac{d}{dt}\iint_{\T^d \times \R^d} (f^\eta)^p\,dxd\xi &= (p-1) \iint_{\T^d \times \R^d} (f^\eta)^p \nabla_\xi \cdot \lt( 2\xi  + \nabla K^\e \star (\rho^\eta-1) - \chi_\lambda (\tilde{u})  -\chi_\lambda(\tilde{v})\rt) dxd\xi \cr
&\quad - \sigma p(p-1) \iint_{\T^d \times \R^d} (f^\eta)^{p-2} |\nabla_\xi f^\eta|^2\,dxd\xi \cr
&=2d(p-1)\iint_{\T^d \times \R^d} (f^\eta)^p \,dxd\xi  - \frac{4\sigma (p-1)}{p}\iint_{\T^d \times \R^d} |\nabla_\xi (f^\eta)^{p/2}|^2\,dxd\xi
\end{align*}
for $p \in [1,\infty)$. We combine this with Gr\"onwall's lemma to obtain
\[
\|f^\eta(\cdot,\cdot,t)\|_{L^p}^p + \frac{4\sigma(p-1)}{p}\int_0^t e^{2d(p-1) (t-s)}\|\nabla_\xi (f^\eta)^{p/2}(\cdot,\cdot,s)\|_{L^2}^2\,ds \leq \|f^\eta_0\|_{L^p}^pe^{2d(p-1)t}.
\]
In particular, we have
\bq\label{bdd_f}
\|f^\eta(\cdot,\cdot,t)\|_{L^1} \leq \|f^\eta_0\|_{L^1} \le \|f_0\|_{L^1} = 1 \quad \mbox{and} \quad \|f^\eta(\cdot,\cdot,t)\|_{L^\infty} \le \|f_0^\eta\|_{L^\infty}e^{2dt}
\eq
for $t \in [0,T]$. 

We next present the estimates for higher-order velocity moments and entropy inequality of solutions to the system \eqref{C-2}.
\begin{lemma}\label{L3.1}
For a weak solution $f^\eta$ to the kinetic equation in \eqref{C-2}, its velocity moments satisfy the following bound:
\[
\sup_{t \in (0,T)}\iint_{\T^d \times \R^d} |\xi|^k f^\eta(x,\xi) \,dx d\xi  \le C(d,\eta, k, \sigma, T) \quad \forall \,k \ge 0. 
\]
\end{lemma}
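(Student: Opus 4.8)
The plan is to derive a closed recursive differential inequality for the weighted velocity moments and then close it by induction on the order. It is cleaner to work with the smooth weight $\langle\xi\rangle^k:=(1+|\xi|^2)^{k/2}$ in place of $|\xi|^k$, since its $\xi$-derivatives are non-singular; as $|\xi|^k\le\langle\xi\rangle^k$, a bound on $M_k(t):=\iint_{\T^d\times\R^d}\langle\xi\rangle^k f^\eta\,dxd\xi$ implies the assertion. (When $\sigma=0$ the diffusion term simply drops out and the argument below is unaffected.)

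First I would test the kinetic equation in \eqref{C-2} against $\langle\xi\rangle^k$. The free-transport term integrates to zero because $\langle\xi\rangle^k\xi$ is $x$-independent and $\T^d$ has no boundary. Integrating by parts in $\xi$ in the remaining terms and using $\nabla_\xi\langle\xi\rangle^k=k\langle\xi\rangle^{k-2}\xi$ and $0\le\Delta_\xi\langle\xi\rangle^k\le C(d,k)\langle\xi\rangle^{k-2}$, the two ``$-\xi$'' contributions (from the drift $\chi_\lambda(\tilde v)-\xi$ on the left and from the friction $\chi_\lambda(\tilde u)-\xi$ on the right) combine into the dissipative term $-2k\iint\langle\xi\rangle^{k-2}|\xi|^2 f^\eta=-2kM_k+2kM_{k-2}$, while the remaining transport coefficients are all uniformly bounded: $|\chi_\lambda(\tilde v)|,|\chi_\lambda(\tilde u)|\le\lambda$, and $\nabla K^\e$ is smooth and bounded on $\T^d$ by \eqref{reg_K} with $\|\rho^\eta\|_{L^1(\T^d)}=\|f^\eta\|_{L^1}\le1$ thanks to \eqref{bdd_f}, so $\|\nabla K^\e\star(\rho^\eta-1)\|_{L^\infty}\le C(\e)$. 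Bounding each such term by $C(\eta,k)\,M_{k-1}$ and the diffusion term by $\sigma C(d,k)\,M_{k-2}$ yields, for every integer $k\ge1$ (with the convention $M_{-1}\le M_0$),
\[
\frac{d}{dt}M_k \le -2k\,M_k + C(d,\eta,k,\sigma)\,\big(M_{k-1}+M_{k-2}\big).
\]

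I would then close this by induction on integers $k$. The base cases are $M_0=\|f^\eta\|_{L^1}\le1$ by \eqref{bdd_f}, and $k=1$, for which the right-hand side involves only $M_0$, so Gr\"onwall's lemma gives a bound on $[0,T]$. For the inductive step, assuming $M_{k-1},M_{k-2}$ are already bounded on $[0,T]$, the inequality reduces to $\tfrac{d}{dt}M_k\le-2kM_k+C$, and Gr\"onwall gives $M_k(t)\le M_k(0)+C/(2k)$; here the initial moment is finite because $M_k(0)=\iint\langle\xi\rangle^k f_0\,\mathds{1}_{\{|\xi|\le\lambda\}}\,dxd\xi\le(1+\lambda^2)^{k/2}\|f_0\|_{L^1}<\infty$. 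Finally, since $\langle\xi\rangle\ge1$ one has $M_k\le M_{\lceil k\rceil}$ for arbitrary real $k\ge0$, which gives the stated bound with constant $C(d,\eta,k,\sigma,T)$.

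The delicate point --- and the only real obstacle to making this rigorous --- is that the manipulations above are formal until one knows a priori that $M_k(t)<\infty$ and that the $\xi$-integrations by parts produce no boundary contribution at infinity. I would handle this in the usual way: replace $\langle\xi\rangle^k$ by a smooth bounded truncation $w_{k,R}$ that agrees with $\langle\xi\rangle^k$ on $\{|\xi|\le R\}$ and satisfies $|\nabla_\xi w_{k,R}|\le C\langle\xi\rangle^{k-1}$ and $|\Delta_\xi w_{k,R}|\le C\langle\xi\rangle^{k-2}$ uniformly in $R$, run the above computation for $\iint w_{k,R}f^\eta\,dxd\xi$ (legitimate since the solution $f^\eta$ built in \cite{De86,KMT13} has enough integrability and decay), obtain the bound uniformly in $R$, and let $R\to\infty$ by monotone convergence.
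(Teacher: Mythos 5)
Your proposal is correct and follows essentially the same route as the paper: test the kinetic equation against velocity weights, use the two relaxation terms to produce the dissipative $-2kM_k$ contribution, bound the remaining coefficients by $\lambda$ and $\|\nabla K^\e\|_{L^\infty}$, and close a moment hierarchy by Gr\"onwall plus induction. The only differences are bookkeeping: you use $\langle\xi\rangle^k$ with integer induction and the monotonicity $M_k\le M_{\lceil k\rceil}$ for real $k$, whereas the paper works with $|\xi|^k$, applies Young's inequality to reduce to $m_{k-2}$, inducts over even integers, and handles non-even $k$ by Cauchy--Schwarz interpolation.
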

\begin{proof}
First, we define a $k$-th moment of $f$ in velocity by
\[
m_k(f) := \iint_{\T^d \times \R^d} |\xi|^k f\,dxd\xi .
\] 
Then, for $k\ge 2$, we estimate
\begin{align*}
\frac{d}{dt} m_k(f^\eta)&= -k \iint_{\T^d \times \R^d} (\xi -\chi_\lambda(\tilde{v}) + \nabla K^\e\star(\rho^\eta-1))\cdot \xi f^\eta |\xi|^{k-2}\,dxd\xi \\
&\quad  - k\iint_{\T^d \times \R^d} (\xi-\chi_\lambda(\tilde{u}))\cdot \xi f^\eta |\xi|^{k-2}\,dxd\xi - \sigma k \iint_{\T^d \times \R^d} \nabla_\xi f^\eta \cdot \xi |\xi|^{k-2}\,dxd\xi \\
&= -2k m_k(f^\eta) -k \iint_{\T^d \times \R^d}  \nabla K^\e\star(\rho^\eta-1) \cdot \xi f^\eta |\xi|^{k-2}\,dxd\xi \\
&\quad + k\iint_{\T^d \times \R^d}(\chi_\lambda(\tilde{u}) + \chi_\lambda(\tilde{v})) \cdot \xi f^\eta |\xi|^{k-2} \,dxd\xi  + \sigma k(k-2+d)m_{k-2}(f^\eta)\\
&\le -k m_k(f^\eta)+ \frac{k}{2} \iint_{\T^d \times \R^d} |\nabla K^\e\star(\rho^\eta-1)|^2 f^\eta |\xi|^{k-2}\,dxd\xi \\
&\quad + \frac{k}{2}\iint_{\T^d \times \R^d}(\chi_\lambda (\tilde{u})+\chi_\lambda(\tilde{v}))^2  f^\eta |\xi|^{k-2} \,dxd\xi  + \sigma k(k-2+d)m_{k-2}(f^\eta)\\
&\le \frac{k}{2} \iint_{\T^d \times \R^d} |\nabla K^\e\star(\rho^\eta-1)|^2 f^\eta |\xi|^{k-2}\,dxd\xi \\
&\quad + \frac{k}{2}\iint_{\T^d \times \R^d}(\chi_\lambda (\tilde{u})+\chi_\lambda(\tilde{v}))^2  f^\eta |\xi|^{k-2} \,dxd\xi  + \sigma k(k-2+d)m_{k-2}(f^\eta)\\
&\le C m_{k-2}(f^\eta),
\end{align*}
where $C = C(d,\eta, k, \sigma,T)$ is a positive constant and we used Young's inequality and
\[
\|\nabla K^\e \star(\rho^\eta-1)\|_{L^\infty} \le \|\nabla K^\e \|_{L^\infty}\|\rho^\eta -1\|_{L^1} \le C(\e).
\]
Since $m_0(f^\eta)$ is just $\|f^\eta(\cdot,\cdot,t)\|_{L^1} = \|f_0^\eta\|_{L^1}$ for $t\geq0$, we combine Gr\"onwall's lemma and induction argument to yield
\[
\sup_{t \in (0,T)}\iint_{\T^d \times \R^d} |\xi|^k f^\eta \,dx d\xi  \le C(d,\eta, k, \sigma, T) \quad \forall\, k=0,2,4, \dots. 
\]
Moreover, for $k \in \R_+ \setminus\{0,2,4, \dots\}$, we get
\[
\iint_{\T^d \times \R^d} |\xi|^k f^\eta \,dxd\xi \le \left(\iint_{\T^d \times \R^d} |\xi|^{2\lfloor k \rfloor} f^\eta\,dxd\xi\right)^{1/2}\left(\iint_{\T^d \times \R^d} |\xi|^{2(k-\lfloor k \rfloor)} f^\eta\,dxd\xi \right)^{1/2},
\]
where $\lfloor k \rfloor$ denotes the greatest integer less than or equal to $k$. Furthermore, we have
\[
\begin{split}
\iint_{\T^d \times \R^d} |\xi|^{2(k-\lfloor k \rfloor)} f^\eta \,dxd\xi  &\le \left(\iint_{\T^d \times \R^d} |\xi|^2 f^\eta \,dxd\xi  \right)^{\frac{2(k-\lfloor k \rfloor)}{2}} \left( \iint_{\T^d \times \R^d} f^\eta \, dxd\xi  \right)^{\frac{2-2(k-\lfloor k \rfloor)}{2}}\\
& \le C(d,\eta, k, \sigma, T).
\end{split}
\]
This asserts our desired result.
\end{proof}

In the proposition below, we provide the estimate of entropy inequality. 
\begin{proposition}\label{P3.1}
For a weak solution $f^\eta$ to the kinetic equation in \eqref{C-2}, we have the following relation:
\begin{align*}
& \iint_{\T^d \times \R^d} \left(\frac{|\xi|^2}{2} +  \sigma \log f^\eta \right) f^\eta \,dxd\xi  + \int_{\T^d} (\rho^\eta-1)K^\e\star(\rho^\eta-1)  \,dx \\
&\quad + \int_0^t\iint_{\T^d \times \R^d}\frac{1}{f^\eta} \left|\sigma \nabla_\xi f^\eta - (\chi_\lambda(\tilde{u})-\xi)f^\eta\right|^2 \,dxd\xi ds \\
&\qquad \leq   \iint_{\T^d \times \R^d} \left(\frac{|\xi|^2}{2} +  \sigma \log f_0^\lambda \right) f^\lambda \,dxd\xi  + \int_{\T^d} (\rho_0^\lambda-1)K^\e\star(\rho_0^\lambda-1)  \,dx \\
&\quad \qquad + \int_0^t\iint_{\T^d \times \R^d} (\chi_\lambda(\tilde{u})-\xi)\cdot \chi_\lambda(\tilde{u}) f^\eta \,dxd\xi ds  + \int_0^t\iint_{\T^d \times \R^d} (\chi_\lambda (\tilde{v}) -\xi)\cdot \xi f^\eta \,dxd\xi ds + \sigma d \|f_0^\lambda\|_{L^1}t 
\end{align*}
for almost all $t \in [0,T]$.
\end{proposition}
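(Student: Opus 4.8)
\emph{Approach.} The plan is to test the kinetic equation in \eqref{C-2} against the variational derivative of the kinetic free energy $f\mapsto\iint_{\T^d\times\R^d}\bigl(\tfrac{|\xi|^2}{2}+\sigma\log f\bigr)f\,dxd\xi$, i.e.\ against
\[
\psi:=\frac{|\xi|^2}{2}+\sigma\bigl(1+\log f^\eta\bigr),
\]
and then to integrate by parts term by term over $\T^d\times\R^d$ and finally in time. The transport term disappears: since $\xi$ is $x$-independent, $\xi\cdot\nabla_x f^\eta=\nabla_x\cdot(\xi f^\eta)$, so after integrating by parts in $x$ only $\sigma\iint\xi\cdot\nabla_x f^\eta\,dxd\xi$ remains, which vanishes by periodicity. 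For the Fokker--Planck part (diffusion plus relaxation towards $u$) on the right, I would use the pointwise identity
\[
\nabla_\xi\psi=\chi_\lambda(\tilde u)+\frac{1}{f^\eta}\bigl(\sigma\nabla_\xi f^\eta-(\chi_\lambda(\tilde u)-\xi)f^\eta\bigr);
\]
pairing with $\sigma\nabla_\xi f^\eta-(\chi_\lambda(\tilde u)-\xi)f^\eta$ and using that $\chi_\lambda(\tilde u)$ is $\xi$-independent (so $\sigma\iint\chi_\lambda(\tilde u)\cdot\nabla_\xi f^\eta\,dxd\xi=0$) produces exactly $-\iint\tfrac{1}{f^\eta}|\sigma\nabla_\xi f^\eta-(\chi_\lambda(\tilde u)-\xi)f^\eta|^2$ together with the cross term $\iint(\chi_\lambda(\tilde u)-\xi)\cdot\chi_\lambda(\tilde u)f^\eta$.

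\emph{The force terms.} For the Vlasov force $B=\chi_\lambda(\tilde v)-\xi-\nabla K^\e\star(\rho^\eta-1)$, pairing $Bf^\eta$ with $\nabla_\xi\psi=\xi+\sigma(f^\eta)^{-1}\nabla_\xi f^\eta$: the $\xi$-independent fields $\chi_\lambda(\tilde v)$ and $\nabla K^\e\star(\rho^\eta-1)$ kill the contributions of $\sigma\nabla_\xi f^\eta$; the $\xi$-part of $\nabla_\xi\psi$ against $\chi_\lambda(\tilde v)-\xi$ yields $\iint(\chi_\lambda(\tilde v)-\xi)\cdot\xi f^\eta$; and $-\sigma\iint\xi\cdot\nabla_\xi f^\eta\,dxd\xi=\sigma d\iint f^\eta\,dxd\xi=\sigma d\|f^\lambda_0\|_{L^1}$ by conservation of mass, recorded in \eqref{bdd_f}. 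Finally the Coulomb piece is $-\iint\xi\cdot(\nabla K^\e\star(\rho^\eta-1))f^\eta\,dxd\xi=-\int_{\T^d}\rho^\eta u^\eta\cdot(\nabla K^\e\star(\rho^\eta-1))\,dx$; using the continuity equation $\partial_t\rho^\eta+\nabla\cdot(\rho^\eta u^\eta)=0$ (the zeroth $\xi$-moment of the kinetic equation) together with the evenness of $K^\e$, this is a total time derivative of the regularized interaction energy $\int_{\T^d}(\rho^\eta-1)K^\e\star(\rho^\eta-1)\,dx$, which therefore moves to the left-hand side. Assembling all the pieces and integrating over $[0,t]$ gives the stated relation with $\le$ in place of $=$.

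\emph{Main obstacle.} The real work is to make this rigorous for a merely weak solution of the kinetic equation in \eqref{C-2}: $\log f^\eta$ is not an admissible test function, and a priori the kinetic entropy and the Fisher-information--type dissipation are only lower semicontinuous functionals, not known to be finite or attained. I would run the computation on a smooth approximation of $f^\eta$ (equivalently on a parabolically regularized equation, for which the linear existence theory of \cite{De86, KMT13} applies), where all the manipulations above are valid and the relation holds with equality; the negative part of the entropy is controlled by $\iint f^\eta\log_- f^\eta\,dxd\xi\le\tfrac{1}{4\sigma}\iint(1+|\xi|^2)f^\eta\,dxd\xi+C$ together with the velocity-moment bounds of Lemma \ref{L3.1}, its positive part by the $L^\infty$-bound in \eqref{bdd_f}, and the dissipation is finite thanks to the parabolic smoothing of the $\sigma\Delta_\xi$ term (and is a plain velocity moment when $\sigma=0$, where the $\log$ terms are absent). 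Passing to the limit in the regularization, the convexity and weak lower semicontinuity of $f\mapsto f\log f$ and of the quadratic dissipation (Fatou's lemma) downgrade the equality to the claimed inequality, which is exactly the form used afterwards in the fixed-point argument.
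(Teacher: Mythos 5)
Your proposal is correct and is essentially the paper's own argument: the paper likewise pairs the equation with $|\xi|^2/2$ and $\sigma\log f^\eta$, recovers the interaction-energy term from the zeroth $\xi$-moment (continuity) equation together with the evenness of $K^\e$, and completes the square among $|\xi-\chi_\lambda(\tilde u)|^2 f^\eta$, $2\sigma(\xi-\chi_\lambda(\tilde u))\cdot\nabla_\xi f^\eta$ and $\sigma^2|\nabla_\xi f^\eta|^2/f^\eta$ to produce the dissipation, with the $\sigma d\,\|f_0^\lambda\|_{L^1}t$ term arising from $-\sigma\iint\xi\cdot\nabla_\xi f^\eta\,dxd\xi$ exactly as you describe. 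Your closing paragraph on why the formal identity degrades to an inequality (regularize, then use lower semicontinuity of the entropy and of the quadratic dissipation) is in fact more explicit than the paper, which carries out the computation formally and states the result directly as an inequality.
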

\begin{proof}
First, it follows from Lemma \ref{L3.1} that
\begin{align*}
&\frac12\frac{d}{dt}\left(\iint_{\T^d \times \R^d} |\xi|^2 f^\eta \,dxd\xi \right) \cr
&\quad = -\iint_{\T^d \times \R^d}(\xi - \chi_\lambda(\tilde{v})+ \nabla K^\e \star (\rho^\eta-1))\cdot \xi f^\eta \,dxd\xi  \\
&\qquad - \iint_{\T^d \times \R^d} (\xi-\chi_\lambda(\tilde{u}))\cdot \xi f^\eta \,dxd\xi   - \sigma\iint_{\T^d \times \R^d} \xi \cdot \nabla_\xi f^\eta \,dxd\xi \\
&\quad =  -\iint_{\T^d \times \R^d}(\xi-\chi_\lambda(\tilde{v}) + \nabla K^\e \star (\rho^\eta-1))\cdot \xi f^\eta \,dxd\xi - \iint_{\T^d \times \R^d} |\xi-\chi_\lambda(\tilde{u})|^2 f^\eta \,dxd\xi \\
&\qquad   - \iint_{\T^d \times \R^d} (\xi-\chi_\lambda(\tilde{u}))\cdot \chi_\lambda(\tilde{u}) f^\eta \,dxd\xi  - \sigma\iint_{\T^d \times \R^d} (\xi-\chi_\lambda(\tilde{u})) \cdot \nabla_\xi f^\eta \,dxd\xi .
\end{align*}
On the other hand, we get
\begin{align*}
\frac{d}{dt}\left(\int_{\T^d} (\rho^\eta-1)K^\e\star(\rho^\eta-1) \,dx\right) 
& = -\iint_{\T^d\times\T^d} K^\e(x-y)\nabla \cdot (\rho^\eta u^\eta)(x) (\rho^\eta(y)-1)\,dxdy\\
& = \iint_{\T^d\times\T^d} \nabla K^\e(x-y) (\rho^\eta(y)-1) \cdot (\rho^\eta u^\eta)(x) \,dxdy\\
& = \int_{\T^d} (\nabla K^\e \star (\rho^\eta-1)) \cdot  (\rho^\eta u^\eta) \,dx\\
& = \iint_{\T^d \times \R^d}(\nabla K^\e \star (\rho^\eta-1)) \cdot \xi f^\eta \,dxd\xi .
\end{align*}
Moreover, we estimate the entropy as
\begin{align*}
&\frac{d}{dt}\left(\iint_{\T^d \times \R^d} \sigma f^\eta \log f^\eta \,dxd\xi \right)\cr
&\quad = \iint_{\T^d \times \R^d} \sigma(\partial_t f^\eta) \log f^\eta\,dxd\xi \\
&\quad = -\sigma \iint_{\T^d \times \R^d} ( \xi - \chi_\lambda(\tilde{v}) + \nabla K^\e \star( \rho^\eta-1))\cdot \nabla_\xi f^\eta \,dxd\xi \\
&\qquad - \sigma \iint_{\T^d \times \R^d} (\xi-\chi_\lambda(\tilde{u}))\cdot \nabla_\xi f^\eta\,dxd\xi  -\sigma^2 \iint_{\T^d \times \R^d} \frac{|\nabla_\xi f^\eta|^2}{f^\eta}\,dxd\xi \\
&\quad = \sigma d \|f_0^\eta\|_{L^1} - \sigma \iint_{\T^d \times \R^d} (\xi-\chi_\zeta(\tilde{u}))\cdot \nabla_\xi f^\eta \,dxd\xi-\sigma^2 \iint_{\T^d \times \R^d} \frac{|\nabla_\xi f^\eta|^2}{f^\eta}\,dxd\xi.
\end{align*}
We then combine all the previous estimates to get the desired result.
\end{proof}
For the fluid part, the following estimate is obvious.
\begin{proposition}\label{P3.2}
For a weak solution $v^\eta$ to the Navier--Stokes system in \eqref{C-2}, we have
\[
\frac12\int_{\T^d} |v^\eta|^2\,dx  + \int_0^t\int_{\T^d} |\nabla v^\eta|^2\,dxds \leq \frac12 \int_{\T^d} |v_0^\eta|^2\,dx +  \int_0^t\int_{\T^d} \rho^\eta(u^\eta-\tilde{v} )\mathds{1}_{\{|\tilde{v}|\le \lambda\}}\cdot v^\eta \,dxds
\]
for almost all $t \in [0,T]$.
\end{proposition}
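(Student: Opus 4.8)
The plan is to derive the energy inequality by testing the Navier--Stokes equation in \eqref{C-2} against $v^\eta$ itself. Since $f^\eta$ is already determined by the kinetic equation in \eqref{C-2} for the fixed data $(\tilde u,\tilde v)$, the fluid equation is a \emph{linear} parabolic system for $v^\eta$, and from \eqref{bdd_f} together with Lemma \ref{L3.1} the forcing $\rho^\eta(u^\eta-\tilde v)\mathds{1}_{\{|\tilde v|\le\lambda\}}$ lies in $L^2(\T^d\times(0,T))$; indeed $\|\rho^\eta u^\eta\|_{L^2_x}^2\le\|\rho^\eta\|_{L^\infty_x}\, m_2(f^\eta)$ and $\rho^\eta\tilde v\in L^2$ because $\rho^\eta\in L^\infty$. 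Hence by standard Stokes theory $v^\eta\in L^\infty(0,T;\mathsf H)\cap L^2(0,T;\mathsf V)$ with $\partial_t v^\eta\in L^2(0,T;\mathsf V')$, so $t\mapsto\tfrac12\|v^\eta(t)\|_{L^2}^2$ is absolutely continuous with $\tfrac{d}{dt}\tfrac12\|v^\eta\|_{L^2}^2=\langle\partial_t v^\eta,v^\eta\rangle_{\mathsf V',\mathsf V}$.

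First I would justify replacing the admissible test functions $\Psi\in\mathcal C_c^\infty(\T^d\times[0,T])$ with $\nabla\cdot\Psi=0$ by $v^\eta$, via a density argument approximating $v^\eta$ in $L^2(0,T;\mathsf V)$ by smooth divergence-free fields. Then the three structural terms are handled in the usual way: the pressure term vanishes, $\int_{\T^d}\nabla p^\eta\cdot v^\eta\,dx=-\int_{\T^d}p^\eta\,\nabla\cdot v^\eta\,dx=0$; the viscous term produces the dissipation, $-\int_{\T^d}\Delta v^\eta\cdot v^\eta\,dx=\int_{\T^d}|\nabla v^\eta|^2\,dx$; and the transport term cancels because the convecting field $\theta_\e\star v^\eta$ is Lipschitz in $x$ (with $\|\theta_\e\star v^\eta\|_{L^\infty}\le\|\theta_\e\|_{L^2}\|v^\eta\|_{L^2}$) and divergence free, so that writing $((\theta_\e\star v^\eta)\cdot\nabla)v^\eta\cdot v^\eta=\tfrac12(\theta_\e\star v^\eta)\cdot\nabla|v^\eta|^2$ and integrating by parts,
\[
\int_{\T^d}\big((\theta_\e\star v^\eta)\cdot\nabla\big)v^\eta\cdot v^\eta\,dx=-\frac12\int_{\T^d}\big(\nabla\cdot(\theta_\e\star v^\eta)\big)|v^\eta|^2\,dx=-\frac12\int_{\T^d}\big(\theta_\e\star(\nabla\cdot v^\eta)\big)|v^\eta|^2\,dx=0,
\]
where $|v^\eta|^2\in W^{1,1}(\T^d)$ follows from $v^\eta\in\mathsf V$. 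Collecting these contributions and integrating in time from $0$ to $t$ yields the stated relation, in fact with equality.

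The inequality in the statement is retained because in the actual construction $v^\eta$ arises as a limit of Galerkin approximations $v^\eta_m$: on the Galerkin level the energy equality holds exactly, and passing to the limit one invokes weak lower semicontinuity of $v\mapsto\|v\|_{L^\infty(0,T;L^2)}$ and $v\mapsto\|\nabla v\|_{L^2(0,T;L^2)}$ on the left, while the right-hand side — affine in $v^\eta_m$, with the forcing $\rho^\eta(u^\eta-\tilde v)\mathds{1}_{\{|\tilde v|\le\lambda\}}$ independent of $m$ — passes to the limit by the convergence of $v^\eta_m$. The only point requiring care, and hence the main obstacle, is purely bookkeeping: ensuring the test-function class in the weak formulation of \eqref{C-2} is rich enough to legitimize using $v^\eta$ as a multiplier (or equivalently carrying the computation at the Galerkin level), and that the mollified convection term is treated so the antisymmetry is rigorous; both are classical once $\theta_\e\star v^\eta$ is known to be Lipschitz in $x$ and divergence free, which is immediate here. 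Everything else is a direct computation, which is why the estimate is stated as obvious.
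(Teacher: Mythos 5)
Your proposal is correct and is exactly the standard energy estimate that the paper declares ``obvious'' and does not write out: testing with $v^\eta$, the pressure and the mollified convection term vanish (the latter because $\nabla\cdot(\theta_\e\star v^\eta)=\theta_\e\star(\nabla\cdot v^\eta)=0$), the viscous term yields the dissipation, and the inequality survives the Galerkin limit by weak lower semicontinuity. One small slip: the fluid equation in \eqref{C-2} is not linear in $v^\eta$ (the convecting field is $\theta_\e\star v^\eta$, not $\theta_\e\star\tilde v$), but this does not affect your argument since the convection term cancels in the energy identity regardless.
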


\subsection{Existence of weak solutions to the regularized system} Now, we proceed to the existence of weak solutions to system \eqref{C-1}. For this, we define a map $\mathcal{T}: \mathcal{S} \to \mathcal{S}$ as 
\[
(\tilde{u}, \tilde{v}) \mapsto \mathcal{T}(\tilde{u}, \tilde{v}) =: (u_\e^\eta, v^\eta).
\]

We first recall from \cite[Lemma 2.4]{KMT13} or \cite[Lemma 3.2]{MV07} that the following lemma which provides some $L^p$ bound estimates for $\rho$ and $\rho u$.

\begin{lemma}\label{lem_mom} Assume that $f$ satisfies
\bq\label{bdd_conds}
\|f\|_{L^\infty(\T^d\times\R^d\times(0,T))} \leq M \quad \mbox{and} \quad \sup_{0 \leq t \leq T}\iint_{\T^d \times \R^d} |\xi|^k f(x,\xi,t) \,dxd\xi \leq M \quad \forall\, k  \in [0,k^*]
\eq
for some $k^* > 1$. Then there exists a constant $C = C(M) > 0$ such that
\[
\|\rho(\cdot,t)\|_{L^{p_1}} \leq C \quad \forall\, p_1 \in [1, (k^*+d)/d) \quad \mbox{and} \quad \|(\rho u)(\cdot,t)\|_{L^{p_2}} \leq C \quad \forall\, p_2 \in [1, (k^*+d)/(d+1))
\]
for all $t \in [0,T]$.
\end{lemma}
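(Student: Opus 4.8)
The plan is to derive pointwise-in-$x$ interpolation bounds for $\rho$ and $\rho u$ in terms of the local velocity moments $m_k(x,t) := \int_{\R^d} |\xi|^k f(x,\xi,t)\,d\xi$, and then integrate in $x$, using that $\|m_k(\cdot,t)\|_{L^1(\T^d)} \le M$ for every $k \in [0,k^*]$ by \eqref{bdd_conds}. Throughout, $t\in[0,T]$ is fixed and all constants depend only on $M$, $d$, and the chosen exponent.

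First I would treat the density. For any $R>0$, splitting the $\xi$-integral according to whether $|\xi|\le R$ or $|\xi|>R$ and using $\|f\|_{L^\infty}\le M$ on the inner part and $|\xi|^{-k}\le R^{-k}$ on the outer part gives
\[
\rho(x,t) = \int_{|\xi|\le R} f\,d\xi + \int_{|\xi|>R} f\,d\xi \le C_d M R^d + R^{-k} m_k(x,t).
\]
Choosing $R=(m_k(x,t)/M)^{1/(k+d)}$ to balance the two contributions yields, for every $k\in(0,k^*]$, the pointwise bound $\rho(x,t)\le C(M,d,k)\,m_k(x,t)^{d/(k+d)}$. Raising to the power $p_1$, integrating over $\T^d$, and applying H\"older's inequality on the unit-measure space $\T^d$ with exponent $\theta:=p_1 d/(k+d)$,
\[
\|\rho(\cdot,t)\|_{L^{p_1}}^{p_1} \le C(M,d,k)^{p_1}\int_{\T^d} m_k(x,t)^{\theta}\,dx \le C(M,d,k)^{p_1}\,\|m_k(\cdot,t)\|_{L^1}^{\theta} \le C(M,d,k,p_1),
\]
which is valid as soon as $\theta\le 1$, i.e.\ $k\ge p_1 d-d$. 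Since $p_1<(k^*+d)/d$ is equivalent to $p_1 d-d<k^*$, such a $k\in(0,k^*]$ exists, and the claimed bound for $\rho$ follows.

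For $\rho u$ the argument is parallel: starting from $|(\rho u)(x,t)|\le \int_{\R^d}|\xi| f\,d\xi$, splitting at $|\xi|=R$, bounding the inner part by $C_d M R^{d+1}$ and the outer part by $R^{-(k-1)}m_k(x,t)$, and optimizing in $R$ gives $|(\rho u)(x,t)|\le C(M,d,k)\,m_k(x,t)^{(d+1)/(k+d)}$ for every $k\in(1,k^*]$; the restriction $k>1$, which makes the velocity tail $|\xi|^{-(k-1)}$ decay, is exactly where the hypothesis $k^*>1$ enters. Integrating in $x$ as above with $\theta:=p_2(d+1)/(k+d)$ gives $\|(\rho u)(\cdot,t)\|_{L^{p_2}}\le C$ whenever $\theta\le 1$, i.e.\ $k\ge p_2(d+1)-d$, and such a $k\in(1,k^*]$ exists precisely because $p_2<(k^*+d)/(d+1)$.

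There is no genuinely hard step; the only points needing care are the bookkeeping of the admissible range of the interpolation parameter $k$ — keeping $k\le k^*$ so the moment hypothesis applies, and $k>1$ in the $\rho u$ case so the velocity tail is integrable — and the observation that the $x$-integration is legitimate because $\T^d$ has finite measure, so that $\int_{\T^d} g^\theta\,dx\le |\T^d|^{1-\theta}\|g\|_{L^1}^{\theta}$ for $0<\theta\le 1$. This is the standard velocity-moment interpolation; one could alternatively cite the computations in \cite{KMT13, MV07} verbatim.
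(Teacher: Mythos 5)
Your proof is correct and is exactly the standard velocity-moment interpolation argument (split the $\xi$-integral at $|\xi|=R$, bound the bulk by $\|f\|_{L^\infty}$ and the tail by the $k$-th moment, optimize in $R$, then integrate in $x$ using Jensen on the finite-measure torus); the paper does not prove this lemma itself but simply cites \cite{KMT13,MV07}, where the same computation appears. Your bookkeeping of the admissible interpolation exponent $k$ — in particular taking $k\in(1,k^*]$ for the $\rho u$ bound, which is where $k^*>1$ is used — is accurate.
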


In the lemma below, we show that $\mathcal{T}$ is well-defined.

\begin{lemma}\label{L3.2}
There exists a constant $C = C(d, \eta,\sigma, T)$ such that
\[
\|\mathcal{T}(\tilde{u}, \tilde{v})\|_{\mathcal{S}} \le C \quad \forall\, (\tilde{u}, \tilde{v}) \in \mathcal{S}.
\]
\end{lemma}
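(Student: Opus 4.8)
The goal is to show that the map $\mathcal{T}$ sends $\mathcal{S} = L^2(\T^d\times(0,T))\times L^2(\T^d\times(0,T))$ into a bounded subset of itself, with a bound depending only on $d,\eta,\sigma,T$ but not on the particular $(\tilde u,\tilde v)$. The plan is to estimate the two components $u_\e^\eta = \rho^\eta u^\eta/(\rho^\eta+\e)$ and $v^\eta$ separately. For the first component I would use the already-established $L^\infty$ bound \eqref{bdd_f} and the velocity-moment bounds of Lemma \ref{L3.1} to invoke Lemma \ref{lem_mom}: since all moments $m_k(f^\eta)$ are finite (in particular for some $k^*>1$), we get $\rho^\eta \in L^{p_1}$ and $\rho^\eta u^\eta \in L^{p_2}$ uniformly on $[0,T]$. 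Then the key pointwise inequality is
\[
|u_\e^{\lambda,\e}| = \frac{\rho^\eta |u^\eta|}{\rho^\eta + \e} \le \min\left\{\frac{|\rho^\eta u^\eta|}{\e},\ |u^\eta|\right\},
\]
and more usefully $\rho^\eta|u_\e^\eta|^2 \le \rho^\eta|u^\eta|^2 \le \int_{\R^d}|\xi|^2 f^\eta\,d\xi$, which after integration in $x$ is controlled by $m_2(f^\eta)$. Alternatively, bounding $|u_\e^\eta| \le |\rho^\eta u^\eta|/\e$ and using $\rho^\eta u^\eta \in L^{p_2}$ with $p_2 \ge 2$ (which holds once $k^*$ is taken large enough, legitimate since Lemma \ref{L3.1} gives all moments) yields $\|u_\e^\eta\|_{L^2(\T^d\times(0,T))} \le C(d,\eta,\sigma,T)$ directly.

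**The fluid component.** For $v^\eta$ I would use the energy inequality of Proposition \ref{P3.2}. The right-hand side contains $\int_0^t\int_{\T^d}\rho^\eta(u^\eta-\tilde v)\mathds{1}_{\{|\tilde v|\le\lambda\}}\cdot v^\eta\,dxds$; the truncation $\mathds{1}_{\{|\tilde v|\le\lambda\}}$ is what makes this term tractable. Writing it as $\int \rho^\eta u^\eta \cdot v^\eta - \int \rho^\eta \tilde v\,\mathds{1}_{\{|\tilde v|\le\lambda\}}\cdot v^\eta$, I bound the first piece by $\|\rho^\eta u^\eta\|_{L^2_x}\|v^\eta\|_{L^2_x}$ (using $\rho^\eta u^\eta \in L^2_x$ uniformly, again from Lemma \ref{lem_mom} with $k^*$ large) and the second by $\lambda\|\rho^\eta\|_{L^2_x}\|v^\eta\|_{L^2_x}$ (using the truncation and $\rho^\eta\in L^2_x$). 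Both are then absorbed via Young's inequality into $\frac12\|v^\eta\|_{L^2_x}^2$ plus a constant, and Grönwall's lemma closes the estimate, giving
\[
\sup_{0\le t\le T}\|v^\eta(\cdot,t)\|_{L^2}^2 + \int_0^T\|\nabla v^\eta\|_{L^2}^2\,dt \le C(d,\eta,\sigma,T).
\]
In particular $\|v^\eta\|_{L^2(\T^d\times(0,T))} \le C$.

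**Main obstacle.** The only delicate point is making sure the constants are genuinely independent of $(\tilde u,\tilde v)$: the velocity-moment bounds in Lemma \ref{L3.1} and the $L^\infty$ bound \eqref{bdd_f} already have this property (they depend on $\chi_\lambda$ only through the a priori bound $|\chi_\lambda|\le\lambda$, i.e. through $\eta$), so Lemma \ref{lem_mom} delivers $\rho^\eta,\rho^\eta u^\eta$ bounds uniform in $(\tilde u,\tilde v)$. The truncation $\mathds{1}_{\{|\tilde v|\le\lambda\}}$ in the fluid forcing is essential — without it the term $\int\rho^\eta\tilde v\cdot v^\eta$ could not be controlled by $\tilde v\in L^2$ alone (one would need $L^\infty$ in $x$). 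Once these uniformities are in hand, both estimates are routine applications of Hölder, Young, and Grönwall, and combining them gives $\|\mathcal{T}(\tilde u,\tilde v)\|_{\mathcal{S}} \le C(d,\eta,\sigma,T)$, which is the assertion of the lemma.
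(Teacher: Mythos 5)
Your proposal is correct and follows essentially the same route as the paper: the kinetic component is bounded via $|u_\e^\eta|\le|\rho^\eta u^\eta|/\e$ together with the $L^2$ bound on $\rho^\eta u^\eta$ obtained from \eqref{bdd_f}, Lemma \ref{L3.1} and Lemma \ref{lem_mom} (taking $k^*$ large enough), and the fluid component via Proposition \ref{P3.2}, Cauchy--Schwarz on the truncated drag term, Young's inequality and Gr\"onwall. The only superfluous aside is the alternative bound $\rho^\eta|u_\e^\eta|^2\le\int|\xi|^2f^\eta\,d\xi$, which gives a weighted rather than plain $L^2$ estimate; you rightly do not rely on it.
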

\begin{proof}
For the kinetic part, we use \eqref{bdd_f} and Lemmas \ref{L3.1} and \ref{lem_mom} to obtain
\[
\rho^\eta \in L^p(\T^d \times (0,T)), \quad \rho^\eta u^\eta \in L^p(\T^d \times (0,T)) \quad \forall\, p \in [1,\infty),
\]
and thus
\[
\|u_\e^\eta\|_{L^2} \le \frac{1}{\e} \|\rho^\eta u^\eta\|_{L^2} \le C.
\]
For the fluid part, we use Cauchy--Schwarz inequality and Young's inequality to get
\[
\begin{split}
\left|\int_{\T^d} \rho^\eta(u^\eta-\tilde{v} )\mathds{1}_{\{|\tilde{v}|\le \lambda\}}\cdot v^\eta \,dx\right| &\le \|\rho^\eta u^\eta\|_{L^2} \|v^\eta\|_{L^2} + \lambda\|\rho^\eta\|_{L^2}\|v^\eta\|_{L^2}\\
&\le  \|v^\eta\|_{L^2}^2 + \frac{1}{2}\|\rho^\eta u^\eta\|_{L^2}^2 + \frac{\lambda^2}{2}\|\rho^\eta\|_{L^2}^2\le \|v^\eta\|_{L^2}^2  +C.
\end{split}
\]
We then combine the above inequality with Proposition \ref{P3.2} and Gr\"onwall's lemma to conclude the proof.
\end{proof}

Next, we show that $\mathcal{T}$ is compact. Here, we consider the velocity averaging lemma from \cite[Lemma 3.2]{CCK16} (see also \cite[Theorem 2]{PS98} and \cite[Lemma 2.7]{KMT13}) and state a modified version to be used in the following proof.
\begin{lemma}\label{L3.3}
Let $T>0$, $r >1$, $q \in [1,(d+r)/(d+1))$, and $\{G^m\}$ be bounded in $L_{loc}^q(\T^d \times \R^d \times (0,T))$. Assume that 
\[
\mbox{$f^m$ is bounded in $L^\infty(\T^d \times \R^d \times (0,T))$}
\]
and
\bq\label{r-mom}
\mbox{$|\xi|^r f^m$ is bounded in $L^\infty(0,T; L^1(\T^d \times \R^d))$. }
\eq
If $f^m$ and $G^m$ satisfy
\[
\partial_t f^m + \xi \cdot \nabla f^m = \nabla_\xi^\alpha G^m, \quad f^m|_{t=0} = f_0 \in L^\infty(\T^d\times\R^d) 
\]
for some multi-index, then for any $\varphi(\xi)$, such that $|\varphi(\xi)| \leq c|\xi|$ as $|\xi| \to \infty$, the sequence 
\[ \left\{ \int_{\R^d} f^m \varphi(\xi) \,d\xi \right\} \]
is relatively compact in $L^q(\T^d \times (0,T))$.
\end{lemma}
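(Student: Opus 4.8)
The plan is to treat Lemma~\ref{L3.3} as a version of the classical velocity averaging lemma in which the localized weight $\varphi$ is allowed to grow linearly at infinity. The idea is to split the velocity integral into a part with compactly supported weight, to which the classical averaging lemma applies directly, and a large-velocity tail, which is controlled uniformly in $m$ by the moment bound \eqref{r-mom}. The interplay between the growth $|\varphi(\xi)|\le c|\xi|$, the exponent $r>1$, and the restriction $q<(d+r)/(d+1)$ is exactly what makes this splitting work.

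First I would fix a smooth cutoff $\zeta\in\mc_c^\infty(\R^d)$ with $0\le\zeta\le1$, $\zeta\equiv1$ on $B_1(0)$ and $\mathrm{supp}\,\zeta\subset B_2(0)$, set $\zeta_R(\xi):=\zeta(\xi/R)$ for $R>0$, and decompose
\[
\int_{\R^d} f^m\varphi(\xi)\,d\xi = \int_{\R^d} f^m\varphi(\xi)\zeta_R(\xi)\,d\xi + \int_{\R^d} f^m\varphi(\xi)\bigl(1-\zeta_R(\xi)\bigr)\,d\xi =: A_R^m + B_R^m .
\]
For the tail $B_R^m$ I would show $\sup_m\|B_R^m\|_{L^q(\T^d\times(0,T))}\to0$ as $R\to\infty$. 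Since $1-\zeta_R$ is supported in $\{|\xi|\ge R\}$ and $|\varphi(\xi)|\le c|\xi|$ for $|\xi|$ large, one has the pointwise estimate $|B_R^m|\ls\int_{|\xi|\ge R}|\xi|f^m\,d\xi\le R^{1-r}\int_{\R^d}|\xi|^r f^m\,d\xi$ for $R$ large, so that integrating over $\T^d\times(0,T)$ and using \eqref{r-mom} gives $\|B_R^m\|_{L^1(\T^d\times(0,T))}\ls R^{1-r}$, uniformly in $m$. On the other hand, the $L^\infty$ bound on $f^m$ together with \eqref{r-mom} lets me run the interpolation argument of Lemma~\ref{lem_mom} to see that $\int_{\R^d}(1+|\xi|)f^m\,d\xi$, hence $B_R^m$, is bounded in $L^{p_0}(\T^d\times(0,T))$ uniformly in $m$ and $R$ for any fixed $p_0$ with $q<p_0<(d+r)/(d+1)$. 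Interpolating between these $L^1$ and $L^{p_0}$ bounds yields the claimed uniform decay of $\|B_R^m\|_{L^q}$ in $R$.

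Next, for each fixed $R$ the weight $\varphi\zeta_R$ is bounded with support in $B_{2R}(0)$ (if $\varphi$ is only locally integrable one first mollifies it in $\xi$, incurring an error uniformly small in $m$ by the $L^\infty$ bound on $f^m$), so $A_R^m=\int_{\R^d}f^m(\varphi\zeta_R)\,d\xi$ lies within the scope of the classical velocity averaging lemma: $f^m$ is bounded in $L^\infty$ hence in $L^q_{loc}(\T^d\times\R^d\times(0,T))$, $G^m$ is bounded in $L^q_{loc}$, and $\partial_t f^m+\xi\cdot\nabla f^m=\nabla_\xi^\alpha G^m$. Thus by \cite[Lemma 3.2]{CCK16} (see also \cite[Theorem 2]{PS98} and \cite[Lemma 2.7]{KMT13}) the sequence $\{A_R^m\}_m$ is relatively compact in $L^q(\T^d\times(0,T))$. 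I would then conclude by a total-boundedness argument: given $\delta>0$, pick $R$ with $\sup_m\|B_R^m\|_{L^q}<\delta/2$, choose a finite $(\delta/2)$-net for $\{A_R^m\}_m$, and observe that it is a finite $\delta$-net for $\{A_R^m+B_R^m\}_m$; since $\delta$ is arbitrary, the sequence $\{\int_{\R^d}f^m\varphi\,d\xi\}_m$ is totally bounded, hence relatively compact, in $L^q(\T^d\times(0,T))$.

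The main obstacle is the uniform tail estimate in the second step — that the bound on $B_R^m$ be uniform in $m$ — since this is precisely where $r>1$, the moment bound \eqref{r-mom}, and the restriction $q<(d+r)/(d+1)$ are genuinely needed (the first forces $L^1$-decay in $R$, the last guarantees the interpolation gap $q<p_0$). The remaining ingredients are routine: the cutoff, the moment interpolation of Lemma~\ref{lem_mom}, and the abstract total-boundedness step. One should also verify that the cited averaging lemma is stated for bounded compactly supported weights and for right-hand sides of the form $\nabla_\xi^\alpha G^m$ with $G^m\in L^q_{loc}$; once the weight has compact support there is no further difficulty.
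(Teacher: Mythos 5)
Your proposal is correct, but it is worth noting that the paper does not actually prove Lemma \ref{L3.3}: it states the lemma as ``a modified version'' of the velocity averaging results in \cite[Lemma 3.2]{CCK16}, \cite[Theorem 2]{PS98}, and \cite[Lemma 2.7]{KMT13}, and leaves the justification entirely to those references. What you supply is precisely the standard reduction that those references use (and that the paper implicitly relies on): cut off in velocity at radius $R$, apply the classical averaging lemma with a bounded compactly supported weight to get compactness of $A_R^m$ for each fixed $R$, control the tail $B_R^m$ in $L^1$ by $R^{1-r}$ via the $r$-th moment bound, upgrade this to $L^q$ by interpolating against the uniform $L^{p_0}$ bound on the first moment coming from the $L^\infty$ bound and Lemma \ref{lem_mom} (which is where $q<(d+r)/(d+1)$ enters), and conclude by total boundedness. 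The one hypothesis you use implicitly that is worth flagging is that Lemma \ref{lem_mom}'s interpolation requires $f^m\ge 0$ and the full range of moments $k\in[0,r]$; both follow here since $f^m$ is a (nonnegative) density and the $L^\infty$ bound together with \eqref{r-mom} controls $\iint |\xi|^k f^m\,dxd\xi$ for all $0\le k\le r$ on the torus. With that observation your argument is complete and is, in substance, the proof the paper delegates to the literature.
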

 
Now, we are ready to prove the compactness of $\mathcal{T}$.
\begin{lemma}\label{C3.1}
For a uniformly bounded sequence $(\tilde{u}^m, \tilde{v}^m)$ in $\mathcal{S}$, the sequence $\mathcal{T}(\tilde{u}^m, \tilde{v}^m) =  ((u_\e^\eta)^m, (v^\eta)^m)$ converges strongly in $\mathcal{S}$, up to a subsequence.
\end{lemma}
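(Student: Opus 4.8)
The plan is to establish the strong convergence of $\mathcal{T}(\tilde u^m, \tilde v^m) = ((u_\e^\eta)^m, (v^\eta)^m)$ in $\mathcal{S} = L^2(\T^d\times(0,T))^2$ by treating the two components separately. For the kinetic component $(u_\e^\eta)^m = (\rho^\eta)^m (u^\eta)^m / ((\rho^\eta)^m + \e)$, I would first observe that the corresponding densities $f^{\eta,m}$ solve the linear kinetic equation in \eqref{C-2} with coefficients built from the uniformly bounded data $(\tilde u^m, \tilde v^m)$; by \eqref{bdd_f} and Lemma \ref{L3.1}, the $f^{\eta,m}$ are uniformly bounded in $L^\infty(\T^d\times\R^d\times(0,T))$ and their moments $|\xi|^k f^{\eta,m}$ are uniformly bounded in $L^\infty(0,T;L^1)$ for every $k\ge0$. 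Rewriting the kinetic equation in the transport form $\partial_t f^{\eta,m} + \xi\cdot\nabla f^{\eta,m} = \nabla_\xi \cdot G^m$ with $G^m$ collecting the drift and diffusion terms — which is bounded in $L^q_{loc}$ for suitable $q$ because $\nabla K^\e$ is bounded, $\chi_\lambda$ is bounded, and the $\sigma\nabla_\xi f^{\eta,m}$ term is controlled via the $L^p$ and moment bounds — I would invoke Lemma \ref{L3.3} (with $\varphi(\xi) = 1$ and $\varphi(\xi) = \xi$) to deduce that $\rho^{\eta,m} = \int f^{\eta,m}\,d\xi$ and $\rho^{\eta,m}u^{\eta,m} = \int \xi f^{\eta,m}\,d\xi$ are relatively compact in $L^q(\T^d\times(0,T))$ for some $q>1$.

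Having extracted a subsequence along which $\rho^{\eta,m}\to\rho^\eta$ and $\rho^{\eta,m}u^{\eta,m}\to (\rho u)^\eta$ strongly in $L^q$ (and a.e. after a further subsequence), I would then conclude that $(u_\e^\eta)^m = \rho^{\eta,m}u^{\eta,m}/(\rho^{\eta,m}+\e) \to (\rho u)^\eta/(\rho^\eta+\e)$ a.e.; since the map $(a,b)\mapsto b/(a+\e)$ is Lipschitz in $b$ and locally Lipschitz in $a\ge0$, and since $(u_\e^\eta)^m$ is uniformly bounded in $L^p$ for all $p<\infty$ by Lemma \ref{L3.2}, the a.e. convergence upgrades to strong $L^2$ convergence by a Vitali/dominated-convergence argument using the higher integrability as an equi-integrability reservoir. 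For the fluid component, I would combine the uniform bound from Proposition \ref{P3.2} and Lemma \ref{L3.2} — giving $(v^\eta)^m$ bounded in $L^\infty(0,T;\mathsf{H})\cap L^2(0,T;\mathsf{V})$ — with a uniform estimate on $\partial_t (v^\eta)^m$ in $L^2(0,T;\mathsf{V}')$, obtained from the Navier--Stokes equation using the mollified convection term $(\theta_\e\star v^{\eta,m})\cdot\nabla v^{\eta,m}$ (which is controlled since $\theta_\e\star v^{\eta,m}$ is bounded in $L^\infty_{x,t}$ for fixed $\e$) and the forcing term $\rho^{\eta,m}(u^{\eta,m}-\tilde v^m)\mathds{1}_{\{|\tilde v^m|\le\lambda\}}$, which is bounded in $L^2$ as in the proof of Lemma \ref{L3.2}. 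The Aubin--Lions lemma then yields strong convergence of $(v^\eta)^m$ in $L^2(\T^d\times(0,T))$ along a further subsequence.

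The main obstacle I anticipate is the kinetic part: ensuring that the velocity-averaging hypotheses of Lemma \ref{L3.3} genuinely hold, in particular that the right-hand side $G^m$ — which contains $\sigma\nabla_\xi f^{\eta,m}$ — lies in a fixed $L^q_{loc}$ space uniformly in $m$. This requires the $L^2_{\xi}$-type gradient bound on $f^{\eta,m}$ coming from the entropy/energy dissipation (the $\nabla_\xi (f^{\eta,m})^{p/2}$ estimate derived just before \eqref{bdd_f}), combined carefully with the moment bounds of Lemma \ref{L3.1} to interpolate $\nabla_\xi f^{\eta,m}$ against polynomial weights; the bookkeeping of which $q$ and which moment order $k$ make Lemma \ref{lem_mom} and Lemma \ref{L3.3} simultaneously applicable is the delicate point. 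Once compactness of the velocity moments is in hand, passing from $L^q$-convergence of $\rho^{\eta,m}u^{\eta,m}$ to $L^2$-convergence of the regularized ratio $(u_\e^\eta)^m$ is routine thanks to the $\e$-regularization in the denominator and the uniform higher-integrability bounds.
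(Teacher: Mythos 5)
Your proposal follows essentially the same route as the paper: the kinetic component is handled by exactly the velocity-averaging argument you describe (Lemma \ref{L3.3} with $\varphi=1$ and $\varphi=\xi$, with $G^m$ containing $\sigma\nabla_\xi f^{\eta,m}$ controlled in $L^2_{loc}$ via the dissipation bound preceding \eqref{bdd_f} and $r$ chosen from Lemma \ref{L3.1} so that $(d+r)/(d+1)>2$), and the fluid component is dispatched by the Aubin--Lions argument you outline, which the paper simply cites from \cite[Lemma 4.2]{CCK16}. Your additional care in passing from convergence of $\rho^{\eta,m}$ and $\rho^{\eta,m}u^{\eta,m}$ to that of $(u_\e^\eta)^m$ via the $\e$-regularized denominator is correct and merely fills in a step the paper leaves implicit.
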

\begin{proof}
Since the convergence of $\{(v^\eta)^m\}$ follows from the same argument as \cite[Lemma 4.2]{CCK16}, it suffices to show the convergence of $\{(u_\e^\eta)^m\}$. For the convergence of $\{(u_\e^\eta)^m\}$, we set
\[ 
f^m := (f^\eta)^m , \quad G^m :=\lt(\sigma \nabla_\xi (f^\eta)^m + (2\xi + \nabla K^\e \star ((\rho^\eta)^m-1) - \chi_\lambda(\tilde{v}^m)- \chi_\lambda(\tilde{u}^m))(f^\eta)^m \rt),
\]
then it is easy to see $G^m \in L^2_{loc}(\T^d \times \R^d \times (0,T))$. Choose $r$ appeared in \eqref{r-mom} so that $\frac{d+r}{d+1}>2$. Then, we set $\varphi(\xi) =1$ and $\varphi(\xi) = \xi$ in Lemma \ref{L3.3}, respectively, and obtain the following strong convergence up to a subsequence:
\begin{align*}
&(\rho^\eta)^m \to \rho^\eta \quad \quad \mbox{in} \quad L^2(\T^d \times (0,T)) \quad \mbox{and a.e.},\\
&(\rho^\eta)^m (u^\eta)^m \to \rho^\eta u^\eta \quad \mbox{in} \quad L^2(\T^d \times (0,T)).
\end{align*}
Consequently, it gives the convergence of $\{(u_\e^\eta)^m\}$ up to a subsequence.
\end{proof}

In conclusion, from Lemma \ref{L3.2} and Lemma \ref{C3.1}, the operator $\mathcal{T}$ is well-defined, continuous, and compact. Thus, we can use Schauder's fixed point theorem to attain the existence of a fixed point of $\mathcal{T}$, which asserts the existence of weak solutions to system \eqref{C-1}. Then, we employ the fixed point argument with Propositions \ref{P3.1} and \ref{P3.2} to yield the following estimate.

\begin{corollary}\label{C3.2}
Let $T>0$ and \emph{($f^\eta, v^\eta)$} be a weak solution to the system \eqref{C-1} on the time interval $[0,T]$. Then, it satisfies the following entropy inequality:
\[
\begin{aligned}
&\iint_{\T^d \times \R^d} \left(\frac{|\xi|^2}{2} + \sigma \log f^\eta \right) f^\eta \,dxd\xi +\int_{\T^d}  (\rho^\eta-1)K^\e\star(\rho^\eta-1) \,dx + \frac12 \int_{\T^d} |v^\eta|^2 \,dx \\
&\quad + \int_0^t\iint_{\T^d \times \R^d} \left(\frac{1}{f^\eta} \left|\sigma \nabla_\xi f^\eta - (\chi_\lambda(u_\e^\eta)-\xi)f^\eta\right|^2 + |\chi_\lambda(v^\eta)-\xi|^2 f^\eta\right)dxd\xi ds + \int_0^t\int_{\T^d} |\nabla v^\eta|^2  \,dxds\\
&\qquad \le \iint_{\T^d \times \R^d} \left(\frac{|\xi|^2}{2} + \sigma \log f_0^\lambda \right) f_0^\lambda \,dxd\xi +  \int_{\T^d}  (\rho_0^\lambda-1)K^\e\star(\rho_0^\lambda-1) \,dx + \frac12 \int_{\T^d} |v_0^\e|^2 \,dx +\sigma d \|f_0^\lambda\|_{L^1}t 
\end{aligned}
\]
for almost all $t \in [0,T]$.
\end{corollary}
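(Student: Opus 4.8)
The plan is to obtain the asserted entropy inequality for the fixed point $(f^\eta,v^\eta)$ of $\mathcal{T}$ by adding the two estimates already available for the partially linearized system \eqref{C-2}, namely Propositions \ref{P3.1} and \ref{P3.2}, and then simplifying the resulting coupling terms. Since the fixed point satisfies $(\tilde u,\tilde v)=(u_\e^\eta,v^\eta)$, and $(u_\e^\eta,v^\eta)\in\mathcal{S}$ by Lemma \ref{L3.2}, the pair $(f^\eta,v^\eta)$ is in particular a weak solution of \eqref{C-2} with this choice of $(\tilde u,\tilde v)$; hence Propositions \ref{P3.1} and \ref{P3.2} may be evaluated along it, with $\chi_\lambda(\tilde u)$ replaced by $\chi_\lambda(u_\e^\eta)$ and $\chi_\lambda(\tilde v)$, $\mathds{1}_{\{|\tilde v|\le\lambda\}}$ replaced by $\chi_\lambda(v^\eta)$, $\mathds{1}_{\{|v^\eta|\le\lambda\}}$.

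First I would add the two inequalities. On the left-hand side this produces exactly the kinetic energy, entropy, Coulomb energy and fluid kinetic energy of $(f^\eta,v^\eta)$, together with the dissipation terms $\int_0^t\iint_{\T^d\times\R^d}\frac{1}{f^\eta}|\sigma\nabla_\xi f^\eta-(\chi_\lambda(u_\e^\eta)-\xi)f^\eta|^2\,dxd\xi\,ds$ and $\int_0^t\int_{\T^d}|\nabla v^\eta|^2\,dxds$; what is missing from the left-hand side is the dissipation $\int_0^t\iint|\chi_\lambda(v^\eta)-\xi|^2 f^\eta$. On the right-hand side, besides the initial data and the term $\sigma d\|f_0^\lambda\|_{L^1}t$, three coupling integrals survive:
\[
\int_0^t\!\!\iint_{\T^d\times\R^d}(\chi_\lambda(u_\e^\eta)-\xi)\cdot\chi_\lambda(u_\e^\eta)f^\eta\,dxd\xi\,ds,\qquad \int_0^t\!\!\iint_{\T^d\times\R^d}(\chi_\lambda(v^\eta)-\xi)\cdot\xi\,f^\eta\,dxd\xi\,ds,
\]
and $\int_0^t\int_{\T^d}\rho^\eta(u^\eta-v^\eta)\mathds{1}_{\{|v^\eta|\le\lambda\}}\cdot v^\eta\,dxds$. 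Using $\int_{\R^d}f^\eta\,d\xi=\rho^\eta$, $\int_{\R^d}\xi f^\eta\,d\xi=\rho^\eta u^\eta$, $v^\eta\mathds{1}_{\{|v^\eta|\le\lambda\}}=\chi_\lambda(v^\eta)$ and $v^\eta\cdot\chi_\lambda(v^\eta)=|\chi_\lambda(v^\eta)|^2$, a short computation shows that the last two integrals add up to $-\int_0^t\iint_{\T^d\times\R^d}|\chi_\lambda(v^\eta)-\xi|^2 f^\eta\,dxd\xi\,ds$, which I then transfer to the left-hand side to complete the dissipation.

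It remains to discard the first coupling integral, and here I would exploit the definition $u_\e^\eta=\rho^\eta u^\eta/(\rho^\eta+\e)$, i.e.\ $\rho^\eta u^\eta=(\rho^\eta+\e)u_\e^\eta$: expanding the dot product and using $u_\e^\eta\cdot\chi_\lambda(u_\e^\eta)=|\chi_\lambda(u_\e^\eta)|^2$ gives
\[
\iint_{\T^d\times\R^d}(\chi_\lambda(u_\e^\eta)-\xi)\cdot\chi_\lambda(u_\e^\eta)f^\eta\,dxd\xi=-\e\int_{\T^d}|\chi_\lambda(u_\e^\eta)|^2\,dx\le 0,
\]
so this term may simply be dropped from the right-hand side. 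Recalling $f^\eta|_{t=0}=f_0^\lambda$ and $v^\eta|_{t=0}=v_0^\e$ and collecting all remaining terms yields precisely the inequality of the statement. The argument is essentially bookkeeping once the fixed-point identities are invoked; the only points that deserve care are to make sure Propositions \ref{P3.1} and \ref{P3.2} are applied to a genuine weak solution of \eqref{C-2} — which is exactly what the Schauder fixed point furnishes — and to keep track of which coupling terms cancel against the newly formed dissipation and which remain nonpositive.
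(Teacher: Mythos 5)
Your proposal is correct and follows essentially the same route as the paper: both apply Propositions \ref{P3.1} and \ref{P3.2} at the fixed point $(\tilde u,\tilde v)=(u_\e^\eta,v^\eta)$, combine the two $v$-coupling integrals into the dissipation $-\int_0^t\iint|\chi_\lambda(v^\eta)-\xi|^2f^\eta\,dxd\xi\,ds$, and show the remaining $u$-coupling term equals $-\e\int_{\T^d}|\chi_\lambda(u_\e^\eta)|^2\,dx\le 0$ via $\rho^\eta u^\eta=(\rho^\eta+\e)u_\e^\eta$. The bookkeeping and the key sign observation match the paper's argument exactly.
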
 
\begin{proof}
From Propositions \ref{P3.1} and \ref{P3.2}, we have
\[
\begin{aligned}
&\iint_{\T^d \times \R^d} \left(\frac{|\xi|^2}{2} + \sigma \log f^\eta \right) f^\eta \,dxd\xi +  \int_{\T^d}  (\rho^\eta-1)K^\e\star(\rho^\eta-1) \,dx + \frac12 \int_{\T^d} |v^\eta|^2 \,dx \\
&\quad + \int_0^t\iint_{\T^d \times \R^d} \left(\frac{1}{f^\eta} \left|\sigma \nabla_\xi f^\eta - (\chi_\lambda(u_\e^\eta)-\xi)f^\eta\right|^2 + |\chi_\lambda(v^\eta)-\xi|^2 f^\eta\right) dxd\xi ds + \int_0^t\int_{\T^d} |\nabla v^\eta|^2  \,dx ds\\
&\qquad \le \iint_{\T^d \times \R^d} \left(\frac{|\xi|^2}{2} + \sigma \log f_0^\lambda \right) f_0^\lambda \,dxd\xi +  \int_{\T^d}  (\rho_0^\lambda-1)K^\e\star(\rho_0^\lambda-1) \,dx + \frac12 \int_{\T^d} |v_0^\e|^2 \,dx \cr
&\qquad \quad+ \sigma d \|f_0^\lambda\|_{L^1} t + \int_0^t\iint_{\T^d \times \R^d} (\chi_\lambda(u_\e^\eta)-\xi)\cdot \chi_\lambda(u_\e^\eta) f^\eta \,dxd\xi ds.
\end{aligned}
\]
Here we obtain
\[
\begin{split}
\iint_{\T^d \times \R^d} (\chi_\lambda(u_\e^\eta)-\xi)\cdot \chi_\lambda(u_\e^\eta) f^\eta \,dxd\xi&= \int_{\T^d} \rho^\eta \chi_\lambda(u_\e^\eta)\cdot (\chi_\lambda(u_\e^\eta)-u^\eta)\,dx\\
&= \int_{\T^d} \rho^\eta \frac{\rho^\eta u^\eta}{\rho^\eta + \e} \cdot \left(\frac{\rho^\eta u^\eta}{\rho^\eta +\e}-u^\eta\right)\mathds{1}_{\{ |u_\e^\eta|\le\lambda\}}\,dx \\
&= -\e \int_{\T^d}| \chi_\lambda (u_\e^\eta)|^2\,dx \le 0,
\end{split}
\]
which implies our desired result.
\end{proof}

Before we proceed to the proof of the existence of a weak solution to system \eqref{main_eq}, we need two technical lemmas concerning the interaction potential energy.
\begin{lemma}\label{L3.5}
For $d=2,3$, suppose that $\varrho \in L^\infty(0,T;L^1(\T^d))$. Then we have
\[
\int_{\T^d} (K^\e\star\varrho) \varrho\,dx \ge -C\|\varrho\|_{L^\infty(0,T;L^1(\T^d))}\lt(1+\|\varrho\|_{L^\infty(0,T;L^1(\T^d))}\rt),
\]
where $C$ is a positive constant satisfying $C = \mathcal{O}(1)$ as $\e \to 0$.
\end{lemma}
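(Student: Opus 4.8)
The plan is to write $K^\e$ on $\T^d$ as a kernel of positive type plus a remainder that is bounded in $L^\infty(\T^d)$ uniformly in $\e$: the bounded remainder is harmless by Young's inequality, while a positive-type kernel has nonnegative interaction energy against any real density. Fix a smooth cut-off $\chi$ on $\T^d$ with $\chi\equiv1$ near the origin and $\mathrm{supp}\,\chi$ inside a small ball on which $K^\e=\Phi^\e+G$, where $\Phi^\e$ is the explicit singular part of \eqref{reg_K} (so $\Phi^\e(x)=-\tfrac{c_0}{2}\log(\e+|x|^2)$ if $d=2$ and $\Phi^\e(x)=c_1(\e+|x|^2)^{-1/2}$ if $d=3$) and $G\in\{G_0,G_1\}$ is smooth. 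Then $K^\e=\chi\Phi^\e+B^\e$ with $B^\e:=(1-\chi)\Phi^\e+G$, and $\|B^\e\|_{L^\infty(\T^d)}\le C$ uniformly in $\e\in(0,1)$, since on $\mathrm{supp}(1-\chi)$ the quantity $\e+|x|^2$ stays bounded away from $0$ (and from $\infty$). Consequently $\bigl|\int_{\T^d}(B^\e\star\varrho)\varrho\,dx\bigr|\le\|B^\e\|_{L^\infty}\|\varrho\|_{L^1(\T^d)}^2$, which is already of the claimed form, and it remains to bound $\int_{\T^d}(\chi\Phi^\e\star\varrho)\varrho\,dx$ from below uniformly in $\e$.

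For this I would invoke the subordination (Gaussian superposition) formulas
\[
\chi(x)\,c_1(\e+|x|^2)^{-1/2}=\frac{c_1}{\sqrt\pi}\int_0^\infty t^{-1/2}e^{-\e t}\,\tilde g_t(x)\,dt\quad(d=3),
\]
\[
-\tfrac{c_0}{2}\chi(x)\log(\e+|x|^2)=-\tfrac{c_0}{2}\chi(x)\log(1+|x|^2)+\frac{c_0}{2}\int_0^\infty\frac{e^{-\e t}-e^{-t}}{t}\,\tilde g_t(x)\,dt\quad(d=2,\ \e<1),
\]
where $\tilde g_t:=\chi\,e^{-t|x|^2}\in\mathcal C^\infty(\T^d)$, and in the $d=2$ formula the first summand is a fixed ($\e$-independent) bounded function contributing another $-C\|\varrho\|_{L^1}^2$. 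Thus, up to $\e$-uniformly bounded terms, $\chi\Phi^\e$ is an average $\int_0^\infty\tilde g_t\,d\nu^\e(t)$ with nonnegative weight $\nu^\e$. By Plancherel on $\T^d$ ($\varrho$ real, so $\widehat\varrho(-k)=\overline{\widehat\varrho(k)}$ and $|\widehat\varrho(k)|\le C\|\varrho\|_{L^1}$), the quantity $\int_{\T^d}(\tilde g_t\star\varrho)\varrho\,dx$ equals, up to a fixed positive dimensional factor, $\sum_{k\in\mathbb Z^d}\widehat{\tilde g_t}(k)\,|\widehat\varrho(k)|^2\ge -\|\varrho\|_{L^1}^2\sum_{k}\bigl(\widehat{\tilde g_t}(k)\bigr)_-$ with $(a)_-:=\max\{-a,0\}$; integrating against $\nu^\e\ge0$,
\[
\int_{\T^d}(\chi\Phi^\e\star\varrho)\varrho\,dx\ge -C\|\varrho\|_{L^1}^2-C\|\varrho\|_{L^1}^2\int_0^\infty\Bigl(\sum_{k\in\mathbb Z^d}\bigl(\widehat{\tilde g_t}(k)\bigr)_-\Bigr)\,d\nu^\e(t),
\]
so everything reduces to showing that the last integral is $\le C$ uniformly in $\e$.

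To control $S(t):=\sum_k(\widehat{\tilde g_t}(k))_-$ I would split at $t=1$. For $t\le1$, $\tilde g_t=\chi e^{-t|x|^2}$ is an $\mathcal O(t)$-perturbation of the fixed function $\chi$, so $S(t)\le\sum_k(\widehat\chi(k))_-+Ct\le C_\chi$. For $t\ge1$, $\tilde g_t$ concentrates near $x=0$, where it agrees with the genuinely positive-type Gaussian $e^{-t|x|^2}$ up to a piece supported away from $0$; hence $\widehat{\tilde g_t}(k)\ge0$ for $|k|\lesssim t$ while $|\widehat{\tilde g_t}(k)|$ is exponentially small in $t$ for $|k|\gtrsim t$, which yields $S(t)\le Ce^{-ct^{\beta}}$ for some $\beta>0$ depending only on $\chi,d$. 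Since $d\nu^\e$ is $\tfrac{c_1}{\sqrt\pi}t^{-1/2}e^{-\e t}\,dt$ (integrable near $t=0$) in $d=3$, and $\tfrac{c_0}{2}\tfrac{e^{-\e t}-e^{-t}}{t}\,dt$ (bounded near $t=0$) in $d=2$, the contribution of $t\le1$ is $\le C$ and that of $t\ge1$ is $\le C\int_1^\infty e^{-ct^{\beta}}\,dt\le C$, both uniformly in $\e\in(0,1)$ — here one uses $e^{-\e t}\le1$ together with the $\e$-independent exponential decay of $S$ for large $t$.

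Collecting the estimates gives $\int_{\T^d}(K^\e\star\varrho)\varrho\,dx\ge -C\|\varrho\|_{L^1(\T^d)}\bigl(1+\|\varrho\|_{L^1(\T^d)}\bigr)$ with $C$ independent of $\e$; bounding $\|\varrho\|_{L^1(\T^d)}$ at a.e.\ time by $\|\varrho\|_{L^\infty(0,T;L^1(\T^d))}$ yields the statement, and since every constant entering the argument is either $\e$-independent or has a finite limit as $\e\to0$, one also gets $C=\mathcal O(1)$ as $\e\to0$. (In $d=2$ one may further note that the only way $\Phi^\e$ grows as $\e\to0$ is through its localized positive bump near $x=0$ of size $\sim-\tfrac{c_0}{2}\log\e$, which can only help a lower bound.) I expect the main obstacle to be precisely the uniform-in-$t$ control of $S(t)$ for large $t$ — the fact that, on the torus, the regularized Coulomb kernel, though it blows up as $\e\to0$, does so in an essentially positive-type manner — together with verifying that $B^\e$ (and, for $d=2$, the fixed logarithmic term) are $L^\infty(\T^d)$-bounded uniformly in $\e$; the remaining ingredients (Young's inequality, Plancherel, the elementary subordination identities) are routine.
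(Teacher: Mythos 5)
Your argument is correct in outline, but it takes a genuinely different and far heavier route than the paper's. The paper's proof is purely pointwise and takes a few lines: for $d=3$ the singular part $c_1(\e+|x|^2)^{-1/2}$ is nonnegative and is simply dropped, leaving only the bounded smooth part; for $d=2$ the elementary inequality $\e+|x-y|^2\le 2(1+\e)(1+|x|^2)(1+|y|^2)$ tensorizes the logarithm, so that $-\tfrac{c_0}{2}\log(\e+|x-y|^2)$ is bounded below by a sum of one\-/variable functions of $x$ and $y$ separately, whose integrals against $\varrho(x)\varrho(y)$ give exactly $\|\varrho\|_{L^1}(1+\|\varrho\|_{L^1})$ (the moment $\int|x|^2\varrho\,dx$ is trivially controlled on the torus). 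Note that both of these steps multiply a pointwise kernel inequality by $\varrho(x)\varrho(y)$ and integrate, so they use the sign $\varrho\ge 0$; your Fourier positive\-/type argument does not, which is the one concrete thing it buys (in the application the quadratic form is evaluated at $\rho^\eta-1$, where the paper's route requires first expanding and treating the cross terms by Young's inequality, the zero mode being explicit). The price you pay is the uniform-in-$t$ control of the negative Fourier modes $S(t)$ created by the cutoff $\chi$, which is exactly the step you flag as the main obstacle and which has no counterpart in the paper's proof.

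One technical claim in that step is overstated: $S(t)\le Ce^{-ct^\beta}$ for large $t$ cannot hold with a compactly supported smooth $\chi$. Writing $\widehat{\tilde g_t}(k)$ as the convolution of $\widehat\chi$ (rapidly but not exponentially decaying) with the positive Gaussian of width $\sim\sqrt t$ in frequency gives positivity of $\widehat{\tilde g_t}(k)$ for $|k|\lesssim\sqrt t$ and a tail bound $S(t)\le C_N t^{-N}$ for every $N$, i.e.\ rapid polynomial rather than exponential decay. This is still amply sufficient to integrate against $t^{-1/2}e^{-\e t}\,dt$ (resp.\ $t^{-1}(e^{-\e t}-e^{-t})\,dt$) uniformly in $\e$, so your conclusion stands; but as written the exponential rate would require analyticity of $\chi$, and the large-$t$ analysis should be carried out via the convolution estimate above rather than asserted.
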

\begin{proof} When $d=3$, it is obvious since
\[
\iint_{\T^3 \times \T^3}K^\e(x-y)\varrho(x)\varrho(y)\,dxdy \ge \iint_{\T^3 \times \T^3} G_0(x-y)\varrho(x)\varrho(y)\,dxdy.
\]
Thus, it suffices to show the case $d=2$. First, we use the following inequality:
\begin{align*}
\e + |x-y|^2 &\le (1+\e)(1+|x-y|^2)\le (1+\e)(1+2|x|^2 + 2|y|^2)\le 2(1+\e)(1+|x|^2)(1+|y|^2), 
\end{align*}
which subsequently gives
\[
\log(\e + |x-y|^2)\le \log \lt(2(1+\e)\rt) + \log(1+|x|^2) + \log(1+|y|^2).
\]
We combine the previous inequality with $\log(1+x) \le x$ on $x\ge 0$  to get
\begin{align*}
&\iint_{\T^2 \times \T^2}K^\e(x-y)\varrho(x)\varrho(y)\,dxdy\\
&\quad =-\frac{c_1}{2} \iint_{\T^2\times\T^2} (\log(\e+|x-y|^2)+G_1(x-y))\varrho(x)\varrho(y)\,dxdy\\
&\quad \ge -\frac{c_1}{2}\iint_{\T^2\times\T^2} \left[\log \lt(2(1+\e)\rt)  + \log(1+|x|^2) + \log(1+|y|^2)+G_1(x-y)\right]\varrho(x)\varrho(y)\,dxdy\\
&\quad \ge -\frac{c_1}{2}(\log 2(1+\e)+\|G_1\|_{L^\infty})\|\varrho\|_{L^\infty(0,T;L^1)}^2  -\frac{c_1}{2}\iint_{\T^2\times\T^2}\left(\log(1+|x|^2) + \log(1+|y|^2)\right)\varrho(x)\varrho(y)\,dxdy\\
&\quad \ge -\frac{c_1}{2}(\log \lt(2(1+\e)\rt) +\|G_1\|_{L^\infty})\|\varrho\|_{L^\infty(0,T;L^1)}^2- c_1\|\varrho\|_{L^\infty(0,T;L^1)} \int_{\T^2}\varrho \log(1+|x|^2)\,dx\\
&\quad \ge -\frac{c_1}{2}(\log \lt(2(1+\e)\rt) +\|G_1\|_{L^\infty})\|\varrho\|_{L^\infty(0,T;L^1)}^2- c_1\|\varrho\|_{L^\infty(0,T;L^1)} \int_{\T^2}|x|^2\varrho \,dx.
\end{align*}
This completes the proof.

\end{proof}

\begin{lemma}\label{L3.7}
For $d=2,3$, suppose that a sequence $\{\varrho^n\}$  is uniformly bounded in $L^\infty(0,T;L^p(\T^d))$ for each $p \in [1,(d+2)/d)$ and $\{\varrho^n\}$ converges to $\varrho$ almost everywhere. Then  we have
\[
\lim_{n \to \infty} \int_{\T^d} (K^\e\star\varrho^n)\varrho^n \,dx = \int_{\T^d} (K^\e\star\varrho)\varrho\,dx.
\]
Moreover, if $\e \to 0$ as $n \to \infty$, then we have
\[
\lim_{n \to \infty} \int_{\T^d} (K^\e\star\varrho^n)\varrho^n \,dx = \int_{\T^d} (K\star\varrho)\varrho\,dx.
\]
\end{lemma}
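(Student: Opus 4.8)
The plan is to first promote the almost everywhere convergence of $\{\varrho^n\}$ to strong convergence in Lebesgue spaces, and then pass to the limit in the bilinear functional $\varrho \mapsto \int_{\T^d}(K^\e\star\varrho)\varrho\,dx$ by splitting off the difference and estimating each piece with Young's convolution inequality. I would work at a fixed time $t$: since $\{\varrho^n\}$ is uniformly bounded in $L^\infty(0,T;L^p(\T^d))$ for every $p \in [1,(d+2)/d)$ and (by Fubini) $\varrho^n(\cdot,t) \to \varrho(\cdot,t)$ almost everywhere in $\T^d$ for a.e.\ $t$, it suffices to prove the statement under the ``static'' hypotheses that $\varrho^n \to \varrho$ a.e.\ in $\T^d$ and $\sup_n\|\varrho^n\|_{L^p(\T^d)}<\infty$ for each such $p$. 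Fixing then $p_0 \in (1,(d+2)/d)$, boundedness in $L^{p_0}(\T^d)$ on the finite-measure space $\T^d$ gives uniform integrability, so Vitali's convergence theorem yields $\varrho^n \to \varrho$ in $L^1(\T^d)$, and interpolation against the $L^{p_0}$-bound upgrades this to $\varrho^n \to \varrho$ in $L^r(\T^d)$ for every $r \in [1,(d+2)/d)$.

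For the first assertion (fixed $\e>0$), the key observation is that $K^\e$ from \eqref{reg_K} is bounded and continuous on $\T^d$, since $\e+|x|^2 \ge \e>0$; hence $\|K^\e\star g\|_{L^\infty} \le \|K^\e\|_{L^\infty}\|g\|_{L^1}$. Writing $\int_{\T^d}(K^\e\star\varrho^n)\varrho^n\,dx - \int_{\T^d}(K^\e\star\varrho)\varrho\,dx = \int_{\T^d}(K^\e\star(\varrho^n-\varrho))\varrho^n\,dx + \int_{\T^d}(K^\e\star\varrho)(\varrho^n-\varrho)\,dx$, I would bound the first term by $\|K^\e\|_{L^\infty}\|\varrho^n-\varrho\|_{L^1}\|\varrho^n\|_{L^1}$ and the second by $\|K^\e\|_{L^\infty}\|\varrho\|_{L^1}\|\varrho^n-\varrho\|_{L^1}$; both tend to $0$ by the $L^1$-convergence of Step~1 and the uniform $L^1$-bound on $\varrho^n$.

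For the second assertion (with $\e \to 0$ as $n \to \infty$), I would additionally estimate $\int_{\T^d}((K^\e-K)\star\varrho^n)\varrho^n\,dx$ and then reduce to the unregularized potential. Fix $q$ with $(d+2)/2<q<\infty$ if $d=2$ and $5/2<q<3$ if $d=3$; then $q':=q/(q-1)<(d+2)/d$, and $K \in L^q(\T^d)$ in both cases. One checks $\|K^\e-K\|_{L^q(\T^d)} \to 0$ by dominated convergence: $K^\e \to K$ pointwise away from the origin, and $|K^\e-K| \le c_1|x|^{-1} \in L^q(\T^3)$ when $d=3$, while $|K^\e-K| = \tfrac{c_0}{2}\log(1+\e/|x|^2) \le \tfrac{c_0}{2}\log(1+1/|x|^2) \in L^q(\T^2)$ when $d=2$ and $\e\le 1$. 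Then Young's inequality gives $|\int_{\T^d}((K^\e-K)\star\varrho^n)\varrho^n\,dx| \le \|K^\e-K\|_{L^q}\|\varrho^n\|_{L^{q'}}\|\varrho^n\|_{L^1} \to 0$ uniformly in $n$, and $\int_{\T^d}(K\star\varrho^n)\varrho^n\,dx \to \int_{\T^d}(K\star\varrho)\varrho\,dx$ follows exactly as in the previous paragraph, with $K^\e$ replaced by $K$ and $\|K^\e\star g\|_{L^\infty}$ replaced by $\|K\star g\|_{L^\infty} \le \|K\|_{L^q}\|g\|_{L^{q'}}$, using the $L^{q'}$- and $L^1$-convergence from Step~1. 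Combining the two limits gives the claim.

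I do not anticipate a genuine obstacle: the argument is a routine uniform-integrability-plus-Young's-inequality computation. The only point needing mild care is the exponent bookkeeping in dimension three, where $q$ must lie simultaneously below $3$ (so that $K \in L^q$ and $\|K^\e-K\|_{L^q}\to 0$) and above $(d+2)/2=5/2$ (so that $q'<(d+2)/d=5/3$, which is the only uniform integrability available for $\varrho^n$); the interval $(5/2,3)$ is nonempty, so this causes no trouble. A secondary, equally minor point is the clean justification of $\|K^\e-K\|_{L^q}\to 0$, for which the monotone bounds on $|K^\e-K|$ displayed above suffice.
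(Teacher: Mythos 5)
Your proof is correct, and it takes a genuinely different route from the paper's. The paper applies a Vitali-type lemma (its Lemma \ref{L3.6}) twice: first to the family $K^\e(x-\cdot)\varrho^n(\cdot)$ to obtain pointwise convergence of the convolutions $K^\e\star\varrho^n$, and then to the products $(K^\e\star\varrho^n)\varrho^n$ themselves, using the pointwise bound $|K^\e\star\varrho^n|\le C(1+|x|)$ (uniform in $n$ and $\e$) to verify the required equi-integrability; the independence of these bounds from $\e$ is what lets the paper absorb the case $\e\to0$ with no extra work. You instead front-load all the compactness into the densities: Vitali plus interpolation gives $\varrho^n\to\varrho$ strongly in $L^r(\T^d)$ for every $r<(d+2)/d$, after which the limit passage is a routine bilinear splitting estimated by Young's convolution inequality, and the $\e\to0$ case reduces to the separate kernel estimate $\|K^\e-K\|_{L^q}\to0$. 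Your exponent bookkeeping is right ($q\in(5/2,3)$ for $d=3$ is exactly the window where $K\in L^q$ and $q'<5/3$, and for $d=2$ any $q\in(2,\infty)$ works since the logarithmic kernel lies in every $L^q$), and the dominating functions $c_1|x|^{-1}$ and $\tfrac{c_0}{2}\log(1+|x|^{-2})$ for $|K^\e-K|$ are correct monotone majorants. The trade-off is that your argument is more modular and avoids re-verifying equi-integrability of the products, at the cost of invoking strong $L^{q'}$ convergence of $\varrho^n$ for some $q'>1$, which the paper's argument never needs; both are complete proofs of the lemma.
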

\begin{proof} Since the proof for $d=3$ is analogous, we only consider the case $d=2$. First, we recall a partial result from \cite[Theorem 5.2]{CCJpre}.
\begin{lemma}\label{L3.6}
A sequence $\{h_n\}$ in $L^1(\T^d)$ converges to $h\in L^1(\T^d)$ in $L^1(\T^d)$ if the following three conditions hold:
\begin{enumerate}
\item[(i)] $h_n$ converges to $h$ almost everywhere.
\item[(ii)] for every $\e>0$, there exists $\delta = \delta(\e)>0$ such that whenever $m(E)<\delta$,
\[
\sup_{n \in \bbn} \int_E |h_n| \,dx <\e.
\]
\end{enumerate}
\end{lemma}
Without loss of generality, we may assume 
\[
\sup_{n\in \bbn}\|\varrho^n\|_{L^\infty(0,T;L^1(\T^2))} \le 1.
\] 
First, we show the pointwise convergence of $K^\e \star \varrho^n$ to $K^\e \star \varrho$. Since $\varrho^n$ converges  to $\varrho$ pointwise, $K^\e(x-\cdot)\varrho^n(\cdot)$ also pointwisely converges to $K^\e(x-\cdot)\varrho(\cdot)$ for each $x$. When $d=2$, we use the fact
\[
|\log x| \le \max\lt\{x, x^{-1}\rt\} \quad \forall \, x>0,
\] 
and choose  a measurable set $E$  and $p \in (1,2)$ to obtain
\[\begin{split}
&\int_E K^\e(x-y)\varrho^n(y)\,dy \\
&\quad\le 2c_1\int_E \max\lt\{(\e + |x-y|^2)^{1/4}, (\e + |x-y|^2)^{-1/4}\rt\}  \varrho^n(y)\,dy + \int_E G_1(x-y)\varrho^n(y)\,dy\\
&\quad\le C\int_{E\cap \{y\in \T^2\,:\,|x-y|\ge 1-\e\}} \sqrt{\e + |x-y|^2} \varrho^n(y)\,dy + C\int_{E\cap \{y\in \T^2\,:\,|x-y|< 1-\e\}}(\e + |x-y|^2)^{-1/4} \varrho^n(y)\,dy \\
&\qquad + \|G_1\|_{L^\infty} \|\varrho^n\|_{L^p} m(E)^{1/p'}\\
&\quad\le  C\lt(|x|+\sqrt{\e}\rt)m(E)^{1/(2p')} + \| |\cdot|^{-1/2}\mathds{1}_{\{|\cdot|\le 1\}}\|_{L^{7/2}}\|\varrho^n\|_{L^{7/4}}m(E)^{1/7} + \|G_1\|_{L^\infty} \|\varrho^n\|_{L^p} m(E)^{1/p'},
\end{split}\]
where $p'$ is the H\"older conjugate of $p$. This guarantees the condition (ii) in Lemma \ref{L3.6}. Thus, Lemma \ref{L3.6} gives the pointwise convergence of $K^\e \star \varrho^n$ to $K^\e \star \varrho$, and subsequently this asserts $(K^\e \star \varrho^n)\varrho^n$ converges to $(K^\e \star \varrho)\varrho$ almost everywhere.  To prove the desired $L^1$-convergence, we notice from the uniform boundedness of $\varrho^n$ in $L^\infty(0,T;L^p(\T^2))$ for $p \in [1,2)$ that
\[
|K^\e \star \varrho^n | \le C(1+|x|),
\]
where $C$ is a constant independent of $n$ and $C = \mathcal{O}(1)$ as $\e \to 0$. Indeed, we have
\[\begin{split}
&\int_{\T^2} K^\e(x-y)\varrho^n(y)\,dy \\
&\quad\le 2c_1\int_{\T^2} \max\lt\{(\e + |x-y|^2)^{1/4}, (\e + |x-y|^2)^{-1/4}\rt\} \varrho^n(y)\,dy + \int_{\T^2} G_1(x-y)\varrho^n(y)\,dy\\
&\quad\le C\int_{\{y\in \T^2\,:\,|x-y|\ge 1-\e\}} \sqrt{\e + |x-y|^2} \varrho^n(y)\,dy + C\int_{ \{y\in \T^2\,:\,|x-y|< 1-\e\}}(\e + |x-y|^2)^{-1/4} \varrho^n(y)\,dy  + \|G_1\|_{L^\infty}\\
&\quad\le  C(1+\sqrt\e + |x|) + \int_{ \{y\in \T^2\,:\,|x-y|< 1\}}|x-y|^{-1/2} \varrho^n(y)\,dy\\
&\quad\le C(1+|x|) + \|| \cdot|^{-1/2} \mathds{1}_{\{|\cdot | <1\}} \|_{L^{7/2}} \|\varrho^n\|_{L^{7/5}} \cr
&\quad \le C(1+|x|).
\end{split}\]
Thus, for a measurable set $E \subset \T^2$ we use the above estimate to show
\[\begin{split}
\int_E (K^\e \star\varrho^n)\varrho^n \,dx &\le C\int_E (1+|x|)\varrho^n\,dx \le  C \|\varrho^n\|_{L^p} m(E)^{1/p'},
\end{split}\]
where $p \in (1,2)$. This together with Lemma \ref{L3.6} yields
\[
\lim_{n \to \infty}\int_E (K^\e \star\varrho^n)\varrho^n \,dx = \int_E (K^\e \star\varrho)\varrho \,dx.
\]
Since the estimates can be made independently of $\e$, the similar analysis also gives
\[
\lim_{n \to \infty}\int_E (K^\e \star\varrho^n)\varrho^n \,dx = \int_E (K\star\varrho)\varrho \,dx,
\]
if $\e \to 0$ as $n \to 0$. This concludes the proof of Lemma \ref{L3.7}.

\end{proof}

\subsection{Existence of a weak solution to VPNS} We are ready to present the existence of a weak solution to the system \eqref{main_eq}. 

\subsubsection{Convergence $\lambda \to \infty$}
We will show the convergences of regularized solutions $(f^{\lambda,\e}, v^{\lambda,\e})$ as $\lambda$ tends to infinity. \\

\noindent $\bullet$ (Step A: Uniform bound estimates) First, we attain an upper bound estimate which is uniform in $\lambda$ using the entropy inequality from Corollary \ref{C3.2}. Here, we recall that
\[
\iint_{\T^d \times \R^d} f^{\lambda,\e} \log_{-}f^{\lambda,\e} \,dx d\xi  \le \frac{1}{4\sigma}\iint_{\T^d \times \R^d} f^{\lambda,\e} \left( 1 + |\xi|^2 \right) dx d\xi  + C(\sigma),
\]
where $\log_{-}g(x) := \max\{0, -\log g(x)\}$. We apply the above inequality  and Lemma \ref{L3.5} to Corollary \ref{C3.2} and obtain
\begin{align*}
&\iint_{\T^d \times \R^d} \left(\frac{|\xi|^2}{4} +\sigma |\log f^{\lambda,\e}| \right) f^{\lambda,\e} \,dxd\xi + \frac{1}{2}\int_{\T^d} |v^{\lambda,\e}|^2\,dx + \int_0^t \int_{\T^d} |\nabla v^{\lambda,\e}|^2\,dxds\\
& \quad+ \int_0^t\iint_{\T^d \times \R^d}\left(\frac{1}{f^{\lambda,\e}} \left|\sigma \nabla_\xi f^{\lambda,\e} - (\chi_\lambda(u_\e^{\lambda,\e})-\xi)f^{\lambda,\e}\right|^2 + |\chi_\lambda(v^{\lambda,\e})-\xi|^2f^{\lambda,\e} \right) dxd\xi ds \\
&\qquad \le \iint_{\T^d \times \R^d}\left(\frac{|\xi|^2}{2} + \sigma |\log f_0^\lambda| \right) f_0^\lambda \,dxd\xi +  \iint_{\T^d \times \T^d}  \lt|(\rho_0^\lambda-1)K^\e\star(\rho_0^\lambda-1) \rt|\,dx + \frac12\int_{\T^d} |v_0^\e|^2\,dx \\
&\quad \qquad +\sigma d t\|f_0^\lambda\|_{L^1} + C,
\end{align*}
where $C=C(T,\sigma)$ is a positive constant independent of $\lambda$ and $\e$, and it vanishes when $\sigma = 0$. Here, note that
\[
\int_{\T^d}  \lt|(\rho_0^\lambda-1)K^\e\star(\rho_0^\lambda-1) \rt|\,dx \le \|K^\e\|_{L^q} \|\rho_0^\lambda -1\|_{L^p}^2,
\]
where $p$ and $q$ satisfies $2/p + 1/q = 2$ and we used Young's convolution inequality:
\[
\int_{\T^d} (f\star g)h \,dx \le \|f\|_{L^p}\|g\|_{L^q}\|h\|_{L^r}, \quad \frac{1}{p} + \frac{1}{q} + \frac{1}{r} = 2.
\]Since the initial data $f_0^\lambda$ satisfy \eqref{bdd_conds} with $k^* = 2$ uniformly in $\eta$, we can get
\[
\|\rho_0^\lambda -1\|_{L^p} \le C \quad \forall \,p \in [1,(d+2)/d) 
\]
uniformly in $\lambda$, and this requires $\|K^\e\|_{L^q} <\infty$ uniformly in $\e$ for some $q>(d+2)/4$. Since $d=2$ or $3$, we can choose such $q$ and hence, we combine this argument with $\|f_0^\lambda\|_{L^1} \le \|f_0\|_{L^1}$ and Lemma \ref{L3.5} and apply Gr\"onwall's lemma to yield, for $t \in [0,T]$,
\bq\label{est_ee}
\iint_{\T^d \times \R^d} |\xi|^2 f^{\lambda,\e} \,dxd\xi  + \int_{\T^d} |v^{\lambda,\e}|^2\,dx + \int_0^t \int_{\T^d} |\nabla v^{\lambda,\e}|^2\,dxds \le C.
\eq
Here $C=C(T,\sigma)$ is a positive constant independent of $\lambda$ and $\e$. Once we recall that 
\[
\|f^{\lambda,\e}(\cdot,\cdot,t)\|_{L^p}^p + \frac{4\sigma(p-1)}{p}\int_0^t e^{2d(p-1)(t-s)}\|\nabla_\xi (f^{\lambda,\e})^{p/2}(\cdot,\cdot,s)\|_{L^2}^2\,ds  \le \|f^\lambda_0\|_{L^p}^pe^{2d(p-1)t}
\]
for every $p \in [1,\infty]$, we obtain the following uniform estimates:
\[
\|f^{\lambda,\e}\|_{L^\infty(0,T;L^p(\T^d\times\R^d))} + \|\rho^{\lambda,\e}\|_{L^\infty(0,T;L^{q_1}(\T^d))} + \|\rho^{\lambda,\e} u^{\lambda,\e}\|_{L^\infty(0,T;L^{q_2}(\T^d))} \le C(T),
\]
where $p \in [1,\infty]$, $q_1 \in [1,(d+2)/d)$, $q_2 \in [1,(d+2)/(d+1))$ and $C=C(T)$ is a positive constant independent of $\lambda$ and $\e$. Consequently, we can obtain the following weak convergence as $\lambda\to\infty$ up to a subsequence:
\[
\begin{array}{lcll}
f^{\lambda,\e} \rightharpoonup f^\e  &\mbox{ in }&  L^\infty(0,T;L^p(\T^d\times\R^d)), & p\in[1,\infty],\\
\displaystyle \rho^{\lambda,\e} \rightharpoonup \rho^\e & \mbox{ in } & L^\infty(0,T;L^p(\T^d)), & p\in[1,(d+2)/d),\\
\displaystyle \rho^{\lambda,\e} u^{\lambda,\e} \rightharpoonup \rho^\e u^\e & \mbox{ in } & L^\infty(0,T;L^p(\T^d)), & p\in [1,(d+2)/(d+1)),\\
\displaystyle v^{\lambda,\e} \rightharpoonup v^\e & \mbox{ in } & L^\infty(0,T;L^2(\T^d))\cap L^2(0,T;H^1(\T^d)).
\end{array}
\]
For the proof, we need to obtain the strong convergence of the averaged quantities and fluid velocity. Let $p \in (1,(d+2)/(d+1))$ and write $G^{\lambda,\e}$ as
$$
G^{\lambda,\e} := \sigma \nabla_\xi f^{\lambda,\e} + (2\xi+ \nabla K^\e \star (\rho^{\lambda,\e}-1) -\chi_\lambda(u_\e^{\lambda,\e})-\chi_\lambda(v^{\lambda,\e}))f^{\lambda,\e},
$$
then we can easily check that $G^{\lambda,\e} \in L_{loc}^p(\T^d \times \R^d \times (0,T) )$. Thus, we set $G^{\lambda,\e}$ as above, $r=2$ in  Lemma \ref{L3.3} to get, for $p \in (1, (d+2)/(d+1))$,
\[
\begin{array}{lcl}
\displaystyle \rho^{\lambda,\e} \to \rho^\e & \mbox{ in } & L^p(\T^d \times (0,T)) \ \mbox{ and a.e.},\\
\displaystyle \rho^{\lambda,\e} u^{\lambda,\e} \to \rho^\e u^\e & \mbox{ in } & L^p(\T^d \times (0,T))
\end{array}
\]
as $\lambda\to\infty$, up to a subsequence. For the fluid velocity, we first obtain from \eqref{est_ee} that
\bq\label{bdd_v}
\|v^{\lambda,\e}\|_{L^\infty(0,T;L^2)} + \|\nabla v^{\lambda,\e}\|_{L^2(0,T;L^2)} \leq C,
\eq
where $C>0$ is independent of $\lambda$ and $\e$. 

We next show that $\|\pa_t v^{\lambda,\e}\|_{L^q(0,T;\mathsf{V}')} \leq C$ for some $q \in (1,\infty)$. For $p \in (1,(d+2)/(d+1))$ and $\varphi\in\mathsf{V}$, we get
\begin{align*}
&\left| \int_0^T \int_{\T^d} \partial_t v^{\lambda,\e} \cdot \varphi \,dxds \right| \\
&\quad \le \|\theta_\e \star v^{\lambda,\e}\|_{L^3(\T^d \times (0,T))} \|\nabla v^{\lambda,\e}\|_{L^2(\T^d \times (0,T))} \|\varphi\|_{L^6(\T^d \times (0,T))} + \|\nabla v^{\lambda,\e}\|_{L^2(\T^d \times (0,T))}\|\nabla \varphi\|_{L^2(\T^d \times (0,T))}  \\
&\qquad + \|\rho^{\lambda,\e} u^{\lambda,\e}\|_{L^\infty(0,T;L^p(\T^d))}\|\varphi\|_{L^1(0,T;L^{p'}(\T^d))} + \|\rho^{\lambda,\e} v^{\lambda,\e}\|_{L^2(0,T;L^p(\T^d))}\|\varphi\|_{L^2(0,T;L^{p'}(\T^d))},
\end{align*}
where $p'$ is the H\"older conjugate of $p$. Here, we use Gagliardo--Nirenberg interpolation inequality to get
\[
\|v^{\lambda,\e}\|_{L^3(\T^d)} \le \left\{\begin{array}{lcl} C\|\nabla v^{\lambda,\e}\|_{L^2}^{1/3} \|v^{\lambda,\e}\|_{L^2}^{2/3} & \mbox{ if } & d=2,\\[2mm]
C\|\nabla v^{\lambda,\e}\|_{L^2}^{1/2}\|v^{\lambda,\e}\|_{L^2}^{1/2} & \mbox{ if } & d=3,\end{array}\right.
\]
where $C$ only depends on $d$. Thus, if $d=2$,
\[
\|\theta_\e \star v^{\lambda,\e}\|_{L^3(\T^2 \times (0,T))} \le \|v^{\lambda,\e}\|_{L^3(\T^2 \times (0,T))} \le C\|v^{\lambda,\e}\|_{L^4(0,T;L^2(\T^2))}^{2/3} \|\nabla v^{\lambda,\e}\|_{L^2(0,T;L^2(\T^2))}^{1/3}
\]
and if $d=3$,
\[
\|\theta_\e \star v^{\lambda,\e}\|_{L^3(\T^3\times (0,T))} \le \|v^{\lambda,\e}\|_{L^3(\T^3 \times (0,T))} \le C\|v^{\lambda,\e}\|_{L^6(0,T;L^2(\T^3))}^{1/2} \|\nabla v^{\lambda,\e}\|_{L^2(0,T;L^2(\T^3))}^{1/2},
\]
where $C$ only depends on $d$. On the other hand, the local moment can be estimated as
\begin{align*}
\|\rho^{\lambda,\e} v^{\lambda,\e}\|_{L^2(0,T;L^p(\T^d))} &\le \|\rho^{\lambda,\e}\|_{L^2(0,T;L^{\frac{(d+2)p}{d+2-p}}(\T^d))}\|v^{\lambda,\e}\|_{L^2(0,T;L^{d+2}(\T^d))}\\
&\le \|\rho^{\lambda,\e}\|_{L^2(0,T;L^{\frac{(d+2)p}{d+2-p}}(\T^d))}\|v^{\lambda,\e}\|_{L^2(0,T;H^1(\T^d))},
\end{align*}
which is finite since $p\in (1, (d+2)/(d+1))$ implies $(d+2)p/(d+2-p) < (d+2)/d$. Thus, taking $p = (d+3)/(d+2)$ gives us $\|\varphi\|_{L^{p'}} \leq C \|\varphi\|_{H^1}$, and thus we obtain
\[
\partial_t v^{\lambda,\e} \in L^{\frac{d+3}{d+2}}(0,T;\mathsf{V}'), \quad \mbox{uniformly in } \lambda \mbox{ and } \e.
\] 
Combining this with \eqref{bdd_v}, we can use Aubin--Lions lemma to get
\[
v^{\lambda,\e} \to v^\e \qquad \mbox{in} \quad L^2(\T^d \times (0,T)) \ \mbox{ and a.e.}, 
\]
up to a subsequence.\\

\noindent $\bullet$ (Step B: Existence of weak solutions and entropy inequality) Now, we show that the limit $(f^\e, v^\e)$ satisfies the following system in distributional sense:
\begin{equation}\label{C-4}
\begin{split}
&\pa_t f^\e + v\cdot\nabla f^\e -  \nabla_\xi \cdot \lt((\xi -v^\e + \nabla K^\e \star (\rho^\e-1))f^\e \rt)=  \nabla_\xi \cdot ( \sigma \nabla_\xi f^\e - (  u_\e^\e-\xi)f^\e),\cr
&\pa_t v^\e + ((\theta_\e \star v^\e)\cdot \nabla) v^\e + \nabla p^\e -\Delta v^\e = -\rho^\e(v^\e-u^\e),\\
&\nabla \cdot v^\e = 0,
\end{split}
\end{equation}
and the entropy inequality:
\begin{align}\label{C-5}
\begin{aligned}
&\iint_{\T^d \times \R^d} \left(\frac{|\xi|^2}{2} + \sigma \log f^\e \right) f^\e \,dxd\xi +  \int_{\T^d} (\rho^\e-1)K^\e\star(\rho^\e-1) \,dx + \frac12 \int_{\T^d} |v^\e|^2 \,dx\\
&\quad + \int_0^t\iint_{\T^d \times \R^d} \left(\frac{1}{f^\e} \left|\sigma \nabla_\xi f^\e - (u_\e^\e-\xi)f^\e\right|^2 + |v^\e-\xi|^2 f^\e\right) dxd\xi ds + \int_0^t \int_{\T^d} |\nabla v^\e|^2  \,dxds\\
&\qquad  \le \iint_{\T^d \times \R^d} \left(\frac{|\xi|^2}{2} + \sigma \log f_0 \right) f_0 \,dxd\xi + \int_{\T^d} (\rho_0 -1)K^\e\star(\rho_0 -1) \,dx  + \frac12 \int_{\T^d} |v_0^\e|^2 \,dx + \sigma d t.
\end{aligned}
\end{align}
For the weak solution $(f^\e,v^\e)$ to \eqref{C-4}, it suffices to show the following convergence in distributional sense since the others are linear:
\begin{equation}\label{C-6}
\begin{cases}
\mbox{(i)}~~(\nabla K^\e \star \rho^{\lambda,\e})f^{\lambda,\e} \to (\nabla K^\e \star \rho^\e)f^\e,\\[2mm]
\mbox{(ii)}~~ \chi_\lambda(u_\e^{\lambda,\e}) f^{\lambda,\e} \to u_\e^\e f^\e.\\
\end{cases}
\end{equation}
Although the proof is similar to that in \cite{CCJpre}, we present the detail for the completeness of our work. For the proof, we arbitrarily choose $\Phi\in \mathcal{C}_c^\infty(\T^d \times\R^d\times [0,T]) $ with $\Phi(\cdot, \cdot ,T)=0$.\\

\noindent $\diamond$ (Step B-1: Convergence of \eqref{C-6} (i))  We write 
\[
\rho^{\lambda,\e}_\Phi(x,t) := \int_{\R^d} f^{\lambda,\e}(x,\xi,t)\Phi(x,\xi,t)\,d\xi.
\]
Then, we estimate
\begin{align*}
\int_0^T& \iint_{\T^d \times \R^d} \left[(\nabla K^\e \star \rho^{\lambda,\e})f^{\lambda,\e} - (\nabla K^\e \star \rho^\e)f^\e\right]\Phi\,dxd\xi dt\\
&=\int_0^T \iint_{\T^d \times \R^d} \nabla K^\e \star (\rho^{\lambda,\e} -\rho^\e)\rho_\Phi^{\lambda,\e}\,dxdt + \int_0^T \iint_{\T^d \times \R^d} (\nabla K^\e \star\rho^\e )(f^{\lambda,\e}-f^\e) \Phi\,dxd\xi dt\\
&=: \mathcal{I}^{\lambda,\e}_1 + \mathcal{I}^{\lambda,\e}_2.
\end{align*}
For $\mathcal{I}^{\lambda,\e}_1$, we use the uniform bound of  $f^{\lambda,\e}$ in $L^\infty(\T^d \times\R^d\times (0,T))$ and the compact support of $\Phi$ to yield
\[
\rho_\Phi^{\lambda,\e} \in L^\infty(0,T; L^p(\T^d)) \quad \mbox{for any } \ p \in [1,\infty] 
\]
uniformly in $\lambda$ and $\e$. This gives
\begin{align*}
\mathcal{I}^{\lambda,\e}_1 &\le \|\nabla K^\e\|_{L^1(\T^d)} \|\rho^{\lambda,\e} - \rho^\e\|_{L^p(\T^d \times (0,T))} \|\rho_\Phi^{\lambda,\e}\|_{L^{p'}(\T^d \times (0,T))},
\end{align*}
where $p \in (1,(d+2)/(d+1))$, $p'$ is the H\"older conjugate of $p$, and we used Young's convolution inequality. Since  $|\nabla K^\e(x)| \le C(1+ |x|^{-d+1})$, we  use the strong convergence of $\rho^{\lambda,\e}$ to get $\mathcal{I}^{\lambda,\e}_1 \to 0$ as $\lambda \to \infty$. For $\mathcal{I}^{\lambda,\e}_2$, we notice 
\[
(\nabla K^\e \star \rho^\e) \Phi \in L^1(0,T;L^p(\T^d)) \ \mbox{ for some } \ p \in (1,\infty). 
\]
Thus, the weak convergence of $f^{\lambda,\e}$ asserts that $\mathcal{I}^{\lambda,\e}_2 \to 0$ as $\lambda \to \infty$.\\

\noindent $\diamond$ (Step B-2: Convergence of \eqref{C-6} (ii)) We estimate
\begin{align*}
\int_0^T& \iint_{\T^d \times \R^d} (\chi_\lambda(u_\e^{\lambda,\e})f^{\lambda,\e} - u_\e^\e f^\e)\Phi \,dxd\xi dt\\
&=\int_0^T \int_{\T^d} (u_\e^{\lambda,\e} - u_\e^\e)\rho_\Phi^{\lambda,\e} \,dxds + \int_0^T \iint_{\T^d \times \R^d} u_\e^\e (f^{\lambda,\e}-f^\e) \Phi \,dxd\xi dt \cr
&\quad + \int_0^T \iint_{\T^d \times \R^d} u_\e^{\lambda,\e} f^{\lambda,\e}\mathds{1}_{\{|u_\e^{\lambda,\e}|>\lambda \}} \Phi \,dxd\xi dt\\
&=: \mathcal{J}^{\lambda,\e}_1 + \mathcal{J}^{\lambda,\e}_2 + \mathcal{J}^{\lambda,\e}_3.
\end{align*}
For $\mathcal{J}^{\lambda,\e}_1$, we find
\[
\mathcal{J}^{\lambda,\e}_1 = \int_0^T \int_{\T^d} \left(\left(\frac{1}{\rho^{\lambda,\e} + \e} - \frac{1}{\rho^\e + \e}\right)(\rho^\e u^\e) + \frac{1}{\rho^{\lambda,\e} +\e} (\rho^{\lambda,\e} u^{\lambda,\e} - \rho^\e u^\e) \right)\rho_\Phi^{\lambda,\e}\,dxdt.
\]
Since $\rho^{\lambda,\e} \to \rho^\e$ a.e. and $\rho_\Phi^{\lambda,\e} \in L^\infty(\T^d \times (0,T))$ uniformly in $\lambda$ and $\e$, we get 
\[
\left(\frac{1}{\rho^{\lambda,\e} + \e} - \frac{1}{\rho^\e + \e}\right)(\rho^\e u^\e)\rho_\Psi^{\lambda,\e} \to 0 \quad \mbox{a.e.}
\]
as $\lambda \to \infty$, and
\[
\left|\left(\frac{1}{\rho^{\lambda,\e} + \e} - \frac{1}{\rho^\e + \e}\right)(\rho^\e u^\e)\rho_\Phi^{\lambda,\e}\right| \le \frac{2\|\rho_\Phi^{\lambda,\e}\|_{L^\infty}}{\e}|\rho^\e u^\e| \le C(\e) |\rho^\e u^\e|,
\]
where $C = C(\e)$ is a positive constant independent of $\lambda$. Then, we use the dominated convergence theorem to get
\[
\int_0^T \int_{\T^d} \left(\frac{1}{\rho^{\lambda,\e} + \e} - \frac{1}{\rho^\e + \e}\right)(\rho^\e u^\e)\rho_\Phi^{\lambda,\e} \,dxdt \to 0
\] 
as $\lambda \to \infty$. Moreover, we obtain
\[
\int_0^T \int_{\T^d}  \frac{1}{\rho^{\lambda,\e} +\e} (\rho^{\lambda,\e} u^{\lambda,\e} - \rho^\e u^\e) \rho_\Phi^{\lambda,\e}\,dxdt\le \frac{1}{\e}\|\rho^{\lambda,\e} u^{\lambda,\e} -\rho^\e u^\e\|_{L^p} \|\rho_\Phi^{\lambda,\e}\|_{L^{p'}},
\]
where $p \in (1,(d+2)/(d+1))$, and this implies $\mathcal{J}^{\lambda,\e}_1 \to 0$ as $\lambda \to \infty$. For $\mathcal{J}^{\lambda,\e}_2$, it is clear that $u_\e^\e \Phi \in L^1(0,T;L^p(\T^d\times\R^d))$ for some $p\in(1,\infty)$. Thus, the weak convergence of $f^{\lambda,\e}$ gives $\mathcal{J}^{\lambda,\e}_2 \to 0$  as $\lambda \to \infty$. Finally for $\mathcal{J}^{\lambda,\e}_3$, we use 
\[
|u^{\lambda,\e}| \le \left(\frac{\int_{\R^d} |\xi|^2 f^{\lambda,\e} \,d\xi}{\rho^{\lambda,\e}}\right)^{1/2}
\] 
to get
\[
\mathcal{J}^{\lambda,\e}_3 \le \frac{1}{\lambda} \int_0^T \iint_{\T^d \times \R^d}|u_\e^{\lambda,\e}|^2 f^{\lambda,\e} \Phi\,dxd\xi dt\le \frac{\|\Phi\|_{L^\infty}}{\lambda} \int_0^T\iint_{\T^d \times \R^d} |\xi|^2 f^{\lambda,\e} \,dxd\xi dt \to 0
\]
as $\lambda \to \infty$. This concludes that $(f^\e,v^\e)$ is a weak solution to the system \eqref{C-4}.\\

\noindent $\diamond$ (Step B-3: Entropy inequality) For the entropy inequality \eqref{C-5}, we first take the liminf on the left hand side of Corollary \ref{C3.2}, convexity of the entropy and use $\|f_0^\lambda \|_{L^1} \le \|f_0\|_{L^1}=1$ and Lemma \ref{L3.7} to get
\[
\begin{split}
&\iint_{\T^d \times \R^d} \left(\frac{|\xi|^2}{2} +  \sigma \log f^\e \right) f^\e \,dxd\xi +  \int_{\T^d}  (\rho^\e-1)K^\e\star(\rho^\e-1) \,dx + \frac{1}{2}\int_{\T^d} |v^\e|^2\,dx\\
&\quad + \int_0^t\iint_{\T^d \times \R^d}\left(\frac{1}{f^\e} \left|\sigma \nabla_\xi f^\e - (u_\e^\e - \xi)\right|^2  + |v^\e-\xi|^2 f^\e\right) dxd\xi ds + \int_0^t \int_{\T^d}|\nabla v^\e |^2 \,dxds\\
&\qquad \le \liminf_{\lambda \to \infty}\left(\iint_{\T^d \times \R^d} \left(\frac{|\xi|^2}{2} +  \sigma \log f_0^\lambda \right) f_0^\lambda \,dxd\xi +  \int_{\T^d}  (\rho_0^\lambda-1)K^\e\star(\rho_0^\lambda-1) \,dx + \frac{1}{2}\int_{\T^d} |v_0^\e|^2\,dx\right) +\sigma dt.
\end{split}
\] 
We employ the reverse Fatou's lemma and almost everywhere convergences $\rho_0^\lambda \to \rho_0$ and $f_0^\lambda \to f_0$ as $\lambda \to \infty$ to have
\begin{align*}
&\liminf_{\lambda \to \infty}\left(\iint_{\T^d \times \R^d} \left(\frac{|\xi|^2}{2} +  \sigma \log f_0^\lambda \right) f_0^\lambda \,dxd\xi + \int_{\T^d} (\rho_0^\lambda-1)K^\e\star(\rho_0^\lambda-1) \,dx +\frac{1}{2}\int_{\T^d} |v_0^\e|^2\,dx \right)\\
&\quad \le\limsup_{\lambda \to \infty}\left(\iint_{\T^d \times \R^d} \left(\frac{|\xi|^2}{2} +  \sigma \log f_0^\lambda \right) f_0^\lambda \,dxd\xi +  \int_{\T^d}(\rho_0^\lambda-1)K^\e\star(\rho_0^\lambda-1) \,dx+ \frac{1}{2}\int_{\T^d} |v_0^\e|^2\,dx\right)\\
&\quad \le \left(\iint_{\T^d \times \R^d} \left(\frac{|\xi|^2}{2} +  \sigma \log f_0 \right) f_0 \,dxd\xi +  \int_{\T^d} (\rho_0-1)K^\e\star(\rho_0-1) \,dx + \frac{1}{2}\int_{\T^d} |v_0^\e|^2\,dx\right),
\end{align*}
which implies the desired estimate.

\subsubsection{Convergence $\e \to 0$} Now, it remains to show the convergence as $\e \to 0$. First, the weak convergence $f^\eta \rightharpoonup f^\e$ implies the following uniform upper bound estimate:
\begin{equation}\label{C-7}
\|f^\e\|_{L^\infty(0,T;L^p(\T^d\times\R^d))}^p + \frac{4\sigma(p-1)}{p}\int_0^T \|\nabla_\xi (f^\e)^{p/2}(\cdot,\cdot,s)\|_{L^2}^2\,ds \leq \|f_0\|_{L^p}^pe^{2d(p-1)t}
\end{equation}
for $p\in[1,\infty)$ and
\bq\label{C-72}
\|f^\e\|_{L^\infty(0,T;L^\infty(\T^d\times\R^d))}  \leq C\|f_0\|_{L^\infty}
\eq
for some $C>0$ independent of $\e>0$.  Hence, the inequalities \eqref{C-7} and \eqref{C-72} together with the entropy inequality \eqref{C-5} yield the following weak convergence up to a subsequence as $\e\to0$:
\[
\begin{array}{lcll}
f^\e \rightharpoonup f  &\mbox{ in }&  L^\infty(0,T;L^p(\T^d\times\R^d)), & p\in[1,\infty],\\
\displaystyle \rho^\e \rightharpoonup \rho & \mbox{ in } & L^\infty(0,T;L^p(\T^d)_, & p\in[1,(d+2)/d),\\
\displaystyle \rho^\e u^\e \rightharpoonup \rho u & \mbox{ in } & L^\infty(0,T;L^p(\T^d)), & p\in [1,(d+2)/(d+1)),\\
\displaystyle v^\e \rightharpoonup v & \mbox{ in } & L^\infty(0,T;L^2(\T^d))\cap L^2(0,T;H^1(\T^d)).
\end{array}
\]
Once again, we apply Lemma \ref{L3.3} to obtain the strong convergence up to a subsequence for $p \in (1,(d+2)/(d+1))$:
\[
\begin{array}{lcl}
\displaystyle \rho^\e \to \rho & \mbox{ in } & L^p(\T^d \times (0,T)) \ \mbox{ and a.e.},\\
\displaystyle \rho^\e u^\e \to \rho u & \mbox{ in } & L^p(\T^d \times (0,T))
\end{array}
\]
as $\e \to 0$. In order to show that $(f,v)$ is a weak solution of \eqref{main_eq}, it suffices to show the following convergences in distribution sense, since the other convergences can be deduced from the previous argument \cite{CCK16}:
\begin{equation}\label{C-8}
\begin{cases}
\mbox{(i)}~~(\nabla K^\e \star \rho^\e)f^\e \to (\nabla K \star \rho)f,\\[2mm]
\mbox{(ii)}~~ u_\e^\e f^\e  \to u f.
\end{cases}
\end{equation}
However, the convergence estimates for \eqref{C-8} can be found in \cite{CCJpre}, thus it remains to show that the weak solution obtained up to now satisfies the entropy inequality:
\[
\begin{split}
&\iint_{\T^d \times \R^d} \left(\frac{|\xi|^2}{2} + \sigma \log f \right) f \,dxd\xi + \int_{\T^d} (\rho-1)K\star(\rho-1) \,dx + \frac{1}{2}\int_{\T^d} |v|^2\,dx\\
&\quad + \int_0^t\iint_{\T^d \times \R^d}\left(\frac{1}{f} \left|\sigma \nabla_\xi f - (u-\xi)f\right|^2  + |v-\xi|^2 f\right) dxd\xi ds + \int_0^t\int_{\T^d}|\nabla v|^2  \,dxds\\
&\qquad  \le \iint_{\T^d \times \R^d} \left(\frac{|\xi|^2}{2} + \sigma \log f_0 \right) f_0 \,dxd\xi + \int_{\T^d} (\rho_0-1)K\star(\rho_0-1) \,dx + \frac{1}{2}\int_{\T^d} |v_0|^2\,dx +\sigma dt \|f_0\|_{L^1}.
\end{split}
\] 
With the entropy inequality \eqref{C-5}, the strong convergences of $\rho^\e$ and $\rho^\e u^\e$ thanks to the velocity averaging lemma, and Lemma \ref{L3.3}, we employ a similar argument as in the previous step to conclude the desired result.

\section{Convergence of $f^\e$ towards $M_{\rho,u}$ in $L^\infty(0,T;L^1(\T^d \times \R^d))$}\label{app_conv}
In this part, we provide the details of proof for Proposition \ref{prop_h1}. Let us first recall the relative entropy:
\[
\mh(f^\e | M_{\rho,u}) = f^\e \lt(\log f^\e - \log M_{\rho,u}\rt) + (f^\e - M_{\rho,u}).
\]
In the rest, we separately estimate the terms on the right hand side of the above equality.\\ 

A direct computation gives
\begin{align}\label{est_conv1}
\begin{aligned}
&\frac{d}{dt}\iint_{\T^d \times \R^d} f^\e \log f^\e\,dxd\xi \cr
&\quad = \iint_{\T^d \times \R^d} \nabla_\xi f^\e \cdot (v^\e - \xi - \nabla K\star(\rho^\e-1))\,dxd\xi - \frac1\e \iint_{\T^d \times \R^d} \frac{\nabla_\xi f^\e}{f^\e} \cdot \lt( \nabla_\xi f^\e - (u^\e - \xi )f^\e \rt) dxd\xi\cr
&\quad =  \iint_{\T^d \times \R^d} \nabla_\xi f^\e \cdot (v^\e - \xi)\,dxd\xi - \frac1\e \iint_{\T^d \times \R^d} \frac{1}{f^\e}| \nabla_\xi f^\e - (u^\e - \xi )f^\e|^2\,dxd\xi\cr
&\qquad - \frac1\e \iint_{\T^d \times \R^d}(u - \xi) \cdot \lt( \nabla_\xi f^\e - (u^\e - \xi )f^\e \rt) dxd\xi,
\end{aligned}
\end{align}
where we used 
\[
 \iint_{\T^d \times \R^d}(u^\e - u) \cdot \lt( \nabla_\xi f^\e - (u^\e - \xi )f^\e \rt) dxd\xi = 0
\]
for the estimate of the last term on the right hand side of the above. We next use the continuity equations of $\rho^\e$ and $\rho$ to obtain
\bq\label{est_conv2}
- \frac{d}{dt}\int_{\T^d} \rho^\e \log \rho = -\int_{\T^d} \rho^\e (u^\e-  u) \cdot \frac{\nabla \rho}{\rho}\,dx - \int_{\T^d} u \cdot \nabla \rho^\e\,dx.
\eq
We next estimate
\begin{align*}
\frac12\frac{d}{dt}\iint_{\T^d \times \R^d} |u-\xi|^2f^\e \,dxd\xi &= \iint_{\T^d \times \R^d} (u - \xi)f^\e \cdot \pa_t u\,dxd\xi + \frac12\iint_{\T^d \times \R^d} |u - \xi|^2 \pa_t f^\e \,dxd\xi =: \ml_1 + \ml_2,
\end{align*}
where we use the momentum equation in \eqref{A-3} to get
\begin{align*}
\ml_1 &=  \int_{\T^d} (u - u^\e)\rho^\e \cdot \pa_t u\,dx\cr
&=- \int_{\T^d} (u - u^\e)\rho^\e \cdot (u \cdot \nabla u + (u-v))\,dx - \int_{\T^d} \frac{\nabla \rho}{\rho} \cdot (u - u^\e) \rho^\e\,dx - \int_{\T^d} \nabla K\star(\rho-1) \cdot (u - u^\e) \rho^\e\,dx\cr
&\leq C\lt(\int_{\T^d} \rho^\e |u - u^\e|^2\,dx\rt)^{1/2} - \int_{\T^d} \frac{\nabla \rho}{\rho} \cdot (u - u^\e) \rho^\e\,dx - \int_{\T^d} \nabla K\star(\rho-1) \cdot (u - u^\e) \rho^\e\,dx.
\end{align*}
Here $C>0$ is independent of $\e > 0$. For the estimate of $\ml_2$, we use the kinetic equation in \eqref{A-2} to find
\begin{align*}
\ml_2 &= \iint_{\T^d \times \R^d} \xi f^\e \otimes (u-\xi) : \nabla u\,dxd\xi - \iint_{\T^d \times \R^d} (v^\e - \xi) \cdot (u - \xi) f^\e\,dxd\xi\cr
&\quad + \int_{\T^d} \nabla K\star(\rho^\e-1) \cdot (u - u^\e) \rho^\e\,dx + \frac1\e \iint_{\T^d \times \R^d} (u - \xi) \cdot (\nabla_\xi f^\e - (u^\e - \xi)f^\e)\,dxd\xi.
\end{align*}
On the other hand, the first term on the right hand side can be estimated as
\begin{align*}
&\iint_{\T^d  \times \R^d} \xi f^\e \otimes (u - \xi) :\nabla u\,dxd\xi\cr
&\quad = -\iint_{\T^d  \times \R^d}(u- \xi) f^\e \otimes (u - \xi) :\nabla u\,dxd\xi +\iint_{\T^d \times\R^d} u\otimes (u-\xi)f^\e : \nabla u\,dxd\xi\\
 &\quad =- \iint_{\T^d  \times \R^d} \lt( (u - u^\e)\otimes (u - u^\e) +   (u^\e - \xi) \otimes (u^\e - \xi) \rt) f^\e  : \nabla u\,dxd\xi  +\iint_{\T^d \times\R^d} u\otimes (u-\xi)f^\e : \nabla u\,dxd\xi \cr
&\quad \leq \|\nabla u\|_{L^\infty} \int_{\T^d } \rho^\e |u - u^\e|^2\,dx + \|u\|_{L^\infty}\|\nabla u\|_{L^\infty}\int_{\T^d } \rho^\e |u-u^\e|\,dx \\
&\qquad- \iint_{\T^d \times \R^d} \lt((u^\e - \xi)\sqrt{f^\e} - 2\nabla_\xi \sqrt{f^\e} \rt) \otimes (u^\e - \xi) \sqrt{f^\e} : \nabla u\,dxd\xi\cr
&\qquad - 2\iint_{\T^d \times \R^d} \nabla_\xi \sqrt{f^\e} \otimes (u^\e - \xi) \sqrt{f^\e} : \nabla u\,dxd\xi\cr
&\quad \leq C\int_{\T^d } \rho^\e |u- u^\e|^2\,dx  + C\int_{\T^d } \rho^\e|u-u^\e|\,dx- \iint_{\T^d \times \R^d} \nabla_\xi f^\e \otimes (u^\e - \xi) : \nabla u\,dxd\xi\cr
&\qquad + C\lt(\iint_{{\T^d } \times \R^d} |u^\e - \xi|^2 f^\e\,dxd\xi \rt)^{1/2}\lt(\iint_{\T^d \times \R^d} \frac{1}{f^\e} |\nabla_\xi f^\e - (u^\e - \xi)f^\e|^2\,dxd\xi \rt)^{1/2}\cr
&\quad \le C\int_{\T^d } \rho^\e |u- u^\e|^2\,dx + C\lt(\int_{\T^d} \rho^\e |u-u^\e|^2\,dx\rt)^{1/2}+\int_{\T^d } \nabla \rho^\e \cdot u\,dx + C\e\iint_{\T^d \times \R^d} |\xi|^2 f^\e\,dxd\xi\cr
&\qquad + \frac{1}{4\e}\iint_{\T^d \times \R^d} \frac{1}{f^\e} |\nabla_\xi f^\e - (u^\e - \xi)f^\e|^2\,dxd\xi,
\end{align*}
where we used
\[
\iint_{\T^d \times \R^d} |u^\e - \xi|^2 f^\e\,dxd\xi \leq \iint_{\T^d \times \R^d} |\xi|^2 f^\e\,dxd\xi - \int_{\T^d} \rho^\e|u^\e|^2\,dx \leq \iint_{\T^d \times \R^d} |\xi|^2 f^\e\,dxd\xi.
\]
Combining these estimates implies
\begin{align*}
\frac12&\frac{d}{dt}\iint_{\T^d \times \R^d} |u-\xi|^2f^\e \,dxd\xi \cr
& \leq C\lt(\min\lt\{1,\int_{\T^d} \rho^\e |u - u^\e|^2\,dx\rt\}\rt)^{1/2} - \int_{\T^d} \frac{\nabla \rho}{\rho} \cdot (u - u^\e) \rho^\e\,dx + \int_{\T^d} \nabla K\star(\rho^\e - \rho) \cdot (u - u^\e) \rho^\e\,dx\cr
&\quad - \iint_{\T^d \times \R^d} (v^\e - \xi) \cdot (u - \xi) f^\e\,dxd\xi + \frac1\e \iint_{\T^d \times \R^d} (u - \xi) \cdot (\nabla_\xi f^\e - (u^\e - \xi)f^\e)\,dxd\xi   \cr
&\quad + C\e\iint_{\T^d \times \R^d} |\xi|^2 f^\e\,dxd\xi + \frac{1}{4\e}\iint_{\T^d \times \R^d} \frac{1}{f^\e} |\nabla_\xi f^\e - (u^\e - \xi)f^\e|^2\,dxd\xi.
\end{align*}
We now combine this with \eqref{est_conv1} and \eqref{est_conv2} to have
\begin{align}\label{est_conv3}
\begin{aligned}
&\frac{d}{dt} \iint_{\T^d \times \R^d} \mh(f^\e | M_{\rho,u})\,dxd\xi + \frac1\e\iint_{\T^d \times \R^d} \frac{1}{f^\e}| \nabla_\xi f^\e - (u^\e - \xi )f^\e|^2\,dxd\xi \cr
&\quad \leq C\lt(\min\lt\{1,\int_{\T^d} \rho^\e |u - u^\e|^2\,dx\rt\}\rt)^{1/2} + \int_{\T^d} \nabla K\star(\rho^\e - \rho) \cdot (u - u^\e) \rho^\e\,dx\cr
&\qquad + C\e\iint_{\T^d \times \R^d} |\xi|^2 f^\e\,dxd\xi + \frac{1}{4\e}\iint_{\T^d \times \R^d} \frac{1}{f^\e} |\nabla_\xi f^\e - (u^\e - \xi)f^\e|^2\,dxd\xi\cr
&\qquad + \iint_{\T^d \times \R^d} \nabla_\xi f^\e \cdot (v^\e - \xi)\,dxd\xi  - \iint_{\T^d \times \R^d} (v^\e - \xi) \cdot (u - \xi) f^\e\,dxd\xi.
\end{aligned}
\end{align}
We further estimate the last two terms on the right hand side as
\begin{align*}
&\iint_{\T^d \times \R^d} \nabla_\xi f^\e \cdot (v^\e - \xi)\,dxd\xi - \iint_{\T^d \times \R^d} (v^\e - \xi) \cdot (u-\xi)f^\e\,dxd\xi\cr
&\quad  = \iint_{\T^d \times \R^d} \lt(\nabla_\xi f^\e - (u^\e - \xi)f^\e\rt) \cdot (v^\e - \xi)\,dxd\xi + \iint_{\T^d \times \R^d} (v^\e - \xi) \cdot (u^\e - u)f^\e\,dxd\xi\cr
&\quad \leq \lt(\iint_{\T^d \times \R^d} \frac{1}{f^\e} |\nabla_\xi f^\e - (u^\e - \xi)f^\e|^2 \,dxd\xi\rt)^{1/2}\lt(\iint_{\T^d \times \R^d} |v^\e - \xi|^2 f^\e\,dxd\xi \rt)^{1/2} \cr
&\qquad + \lt(\int_{\T^d} |v^\e - u^\e|^2 \rho^\e\,dx \rt)^{1/2}\lt(\int_{\T^d} |u^\e - u|^2 \rho^\e\,dx \rt)^{1/2} \cr
&\quad \leq C\e \iint_{\T^d \times \R^d} |v^\e - \xi|^2 f^\e\,dxd\xi  + \frac1{4\e}\iint_{\T^d \times \R^d} \frac{1}{f^\e} |\nabla_\xi f^\e - (u^\e - \xi)f^\e|^2 \,dxd\xi\cr
&\qquad + C\e^{1/4}\iint_{\T^d \times \R^d} |v^\e - \xi|^2 f^\e\,dxd\xi  + \frac{1}{\e^{1/4}}\int_{\T^d} |u^\e - u|^2 \rho^\e\,dx.
\end{align*}
Here we used
\[
\int_{\T^d} |v^\e - u^\e|^2 \rho^\e\,dx \leq \iint_{\T^d \times \R^d} |v^\e - \xi|^2 f^\e\,dxd\xi.
\]
For the second term on the right hand side of \eqref{est_conv3}, we recall the estimates in the proof of Proposition \ref{P2.2}: 
\begin{align*}
\frac12\frac{d}{dt}\int_{\T^d} |\nabla K\star(\rho^\e - \rho)|^2\,dx &= \int_{\T^d} \nabla K\star(\rho^\e - \rho) \cdot (\rho^\e u^\e - \rho u)\,dx\cr
&= \int_{\T^d} \nabla K\star(\rho^\e - \rho) \cdot (u^\e - u)\rho^\e\,dx + \int_{\T^d} \nabla K\star(\rho^\e - \rho) \cdot u (\rho^\e - \rho)\,dx
\end{align*}
and 
\begin{align*}
&\int_{\T^d} \nabla K\star(\rho^\e - \rho) \cdot u (\rho^\e - \rho)\,dx \cr
&\quad =-\frac{1}{2} \int_{\T^d} |\nabla K\star(\rho^\e - \rho)|^2 \nabla \cdot u \,dx +  \int_{\T^d} \nabla K\star(\rho^\e - \rho)\otimes \nabla K \star(\rho^\e -\rho) : \nabla u\,dx\\
&\quad \le C\int_{\T^d} |\nabla K\star(\rho^\e - \rho)|^2\,dx,
\end{align*}
where $C>0$ is independent of $\e > 0$. Hence we have
\begin{align*}
&\frac{d}{dt}\lt(\iint_{\T^d \times \R^d} \mh(f^\e | M_{\rho,u})\,dxd\xi +\frac12\int_{\T^d} |\nabla K\star(\rho^\e - \rho)|^2\,dx\rt)+ \frac1{2\e}\iint_{\T^d \times \R^d} \frac{1}{f^\e}| \nabla_\xi f^\e - (u^\e - \xi )f^\e|^2\,dxd\xi \cr
&\quad \leq C\lt(\min\lt\{1,\int_{\T^d} \rho^\e |u - u^\e|^2\,dx\rt\}\rt)^{1/2} +  \frac{1}{\e^{1/4}}\int_{\T^d} |u^\e - u|^2 \rho^\e\,dx  + C \int_{\T^d} |\nabla K\star(\rho^\e - \rho)|^2\,dx\cr
&\qquad + C\e\iint_{\T^d \times \R^d} |\xi|^2 f^\e\,dxd\xi + C\e \iint_{\T^d \times \R^d} |v^\e - \xi|^2 f^\e\,dxd\xi  + C\e^{1/4}\iint_{\T^d \times \R^d} |v^\e - \xi|^2 f^\e\,dxd\xi.
\end{align*}
This completes the proof.
%%%%%%%%%%%%%%%%%%%%%%%%%%%%%%%%%%%%%%%%%%%%%%%%%%%
%
%
%
%\section{Proof of Lemma \ref{L5.1}}\label{app.A}
%
%
%
%%%%%%%%%%%%%%%%%%%%%%%%%%%%%%%%%%%%%%%%%%%%%%%%%%%

\section{Local solvability for the EPNS system}\label{app_local}
 
In this appendix, we present the details of the proof for Theorem \ref{L5.1}.

\subsection{Solvability for the linearized system} First, we linearize the system \eqref{E-2} and investigate its local-in-time estimates. To be specific, for a given triplet
\[
(\tilde{g}, \tilde u, \tilde v) \in \mathcal{C}([0,T];H^s(\T^d)) \times \mathcal{C}([0,T];H^s(\T^d)) \times \mc([0,T];H^s(\T^d))\quad \mbox{with} \quad \nabla \cdot \tilde v = 0,
\]
we consider the following linearized system:
\begin{align}\label{AppA-1}
\begin{aligned}
&\pa_t g + \tilde{u} \cdot \nabla g   + \nabla \cdot u = 0, \quad (x,t) \in \T^d \times \R_+,\cr
&\pa_t u +  (\tilde{u} \cdot \nabla ) u + \nabla g =  - (u-\tilde{v}+ \nabla K \star (e^{\tilde{g}}-1)),\\
&\pa_t v + (\tilde{v} \cdot \nabla)v +\nabla p -\Delta v = e^{\tilde g}(\tilde{u}-v),\\
&\nabla \cdot v=0,
\end{aligned}
\end{align}
subject to the initial data $(g_0, u_0, v_0) \in H^s(\T^d) \times H^s(\T^d) \times H^s(\T^d)$.

\begin{lemma}\label{LA.1} Let $T>0$ and $s > d/2 +1$. For any positive constants $N<M$, if 
\bq\label{ini_gu}
\|g_0\|_{H^s}^2 + \|u_0\|_{H^s}^2  +\|v_0\|_{H^s}^2< N,
\eq
and
\[
\sup_{0 \le t \le T} \left( \|\tilde{g}(\cdot,t)\|_{H^s}^2 + \|\tilde{u}(\cdot,t)\|_{H^{s}}^2 + \|\tilde{v}(\cdot,t)\|_{H^s}^2\right) < M,
\]
then the system \eqref{AppA-1} admits a unique classical solution 
\[
(g,u,v) \in \mc([0,T];H^s(\T^d)) \times \mc([0,T];H^s(\T^d))\times\mc([0,T];H^s(\T^d)) 
\]
satisfying
\[
 \sup_{0 \le t \le T^*} \left( \|g(\cdot,t)\|_{H^s}^2 + \|u(\cdot,t)\|_{H^{s}}^2 + \|v(\cdot,t)\|_{H^s}^2\right) < M
\]
for some $T^* \leq T$.
\end{lemma}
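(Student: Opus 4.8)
The plan is to treat \eqref{AppA-1} as a \emph{linear} problem in the unknown $(g,u,v)$ with frozen, regular coefficients, so that existence and uniqueness on the whole interval $[0,T]$ come from classical theory, and then to derive an $H^s$ energy inequality whose constant depends only on the frozen data; the strict inequality $N<M$ will then force the solution to stay in the ball of radius $M$ on a possibly shorter interval $[0,T^*]$. First I would observe that the system decouples: the equation for $v$ is a linear Stokes--parabolic equation whose coefficients and source ($\tilde v$ and $e^{\tilde g}(\tilde u-v)$) involve only the frozen functions and $v$ itself, while the pair $(g,u)$ solves a linear symmetric hyperbolic system — the principal part $(g,u)\mapsto(\nabla\cdot u,\nabla g)$ is $\sum_j A_j\partial_j$ with symmetric $A_j$ — transported along $\tilde u$, with a bounded zeroth-order term and the forcing $\nabla K\star(e^{\tilde g}-1)$. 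Since $s>d/2$, Moser-type estimates (Lemma \ref{lem_moser}) give $e^{\tilde g}-1\in\mc([0,T];H^s)$, and $\nabla K$ acts boundedly on each $H^k$ by Young's convolution inequality (writing $\nabla K$ as an $L^1$ kernel plus the smooth $\nabla G_i$), so the source lies in $\mc([0,T];H^s)$; the Galerkin method with the Helmholtz projection removing the pressure for the $v$-block, and the standard theory of linear symmetric hyperbolic systems for the $(g,u)$-block, then produce a unique solution $(g,u,v)\in\mc([0,T];H^s)$, which is classical because $H^s\hookrightarrow C^1$ for $s>d/2+1$; uniqueness of the triple follows from linearity and the basic $L^2$ estimate.

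Next I would carry out the $H^s$ a priori estimate. Setting $\mathcal E(t):=\|g(\cdot,t)\|_{H^s}^2+\|u(\cdot,t)\|_{H^s}^2+\|v(\cdot,t)\|_{H^s}^2$, I would apply $\nabla^k$ for $0\le k\le s$ to each equation, pair with $\nabla^k g$, $\nabla^k u$, $\nabla^k v$, and sum. For the transport terms I would split $\int\nabla^k(\tilde u\cdot\nabla w)\cdot\nabla^k w\,dx$ into $\int(\tilde u\cdot\nabla\nabla^k w)\cdot\nabla^k w\,dx$, integrated by parts to produce $-\tfrac12\int(\nabla\cdot\tilde u)|\nabla^k w|^2\,dx$, plus the commutator $\int[\nabla^k,\tilde u\cdot\nabla]w\cdot\nabla^k w\,dx$, bounded by $C\|\tilde u\|_{H^s}\|w\|_{H^s}^2\le C(M)\mathcal E$ via Lemma \ref{lem_moser}; the cross terms $-\int\nabla^k\nabla g\cdot\nabla^k u\,dx$ and $-\int\nabla^k(\nabla\cdot u)\,\nabla^k g\,dx$ cancel after integration by parts; the Coulomb term is controlled by $\|\nabla^k(\nabla K\star(e^{\tilde g}-1))\|_{L^2}\le C\|e^{\tilde g}-1\|_{H^s}\le C(M)$; and in the $v$-equation the term $-\Delta v$ yields $+\|\nabla^{k+1}v\|_{L^2}^2$ on the left, which absorbs the commutator and product contributions coming from $e^{\tilde g}(\tilde u-v)$ after another application of Lemma \ref{lem_moser}. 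Since every remaining term is at most linear in $(g,u,v)$ with coefficients bounded by $M$, this yields $\tfrac{d}{dt}\mathcal E(t)\le C(M)\bigl(1+\mathcal E(t)\bigr)$ as long as the solution exists, hence $\mathcal E(t)+1\le(N+1)e^{C(M)t}$ by Grönwall's lemma. Choosing $T^*\le T$ so small that $(N+1)e^{C(M)T^*}-1<M$ — possible precisely because $N<M$ — gives $\sup_{0\le t\le T^*}\mathcal E(t)<M$, which is the claimed bound, and the continuity in time with values in $H^s$ is part of the linear construction above.

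The routine existence machinery aside, the main obstacle will be the careful bookkeeping of the commutator and Moser product estimates so that the differential inequality closes with a constant depending only on $M$ and not on the solution itself, together with the need, in the $v$-estimate, to absorb the lower-order terms into the parabolic dissipation $\|\nabla^{k+1}v\|_{L^2}^2$; the Poisson coupling contributes only the mild point that $\nabla K$ is bounded on every $H^k$, which is harmless. No idea beyond the standard energy method is needed.
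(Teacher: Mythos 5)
Your proposal is correct and follows essentially the same route as the paper: cite standard linear theory for existence/uniqueness of the frozen-coefficient system, then close an $H^s$ energy estimate via integration by parts on the transport terms, Moser/commutator bounds with constants depending only on $M$, cancellation of the $\nabla g$/$\nabla\cdot u$ cross terms, the inductive bound on $\|\nabla^k e^{\tilde g}\|_{L^2}$, and Gr\"onwall's lemma yielding $\mathcal E(t)\le N+(N+1)(e^{C(M)t}-1)$, which is $<M$ for $T^*$ small since $N<M$. The only cosmetic difference is that the paper simply discards the nonpositive viscous term rather than using it to absorb the Moser contributions from $e^{\tilde g}(\tilde u-v)$, but both handlings are valid.
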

\begin{proof} 
A standard theory of linear PDEs would assure the existence and uniqueness of solutions to the system \eqref{AppA-1}. Thus, it suffices to provide bound estimates for $g$, $u$, and $v$. A straightforward computation gives
\[
\frac{1}{2}\frac{d}{dt}\|g\|_{L^2}^2 \le \frac{\|\nabla \cdot \tilde{u}\|_{L^\infty}}{2}\|g\|_{L^2}^2 - \int_{\T^d} g \nabla \cdot u \,dx
\]
and
\begin{align*}
\frac12\frac{d}{dt}\|u\|_{L^2}^2 &\le \frac{\|\nabla \cdot \tilde{u}\|_{L^\infty}}{2}\|u\|_{L^2}^2  - \int_{\T^d} \nabla g \cdot u\,dx  + \|\nabla K \star (e^{\tilde{g}}-1)\|_{L^2}\|u\|_{L^2} +\|u\|_{L^2}\|\tilde{v}\|_{L^2}\\
&\le \frac{\|\nabla \cdot \tilde{u}\|_{L^\infty}}{2}\|u\|_{L^2}^2  - \int_{\T^d} \nabla g \cdot u\,dx  + \|\nabla K\|_{L^1} e^{\|\tilde{g}\|_{L^\infty}}\|\tilde g\|_{L^2}\|u\|_{L^2} +\|u\|_{L^2}\|\tilde{v}\|_{L^2}.
\end{align*}
For $v$, we have
\[
\frac12\frac{d}{dt}\|v\|_{L^2}^2 + \|\nabla v\|_{L^2}^2 \le e^{\|\tilde{g}\|_{L^\infty}} \|\tilde{u}\|_{L^2}\|v\|_{L^2}.
\]
Then we use Sobolev inequality and Young's inequality to get
\begin{equation}\label{AppA-3}
\frac{d}{dt}\left( \|g\|_{L^2}^2 +\|u\|_{L^2}^2 +\|v\|_{L^2}^2\right) \le Ce^{CM}(1+M)\left( \|g\|_{L^2}^2 +\|u\|_{L^2}^2 +\|v\|_{L^2}^2\right) +  Ce^{CM}(1+M),
\end{equation}
where $C > 0$ only depends on $s$, $d$, and $\|\nabla K\|_{L^1}$.

For $1 \le k \le s$, we first estimate $\nabla^k g$ as follows:
\begin{align*}
&\frac12\frac{d}{dt}\|\nabla^k g\|_{L^2}^2 \cr
&\quad = -\int_{\T^d} \nabla (\nabla^k g) \cdot \tilde{u} \nabla^k g \,dx-\int_{\T^d} \left( \nabla^k (\nabla g \cdot \tilde{u}) - \nabla (\nabla^k g)\cdot \tilde{u}\right) \nabla^k g \,dx -\int_{\T^d}(\nabla \cdot (\nabla^k u)) \nabla^k g \,dx\\
&\quad \le \frac{\|\nabla \cdot \tilde{u}\|_{L^\infty}}{2}\|\nabla^k g\|_{L^2}^2 +C\|\nabla^k g\|_{L^2}\Big( \|\nabla \tilde{u}\|_{L^\infty} \|\nabla(\nabla^{k-1} g)\|_{L^2} + \|\nabla g\|_{L^\infty}\|\nabla^k \tilde{u}\|_{L^2}\Big) -\int_{\T^d}(\nabla \cdot (\nabla^k u)) \nabla^k g \,dx\\
&\quad \le CM\|\nabla^k g\|_{L^2} + CM\|\nabla^k g\|_{L^2}\|g\|_{H^s} -\int_{\T^d}(\nabla \cdot (\nabla^k u)) \nabla^k g \,dx,
\end{align*}
where $C > 0$ only depends on $s$ and $d$. Similarly, $\nabla^k u$ can be estimated as
\begin{align*}
\frac12\frac{d}{dt}\|\nabla^k u\|_{L^2}^2 &= -\int_{\T^d}(\tilde{u} \cdot \nabla (\nabla^k u)) \cdot \nabla^k u \,dx - \int_{\T^d} \lt( \nabla^k (\tilde{u} \cdot \nabla u) - \tilde{u} \cdot \nabla (\nabla^k u)\rt) \cdot \nabla^k u \,dx\\
&\quad -\int_{\T^d}\nabla(\nabla^k g)\cdot \nabla^k u \,dx - \int_{\T^d}\nabla^k (\nabla K \star (e^{\tilde{g}}-1)) \cdot \nabla^k u \,dx -\int_{\T^d} \nabla^k(u-\tilde{v})\cdot\nabla^k u\,dx\\
&\le \frac{\|\nabla \cdot \tilde{u}\|_{L^\infty}}{2}\|\nabla^k u\|_{L^2} + C\|\nabla^k u\|_{L^2}\Big( \|\nabla \tilde{u}\|_{L^\infty}\|\nabla (\nabla^{k-1} u)\|_{L^2} + \|\nabla u\|_{L^\infty} \|\nabla^k \tilde{u}\|_{L^2}\Big)\\
&\quad  + \|\nabla K \star (\nabla^k e^{\tilde{g}})\|_{L^2}\|\nabla^k u\|_{L^2} -\int_{\T^d}\nabla(\nabla^k g)\cdot \nabla^k u \,dx + \|\nabla^ku\|_{L^2}\|\nabla^kv\|_{L^2}\\
&\le CM\|\nabla^k u\|_{L^2}^2 + CM\|\nabla^k u\|_{L^2}\|u\|_{H^s}+ Ce^{CM}(1+M)\|\nabla^k u\|_{L^2} \\
&\quad + C\|\nabla^k (e^{\tilde{g}}) \|_{L^2} \|\nabla^k u\|_{L^2}-\int_{\T^d}\nabla(\nabla^k g)\cdot \nabla^k u \,dx.
\end{align*}
Here $C > 0$ only depends on $d$, $s$, and $\|\nabla K\|_{L^1}$. For the estimate of the Poisson interaction term, we follow the same argument as Lemma \ref{LB.1}: we let $a_k := \|\nabla^k (e^{\tilde{g}}) \|_{L^2}$. Obviously, we have $a_0 \le e^{\|\tilde{g}\|_{L^\infty}} \le Ce^{CM}$. Then, the usage of Lemma \ref{lem_moser} and Sobolev inequality yields
\begin{align*}
a_k &=\|\nabla^{k-1} (e^{\tilde{g}} \nabla \tilde{g})\|_{L^2}\\
&\le \|e^{\tilde{g}} \nabla^k \tilde{g} \|_{L^2} + \|\nabla^{k-1}(e^{\tilde{g}}\nabla \tilde{g} ) - e^{\tilde{g}} \nabla^k \tilde{g}\|_{L^2}\\
&\le M e^{CM} + C(\|\nabla e^{\tilde{g}}\|_{L^\infty}\|\nabla^{k-1}\tilde{g}\|_{L^2} + \|\nabla^{k-1}(e^{\tilde{g}})\|_{L^2}\|\nabla \tilde{g}\|_{L^\infty})\\
&\le CMa_{k-1} + CMe^{CM}(1+M),
\end{align*}
where $C>0$ only depends on $d$ and $k$, and inductively, we get
\[
a_k \le CM^k a_0 + CM^{k-1}e^{CM}(1+M) \le Ce^{CM}.
\]
Here $C>0$ only depends on $d$ and $k$ and this gives
\[
\frac{d}{dt}\|\nabla^k u\|_{L^2}^2 \le CM\|\nabla^k u\|_{L^2}^2 + CM\|\nabla^k u\|_{L^2}\|u\|_{H^s}+ Ce^{CM}(1+M)\|\nabla^k u\|_{L^2} -\int_{\T^d}\nabla(\nabla^k g)\cdot \nabla^k u \,dx.
\]

 Finally, we estimate $\nabla ^k v$ as
\begin{align*}
\frac12\frac{d}{dt}\|\nabla^k v\|_{L^2}^2 &= -\int_{\T^d}(\tilde{v} \cdot \nabla (\nabla^k v))\cdot \nabla^k v\,dx -\int_{\T^d}\left(\nabla^k(\tilde{v}\cdot \nabla v) -\tilde{v}\cdot\nabla (\nabla^k v)\right) \cdot \nabla^k v\\
&\quad -\int_{\T^d} |\nabla (\nabla^k v)|^2\,dx -\int_{\T^d} e^{\tilde{g}}\nabla^k(v-\tilde u)\cdot\nabla^k v\,dx  -\int_{\T^d}\left(\nabla^k (e^{\tilde g} (v-\tilde{u})) -e^{\tilde{g}}\nabla^k(v-\tilde u)  \right)\cdot\nabla^k v\,dx\\
& \le C\|\nabla^k v\|_{L^2}(\|\nabla \tilde{v}\|_{L^\infty}\|\nabla (\nabla^{k-1} v)\|_{L^2} + \|\nabla v\|_{L^\infty}\|\nabla^k \tilde v\|_{L^2}) + e^{\|\tilde g\|_{L^\infty}}\|\nabla^kv\|_{L^2}\|\nabla^k \tilde u\|_{L^2}\\
&\quad + C\|\nabla^k v\|_{L^2}\left( \| e^{\tilde g} \nabla \tilde{g}\|_{L^\infty} \|\nabla^{k-1}(v-\tilde{u})\|_{L^2} + \|\nabla^k (e^{\tilde g})\|_{L^2}\|\tilde{u}-v\|_{L^\infty} \right)\\
&\le CM\|\nabla^k v\|_{L^2}^2 + Ce^{CM}(1+M)\|\nabla^k v\|_{L^2}(1 + \|v\|_{H^s}).
\end{align*}
Thus, we combine the estimates for $\nabla^k g$, $\nabla^k u$ and $\nabla^k v$ to obtain
\begin{align}
\begin{aligned}\label{AppA-4}
\frac{d}{dt}&\left( \|\nabla^k g\|_{L^2}^2 + \|\nabla^k u\|_{L^2}^2 + \|\nabla^k v\|_{L^2}^2 \right)\\
&\le CM\left( \|\nabla^k g\|_{L^2}^2 + \|\nabla^k u\|_{L^2}^2+ \|\nabla^k v\|_{L^2}^2\right) \\
&\quad +Ce^{CM}\Big( \|\nabla^k g\|_{L^2}\|g\|_{H^s} + \|\nabla^k u\|_{L^2}\|u\|_{H^s} + \|\nabla^k v\|_{L^2}\|v\|_{H^s}\Big)\\
&\quad + Ce^{CM}(1+M)(\|\nabla^k u\|_{L^2}+\|\nabla^k v\|_{L^2}).
\end{aligned}
\end{align}
Now we sum the relation \eqref{AppA-4} over $1 \le k \le s$ and combine this with zeroth-order estimate \eqref{AppA-3} to yield
\[
\frac{d}{dt}\left(\|g\|_{H^s}^2 + \|u\|_{H^s}^2+\|v\|_{H^s}^2\right)\le Ce^{CM}(1+M)\Big( \|g\|_{H^s}^2 + \|u\|_{H^s}^2 + \|v\|_{H^s}^2 \Big) +  Ce^{CM}(1+M).
\]
We set $\mathfrak{b}(M) := Ce^{CM}(1+M)$ and use Gr\"onwall's lemma to obtain
\begin{align*}
\|g\|_{H^s}^2 + \|u\|_{H^s}^2 + \|v\|_{H^s}^2 &\le (\|g_0\|_{H^s}^2 + \|u_0\|_{H^s}^2 + \|v_0\|_{H^s}^2)e^{\mathfrak{b}(M)t} + e^{\mathfrak{b}(M)t}\lt(1-e^{-\mathfrak{b}(M)t}\rt)\\
&\le Ne^{\mathfrak{b}(M)t} +e^{\mathfrak{b}(M)t}\lt(1-e^{-\mathfrak{b}(M)t}\rt)\cr
&= N + (N+1)\lt(e^{\mathfrak{b}(M)t} - 1\rt).
\end{align*}
Since $N<M$ and $e^{\mathfrak{b}(M)t} - 1$ can be arbitrary small if $t \ll 1$, we can find $T^*>0$ satisfying
\[
N + (N+1)\lt(e^{\mathfrak{b}(M)T^*} - 1\rt) < M.
\]
This asserts the desired result.
\end{proof}

\subsection{Construction of approximate solutions} Based on the estimates for the linearized system \eqref{AppA-1}, we construct a sequence of approximate solutions to \eqref{E-2} and present the uniform estimates for the sequence. Specifically, we consider a sequence $(g^n, u^n, v^n)$ which uniquely solves the following system:
\begin{align}\label{AppA-5}
\begin{aligned}
&\pa_t g^{n+1} + \nabla g^{n+1} \cdot  u^n + \nabla \cdot u^{n+1} = 0, \quad \quad (x,t) \in \T^d \times \R_+,\cr
&\pa_t u^{n+1} +  (u^n \cdot \nabla ) u^{n+1} + \nabla g^{n+1} =  - (u^{n+1}-v^n + \nabla K \star (e^{g^n}-1)),\cr
&\pa_t v^{n+1} + (v^n \cdot \nabla) v^{n+1} + \nabla p^{n+1} -\Delta v^{n+1} = e^{g^n}(u^n -v^{n+1}),\cr
&\nabla \cdot v^{n+1} = 0, 
\end{aligned}
\end{align}
with the initial step and initial data given by
\[
(g^0(x,t), u^0(x,t), v^0(x,t)) = (g_0(x), u_0(x), v_0(x)), \quad (x,t) \in \T^d \times \R_+
\]
and
\[
(g^n(x,0), u^n(x,0), v^n(x,0)) = (g_0(x), u_0(x),v_0(x)) \quad \forall \, n \in \bbn, \quad x \in \T^d,
\]
respectively. Here, Lemma \ref{LA.1} assures that the sequence $(g^n, u^n, v^n)$ is well-defined. Furthermore, Lemma \ref{LA.1}  gives the following uniform-in-$n$ bound estimates to the approximation sequence.
\begin{corollary}\label{CA.1} Let $s > d/2 + 1$. For any $M>N$, there exists $T^*>0$ such that if the initial data $(g_0, u_0,v_0)$ satisfy \eqref{ini_gu}, then for each $n \in \bbn$ 
\[
(g^n, u^n, v^n) \in \mc([0,T^*]; H^s(\T^d)) \times \mc([0,T^*]; H^s(\T^d))\times\mc([0,T^*];H^s(\T^d)),
\] 
and
\[
\sup_{0 \le t \le T^*}\left( \|g^n(\cdot,t)\|_{H^s}^2 + \|u^n(\cdot,t)\|_{H^s}^2 + \|v^n(\cdot, t)\|_{H^s}^2 \right) < M \quad \forall  \, n \in \bbn \cup \{0\}.
\]
\end{corollary}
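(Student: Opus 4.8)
The plan is to establish Corollary~\ref{CA.1} by a straightforward induction on $n$, with Lemma~\ref{LA.1} doing essentially all of the work. First I would fix the pair $M>N$ once and for all and extract from (the proof of) Lemma~\ref{LA.1} the existence time $T^*>0$ attached to this pair: concretely, $T^*$ is any positive number for which $N+(N+1)(e^{\mathfrak{b}(M)T^*}-1)<M$, where $\mathfrak{b}(M)=Ce^{CM}(1+M)$ is the constant appearing there. The one point that genuinely needs to be observed is that this $T^*$ depends only on $N$, $M$, $s$, $d$, and $\|\nabla K\|_{L^1}$, hence is independent of $n$; and since every iterate $(g^{n+1},u^{n+1},v^{n+1})$ in \eqref{AppA-5} is solved from the \emph{same} initial datum $(g_0,u_0,v_0)$ obeying \eqref{ini_gu}, Lemma~\ref{LA.1} returns exactly this $T^*$ and the same bound $M$ at every step.

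For the base case $n=0$ I would simply note that $(g^0,u^0,v^0)=(g_0,u_0,v_0)$ is constant in time, hence lies in $\mc([0,T^*];H^s(\T^d))\times\mc([0,T^*];H^s(\T^d))\times\mc([0,T^*];H^s(\T^d))$, has divergence-free last component by hypothesis, and satisfies $\sup_{0\le t\le T^*}(\|g^0\|_{H^s}^2+\|u^0\|_{H^s}^2+\|v^0\|_{H^s}^2)=\|g_0\|_{H^s}^2+\|u_0\|_{H^s}^2+\|v_0\|_{H^s}^2<N<M$.

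For the induction step, assuming $(g^n,u^n,v^n)\in\mc([0,T^*];H^s(\T^d))^3$ with $\nabla\cdot v^n=0$ and $\sup_{0\le t\le T^*}(\|g^n\|_{H^s}^2+\|u^n\|_{H^s}^2+\|v^n\|_{H^s}^2)<M$, I would observe that the system \eqref{AppA-5} defining $(g^{n+1},u^{n+1},v^{n+1})$ is precisely the linearized system \eqref{AppA-1} with coefficient triplet $(\tilde g,\tilde u,\tilde v)=(g^n,u^n,v^n)$ and initial data $(g_0,u_0,v_0)$. Every hypothesis of Lemma~\ref{LA.1} is then in place, so the lemma produces a unique solution $(g^{n+1},u^{n+1},v^{n+1})\in\mc([0,T^*];H^s(\T^d))^3$, divergence-free in $v^{n+1}$, with $\sup_{0\le t\le T^*}(\|g^{n+1}\|_{H^s}^2+\|u^{n+1}\|_{H^s}^2+\|v^{n+1}\|_{H^s}^2)<M$ on the same interval $[0,T^*]$. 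This closes the induction and the uniformity in $n$ is built in from the outset.

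As for the main obstacle: there is none of real substance here, since all the analytic difficulty has already been absorbed into the energy estimates of Lemma~\ref{LA.1}. The only thing requiring care is the bookkeeping point flagged above --- verifying that the existence time $T^*$ can be chosen the same at every iteration --- which hinges on the a priori bound in Lemma~\ref{LA.1} depending on the data only through $N$ and $M$ together with the fact that each iterate is restarted from the fixed initial datum $(g_0,u_0,v_0)$. Without this uniformity one would merely obtain a sequence of shrinking intervals $[0,T^*_n]$ and the construction, along with the subsequent passage to the limit $n\to\infty$, would break down.
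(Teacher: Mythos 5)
Your proposal is correct and follows essentially the same route as the paper: an induction on $n$ in which Lemma \ref{LA.1} supplies both the existence of each iterate and the bound $M$ on a time interval $[0,T^*]$ chosen once so that $N+(N+1)(e^{\mathfrak{b}(M)T^*}-1)<M$, which is exactly the condition $Ne^{\mathfrak{b}(M)T^*}+e^{\mathfrak{b}(M)T^*}(1-e^{-\mathfrak{b}(M)T^*})<M$ used in the paper. Your emphasis on the $n$-independence of $T^*$ (coming from the fact that every iterate restarts from the same initial datum and the coefficients are only used through the bound $M$) is precisely the point the paper's proof relies on.
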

\begin{proof} The proof employs the inductive argument. Since the initial step $(n=0)$ is trivial, it suffices to check the induction step. First, Lemma \ref{LA.1} asserts that 
\[
Ne^{\mathfrak{b}(M)T^*} + e^{\mathfrak{b}(M)T^*}(1-e^{-\mathfrak{b}(M)T^*}) < M
\]
for some $T^*>0$. Then, by the induction hypothesis, we get
\[
\sup_{0 \le t \le T^*}\left( \|g^n(\cdot,t)\|_{H^s}^2 + \|u^n(\cdot,t)\|_{H^s}^2 + \|v^n(\cdot, t)\|_{H^s}^2 \right) < M.
\]
This combined with the same estimates in Lemma \ref{LA.1} deduces
\begin{align*}
\|g^{n+1}\|_{H^s}^2 + \|u^{n+1}\|_{H^s}^2 + \|v^{n+1}\|_{H^s}^2 
&\le \lt(\|g_0\|_{H^s}^2 + \|u_0\|_{H^s}^2+\|v_0\|_{H^s}^2\rt)e^{\mathfrak{b}(M)t} + e^{\mathfrak{b}(M)t}\lt(1-e^{-\mathfrak{b}(M)t}\rt)\\
&\le Ne^{\mathfrak{b}(M)t} +e^{\mathfrak{b}(M)t}\lt(1-e^{-\mathfrak{b}(M)t}\rt) < M
\end{align*}
for $0 \leq t \le T^*$. This concludes the proof.
\end{proof}
Next, we show that the sequence $(g^n, u^n, v^n)$ is a Cauchy sequence in $\mc([0,T^*];L^2(\T^d)) \times \mc([0,T^*];L^2(\T^d))\times\mc([0,T^*];L^2(\T^d))$.

\begin{lemma}\label{LA.2} 
Let $(g^n, u^n, v^n)$ be a sequence of the approximate solutions with the initial data $(g_0, u_0, v_0)$ satisfying \eqref{ini_gu}. Then we have
\begin{align*}
&\|(g^{n+1} -g^n)(\cdot,t)\|_{L^2}^2 + \|(u^{n+1} - u^n)(\cdot,t)\|_{L^2}^2 + \|(v^{n+1} - v^n)(\cdot,t)\|_{L^2}^2 \cr
&\quad \leq  C\int_0^t \lt(\|(g^n -g^{n-1})(\cdot,s)\|_{L^2}^2 + \|(u^n - u^{n-1})(\cdot,s)\|_{L^2}^2 +  \|(v^n - v^{n-1})(\cdot,s)\|_{L^2}^2\rt) ds
\end{align*}
for $0\leq t \le T^*$ and $n \in \bbn$, where $C>0$ is independent of $n$.
\end{lemma}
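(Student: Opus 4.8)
The plan is to derive an $L^2$ energy inequality for the differences
$\delta g^{n+1}:=g^{n+1}-g^n$, $\delta u^{n+1}:=u^{n+1}-u^n$, $\delta v^{n+1}:=v^{n+1}-v^n$ and close it by Gr\"onwall's lemma. First I would subtract the system \eqref{AppA-5} at levels $n+1$ and $n$. Writing the transport nonlinearities as
$\nabla g^{n+1}\cdot u^n-\nabla g^n\cdot u^{n-1}=u^n\cdot\nabla(\delta g^{n+1})+\nabla g^n\cdot\delta u^n$,
$(u^n\cdot\nabla)u^{n+1}-(u^{n-1}\cdot\nabla)u^n=(u^n\cdot\nabla)(\delta u^{n+1})+(\delta u^n\cdot\nabla)u^n$,
$(v^n\cdot\nabla)v^{n+1}-(v^{n-1}\cdot\nabla)v^n=(v^n\cdot\nabla)(\delta v^{n+1})+(\delta v^n\cdot\nabla)v^n$,
and treating the reaction/Poisson terms similarly (so that $e^{g^n}(u^n-v^{n+1})-e^{g^{n-1}}(u^{n-1}-v^n)=e^{g^n}(\delta u^n-\delta v^{n+1})+(e^{g^n}-e^{g^{n-1}})(u^{n-1}-v^n)$), one obtains linear evolution equations for $(\delta g^{n+1},\delta u^{n+1},\delta v^{n+1})$ whose coefficients involve only the bounded iterates, and whose source terms involve only the previous-level differences.

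Next I would test these equations against $\delta g^{n+1}$, $\delta u^{n+1}$, $\delta v^{n+1}$ respectively and integrate over $\T^d$. The convection terms, after one integration by parts, produce factors $\tfrac12(\nabla\cdot u^n)$, $\tfrac12(\nabla\cdot u^n)$, $\tfrac12(\nabla\cdot v^n)=0$; by Corollary \ref{CA.1} and the Sobolev embedding $H^{s-1}\hookrightarrow L^\infty$ (valid since $s>d/2+1$) these are bounded by $C(M)$ times the corresponding squared $L^2$ norm. The terms $\nabla g^n\cdot\delta u^n$, $(\delta u^n\cdot\nabla)u^n$, $(\delta v^n\cdot\nabla)v^n$ are each controlled by $\|\nabla(g^n,u^n,v^n)\|_{L^\infty}\le C(M)$ times a product of an old and a new difference, hence by $C(M)$ times a sum of squared $L^2$ norms; the pressure term vanishes since $\nabla\cdot\delta v^{n+1}=0$, and $-\Delta\delta v^{n+1}$ contributes the nonnegative $\|\nabla\delta v^{n+1}\|_{L^2}^2$, which may be discarded. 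The zeroth-order reaction terms $-\delta u^{n+1}+\delta v^n$, $e^{g^n}(\delta u^n-\delta v^{n+1})$ are handled by Young's inequality together with $e^{\|g^n\|_{L^\infty}}\le C(M)$.

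The key structural point, which is the main obstacle to be careful about, is the cross term: testing the $\delta g^{n+1}$-equation produces $-\int_{\T^d}(\nabla\cdot\delta u^{n+1})\,\delta g^{n+1}\,dx$, while testing the $\delta u^{n+1}$-equation produces $-\int_{\T^d}\nabla(\delta g^{n+1})\cdot\delta u^{n+1}\,dx$; individually each of these loses a derivative and cannot be controlled in $L^2$, but their sum equals $-\int_{\T^d}\nabla\cdot(\delta g^{n+1}\,\delta u^{n+1})\,dx=0$, so they cancel upon summation — this is precisely why $g$ and $u$ must be estimated jointly. The only genuinely new ingredient beyond the classical Euler argument is the Poisson source: by the mean value theorem and the uniform bound $\|g^n\|_{L^\infty},\|g^{n-1}\|_{L^\infty}\le C(M)$ from Corollary \ref{CA.1} one has $\|e^{g^n}-e^{g^{n-1}}\|_{L^2}\le C(M)\|\delta g^n\|_{L^2}$, and Young's convolution inequality $\|\nabla K\star h\|_{L^2}\le\|\nabla K\|_{L^1}\|h\|_{L^2}$ bounds $\int(\nabla K\star(e^{g^n}-e^{g^{n-1}}))\cdot\delta u^{n+1}$ by $C(M)(\|\delta g^n\|_{L^2}^2+\|\delta u^{n+1}\|_{L^2}^2)$; the fluid source term $\int(e^{g^n}-e^{g^{n-1}})(u^{n-1}-v^n)\cdot\delta v^{n+1}$ is controlled the same way using $\|u^{n-1}-v^n\|_{L^\infty}\le C(M)$.

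Collecting all contributions yields, with a constant $C=C(M,d,s,\|\nabla K\|_{L^1})$ independent of $n$,
\begin{align*}
\frac{d}{dt}&\lt(\|\delta g^{n+1}\|_{L^2}^2+\|\delta u^{n+1}\|_{L^2}^2+\|\delta v^{n+1}\|_{L^2}^2\rt)\\
&\le C\lt(\|\delta g^{n+1}\|_{L^2}^2+\|\delta u^{n+1}\|_{L^2}^2+\|\delta v^{n+1}\|_{L^2}^2\rt)+C\lt(\|\delta g^{n}\|_{L^2}^2+\|\delta u^{n}\|_{L^2}^2+\|\delta v^{n}\|_{L^2}^2\rt).
\end{align*}
Since $\delta g^{n+1}(\cdot,0)=\delta u^{n+1}(\cdot,0)=\delta v^{n+1}(\cdot,0)=0$ (all iterates share the same initial data), Gr\"onwall's lemma on $[0,T^*]$ gives the asserted estimate, the final constant absorbing the factor $e^{CT^*}$. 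This then implies that $(g^n,u^n,v^n)$ is Cauchy in $\mc([0,T^*];L^2)^3$, and interpolating the strong $L^2$-convergence against the uniform $H^s$-bound of Corollary \ref{CA.1} (with a standard lower-semicontinuity/uniqueness argument) completes the proof of Theorem \ref{L5.1}.
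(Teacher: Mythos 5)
Your proposal is correct and follows essentially the same route as the paper's proof: the same decomposition of the differences, the same cancellation of the cross terms $-\int(\nabla\cdot\delta u^{n+1})\delta g^{n+1}\,dx$ and $-\int\nabla(\delta g^{n+1})\cdot\delta u^{n+1}\,dx$ upon summation, the same mean-value-theorem bound $\|e^{g^n}-e^{g^{n-1}}\|_{L^2}\le C(M)\|\delta g^n\|_{L^2}$ combined with Young's convolution inequality, and Gr\"onwall with vanishing initial differences. (If anything, your handling of the fluid source term via $\|u^{n-1}-v^n\|_{L^\infty}\le C(M)$ is stated more carefully than the corresponding line in the paper.)
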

\begin{proof}
First, we deduce from \eqref{AppA-5} that
\begin{align*}
\frac12\frac{d}{dt}\|g^{n+1} - g^n\|_{L^2}^2 
 &= -\int_{\T^d} u^n \cdot \nabla( g^{n+1} - g^n) (g^{n+1} - g^n) \,dx - \int_{\T^d} (u^n - u^{n-1})\cdot \nabla g^n (g^{n+1} - g^n) \,dx\\
&\quad -\int_{\T^d} \nabla \cdot (u^{n+1} - u^n) (g^{n+1} - g^n) \,dx\\
& \le \frac{\|\nabla \cdot u^n\|_{L^\infty}}{2}\|g^{n+1} - g^n\|_{L^2}^2 + \|\nabla g^n \|_{L^\infty} \|u^n - u^{n-1}\|_{L^2}\|g^{n+1} - g^n\|_{L^2}\\
&\quad - \int_{\T^d} \nabla \cdot (u^{n+1} - u^n) (g^{n+1} - g^n) \,dx\\
& \le C\left(\|g^{n+1}-g^n\|_{L^2}^2 + \|u^n - u^{n-1}\|_{L^2}^2\right) - \int_{\T^d} \nabla \cdot (u^{n+1} - u^n) (g^{n+1} - g^n) \,dx.
\end{align*}
Next, we estimate $(u^{n+1}-u^n)$ as
\begin{align*}
\frac12\frac{d}{dt}&\|u^{n+1} - u^n\|_{L^2}^2\\
&= -\int_{\T^d} u^n \cdot \nabla(u^{n+1} - u^n) \cdot (u^{n+1} - u^n)\,dx - \int_{\T^d}(u^n - u^{n-1}) \cdot \nabla u^n \cdot (u^{n+1} - u^n)\,dx\\
&\quad -\int_{\T^d}\nabla(g^{n+1} - g^n) \cdot (u^{n+1} - u^n)\,dx -\|u^{n+1}-u^n\|_{L^2}^2 \\
&\quad -\int_{\T^d} \nabla K \star(e^{g^n} - e^{g^{n-1}})\cdot (u^{n+1} - u^n)\,dx + \int_{\T^d} (v^n - v^{n-1})\cdot (u^{n+1} - u^n)\,dx \\
&\le \frac{\|\nabla \cdot u^n\|_{L^\infty}}{2}\|u^{n+1} - u^n\|_{L^2}^2 + \|\nabla u^n\|_{L^\infty}\|u^{n+1} - u^n\|_{L^2}\|u^n - u^{n-1}\|_{L^2}\\
&\quad -\int_{\T^d}\nabla(g^{n+1} - g^n) \cdot (u^{n+1} - u^n)\,dx + \|\nabla K\|_{L^1}\|e^{g^n} - e^{g^{n-1}}\|_{L^2}\|u^{n+1} - u^n\|_{L^2}\\
&\quad +\|v^n - v^{n-1}\|_{L^2}\|u^{n+1}-u^n\|_{L^2} \\
&\le C\left(\|u^{n+1}-u^n\|_{L^2}^2 + \|u^n- u^{n-1}\|_{L^2}^2 +\|g^n - g^{n-1}\|_{L^2}^2 + \|v^n- v^{n-1}\|_{L^2}^2 \rt)\\
&\quad -\int_{\T^d}\nabla(g^{n+1} - g^n) \cdot (u^{n+1} - u^n)\,dx,
\end{align*}
where we used the mean value theorem to obtain
\[
\left\|e^{g^n} - e^{g^{n-1}}\right\|_{L^2} \le \exp\lt(\max\{ \|g^n\|_{L^\infty}, \|g^{n-1}\|_{L^\infty}\}\rt)\|g^n - g^{n-1}\|_{L^2}\le C\|g^n - g^{n-1}\|_{L^2}.
\]
Finally, for $(v^{n+1} - v^n)$, we have
\begin{align*}
\frac12\frac{d}{dt}&\|v^{n+1} - v^n\|_{L^2}^2\\
&= -\int_{\T^d} v^n \cdot \nabla(v^{n+1} - v^n) \cdot (v^{n+1} - v^n)\,dx -\int_{\T^d} (v^n -v^{n-1})\cdot \nabla v^n \cdot (v^{n+1} - v^n)\,dx\\
&\quad +\int_{\T^d} \Delta(v^{n+1} -v^n)\cdot (v^{n+1}-v^n)\,dx  -\int_{\T^d}(e^{g^n}-e^{g^{n-1}})( v^{n+1}-u^n)\cdot(v^{n+1}-v^n)\,dx\\
&\quad - \int_{\T^d} e^{g^{n-1}}|v^{n+1} - v^n|^2\,dx + \int_{\T^d} e^{g^{n-1}}(u^n - u^{n-1})\cdot (v^{n+1} -v^n)\,dx\\
&\le  \|\nabla v^n\|_{L^\infty}\|v^n - v^{n-1}\|_{L^2}\|v^{n+1} - v^n\|_{L^2} + \|e^{g^n} - e^{g^{n-1}}\|_{L^2}(\|v^{n+1}\|_{L^2} + \|u^n\|_{L^2}) \|v^{n+1}-v^n\|_{L^2}\\
&\quad + e^{\|g^{n-1}\|_{L^\infty}}\|u^n - u^{n-1}\|_{L^2}\|v^{n+1} - v^n\|_{L^2}\\
&\le C\left( \|v^{n+1} - v^n\|_{L^2}^2 + \|u^n- u^{n-1}\|_{L^2}^2 +\|g^n - g^{n-1}\|_{L^2}^2 + \|v^n- v^{n-1}\|_{L^2}^2  \rt).
\end{align*}
We combine all of the above estimates to obtain
\begin{align*}
\frac{d}{dt}&\left( \|(g^{n+1} -g^n)(\cdot,t)\|_{L^2}^2 + \|(u^{n+1} - u^n)(\cdot,t)\|_{L^2}^2+ \|(v^{n+1} - v^n)(\cdot,t)\|_{L^2}^2\right)\\
&\le C\left(\|(g^{n+1} -g^n)(\cdot,t)\|_{L^2}^2 + \|(u^{n+1} - u^n)(\cdot,t)\|_{L^2}^2 + + \|(v^{n+1} - v^n)(\cdot,t)\|_{L^2}^2\right)\\
&\quad +C\left(\|(g^n -g^{n-1})(\cdot,t)\|_{L^2}^2 + \|(u^n - u^{n-1})(\cdot,t)\|_{L^2}^2 + \|(v^n - v^{n-1})(\cdot,t)\|_{L^2}^2 \right)
\end{align*}
for $0 \leq t \le T^*$, where $C>0$ is independent of $n$. Therefore, we can apply Gr\"onwall's lemma to get the desired result.
\end{proof}

\subsection{Proof of Lemma \ref{L5.1}}
Now, we prove the well-posedness of strong solutions to \eqref{E-2}. First, Lemma \ref{LA.2} implies
\[
g^n \to g, \quad  u^n \to u, \quad \mbox{and} \quad v^n \to v \quad \mbox{in } \ \mathcal{C}([0,T];L^2(\T^d))
\]
as $n \to \infty$. Moreover, we can extend the convergence in $\mathcal{C}([0,T];L^2(\T^d))$ to that in $\mathcal{C}([0,T];H^{s-1}(\T^d))$ by interpolating this with the uniform bound in $\mc([0,T];H^s(\T^d))$ from Corollary \ref{CA.1}:
\[
g^n \to g \quad \mbox{in } \ \mathcal{C}([0,T];H^{s-1}(\T^d))  \quad \mbox{and} \quad  u^n \to u. \quad v^n \to v \quad \mbox{in } \ \mathcal{C}([0,T];H^{s-1}(\T^d))
\]
as $n \to \infty$. Concerning the $H^s$-regularity of $(g,u,v)$, we can use a standard argument from functional analysis. We refer to \cite{CCZ16} for details.

For the uniqueness, let  $(g, u, v)$ and $(\hat g, \hat u, \hat v)$ be two solutions with the same initial data $(g_0, u_0,v_0)$. Then, the Cauchy estimate in Lemma \ref{LA.2} implies
\[\begin{aligned}
\|&(g - \hat g)(\cdot,t)\|_{L^2}^2 + \|(u - \hat u)(\cdot,t)\|_{L^2}^2 + \|(v - \hat v)(\cdot,t)\|_{L^2}^2  \\
&\quad \le C\int_0^t \lt(\|(g-\hat g)(\cdot,s)\|_{L^2}^2 +  \|(u - \hat u)(\cdot,s)\|_{L^2}^2 + \|(v - \hat v)(\cdot,s)\|_{L^2}^2 \rt) ds
\end{aligned}
\]
for $t \le T^*$. Thus, we apply Gr\"onwall's lemma to the above to conclude the uniqueness of solutions.

\end{document}